\title{On the equivariant blow-Nash classification of simple invariant Nash germs}
\author{Fabien Priziac}
\date{}
\newtheorem{de}{Definition}[section]
\newtheorem{theo}[de]{Theorem}
\newtheorem{prop}[de]{Proposition}
\newtheorem{cor}[de]{Corollary}
\newtheorem{lem}[de]{Lemma}
\theoremstyle{remark}
\newtheorem{rem}[de]{Remark}
\begin{document}

\maketitle

\begin{abstract} We make progress towards the classification of simple Nash germs invariant under the involution changing the sign of the first coordinate, with respect to equivariant blow-Nash equivalence, which is an equivariant Nash version of blow-analytic equivalence, taking advantage of invariants for this relation, the equivariant zeta functions.
\end{abstract}
\footnote{
Keywords : simple invariant Nash germs, equivariant blow-Nash equivalence, $ABCDEF$-singularities, equivariant zeta functions, equivariant virtual Poincar\'e series.
\\
{\it 2010 Mathematics Subject Classification :} 14B05, 14P20, 14P25, 32S15, 57S17, 57S25.
 
Research partially supported by a Japan Society for the Promotion of Science (JSPS) Postdoctoral Fellowship.
}

\section{Introduction}

The classification of real analytic germs requires to choose carefully the used equivalence relation. One may think about the (right) $C^1$-equivalence. However, it is too strong, as illustrated by the example of the Whitney family $f_t(x,y) = xy(y-x)(y-tx)$, $t >1$ ($f_t$ and $f_{t'}$ are $C^1$-equivalent if and only if $t~=~t'$), while the topological equivalence is too rough. In \cite{Kuo}, T.-C. Kuo suggested an equivalence relation for which Whitney family has only one equivalence class : the blow-analytic equivalence. More generally, any analytically parametrized family of isolated singularities has a locally finite classification with respect to blow-analytic equivalence. 

Two real analytic germs are said blow-analytically equivalent if, roughly speaking, they become analytically equivalent after compositions with real modifications, e.g. compositions of blowings-up along smooth centers. From the definition of this equivalence relation, further studies on real analytic germs were stimulated. In particular, invariants have been constructed for blow-analytic equivalence, like the Fukui invariants (\cite{Fukui}) as well as the zeta functions of S. Koike and A. Parusi\'nski (\cite{KoiPar}), inspired by the motivic zeta functions of J. Denef and F. Loeser (\cite{DL-GA}), using the Euler characteristic with compact supports as a motivic measure.

A refinement of blow-analytic equivalence has been defined for Nash germs, that is germs of real analytic functions with a semialgebraic graph, by G. Fichou  in \cite{GF-ZF} : the blow-Nash equivalence, that is Nash equivalence after compositions with Nash modifications. The involved algebraicity allowed him to use the virtual Poincar\'e polynomial (\cite{MCP-VB} and \cite{GF-MI}), which is an additive and multiplicative invariant on $\mathcal{AS}$ sets (\cite{Kur} and \cite{KP}) encoding more information than the Euler characteristic with compact supports, in order to define new zeta functions, invariant for the blow-Nash equivalence of Nash germs. Recently, J.-B. Campesato gave in \cite{JBC} an equivalent alternative definition of blow-Nash equivalence as arc-analytic equivalence, proving that the blow-Nash equivalence of \cite{GF-ZF} was indeed an equivalence relation, and defined a more general invariant for it, the motivic local zeta function. 
 
In \cite{GF-BNT}, G. Fichou used his zeta functions of \cite{GF-ZF} to classify the simple Nash germs (a germ is called simple if sufficiently small perturbations provide only finitely many analytic classes) with respect to blow-Nash equivalence. He showed that this classification actually coincides with the real analytic one, that is the $ADE$-classification of \cite{AGZV}. An analog result for blow-analytic equivalence is not known.

In this paper, we are interested in real analytic germs invariant under right composition with the action of the group $G = \mathbb{Z}/2 \mathbb{Z}$ only changing the sign of the first coordinate (that we will simply call invariant germs). In \cite{Pri-EZF}, we defined the equivariant blow-Nash equivalence for invariant Nash germs, which is, roughly speaking, an equivariant Nash equivalence after compositions with equivariant Nash modifications. Using the equivariant virtual Poincar\'e series (\cite{GF}), which is an additive invariant on $G$-$\mathcal{AS}$ sets, as a motivic measure, we constructed ``equivariant'' zeta functions which are invariants for the equivariant blow-Nash equivalence.

Similarly to the non-equivariant frame, we ask if the equivariant blow-Nash classification of invariant Nash germs could coincide with the equivariant Nash classification for sufficiently ``tame'' invariant singularities. The equivariant analytic classification of simple invariant real analytic germs has been established by V. I. Arnold in \cite{Arn} and recalled in \cite{Gor} by V. V. Goryunov. The representatives for this classification are the invariant singularities $A_k$, $B_k$, $C_k$, $D_k$, $E_6$, $E_7$, $E_8$ and $F_4$ (see theorem \ref{arngor} below). We will first show that a simple invariant Nash germ is $G$-blow-Nash equivalent (and even $G$-Nash equivalent) to one of these germs. The largest part of our study will then consist in trying to distinguish, with respect to $G$-blow-Nash equivalence, the invariant $ABCDEF$-singularities, using notably the equivariant zeta functions.

For some cases, we will be faced with either the equality of the respective equivariant zeta functions of a couple of germs or the fact that they are equal if and only if the respective equivariant virtual Poincar\'e series of specific sets are equal. The former situation is in particular due to the fact that the equivariant virtual Poincar\'e series can not distinguish two different algebraic actions on a same sphere as soon as there is at least one fixed point. As for the latter situation, we do not know if the invariance of the virtual Poincar\'e polynomial under bijection with $\mathcal{AS}$ graph (see \cite{MCP}) ``generalizes'' to an invariance of the equivariant virtual Poincar\'e series under equivariant bijection with $\mathcal{AS}$ graph. If this was proven to be true, it should allow to compute all the coefficients of the considered equivariant zeta functions.
\\

The next section is devoted to the equivariant Nash classification of simple invariant Nash germs : we prove that it coincides with the equivariant real analytic classification of \cite{Arn} and \cite{Gor}. Indeed, two invariant Nash germs are equivariantly Nash equivalent if and only if they are equivariantly analytically equivalent (proposition \ref{equivnasheqiffequivaneq}). This can be deduced from an equivariant Nash approximation theorem of E. Bierstone and P. Milman in \cite{BMinv}.

In section \ref{sectioneqblnequiv}, we justify the fact that a germ $G$-Nash equivalent to a germ of the list $ABCDEF$ is in particular $G$-blow-Nash equivalent to it. On the other hand, one can notice that, forgetting the $G$-action, the invariant singularities $A_k$ and $B_k$, resp. $C_k$ and $D_k$, $E_6$ and $F_4$, are both $A$-, resp. $D$-, $E$-, singularities. Since equivariant blow-Nash equivalence is a particular case of blow-Nash equivalence and because the $ADE$-singularities are not blow-Nash equivalent to one another (\cite{GF-BNT}), we are reduced to compare, with respect to $G$-blow-Nash equivalence, the invariant germs of the families $A_k$ and $B_k$, resp. $C_k$ and $D_k$, $E_6$ and $F_4$.

The section \ref{sectioneqzetafunc} recalls the definition of the tools we are going to use to do so : the equivariant zeta functions. Each of the sections \ref{compAB}, \ref{compCD} and \ref{compEF} is devoted to the comparison of the invariant germs of a specific couple of families ($A_k$ and $B_k$, $C_k$ and $D_k$, and finally $E_6$ and $F_4$). We proceed as follows. We begin by computing the first coefficients of the equivariant zeta functions (that is the coefficients of degree strictly smaller than the degree of the germs) in order to extract first cases of non-G-blow-Nash equivalence. Reducing our study to the remaining cases, we then compute the coefficient of degree equal to the degree of the germs. Finally, for the cases for which it is not sufficient, we compare the last terms of the respective equivariant zeta functions.

These comparisons lead to interesting examples of computations of equivariant virtual Poincar\'e series. The first one, to which is devoted section \ref{sectionypq}, is the computation of the equivariant virtual Poincar\'e series of the fibers over $0$, $-1$ and $+1$ of the quadratic forms $Q_{p,q}(y) := \sum_{i=1}^p y_i^2 - \sum_{j = 1}^q y_{p+j}^2$, equipped with four different actions of $G$.
\\
  
{\bf Acknowledgements.} The author wishes to thank J.-B. Campesato, G. Fichou, T. Fukui, A. Parusi\'nski, G. Rond and M. Shiota for useful discussions and comments.

\section{Equivariant Nash classification of invariant simple Nash germs} 

Consider the affine space $\mathbb{R}^n$ with coordinates $(x_1, \ldots, x_n)$. We denote by $s$ the involution of $\mathbb{R}^n$ changing the sign of the first coordinate $x_1$ : 
$$s : \begin{array}{ccc}
\mathbb{R}^n & \rightarrow & \mathbb{R}^n \\
(x_1,x_2, \ldots , x_n) & \mapsto & (- x_1,x_2, \ldots , x_n) 
\end{array}
$$
This equips $\mathbb{R}^n$ with a linear action of the group $G = \{id_{\mathbb{R}^n}, s\}$.

In this paper, a function germ $f : (\mathbb{R}^n, 0) \rightarrow (\mathbb{R},0)$ will be said invariant if $f$ is invariant under right composition with $s$, that is if $f$ is the germ of an equivariant function (we equip $\mathbb{R}$ with the trivial action of $G$).

In \cite{Arn} and \cite{Gor} is given the classification of invariant simple real analytic germs $(\mathbb{R}^n, 0) \rightarrow (\mathbb{R},0)$ with respect to equivariant analytic equivalence, that is right equivalence via an equivariant analytic diffeomorphism $(\mathbb{R}^n, 0) \rightarrow (\mathbb{R}^n,0)$ :

\begin{theo}[\cite{Arn}, \cite{Gor}] \label{arngor} An invariant simple real analytic function germ $(\mathbb{R}^n,0) \rightarrow (\mathbb{R},0)$ is equivariantly analytically equivalent to one and only one invariant germ of the following list :
$$\begin{array}{ll}
 A_k, k \geq 0 : \pm x_1^2 \pm x_2^{k+1} + Q, & E_6 : \pm x_1^2 + x_2^3 \pm x_3^4 + Q, \\
 B_k, k \geq 2 : \pm x_1^{2k} \pm x_2^2 + Q, &  E_7 : \pm x_1^2 + x_2^3 + x_2 x_3^3 + Q, \\
 C_k, k \geq 3 : x_1^2 x_2 \pm x_2^k + Q, & E_8 : \pm x_1^2 + x_2^3 + x_3^5 + Q, \\
 D_k, k \geq 4 : \pm x_1^2 + x_2^2 x_3 \pm x_3^{k-1} + Q, & F_4 : \pm x_1^4 + x_2^3 + Q, 
\end{array}$$   
where 
$Q = \pm x_s^2 \pm \cdots \pm x_n^2$, with $s=4$ for singularities $D_k$ and $E_k$, and $s = 3$ in the other cases.
\end{theo}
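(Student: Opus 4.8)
The plan is to adapt Arnold's machinery for classifying simple singularities to the $G$-equivariant setting, the structural key being a dictionary between invariant germs and boundary singularities. The starting observation is that invariance of $f$ forces $f$ to be even in $x_1$, so that $f(x_1, x_2, \ldots, x_n) = h(x_1^2, x_2, \ldots, x_n)$ for an analytic germ $h$ in the variables $(u, x_2, \ldots, x_n)$ with $u = x_1^2$. Moreover, an analytic diffeomorphism $\phi = (\phi_1, \ldots, \phi_n)$ of $(\mathbb{R}^n, 0)$ is equivariant precisely when $\phi_1$ is odd in $x_1$ and $\phi_2, \ldots, \phi_n$ are even in $x_1$; writing $\phi_1 = x_1 a(u, x')$ with $a(0) \neq 0$ one checks that right composition by $\phi$ corresponds, in the $(u, x')$ variables, to a diffeomorphism sending $u$ to $u$ times a positive unit, hence preserving the hyperplane $\{u = 0\}$. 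I would first make this correspondence precise: equivariant right equivalence of invariant germs $f$ matches right equivalence of the associated germs $h$ under boundary-preserving diffeomorphisms, i.e. the equivalence of \emph{boundary singularities in the sense of Arnold}. The list $A_k, B_k, C_k, D_k, E_6, E_7, E_8, F_4$ is exactly Arnold's list of simple boundary singularities, so this reduction is the heart of the statement.

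With this dictionary in hand, the classification runs as for ordinary simple germs but relative to the boundary $\{u = 0\}$. I would first establish equivariant finite determinacy: an invariant germ with isolated singularity is equivalent, through an equivariant diffeomorphism, to a polynomial of bounded degree, namely a finite jet. This is the equivariant analogue of the Mather--Tougeron theorem, proved by the usual homotopy and vector-field integration argument carried out inside the module of invariant germs and using the invariant part of the gradient ideal; it reduces the problem to a finite-dimensional classification in jet space and justifies working with polynomial normal forms.

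Next I would run the corank reduction via an equivariant splitting (generalized Morse) lemma. Because $f$ is even in $x_1$, its Hessian at the origin is block-diagonal with respect to $\mathbb{R}^n = \langle x_1\rangle \oplus \langle x_2, \ldots, x_n\rangle$, the $(-1)$- and $(+1)$-eigenspaces of $s$, and the coefficient of $x_1^2$ is the distinguished boundary datum. After splitting off nondegenerate directions equivariantly, I would separate two regimes. If the $x_1$ direction is nondegenerate, i.e. the coefficient of $x_1^2$ is nonzero, then $f \sim \pm x_1^2 + \tilde f(x_2, \ldots, x_n)$ with $\tilde f$ a germ in the $G$-fixed variables alone; since $G$ acts trivially there, equivariant equivalence collapses to ordinary equivalence and the usual $ADE$ theorem yields $A_k$, $D_k$, $E_6$, $E_7$, $E_8$. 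If the $x_1$ direction is degenerate, then $x_1$ enters the singularity essentially and only odd-in-$x_1$ coordinate changes are available; a case-by-case Newton-polygon and quasi-homogeneous reduction then produces the refined normal forms $B_k$, $C_k$, $F_4$.

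The two genuinely delicate points are completeness and the simplicity cutoff. For completeness I would, in each corank, perform the successive equivariant coordinate changes bringing every jet to one of the listed forms, keeping careful track of which ordinary reductions are forbidden by the equivariance constraint; this is exactly what splits certain ordinary $A$-, $D$- and $E$-families into the invariant $A/B$-, $C/D$- and $E_6/F_4$-pairs. The main obstacle I anticipate is the simplicity argument: one must show that any invariant germ not in the list carries a modulus, i.e. lies in an equivariant deformation meeting infinitely many classes (the equivariant, or boundary, analogues of Arnold's first unimodal singularities), while checking that each listed germ is genuinely simple. Finally, pairwise distinctness follows from equivariant invariants such as corank, multiplicity, the precise way $x_1$ occurs, and the local algebra together with its $G$-action, and uniqueness of the normal form then completes the statement.
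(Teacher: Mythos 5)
The paper itself gives no proof of this theorem: it is imported verbatim from Arnold \cite{Arn} and Goryunov \cite{Gor}, and your central device --- the dictionary $u = x_1^2$ identifying invariant germs with Arnold's boundary singularities, with equivariant diffeomorphisms (first component odd, the rest even in $x_1$) corresponding to boundary-preserving ones --- is exactly the mechanism by which those references obtain the list $A_k, B_k, C_k, D_k, E_6, E_7, E_8, F_4$. Your proposal is therefore correct in outline and follows essentially the same route as the paper's cited source.
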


\begin{rem} If we forget the action of the involution $s$ on $\mathbb{R}^n$, notice that the families $A_k$ and $B_k$, resp. $C_k$ and $D_k$, $E_6$ and $F_4$, $E_7$, $E_8$, of Theorem \ref{arngor} are singularities $A$, resp. $D$, $E_6$, $E_7$, $E_8$.
\end{rem}

In this paper, we are interested in the classification of invariant Nash germs $(\mathbb{R}^n,0) \rightarrow (\mathbb{R},0)$, that is germs of equivariant analytic functions with semialgebraic graph. Recall (see for instance \cite{BCR} Corollary 8.1.6) that a Nash germ can be considered as an algebraic power series, via its Taylor series. The above classification is also valid for invariant simple Nash germs with respect to equivariant Nash equivalence, that is right equivalence via an equivariant Nash diffeomorphism $(\mathbb{R}^n, 0) \rightarrow (\mathbb{R}^n,0)$, according to the following proposition :

\begin{prop} \label{equivnasheqiffequivaneq} Let $f, h : (\mathbb{R}^n,0) \rightarrow (\mathbb{R},0)$ be two invariant Nash germs. Then $f$ and $h$ are equivariantly Nash equivalent if and only if they are equivariantly analytically equivalent.
\end{prop}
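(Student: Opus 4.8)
The \emph{only if} direction is immediate: since any Nash map germ is in particular analytic, an equivariant Nash diffeomorphism realizing $f = h \circ \psi$ is also an equivariant analytic diffeomorphism, so equivariant Nash equivalence trivially implies equivariant analytic equivalence. The content of the statement lies entirely in the converse.

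So assume $f$ and $h$ are equivariantly analytically equivalent, i.e. there is an equivariant analytic diffeomorphism germ $\phi : (\mathbb{R}^n,0) \to (\mathbb{R}^n,0)$ with $f = h \circ \phi$. The plan is to produce an equivariant \emph{Nash} diffeomorphism realizing the same equivalence by approximating $\phi$. I would regard the components of the sought-after map as unknowns $X = (X_1, \ldots, X_n)$ and consider the equation
$$ f(x) - h(X(x)) = 0 $$
to be solved by a germ $X$ with $X(0) = 0$. Since $f$ and $h$ are Nash germs, hence algebraic power series via their Taylor series, this is a system with Nash coefficients; moreover, because $f$ and $h$ are invariant, the system respects the $G$-action, and $\phi$ is an equivariant analytic solution of it.

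The key step is to invoke the equivariant Nash approximation theorem of Bierstone and Milman \cite{BMinv}: since the system admits the equivariant analytic solution $\phi$, it admits an equivariant Nash solution $\psi : (\mathbb{R}^n,0) \to (\mathbb{R}^n,0)$ approximating $\phi$ at the origin to any prescribed order. Requiring agreement at first order forces $\psi(0) = 0$ and $D\psi(0) = D\phi(0)$, which is invertible; by the inverse function theorem in the Nash category (see \cite{BCR}), $\psi$ is therefore a Nash diffeomorphism germ whose inverse is again Nash. The equivariance of $\psi^{-1}$ follows formally from $\psi \circ s = s \circ \psi$ by composing with $\psi^{-1}$ on both sides. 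Since $\psi$ is an \emph{exact} solution of $f = h \circ X$, the germs $f$ and $h$ are equivariantly Nash equivalent, which finishes the proof.

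The main obstacle I anticipate is not the final assembly but the careful reduction of the equivalence problem to a form to which \cite{BMinv} genuinely applies: one must set up the system and the $G$-action on the space of unknowns so that an ``equivariant solution'' in the sense of \cite{BMinv} corresponds precisely to an equivariant diffeomorphism germ realizing the equivalence, and one must ensure that the approximant supplied by the theorem is simultaneously an exact Nash solution of $f = h \circ X$ and close enough to $\phi$ to remain a local diffeomorphism. Once the hypotheses of the equivariant Nash approximation theorem are matched, the diffeomorphism property and the equivariance of the inverse are routine consequences of the Nash inverse function theorem.
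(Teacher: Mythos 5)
Your proposal is correct and follows essentially the same route as the paper: there, one sets $F(x,y) := f(y) - h(x)$, viewed as an algebraic power series, and applies Theorem A of \cite{BMinv} (together with Example 11.3 of \cite{Rond}) to approximate the equivariant analytic solution $\phi$ by an equivariant algebraic power series solution $\widetilde{\phi}$, the approximation being taken close enough that $\widetilde{\phi}$ remains a diffeomorphism. The obstacle you anticipate is real and is precisely what the paper settles in a remark following the proof: Theorem A of \cite{BMinv} is stated for \emph{polynomial} equations, and its extension to algebraic power series equations such as $F$ is obtained by a reduction in the spirit of \cite{Art} Lemma 5.2 and \cite{CRS}, using arguments of \cite{Rondloc} and the faithful flatness of $\mathbb{R}[x,y]_{(x,y)} \rightarrow \mathbb{R}_{alg}[[x,y]]$ from \cite{BCR}.
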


This property is a particular case of the following result :

\begin{theo} \label{equivnash} Let $G$ be a reductive algebraic group acting linearly on $\mathbb{R}^n$ and $\mathbb{R}^p$. Consider two equivariant Nash germs $f : (\mathbb{R}^n,0) \rightarrow (\mathbb{R}^p,0)$ and $h : (\mathbb{R}^n,0) \rightarrow (\mathbb{R}^p,0)$. If $f$ and $h$ are equivariantly analytically equivalent, then they are equivariantly Nash equivalent.
\end{theo}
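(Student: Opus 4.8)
The plan is to reformulate the equivalence as a system of Nash equations and then to upgrade the given equivariant analytic solution to an equivariant Nash one by means of the equivariant Nash approximation theorem of Bierstone and Milman (\cite{BMinv}). By definition, saying that $f$ and $h$ are equivariantly analytically equivalent means that there exist equivariant analytic diffeomorphism germs $\phi : (\mathbb{R}^n,0) \rightarrow (\mathbb{R}^n,0)$ and $\psi : (\mathbb{R}^p,0) \rightarrow (\mathbb{R}^p,0)$, for the prescribed linear $G$-actions, such that
$$ h \circ \phi = \psi \circ f . $$
In the right-equivalence situation of Proposition \ref{equivnasheqiffequivaneq} one simply takes $\psi = \mathrm{id}$. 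What I must produce is a pair of equivariant \emph{Nash} diffeomorphism germs satisfying the same relation.

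First I would record that this relation is a system of Nash equations in the unknown germs $\phi$ and $\psi$. Since $f$ and $h$ are Nash, hence algebraic power series, the data occurring when one expands $h \circ \phi - \psi \circ f$ are algebraic; writing $y = \phi(x)$, the condition $h(y) - \psi(f(x)) = 0$ is governed by Nash functions. The linear $G$-actions on source and target induce an action on the space of such germs, under which the equivariant germs are exactly the fixed ones, and the equation is compatible with this action because $f$ and $h$ are themselves equivariant. The given pair $(\phi,\psi)$ is then an equivariant analytic \emph{solution} of this system.

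Next I would invoke the equivariant Nash approximation theorem for a reductive group acting linearly (\cite{BMinv}): an equivariant analytic solution of such a system of equivariant Nash equations can be approximated, to any prescribed order, by an equivariant Nash solution. Applying it to $(\phi,\psi)$ and requiring agreement up to order one, I obtain equivariant Nash germs $\widetilde{\phi}$ and $\widetilde{\psi}$ \emph{exactly} satisfying $h \circ \widetilde{\phi} = \widetilde{\psi} \circ f$ and whose differentials at the origin coincide with those of $\phi$ and $\psi$. As $\phi$ and $\psi$ are diffeomorphisms these differentials are invertible, so by the Nash inverse function theorem $\widetilde{\phi}$ and $\widetilde{\psi}$ are Nash diffeomorphism germs, which exhibits $f$ and $h$ as equivariantly Nash equivalent.

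The hard part is entirely concentrated in the equivariant approximation step, and more precisely in casting $h \circ \phi = \psi \circ f$ as a bona fide system of Nash equations amenable to \cite{BMinv}: one must handle the composition $\psi \circ f$, in which the unknown target germ $\psi$ is evaluated along the fixed Nash map $f$, so that $\phi$ and $\psi$ live over different sets of variables and the system is coupled, and one must check that the $G$-action is precisely of the linear reductive type the theorem requires (here $G = \mathbb{Z}/2\mathbb{Z}$ is finite, hence reductive) so that the approximating solution can be taken equivariant rather than merely Nash. By contrast, the passage from an approximate to an actual diffeomorphism is immediate, since coincidence of the first-order parts already forces invertibility of the differential at the origin.
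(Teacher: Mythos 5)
Your core mechanism is the paper's: view the equivalence as an equation with algebraic power series data, invoke the equivariant approximation theorem of Bierstone and Milman (a finite group being reductive), and approximate closely enough that the linear part at the origin is preserved, so that the approximating equivariant algebraic power series is still a diffeomorphism. Note, however, that in this paper ``equivariantly analytically equivalent'' means \emph{right} equivalence --- the paper's own proof starts from a single equivariant analytic diffeomorphism $\phi$ of the source with $f \circ \phi = h$ --- and in that case your argument, with $\psi$ frozen to the identity and $F(x,y) := f(y) - h(x) \in \mathbb{R}_{alg}[[x,y]]^p$, coincides with the paper's proof. (One point you pass over, which the paper handles in a separate remark via \cite{Rond} and \cite{Rondloc}: Theorem A of \cite{BMinv} is stated for polynomial equations, and its application to algebraic power series equations such as $F$ requires a reduction argument.)

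The genuine gap lies in the two-sided (left-right) formulation you chose to prove. The relation $h \circ \phi = \psi \circ f$ is not a system of the kind Theorem A of \cite{BMinv} treats: there, all unknowns are power series in the same variables $x$, and one approximates solutions of $F(x,y(x)) = 0$; your unknown $\psi$ is a power series in the $p$ target variables evaluated along the fixed germ $f$. If you rename $z(x) := \psi(f(x))$ and treat $z$ as an unknown in $x$, the theorem returns an equivariant algebraic solution $\widetilde{z}$ that has no reason to factor through $f$; demanding such a factorization turns the problem into a composite, nested-type approximation problem (Artin approximation with constraints), which is not covered by the cited theorem and is known to fail in general when unknowns are required to depend on different groups of variables. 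You acknowledge this yourself by calling it ``the hard part,'' but you never supply it, so as written the two-sided statement is not proved. The repair is simply to prove what the paper actually asserts: drop $\psi$, take the single equation $F(x,\phi(x)) = 0$, and your remaining argument --- equivariant approximation plus invertibility of the differential at the origin --- goes through verbatim.
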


\begin{rem}
\begin{itemize}
	\item Since a Nash diffeomorphism is in particular analytic, the converse is obviously true.
	\item Any finite group is reductive.
\end{itemize}
\end{rem}

\begin{proof}[Proof (of Theorem \ref{equivnash})] Suppose there exists an equivariant analytic diffeomorphism $\phi : (\mathbb{R}^n,0) \rightarrow (\mathbb{R}^n,0)$ such that $f \circ \phi = h$. Denote $F(x,y) := f(y) - h(x)$ for $x,y \in \mathbb{R}^n$. Then $F : (\mathbb{R}^{2n}, 0) \rightarrow (\mathbb{R}^p,0)$ is a Nash germ and can be considered as an algebraic power series in $\mathbb{R}_{alg}[[x,y]]^p$, and $\phi(x)$ as an equivariant convergent power series in $\mathbb{R}\{x,y\}$ such that $F(x,\phi(x)) = 0$.

Therefore, by Theorem A of \cite{BMinv} and Example 11.3 of \cite{Rond}, we can approximate $\phi(x)$ by an equivariant algebraic power series $\widetilde{\phi}(x)$ such that $F(x,\widetilde{\phi}(x)) = 0$, and we do the approximation closely enough so that $\widetilde{\phi}(x)$ remains a diffeomorphism. As a consequence, $\widetilde{\phi} : (\mathbb{R}^n,0) \rightarrow (\mathbb{R}^n,0)$ is an equivariant Nash diffeomorphism such that $f \circ \widetilde{\phi} = h$.
\end{proof}

\begin{rem} Actually, Theorem A of \cite{BMinv} is about approximation of equivariant formal solutions of polynomial equations by equivariant algebraic power series but it is also true for algebraic power series equations. Indeed, following G. Rond's ideas, it is possible to reduce to the case of polynomial equations as in \cite{Art} Lemma 5.2 and \cite{CRS} Reduction (2) of the proof of Theorem 1.1, using arguments of the proof of Lemma 8.1 in \cite{Rondloc}, along with the fact that the morphism $\mathbb{R}[x,y]_{(x,y)} \rightarrow \mathbb{R}_{alg}[[x,y]]$ is faithfully flat by \cite{BCR} Corollary 8.7.16.
\end{rem}

\section{Equivariant blow-Nash equivalence} \label{sectioneqblnequiv}

Now, we want to study the classification of invariant simple Nash germs with respect to $G$-blow-Nash equivalence via an equivariant blow-Nash isomorphism : see \cite{Pri-EZF} for the definition of $G$-blow-Nash equivalence via an equivariant blow-Nash isomorphism.

First, we have the following : 

\begin{prop} An invariant simple Nash germ  $(\mathbb{R}^n,0) \rightarrow (\mathbb{R},0)$ is $G$-blow-Nash equivalent via an equivariant blow-Nash isomorphism to an invariant germ of the list of Theorem \ref{arngor}.
\end{prop}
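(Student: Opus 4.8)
The plan is to chain together the two results already established before the statement, Theorem~\ref{arngor} and Proposition~\ref{equivnasheqiffequivaneq}, and then to observe that equivariant Nash equivalence is nothing but a degenerate instance of $G$-blow-Nash equivalence. First I would start with an invariant simple Nash germ $f : (\mathbb{R}^n,0) \to (\mathbb{R},0)$. Since a Nash germ is in particular a real analytic germ, $f$ is an invariant simple real analytic germ, so Theorem~\ref{arngor} applies and produces a germ $g$ in the list $ABCDEF$ to which $f$ is equivariantly analytically equivalent.

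The next step is to upgrade this analytic equivalence to a Nash one. The point to record here is that every representative $g$ appearing in the list of Theorem~\ref{arngor} is polynomial, hence an invariant Nash germ. Both $f$ and $g$ being invariant Nash germs that are equivariantly analytically equivalent, Proposition~\ref{equivnasheqiffequivaneq} immediately yields that $f$ and $g$ are equivariantly Nash equivalent, say via an equivariant Nash diffeomorphism $\psi : (\mathbb{R}^n,0) \to (\mathbb{R}^n,0)$ with $f \circ \psi = g$.

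It remains to pass from equivariant Nash equivalence to $G$-blow-Nash equivalence, and this is the only step that genuinely requires inspecting the definition of an equivariant blow-Nash isomorphism from~\cite{Pri-EZF}. The idea is that no modification is actually needed: I would take both equivariant Nash modifications to be the identity of $(\mathbb{R}^n,0)$ and take $\psi$ itself as the equivariant Nash isomorphism between the (trivially) modified spaces. The equality $f \circ \psi = g$ then plays the role of the compatibility condition relating $f$, $g$ and the isomorphism along the modifications. The main thing to check, and where I expect the only real care to be needed, is that the multiplicity and Jacobian conditions built into the definition of an equivariant blow-Nash isomorphism are satisfied; but since $\psi$ is a diffeomorphism its Jacobian is a unit, so these conditions hold trivially and the pullback of the relevant measures is compatible. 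This exhibits an equivariant blow-Nash isomorphism between $f$ and $g$, so $f$ is $G$-blow-Nash equivalent to a germ of the list, completing the argument.
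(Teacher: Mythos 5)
Your first two steps (producing a representative $g$ of the list via Theorem \ref{arngor}, noting that $g$ is polynomial hence Nash, and upgrading the equivariant analytic equivalence to an equivariant Nash equivalence $f\circ\psi = g$ via Proposition \ref{equivnasheqiffequivaneq}) are exactly the reduction the paper makes. The gap is in your third step, which is precisely where the real content of the proposition lies. Taking both equivariant Nash modifications to be the identity is not permitted by the definition: an equivariant Nash modification $\sigma_f$ of $f$ (see \cite{Pri-EZF}, following \cite{GF-ZF} and \cite{GF-BNT}) must be such that $f\circ\sigma_f$ and $\mathrm{jac}\,\sigma_f$ have only normal crossings simultaneously, and such that the action of $G$ can be locally linearized in a finite collection of invariant charts. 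With $\sigma_f = \mathrm{id}$ the first condition reads ``$f$ itself has only normal crossings at $0$'', which fails for the singular germs of the list: for instance the $A_2$ germ $x_1^2 + x_2^3 + Q$ cannot be written as a unit times a monomial in any local coordinates at the origin (its zero locus is not a union of smooth transverse hypersurfaces). So the triple (identity, identity, $\psi$) simply does not satisfy the definition of $G$-blow-Nash equivalence via an equivariant blow-Nash isomorphism; the fact that $\mathrm{jac}\,\psi$ is a unit is beside the point, because the obstruction is the normal crossings requirement on the modified function, not the multiplicity bookkeeping.

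What is needed instead — and what the paper's proof spends all its effort on — is the insertion of genuine equivariant modifications: compositions $\sigma_f$, $\sigma_h$ of equivariant blowings-up along $G$-invariant smooth Nash centers, chosen so that the irreducible components of the strict transforms do not intersect, the composites and Jacobians have simultaneous normal crossings, the $G$-action is locally linearizable in invariant charts, and the two germs remain equivariantly Nash equivalent after each blowing-up, so that $\psi$ lifts to an equivariant Nash isomorphism of the modified spaces preserving the relevant multiplicities. A repair closest in spirit to your idea would be: resolve $g$ equivariantly by such a $\sigma_g$, then set $\sigma_f := \psi\circ\sigma_g$ and $\Phi := \mathrm{id}$; indeed $f\circ\sigma_f = g\circ\sigma_g$ has normal crossings, and $\mathrm{jac}\,\sigma_f = (\mathrm{jac}\,\psi\circ\sigma_g)\cdot\mathrm{jac}\,\sigma_g$ differs from $\mathrm{jac}\,\sigma_g$ by a unit, so the normal crossings and multiplicity conditions transfer. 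Your intuition that the Jacobian of a diffeomorphism is harmless is exactly what makes this transported resolution work, but it only becomes an argument after a nontrivial equivariant resolution has been constructed; as written, your proof fails at the definition of the equivalence relation.
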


\begin{proof} This comes from the fact that if $f$ and $h$ are equivariantly Nash equivalent invariant Nash germs $(\mathbb{R}^n,0) \rightarrow (\mathbb{R},0)$, then they are $G$-blow-Nash equivalent via an equivariant blow-Nash isomorphism.

Indeed, if $f^{-1}(0)$, resp. $h^{-1}(0)$, has only one irreducible component at $0 \in \mathbb{R}^n$, this is straightforward. If not, we perform a composition $\sigma_f : (M_f , \sigma_f^{-1}(0)) \rightarrow (\mathbb{R}^n,0)$, resp. $\sigma_h : (M_h , \sigma_h^{-1}(0)) \rightarrow (\mathbb{R}^n,0)$, of successive equivariant blowings-up along $G$-invariant smooth Nash centers such that 
\begin{itemize}
	\item the irreducible components of the strict transform of $f^{-1}(0)$ by $\sigma_f$, resp. of $h^{-1}(0)$ by $\sigma_h$, do not intersect,
	\item $f \circ \sigma_f$ and $jac~\sigma_f$, resp. $h \circ \sigma_h$ and $jac~\sigma_h$, have only normal crossings simultaneously,
	\item there exists a finite collection of $G$-invariant affine charts for $\sigma_f$, resp. for $\sigma_h$, such that, on each of these charts, the action of $G$ is of the form 
$$(x_1,x_2, \ldots, x_n) \mapsto (\epsilon_1 x_1, \epsilon_2 x_2, \ldots, \epsilon_n x_n),$$
where $\epsilon_i \in \{\pm 1\}$ (so that the action of $G$ on $M_f$, resp. on $M_h$, can be locally linearized on the normal crossings, in the sense of \cite{Pri-EZF}),
\item after each blowing-up, $f$ and $h$ remain equivariantly Nash equivalent. 
\end{itemize}



\end{proof}

The second step will then consist in understanding the relations, with respect to $G$-blow-Nash equivalence via an equivariant blow-Nash isomorphism, between the invariant Nash germs of the list of Theorem \ref{arngor}.
\\

Equivariant blow-Nash equivalence (resp. equivariant blow-Nash equivalence via an equivariant blow-Nash isomorphism) is a particular case of the blow-Nash equivalence (resp. blow-Nash equivalence via a blow-Nash isomorphism) defined in \cite{GF-ZF}. In \cite{GF-BNT}, Fichou proved that the classification of simple Nash germs $(\mathbb{R}^n,0) \rightarrow (\mathbb{R}, 0)$ with respect to blow-Nash equivalence via a blow-Nash isomorphism is the same as Arnold's $ADE$-classification of real analytic germs with respect to right analytic equivalence.

As a consequence, the $A$, $D$, $E$-singularities, belonging to different blow-Nash classes, cannot be $G$-blow-Nash-equivalent via an equivariant blow-Nash isomorphism either. We are then reduced to try to distinguish the invariant germs of the families $A_k$ and $B_k$, resp. $C_k$ and $D_k$, $E_6$ and $F_4$.

For this purpose, we will use the equivariant zeta functions defined in \cite{Pri-EZF}, which are invariants for equivariant blow-Nash equivalence via an equivariant blow-Nash isomorphism. 

\section{Equivariant zeta functions} \label{sectioneqzetafunc}

Let $f : (\mathbb{R}^n,0) \rightarrow (\mathbb{R},0)$ be an invariant Nash germ. We recall the definition given in \cite{Pri-EZF} of the equivariant zeta functions of $f$.

Denote $\mathcal{L} := \{ \gamma : (\mathbb{R}, 0) \rightarrow (\mathbb{R}^n , 0)~|~\gamma(t) = a_1 t + a_2 t^2 + \ldots, a_i \in \mathbb{R}^n \}$ the space of formal arcs at the origin of $\mathbb{R}^n$. The action of $G$ on $\mathbb{R}^n$ induces naturally an action of $G$ on $\mathcal{L}$, by left composition with $s$. For $m \in \mathbb{N} \setminus \{0\}$, the space 
$$\mathcal{L}_m := \{ \gamma : (\mathbb{R}, 0) \rightarrow (\mathbb{R}^n , 0)~|~\gamma(t) = a_1 t + a_2 t^2 + \ldots + a_m t^m\}$$
of arcs truncated at order $m+1$ is globally stable under this action, as well as the spaces 
$$A_m(f) := \{\gamma \in \mathcal{L}_m~|~f \circ \gamma(t) = c t^m + \ldots, c \neq 0 \},$$ 
$$A_m^+(f) := \{\gamma \in \mathcal{L}_m~|~f \circ \gamma(t) = + t^m + \ldots \} \mbox{ ~and~  } A_m^-(f) := \{\gamma \in \mathcal{L}_m~|~f \circ \gamma(t) = - t^m + \ldots \}.$$

These latter sets are Zariski constructible sets equipped with an algebraic action of $G$ and we define 
$$Z_f^G(u,T) :=  \sum_{m \geq 1} \beta^G(A_m(f)) u^{-mn} T^m \in \mathbb{Z}[u][[u^{-1}]][[T]]$$
and
$$Z_f^{G,\pm}(u,T) :=  \sum_{m \geq 1} \beta^G(A^{\pm}_m(f)) u^{-mn} T^m \in \mathbb{Z}[u][[u^{-1}]][[T]],$$
respectively the naive equivariant zeta function and the equivariant zeta functions with sign of~$f$.

Here, $\beta^G(\cdot)$ denotes the equivariant virtual Poincar\'e series on $G$-$\mathcal{AS}$ sets of \cite{GF} : it is an additive invariant with respect to equivariant isomorphisms, with values in $\mathbb{Z}[[u]]$, such that, if $X$ is a compact nonsingular $G$-$\mathcal{AS}$ set, $\beta^G(X) = \sum_{i \in \mathbb{Z}} dim_{\mathbb{Z}_2} H_i(X ; G) \, u^i$, where $H_*(X ; G)$ denotes the equivariant Borel-Moore homology of $X$ with coefficients in $\mathbb{Z}_2$ defined in \cite{VH}.

\begin{rem} \label{equivpoinc}
\begin{itemize}
	\item By an isomorphism between arc-symmetric sets is meant a birational map containing the arc-symmetric sets in its support.
	\item The equivariant virtual Poincar\'e series of a point is $\frac{u}{u-1}$, the equivariant virtual Poincar\'e series of two fixed points is $2 \frac{u}{u-1}$ and the equivariant virtual Poincar\'e series of two points exchanged by $G$ is $1$ : see \cite{GF} Example 3.12.
	\item If $S^d$ denotes the unit sphere in $\mathbb{R}^d$ then
	$$\beta^G(S^d) = \begin{cases} 1 + u + \ldots + u^d \mbox{ if $G$ acts via the central symmetry of $\mathbb{R}^d$,} \\
	2 \frac{u}{u-1} + u + \ldots + u^d \mbox{ if $G$ acts with a fixed point}
	\end{cases}$$
(see \cite{GF} Example 3.12).
	\item If $\mathbb{R}^d$ is equipped with any algebraic action of $G$, then $\beta^G(\mathbb{R}^d) = \frac{u^{d+1}}{u-1}$ : see \cite{GF} Example 3.12. 
	\item If $X$ is a $G$-$\mathcal{AS}$ set and if the affine space $\mathbb{R}^d$ is equipped with any algebraic action of $G$, then $\beta^G(X \times \mathbb{R}^d) = u^d \beta^G(X)$ (the product $X \times \mathbb{R}^d$ is equipped with the diagonal action of $G$) : see \cite{GF} Proposition 3.13.
	\item If $X$ is a $G$-$\mathcal{AS}$ set and if the affine line $\mathbb{R}$ is equipped with an algebraic action of $G$ stabilizing $0$, then $\beta^G(X \times (\mathbb{R}^*)^d) = (u-1)^d \beta^G(X)$ : see \cite{Pri-EZF} Lemma 3.9.
	\item If $X$ is a $G$-$\mathcal{AS}$, then the coefficients of the negative powers of $u$ in $\beta^G(X)$ are all equal to $\sum_{i \geq 0} \beta_i(X^G)$, where $X^G$ is the fixed point set of $X$ and $\beta_i(\cdot)$ denotes the $i^{th}$ virtual Betti number (\cite{MCP-VB}) : see \cite{GF} Proposition 4.5.
\end{itemize}
\end{rem}

\begin{theo}[Theorem 4.1 of \cite{Pri-EZF}] \label{eqzfinv} Let $f, h : (\mathbb{R}^n, 0) \rightarrow (\mathbb{R}, 0)$ be two invariant Nash germs. If $f$ and $h$ are $G$-blow-Nash equivalent via an equivariant blow-Nash isomorphism, then $Z_f^{G}(u,T) = Z_h^{G}(u,T)$ and $Z_f^{G,\pm}(u,T) = Z_h^{G,\pm}(u,T)$.
\end{theo}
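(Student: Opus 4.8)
The plan is to reduce the equality of zeta functions to an equivariant change of variables formula on the modifications, and then to transfer the resulting combinatorial data across the equivariant isomorphism using the isomorphism-invariance of $\beta^G$. First I would unwind the definition of $G$-blow-Nash equivalence via an equivariant blow-Nash isomorphism from \cite{Pri-EZF}: there exist equivariant Nash modifications $\sigma_f : (M_f, \sigma_f^{-1}(0)) \to (\mathbb{R}^n, 0)$ and $\sigma_h : (M_h, \sigma_h^{-1}(0)) \to (\mathbb{R}^n, 0)$, under which $f \circ \sigma_f$ and $\mathrm{jac}\,\sigma_f$ (resp. $h \circ \sigma_h$ and $\mathrm{jac}\,\sigma_h$) are simultaneously normal crossings in suitable $G$-invariant charts where the action is diagonal, together with an equivariant Nash isomorphism $\Phi : M_f \to M_h$ intertwining the two situations, i.e. $f \circ \sigma_f = h \circ \sigma_h \circ \Phi$ while matching the Jacobian data.

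The core step is an equivariant Kontsevich--Denef--Loeser change of variables formula. I would lift truncated arcs on $\mathbb{R}^n$ through $\sigma_f$ (by properness of the modification, every arc not contained in the critical locus lifts), and compare the order of $f \circ \gamma$ with the order of $(f \circ \sigma_f) \circ \widetilde{\gamma}$ corrected by the order of $\mathrm{jac}\,\sigma_f$ along $\widetilde{\gamma}$. Partitioning the arc space by the contact orders of the lifted arcs with the components of the normal crossing divisor yields an expression of each coefficient $\beta^G(A_m(f))$, and of $\beta^G(A_m^{\pm}(f))$, as a sum, over the strata of the divisor, of $\beta^G$ of the corresponding lifted-arc spaces. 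Here the local linearization of the $G$-action is essential: it guarantees that the lifted-arc spaces fiber $G$-equivariantly over the strata with affine and $(\mathbb{R}^*)$-bundle fibers, so that the computation of their $\beta^G$ reduces, via the product and $(\mathbb{R}^*)^d$ formulas recalled in Remark \ref{equivpoinc}, to $\beta^G$ of the strata themselves weighted by monomials in $u$ determined by the multiplicities $(N_i, \nu_i)$ of $f \circ \sigma_f$ and $\mathrm{jac}\,\sigma_f$.

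The outcome is a formula for $Z_f^{G}$, and likewise for $Z_f^{G,\pm}$, depending only on the combinatorial type of the normal crossing configuration, the pairs of multiplicities, and the equivariant $\mathcal{AS}$-isomorphism class of each stratum. Since $\Phi$ is an equivariant isomorphism carrying the components of the divisor of $f \circ \sigma_f$ and of $\mathrm{jac}\,\sigma_f$ to those of $h \circ \sigma_h$ and $\mathrm{jac}\,\sigma_h$ with the same multiplicities, it restricts to equivariant isomorphisms between the corresponding strata. The invariance of $\beta^G$ under equivariant isomorphisms, together with its additivity \cite{GF}, then forces the two combinatorial formulas to coincide term by term, whence $Z_f^{G} = Z_h^{G}$ and $Z_f^{G,\pm} = Z_h^{G,\pm}$.

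The main obstacle I anticipate is establishing the change of variables formula with $\beta^G$ as motivic measure in the equivariant category. Controlling the $G$-action on the fibers of the arc fibrations, so that the multiplicative formulas for $\beta^G$ genuinely apply, is exactly what the diagonal linearization of the action on the normal crossings is designed to handle, and the bookkeeping is most delicate for the signed zeta functions $Z_f^{G,\pm}$, where the sign of the leading coefficient of $f \circ \gamma$ must be tracked through the $G$-action on the exceptional divisor.
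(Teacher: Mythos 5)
Your proposal is correct and follows essentially the same route as the actual proof of this statement (which the paper does not reprove but imports from \cite{Pri-EZF}): there, the key step is likewise an equivariant Denef--Loeser-type formula, obtained via the Kontsevich change of variables for arcs lifted through the Nash modification, expressing $Z_f^{G}$ and $Z_f^{G,\pm}$ in terms of the strata and multiplicities of the normal crossing data, with the local linearization of the $G$-action ensuring that the fibrations are equivariantly trivial so that the multiplicativity of $\beta^G$ applies, and the conclusion then follows by matching this data through the equivariant blow-Nash isomorphism.
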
  

\begin{rem} In the rest of the paper, we will simply talk about equivariant blow-Nash equivalence to refer to equivariant blow-Nash equivalence via an equivariant blow-Nash isomorphism. 
\end{rem}

In the next parts of the paper, we are then going to use the equivariant zeta functions in order to try to distinguish the families $A_k$ and $B_k$, resp. $C_k$ and $D_k$, $E_6$ and $F_4$, with respect to $G$-blow-Nash equivalence. More precisely, we will show that, in some cases, some terms of the respective equivariant zeta functions of the considered germs are different. 

On the other hand, we will prove that, in some other cases, the equivariant zeta functions are equal.
\\

Before this, in the following section, we compute equivariant virtual Poincar\'e series associated to the quadratic form 
$$Q_{p,q}(y) := \sum_{i=1}^p y_i^2 - \sum_{j = 1}^q y_{p+j}^2,$$
where $p,q \in \mathbb{N}$, $(y_1, \ldots, y_{p+q}) \in \mathbb{R}^{p+q}$. More precisely, we compute the equivariant virtual Poincar\'e series of the algebraic sets
$$Y_{p,q} := \{Q_{p,q} = 0\} \mbox{~~and~} Y_{p,q}^{\xi} := \{Q_{p,q} = \xi\},$$
for $\xi = \pm 1$, in the cases where the action of $G$ on $\mathbb{R}^{p+q}$ is given by 
\begin{enumerate}
\item \label{acty1} $(y_1, \ldots, y_p, y_{p+1}, \ldots , y_{p+q}) \mapsto (- y_1, \ldots, y_p, y_{p+1}, \ldots , y_{p+q})$,
\item \label{actyp1} $(y_1, \ldots, y_p, y_{p+1}, \ldots , y_{p+q}) \mapsto (y_1, \ldots, y_p, - y_{p+1}, \ldots , y_{p+q})$,
\item \label{acttout} $(y_1, \ldots, y_p, y_{p+1}, \ldots , y_{p+q}) \mapsto (- y_1, \ldots, - y_p, -y_{p+1}, \ldots , -y_{p+q})$,
\item \label{triv} or $(y_1, \ldots, y_p, y_{p+1}, \ldots , y_{p+q}) \mapsto (y_1, \ldots, y_p, y_{p+1}, \ldots , y_{p+q})$.
\end{enumerate}
This will reveal useful in the comparisons of the equivariant zeta functions.

\section{Computation of $\beta^G(Y_{p,q})$ and $\beta^G(Y_{p,q}^{\xi})$} \label{sectionypq}

Suppose that $p \leq q$. We have the following result :

\begin{prop} \label{ypq}
\begin{enumerate}
	\item If $0 < p < q$, then
$$\beta^G(Y_{p,q}) = 
\begin{cases}
\frac{u^{p+q} - u^q + u^{p-1}}{u-1} & \mbox{ in the case n\textsuperscript{o}\ref{acty1},} \\
\frac{u^{p+q} - u^q + u^{p+1}}{u-1} & \mbox{ in the three other cases. }
\end{cases}$$
	\item If $p = q \neq 0$, then
$$\beta^G(Y_{p,q}) = 
\begin{cases}
\frac{u^{2p}-u^p + u^{p-1}}{u-1} & \mbox{ in the cases n\textsuperscript{o}\ref{acty1} and n\textsuperscript{o}\ref{actyp1}, } \\
\frac{u^{2p}  - u^p + u^{p+1}}{u-1} & \mbox{ in the two other cases. }
\end{cases}$$
	\item If $p = 0$, then
$$\beta^G(Y_{p,q}) = \frac{u}{u-1}.$$			 
\end{enumerate}
\end{prop}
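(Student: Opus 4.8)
The plan is to compute $\beta^G(Y_{p,q})$ by repeatedly combining additivity of $\beta^G$ with the two basic principles for a $\mathbb{Z}/2\mathbb{Z}$-action: on the fixed locus $G$ acts trivially, so $\beta^G$ is $\frac{u}{u-1}$ times the ordinary virtual Poincar\'e series $\beta$ (compatibly with $\beta^G(\text{pt})=\frac{u}{u-1}$ and $\beta^G(\mathbb{R}^d)=\frac{u^{d+1}}{u-1}$), while on the complementary \emph{free} part $\beta^G$ equals the ordinary $\beta$ of the quotient. The case $p=0$ is immediate since $Y_{0,q}=\{0\}$. For $p\geq 1$ I would split $Y_{p,q}=(Y_{p,q})^G\sqcup\bigl(Y_{p,q}\setminus(Y_{p,q})^G\bigr)$ and treat the fixed part and the free part separately for each of the four actions.

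First I would identify the fixed loci and the free quotients. For the first (resp. second) action only one coordinate changes sign, so $(Y_{p,q})^G$ is the smaller cone $Y_{p-1,q}$ (resp. $Y_{p,q-1}$), and eliminating the squared reflected coordinate identifies the quotient of the free part with an open quadratic region $\{Q_{q,p-1}>0\}$ (resp. $\{Q_{p,q-1}>0\}$); this produces a recursion lowering $p+q$, and it already explains why the first two cases give different answers when $p<q$ but coincide when $p=q$. For the third action, where $G$ acts by the central symmetry, the fixed locus is just the origin and the free part is $(Y_{p,q}\setminus\{0\})/G$, which fibres over the smooth projective quadric $\bar Y_{p,q}=\{Q_{p,q}=0\}\subseteq\mathbb{RP}^{p+q-1}$ with fibre $\mathbb{R}^*/\{\pm 1\}$. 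For the fourth (trivial) action everything is fixed and $\beta^G(Y_{p,q})=\frac{u}{u-1}\beta(Y_{p,q})$.

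These reductions require a few ordinary virtual Poincar\'e computations as input. The key one is $\beta(Y_{p,q})$ itself: here I would use that $Y_{p,q}\setminus\{0\}$ is the complement of the zero section of the tautological line bundle over $\bar Y_{p,q}$, hence a Zariski-locally trivial $\mathbb{R}^*$-bundle, giving $\beta(Y_{p,q})=1+(u-1)\beta(\bar Y_{p,q})$; the $\mathbb{Z}_2$-Betti numbers of $\bar Y_{p,q}$ (the sphere bundle of $q$ copies of the tautological line bundle over $\mathbb{RP}^{p-1}$) follow from a Gysin sequence, yielding $\beta(Y_{p,q})=u^{p+q-1}+u^{p}-u^{q-1}$ for $p\leq q$. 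I would likewise record $\beta(\{Q_{a,b}>0\})=(u^{a}-1)u^{b}$, from the fibration over $\mathbb{R}^a\setminus\{0\}$, and the value $u$ of the virtual class of the fibre $\mathbb{R}^*/\{\pm 1\}$ in the third case, so that there the free part contributes $u\,\beta(\bar Y_{p,q})$. Substituting and simplifying, and separating $0<p<q$ from $p=q$, should reproduce all the stated formulas.

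The main obstacle is precisely the computation of $\beta$ of the pieces that are \emph{not} arc-symmetric as embedded sets: the open regions $\{Q_{a,b}>0\}$, the cone minus its vertex, and the fibres $\mathbb{R}^*/\{\pm 1\}$. One cannot use their homotopy type, nor the evident ``normalize by the radius'' trivializations, since the latter involve square roots and are not isomorphisms in the $\mathcal{AS}$ category --- they already give the wrong value on a single hyperbola, where $\{x^2-y^2=1\}\cong\mathbb{R}^*$ has $\beta=u-1$ rather than the $2u$ predicted by the naive $S^0\times\mathbb{R}$ picture. Their virtual classes must instead be pinned down through additivity over genuinely closed $\mathcal{AS}$ strata and through the Zariski-local triviality of the tautological bundle over the projective quadric. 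The remaining effort is bookkeeping: tracking, for each of the four linear actions, the induced action on the fixed locus and on the quotient, and verifying the base cases $p=0,1$ of the recursion.
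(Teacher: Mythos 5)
Your skeleton --- split $Y_{p,q}$ into its fixed locus and its free part, recurse, and feed in ordinary (non-equivariant) classes of quadrics --- produces the correct numbers, and parts of it can be made rigorous: the trivial-action formula $\beta^G(X)=\frac{u}{u-1}\,\beta(X)$ is correct (though it is not among the facts quoted in Remark \ref{equivpoinc}, so it would itself need a proof, via the K\"unneth formula for compact nonsingular sets and additivity), and your treatment of the central symmetry (case n\textsuperscript{o}\ref{acttout}) is essentially what the paper does in the proof of Proposition \ref{yxi}, where the fibration $Y_{p,q}\setminus\{0\}\to Z_{p,q}$ gives $\beta^G(Y_{p,q}\setminus\{0\})=(u-1)\beta^G(Z_{p,q})$ and the induced action on the base is trivial. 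The genuine gap is in the cases n\textsuperscript{o}\ref{acty1} and n\textsuperscript{o}\ref{actyp1}, which are precisely the ones the paper needs later. There your free part is evaluated by the principle ``$\beta^G$ of a free part equals $\beta$ of its quotient'', applied to the quotients $\{Q_{q,p-1}>0\}$, $\{Q_{p,q-1}>0\}$ and to fibres $\mathbb{R}^*/\{\pm 1\}$. No such principle is available in the cited literature, and it cannot even be formulated here, because these quotients are not $\mathcal{AS}$ sets, so $\beta$ of them is simply undefined: for instance $\{y_1^2>y_2^2\}$ fails arc-symmetry along the analytic arc $t\mapsto (1,1+t)$, and the half-line $\mathbb{R}^*/\{\pm 1\}\cong(0,\infty)$ fails it along $t\mapsto -t$.

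Your proposed remedy --- pinning these classes down ``through additivity over genuinely closed $\mathcal{AS}$ strata'' --- is not just missing, it is provably impossible, so this is not a fixable bookkeeping issue. Indeed $\mathbb{R}^2$ is the disjoint union of $\{Q_{1,1}>0\}$, $\{Q_{1,1}<0\}$ and $Y_{1,1}$, and the two open regions are exchanged by the algebraic isomorphism swapping the coordinates; any additive, isomorphism-invariant extension $\widetilde{\beta}$ would have to satisfy $2\,\widetilde{\beta}(\{Q_{1,1}>0\})=u^2-(2u-1)=(u-1)^2$, a non-integral value, which in particular contradicts the value $(u-1)u$ that your formula $\beta(\{Q_{a,b}>0\})=(u^a-1)u^b$ requires (for $p=1$ your region happens to be $\mathbb{R}^q\setminus\{0\}$ and all is well, but for $p\geq 2$ this inconsistency is unavoidable). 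The quantity you actually need is $\beta^G$ of the free double cover upstairs, and computing it requires staying equivariant: the paper applies the hyperbolic change of variables $u_i=y_i+y_{i+p}$, $v_i=y_i-y_{i+p}$ only to the pairs of coordinates untouched by the involution, reduces to $\{y_1^2-y_{p+1}^2-\sum_j y_j^2=0\}$, equivariantly blows up at the origin to identify the complement of the vertex with $\mathbb{R}^*\times S^{q-p}$, and then invokes the two citable facts $\beta^G(X\times\mathbb{R}^*)=(u-1)\beta^G(X)$ and the values of $\beta^G(S^{q-p})$ for the antipodal action versus an action with a fixed point. That last dichotomy, which is invisible in any quotient picture, is exactly what produces the difference between $u^{p-1}$ (case n\textsuperscript{o}\ref{acty1}) and $u^{p+1}$ (the other cases) in the statement.
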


\begin{rem} If $q \leq p$, just exchange the roles of $p$ and $q$ along with the actions of the cases n\textsuperscript{o}\ref{acty1} and n\textsuperscript{o}\ref{actyp1}.
\end{rem}

\begin{proof}[Proof (of Proposition \ref{ypq})]
If $p = 0$, then $Y_{p,q} = \{0\}$ and $\beta^G(Y_{p,q}) = \frac{u}{u-1}$ by remark \ref{equivpoinc}.
\\

If $0 < p < q$, as in \cite{GF-TC} Proof of Proposition 2.1 and \cite{GF-BNT} Proof of Lemma 3.1, we apply the equivariant change of variables $u_i = y_i + y_{i+p}$, $v_i = y_i - y_{i+p}$ for $i = 2, \ldots, p$ and the equation $Q_{p,q} = 0$ becomes
$$y_1^2 - y_{p+1}^2 + \sum_{i=2}^p u_i v_i - \sum_{j = 2p+1}^{p+q} y_j^2 = 0,$$
the action of $G$ on the new coordinates $u_i$, $v_i$ being trivial in the cases n\textsuperscript{o}\ref{acty1}, n\textsuperscript{o}\ref{actyp1} and n\textsuperscript{o}\ref{triv}, and changing their signs in the case n\textsuperscript{o} \ref{acttout}. 

As in \cite{GF-TC} and \cite{GF-BNT}, we write, by additivity of the equivariant virtual Poincar\'e series,
$$\beta^G(Y_{p,q}) = \beta^G(Y_{p,q} \cap \{u_2 \neq 0\}) + \beta^G(Y_{p,q} \cap \{u_2 = 0\}).$$
Because, if $u_2 \neq 0$, the coordinate $v_2$ is determined via an equivariant isomorphism by $u_2$ and the other variables which are free, we have $\beta^G(Y_{p,q} \cap \{u_2 \neq 0\}) = \beta^G(\mathbb{R}^* \times \mathbb{R}^{p+q-2}) = (u-1) \frac{u^{p+q-1}}{u-1}$ (see remark \ref{equivpoinc}). Furthermore, the equation describing $Y_{p,q} \cap \{u_2 = 0\}$ is 
$$y_1^2 - y_{p+1}^2 + \sum_{i=3}^p u_i v_i - \sum_{j = 2p+1}^{p+q} y_j^2 = 0$$
(notice that the variable $v_2$ is then free) and, by an induction, we obtain
\begin{eqnarray*}
\beta^G(Y_{p,q}) & = & \sum_{i = 2}^{p} u^{p+q+1-i} + u^{p-1} \beta^G\left(\left\{y_1^2 - y_{p+1}^2 - \sum_{j = 2p+1}^{p+q} y_j^2 = 0\right\}\right) \\
& = & u^{q+1} \frac{u^{p-1} - 1 }{u-1} + u^{p-1} \beta^G\left(\left\{y_1^2 - y_{p+1}^2 - \sum_{j = 2p+1}^{p+q} y_j^2 = 0\right\}\right).
\end{eqnarray*}

Now, in order to compute $\beta^G\left(\left\{y_1^2 - y_{p+1}^2 - \sum_{j = 2p+1}^{p+q} y_j^2 = 0\right\}\right)$, we equivariantly blow up the latter algebraic set at the origin of $\mathbb{R}^{q-p+2}$ : in the chart $y_1 = w$, $y_i = w z_i$, $i = p+1, 2p+1, \ldots, p+q$, the blown-up variety is defined by
$$w^2\left(1 - z_{p+1}^2 - \sum_{j = 2p+1}^{p+q} z_j^2\right) = 0,$$
the action of $G$ being given by   
\begin{itemize}
	\item $(w, z_{p+1}, z_{2p+1}, \ldots , z_{p+q}) \mapsto (-w, -z_{p+1}, -z_{2p+1}, \ldots , -z_{p+q})$ in the case n\textsuperscript{o}\ref{acty1},
	\item $(w, z_{p+1}, z_{2p+1}, \ldots , z_{p+q}) \mapsto (w, -z_{p+1}, z_{2p+1}, \ldots , z_{p+q})$ in the case n\textsuperscript{o}\ref{actyp1},
	\item $(w, z_{p+1}, z_{2p+1}, \ldots , z_{p+q}) \mapsto (- w, z_{p+1}, z_{2p+1}, \ldots , z_{p+q})$ in the case n\textsuperscript{o} \ref{acttout},
	\item  $(w, z_{p+1}, z_{2p+1}, \ldots , z_{p+q}) \mapsto (w, z_{p+1}, z_{2p+1}, \ldots , z_{p+q})$ in the case n\textsuperscript{o}\ref{triv}.
\end{itemize}
We have
\begin{eqnarray*}
\beta^G\left(\left\{y_1^2 - y_{p+1}^2 - \sum_{j = 2p+1}^{p+q} y_j^2 = 0\right\} \setminus \{0\}\right) & = &  \beta^G\left(\left\{1 - z_{p+1}^2 - \sum_{j = 2p+1}^{p+q} z_j^2 = 0\right\} \setminus \{w=0\}\right) \\
& = & \beta^G(\mathbb{R}^* \times S^{q-p}) \\
& = & (u-1) \beta^G(S^{q-p}) 
\end{eqnarray*} 

Finally, since the action of $G$ on the sphere $S^{q-p}$ is the central symmetry in the case n\textsuperscript{o}\ref{acty1} and admits a fixed point in the three other cases, we have
$$\beta^G(S^{q-p})  = \begin{cases} \frac{u^{q-p+1} -1}{u-1}  \mbox{ in the case n\textsuperscript{o}\ref{acty1},} \\
\frac{u^{q-p+1} + u}{u-1} \mbox{ in the three other cases}
\end{cases}$$
(see remark \ref{equivpoinc}). Using the additivity relation
$$\beta^G\left(\left\{y_1^2 - y_{p+1}^2 - \sum_{j = 2p+1}^{p+q} y_j^2 = 0\right\} \setminus \{0\}\right) = \beta^G\left(\left\{y_1^2 - y_{p+1}^2 - \sum_{j = 2p+1}^{p+q} y_j^2 = 0\right\} \right) - \beta^G(\{0\})$$
and the equality $\beta^G(\{0\}) = \frac{u}{u-1}$, we obtain the desired result.

\vspace{0.8cm}

If $p = q \in \mathbb{N} \setminus \{0 \, ; 1\}$, we do as before in order to obtain the equality
$$\beta^G(Y_{p,q}) =  u^{p+1} \frac{u^{p-1} - 1 }{u-1} + u^{p-1} \beta^G\left(\left\{y_1^2 - y_{p+1}^2 = 0\right\}\right)$$
(notice that the quantity $\beta^G(\{y_1^2 - y_{p+1}^2 = 0\})$ is the same in the cases n\textsuperscript{o}\ref{acty1} and n\textsuperscript{o}\ref{actyp1}). Now, as above, we equivariantly blow up at the origin of $\mathbb{R}^2$ and look in the chart $y_1 = u_1$, $y_{p+1} = u_1 v_{p+1}$~: the blown-up variety is given by the equation
$$u_1^2(1 - v_{p+1}^2) = 0$$
and the action of $G$ is given by 
\begin{itemize}
	\item $(u_1,v_{p+1}) \mapsto (-u_1, -v_{p+1})$ in the case n\textsuperscript{o}\ref{acty1}, 
	\item $(u_1,v_{p+1}) \mapsto (u_1, -v_{p+1})$ in the case n\textsuperscript{o}\ref{actyp1},
	\item $(u_1,v_{p+1}) \mapsto (- u_1,  v_{p+1})$ in the case n\textsuperscript{o}\ref{acttout}, 
	\item $(u_1,v_{p+1}) \mapsto ( u_1,  v_{p+1})$ in the case n\textsuperscript{o}\ref{triv}. 
\end{itemize}	
As a consequence, 
$$\beta^G(\{1- v^2_{p+1} = 0\}) = \begin{cases} 1 \mbox{ in the cases n\textsuperscript{o}\ref{acty1} and n\textsuperscript{o}\ref{actyp1},} \\
2 \frac{u}{u-1} \mbox{ in the two other cases}
\end{cases}$$
(see remark \ref{equivpoinc}) and we obtain the desired result.
\\

If $p = q = 1$, we have $\beta^G(Y_{p,q}) = \beta^G\left(\left\{y_1^2 - y_{p+1}^2 = 0\right\}\right)$ and we can use the previous computation.
\end{proof}

This proposition can be used to compute the quantities $\beta^G(Y_{p,q}^{\xi})$. Indeed :

\begin{prop} \label{yxi} We have
$$\beta^G(Y_{p,q}^{+1}) = \frac{1}{u-1} \left(\beta^G(Y_{p,q+1}) - \beta^G(Y_{p,q})\right)$$
and
$$\beta^G(Y_{p,q}^{-1}) = \frac{1}{u-1} \left(\beta^G(Y_{p+1,q}) - \beta^G(Y_{p,q})\right)$$
\end{prop}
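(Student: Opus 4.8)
The plan is to deduce both formulas from a single additive identity for the zero loci, namely
$$\beta^G(Y_{p,q+1}) = \beta^G(Y_{p,q}) + (u-1)\,\beta^G(Y_{p,q}^{+1}),$$
together with its companion $\beta^G(Y_{p+1,q}) = \beta^G(Y_{p,q}) + (u-1)\,\beta^G(Y_{p,q}^{-1})$ obtained by adjoining a positive square instead of a negative one. Solving each for the level-set series immediately yields the two asserted equalities. The geometric input is that adding one extra square variable converts a nearby level set of $Q_{p,q}$ into a vanishing locus in one more dimension: writing $z$ for the new coordinate, $Q_{p,q+1}(y,z) = Q_{p,q}(y) - z^2$, so $Y_{p,q+1} = \{Q_{p,q}(y) = z^2\}$, and symmetrically $Q_{p+1,q}(y_0,y) = y_0^2 + Q_{p,q}(y)$ gives $Y_{p+1,q} = \{Q_{p,q}(y) = -y_0^2\}$.

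First I would split $Y_{p,q+1}$ along the $G$-stable hypersurface $\{z = 0\}$. By additivity of $\beta^G$ applied to the decomposition into the closed piece $\{z=0\}$ and its open complement, and since $Y_{p,q+1} \cap \{z=0\} = Y_{p,q}$ inherits exactly the ambient action, the central slice contributes $\beta^G(Y_{p,q})$. For the open part I would use the scaling map $(y,z) \mapsto (y/z, z)$, a biregular isomorphism of $Y_{p,q+1} \cap \{z \neq 0\}$ onto $Y_{p,q}^{+1} \times \mathbb{R}^*$ with polynomial inverse $(y',t) \mapsto (ty',t)$; indeed $Q_{p,q}(y/z) = Q_{p,q}(y)/z^2 = 1$ on this locus. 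On the $\mathbb{R}^*$ factor the group acts linearly, hence stabilizing $0$, so Remark \ref{equivpoinc} gives $\beta^G(Y_{p,q+1}\cap\{z\neq 0\}) = (u-1)\,\beta^G(Y_{p,q}^{+1})$, provided the scaling is equivariant and carries the ambient action on $Y_{p,q}^{+1}$ to the one intended in Proposition \ref{ypq}. Repeating the argument with $y_0$ in place of $z$ and $Y_{p,q}^{-1}$ in place of $Y_{p,q}^{+1}$ produces the companion identity.

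The step I expect to be the main obstacle is precisely this equivariance check, because the induced action on the adjoined coordinate depends on the case. In cases \ref{acty1}, \ref{actyp1} and \ref{triv} the coordinate $z$ is fixed by $G$, so $(y,z)\mapsto(y/z,z)$ intertwines the ambient action with the given action on $Y_{p,q}^{+1}$ and the trivial action on $\mathbb{R}^*$, and the product formula applies verbatim. In case \ref{acttout}, however, $G$ reverses $z$; the quotient $y/z$ is then $G$-invariant, so the scaling transports the ambient action to the \emph{trivial} action on $Y_{p,q}^{+1}$ together with the sign action on $\mathbb{R}^*$. Since the product formula is insensitive to which $0$-stabilizing action sits on the $\mathbb{R}^*$ factor, what remains is to verify that $\beta^G(Y_{p,q}^{+1})$ is unchanged when $y \mapsto -y$ is replaced by the trivial action.

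I would establish this last coincidence by passing to the projective closure of $Y_{p,q}^{+1}$: the map $y \mapsto -y$ extends projectively to $[y_0 : y] \mapsto [-y_0 : y]$, which fixes the quadric at infinity pointwise, so it is a reflection-type action with nonempty fixed locus rather than a fixed-point-free one. Decomposing the closure into $G$-invariant affine cells and their fixed loci and computing $\beta^G$ by additivity then yields the same series as for the trivial action, exactly the phenomenon recorded in Remark \ref{equivpoinc}, where only the fixed-point-free central symmetry produces a series different from the fixed-point case. Once this is checked, the two displayed identities hold in all four cases and Proposition \ref{yxi} follows.
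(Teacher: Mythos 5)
Your main computation is correct and follows a genuinely different, and in fact more direct, route than the paper's. The paper argues projectively: it compactifies $Y_{p,q}^{+1}$ into the projective quadric $Z_{p,q+1}$, identifies the part at infinity with $Z_{p,q}$, and converts the projective quantities back into affine ones through the relation $\beta^G(Y_{p,q}\setminus\{0\}) = (u-1)\beta^G(Z_{p,q})$, proved by covering $Z_{p,q}$ with the invariant open sets $U_i = Z_{p,q}\cap\{Y_i\neq 0\}$, trivializing the projection $Y_{p,q}\setminus\{0\}\rightarrow Z_{p,q}$ equivariantly over each $U_i$, and applying additivity (inclusion--exclusion) over the cover. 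You stay entirely affine: slicing the cone $Y_{p,q+1}$ along the invariant hyperplane $\{z=0\}$ and identifying the complement with $Y_{p,q}^{+1}\times\mathbb{R}^*$ by the equivariant scaling $(y,z)\mapsto(y/z,z)$ gives $\beta^G(Y_{p,q+1}) = \beta^G(Y_{p,q})+(u-1)\beta^G(Y_{p,q}^{+1})$ in one step, using only additivity and the product formula of Remark \ref{equivpoinc}. Whenever the adjoined coordinate is fixed by $G$ --- cases n\textsuperscript{o}\ref{acty1}, n\textsuperscript{o}\ref{actyp1} and n\textsuperscript{o}\ref{triv}, which are the only ones in which the paper ever invokes this proposition --- your proof is complete and simpler than the paper's.

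Your treatment of case n\textsuperscript{o}\ref{acttout} is, however, genuinely wrong. The principle you invoke --- that an action with nonempty fixed locus yields the same series as the trivial action --- is a special property of spheres (that is exactly what Remark \ref{equivpoinc} records); it fails in general, and the paper itself contains counterexamples: by Proposition \ref{ypq}, $Y_{p,q}$ with the action n\textsuperscript{o}\ref{acty1} has nonempty fixed locus $Y_{p,q}\cap\{y_1=0\}$, yet its series differs from the one for the trivial action. Moreover the coincidence you need, namely that $\beta^G(Y_{p,q}^{+1})$ is the same for the central symmetry and for the trivial action, is false. Take $(p,q)=(3,2)$: by the last item of Remark \ref{equivpoinc}, the coefficients of the negative powers of $u$ in $\beta^G(X)$ all equal $\sum_{i\geq 0}\beta_i(X^G)$. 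The central symmetry acts freely on $Y_{3,2}^{+1}$, so for it these coefficients vanish; for the trivial action they equal $\sum_i\beta_i(Y_{3,2}^{+1})=2$, since $\beta(Y_{3,2}^{+1})=\beta(Z_{3,3})-\beta(Z_{3,2})=(1+u+2u^2+u^3+u^4)-(1+u+u^2+u^3)=u^4+u^2$. Equivalently, your claim amounts to the reflection $[y_0:y]\mapsto[-y_0:y]$ of $Z_{3,3}$ being totally non-homologous to zero, which fails because the total mod $2$ Betti number of its fixed locus $Z_{3,2}$ is $4$ while that of $Z_{3,3}$ is $6$.

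This is not a removable defect of your auxiliary lemma: with the convention you chose (the adjoined variable also has its sign reversed), the statement of the proposition itself becomes false. Indeed, for $(p,q)=(3,2)$, Proposition \ref{ypq} then gives for the right-hand side $\frac{1}{u-1}\left(\frac{u^6+u^4-u^3}{u-1}-\frac{u^5}{u-1}\right)=\frac{u^5+u^3}{u-1}$, whose negative-power coefficients are all $2$, whereas the left-hand side is the series of a free action and has vanishing ones. In case n\textsuperscript{o}\ref{acttout} the proposition must therefore be read --- as the paper's own compactification proof implicitly does --- with the adjoined variable carrying the trivial action. Under that reading the adjoined coordinate is $G$-fixed in all four cases, your scaling map is equivariant for the given action on $Y_{p,q}^{+1}$ and the trivial one on $\mathbb{R}^*$, and the first part of your proposal already proves the full proposition; your final paragraph should simply be discarded.
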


\begin{rem} We have the same equalities if $q \leq p$.
\end{rem}

\begin{proof}[Proof (of Proposition \ref{yxi})]
We show the first equality, the proof of the second one being similar. 

Denote $Z_{p,q}$ the projective algebraic set
$$\left\{[Y_1: \ldots : Y_{p+q}] \in \mathbb{P}^{p+q-1}(\mathbb{R})~|~\sum_{i=1}^p Y_i^2 - \sum_{j=1}^q Y_{p+j}^2 = 0\right\}$$

As in \cite{GF-CI} Proof of Corollary 2.5, we can equivariantly compactify $Y_{p,q}^{+1}$ into the projective algebraic set $Z_{p,q+1}$, the part at infinity being equivariantly isomorphic to $Z_{p,q}$ (we equip $\mathbb{P}^{p+q}(\mathbb{R})$ and $ \mathbb{P}^{p+q-1}(\mathbb{R})$ with the actions of $G$ naturally induced from the considered action on the variables of $\mathbb{R}^{p+q}$).
\\

Now, we compute $\beta^G(Z_{p,q})$, using, as in \cite{GF-CI} Proof of Proposition 2.1, the fact that the projection 
$$p : \begin{array}{ccc}
         Y_{p,q} \setminus \{0\} & \rightarrow & Z_{p,q} \\
         (y_1, \ldots, y_{p+q}) & \mapsto & [y_1 : \ldots : y_{p+q}]
         \end{array}$$
is a piecewise algebraically trivial fibration, compatible with the respective considered actions of $G$. More precisely, we can cover $Z_{p,q}$ by the globally $G$-invariant open subvarieties 
$$U_i := Z_{p,q} \cap \{Y_i \neq 0\}, i \in \{1, \ldots, p+q\},$$
and, for each $i \in \{1, \ldots, p+q\}$, we can define the isomorphism
$$\varphi_i : \begin{array}{ccc}
p^{-1}(U_i) = Y_{p,q} \cap \{y_i \neq 0\} & \rightarrow & U_i \times \mathbb{R}^* \\
(y_1, \ldots, y_i , \ldots , y_{p+q}) & \mapsto & ([y_1 : \ldots : y_i : \ldots : y_{p+q}], y_i) 
\end{array}$$
For $i \in \{1, \ldots, p+q\}$, if the sign of the coordinate $y_i$ is changed under the action of $G$, we equip $\mathbb{R^*}$ with the action of $G$ given by the involution $z \mapsto -z$. If $y_i$ remains unchanged under the action of $G$, we equip $\mathbb{R^*}$ with the trivial action of $G$. Furthermore equipping the product $U_i \times \mathbb{R}^*$ with the diagonal action, this makes the isomorphism $\varphi_i$ equivariant.


By the additivity of the equivariant virtual Poincar\'e series, the quantity $\beta^G(Y_{p,q} \setminus \{0\})$ can be written as the alternated sum of the terms 
$$\sum_{J \subset \{1, \ldots, p+q\}, \, Card(J) = r} \beta^G\left(p^{-1}\left(\bigcap_{m \in J} U_m\right)\right), 1 \leq r \leq p+q,$$
and, via the equivariant isomorphisms $\varphi_i$, we have
$$\beta^G\left(p^{-1}\left(\bigcap_{m \in J} U_m\right)\right) = \beta^G\left(\left(\bigcap_{m \in J} U_m\right) \times \mathbb{R}^*\right) = (u-1) \beta^G\left(\bigcap_{m \in J} U_m \right).$$
As a consequence, once again thanks to the additivity of the equivariant virtual Poincar\'e series, 
$$\beta^G(Y_{p,q} \setminus \{0\}) = (u-1) \beta^G(Z_{p,q}).$$

Therefore,
\begin{eqnarray*}
\beta^G(Y_{p,q}^{+1}) & = & \beta^G(Z_{p,q+1}) - \beta^G(Z_{p,q}) \\
				  & = & \frac{1}{u-1} \left(\beta^G(Y_{p,q+1} \setminus \{0\}) - \beta^G(Y_{p,q} \setminus \{0\})\right) \\
				  & = & \frac{1}{u-1} \left(\beta^G(Y_{p,q+1}) - \beta^G(Y_{p,q})\right).
\end{eqnarray*}

\end{proof}

\section{The germs $A_k$ and $B_k$} \label{compAB}

In this section, we want to study the relations with respect to $G$-blow-Nash equivalence between the invariant germs of the families
$$f_k^{\epsilon_k}(x) := \pm x_1^2 + \epsilon_k x_2^{k+1} + Q \mbox{ ~~and~~} g_k^{\epsilon_k}(x) := \epsilon_k x_1^{2k} \pm x_2^2 + Q',$$
where $\epsilon_k \in \{-1 \, ; + 1\}$.

First, if any two invariant Nash germs are $G$-blow-Nash equivalent, they are in particular blow-Nash equivalent and then, according to \cite{GF-CI} Theorem 2.5, they have the same corank and index.

Therefore, if two germs $f_k^{\epsilon_k}$ and $f_l^{\epsilon_l}$ are $G$-blow-Nash equivalent, they have the same quadratic part up to permutation of the variables $x_1, x_3, \ldots , x_n$. Furthermore, we know, by \cite{GF-BNT} Proposition 3.4, that $k = l$ and, if $k = l$ is odd, that $\epsilon_k = \epsilon_l$. If $k$ is even, $f_k^{+1}(x_1,x_2, x_3, \ldots, x_n) = f_k^{-1}(x_1, -x_2,x_3, \ldots, x_n)$ and the (linear) change of variables is equivariant with respect to the involution $s$ on $\mathbb{R}^n$ : $f_k^{+1}$ and $f_k^{-1}$ are then $G$-Nash equivalent, in particular $G$-blow-Nash equivalent.

As a conclusion, inside the family $A_k$, we are reduced to try to distinguish the germs
$$f_k^{\epsilon_k, +}(x) := + x_1^2 + \epsilon_k x_2^{k+1} + Q \mbox{   and   } f_k^{\epsilon_k, -}(x) := - x_1^2 + \epsilon_k x_2^{k+1} + Q'$$
where $\epsilon_k \in \{-1 \, ; + 1\}$ and $+ x_1^2 + Q$ and $- x_1^2 + Q'$ are the same quadratic part up to permutation of the variables $x_1, x_3, \ldots , x_n$.
\\

Similarly, if two germs $g_k^{\epsilon_k}$ and $g_l^{\epsilon_l}$ are $G$-blow-Nash equivalent, they have the same quadratic part, up to permutation of the variables $x_2, \ldots , x_n$, and $k = l$ and $\epsilon_k = \epsilon_l$.
\\

Finally, if two germs $f_k^{\epsilon_k}$ and $g_{k'}^{\epsilon_{k'}}$ are blow-Nash equivalent, then $k = 2k'-1$ and $\epsilon_{k} = \epsilon_{k'}$, and furthermore $\pm x_1^2 + Q$ and $\pm x_2^2 + Q'$ are the same quadratic part up to permutation of all variables. Consequently, it remains to look at the relation between the germs
$$f_{2k-1} = \epsilon x_2^{2k} + \eta x_1^2 + Q \mbox{   and   } g_k = \epsilon x_1^{2k} + \eta' x_2^2 + Q'$$
where $\epsilon, \eta, \eta' \in \{1,-1\}$ and $\eta x_1^2 + Q = \eta' x_2^2 + Q'$ up to permutation of all variables. 
\\

In the following parts of this section, we will compute some terms of the equivariant zeta functions of $f_k$ and $g_k$. In virtue of theorem \ref{eqzfinv}, this will allow us to make further distinctions inside each of the above couples of germs in some cases.  

\subsection{Computation of the first terms of the equivariant zeta functions} \label{firsttermsab} 

If $h$ is an invariant Nash germ $(\mathbb{R}^n, 0) \rightarrow (\mathbb{R},0)$, recall that, for $m \geq 1$,
\begin{eqnarray*}
A_{m}(h) & = & \{\gamma(t) = a_1 t + \cdots + a_{m} t^{m} \in \mathcal{L}_{m}~|~h \circ \gamma(t) = c t^{m} + \cdots , c\neq 0\} \\
	       & = & \{\gamma \in \mathcal{L}_{m}~|~ h \circ \gamma(t) = ct^{m} + \cdots, c \in \mathbb{R} \} \setminus \{\gamma \in \mathcal{L}_{m}~|~ h \circ \gamma(t) = 0 \times t^{m} + \cdots \}
\end{eqnarray*}
Since $h$ is an invariant germ, the latter sets are both globally stable under the action of $G$ on $\mathcal{L}_m$ and, by the additivity of the equivariant virtual Poincar\'e series, the quantity $\beta^G(A_{m}(h))$ is equal to the difference $\beta^G({}^{0}\!A_{m}(h)) - \beta^G(A^0_{m}(h))$, where
$${}^0\!A_{m}(h) := \{\gamma \in \mathcal{L}_{m}~|~ h \circ \gamma(t) = ct^{m} + \cdots, c \in \mathbb{R} \} \mbox{   and   } A_{m}^0(h) := \{\gamma \in \mathcal{L}_{m}~|~ h \circ \gamma(t) = 0 \times t^{m} + \cdots \}.$$
\\

Fix $k \geq 0$ and consider the invariant germ $f_k^{\epsilon, \eta}(x_1, \ldots,x_n) =  \eta x_1^2 + \epsilon x_2^{k+1} + Q$. We denote $x_2 = x$ and $\eta x_1^2 + Q = Q_{p,q} =  \sum_{i=1}^p y_i^2 - \sum_{j=1}^q y_{p+j}^2$ in such a way that $G$ acts on the renamed coordinates via the involution n\textsuperscript{o}\ref{acty1} or n\textsuperscript{o}\ref{actyp1} depending on the sign of $\eta$.
\\

We first compute $\beta^G(A^0_{m}(f_k^{\epsilon, \eta}))$ for $m < k+1$. Notice that the set $A_{1}(f_1^{\epsilon, \eta})$ is empty and, consequently, $\beta^G(A_{1}(f_1^{\epsilon, \eta})) = 0$.

\begin{prop} \label{amf} Suppose $k \geq 2$ and $m < k+1$.
		\begin{enumerate}
			\item If $pq = 0$, then
$$\beta^G(A^0_{m}(f_k^{\epsilon, \eta})) = \begin{cases} \frac{u^{m + (r+1)(p+q) + 1}}{u-1} \mbox{ if $m = 2r+1$,} \\
\frac{u^{m + r(p+q) + 1}}{u-1} \mbox{ if $m = 2r$.}
\end{cases}$$ 
			\item If $(p,q) = (1,1)$, then
$$\beta^G(A^0_{m}(f_k^{\epsilon, \eta})) = \begin{cases} r u^{2m} \beta^G(Y_{1,1} \setminus \{0\}) + \frac{u^{4(r+1)}}{u-1} \mbox{ if $m = 2r+1$,} \\
(r-1) u^{2m} \beta^G(Y_{1,1} \setminus \{0\}) + u^{4r} \beta^G(Y_{1,1})  \mbox{ if $m = 2r$,}
\end{cases}$$
			\item If $pq \neq 0$ and $(p,q) \neq (1,1)$, then
$$\beta^G(A^0_{m}(f_k^{\epsilon, \eta})) = \begin{cases} u^{m} u^{(r+1)(p+q)-1} \frac{u^{r(p+q-2)} - 1}{u^{p+q-2} - 1} \beta^G(Y_{p,q} \setminus \{0\})  + \frac{u^{(r+1)(2+p+q)}}{u-1} \mbox{ if $m = 2r+1$,} \\
 u^m u^{(r+1)(p+q) - 2}  \frac{u^{(r-1)(p+q-2)} - 1}{u^{p+q-2} - 1} \beta^G(Y_{p,q} \setminus \{0\}) + u^{r(2+ p+q)} \beta^G(Y_{p,q}) \mbox{ if $m = 2r$.}
\end{cases}$$
		\end{enumerate}
\end{prop}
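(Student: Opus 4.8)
The plan is to reduce $\beta^G(A^0_m(f_k^{\epsilon,\eta}))$ to the equivariant virtual Poincar\'e series of an arc space attached to $Q_{p,q}$ alone, and then to run a recursion on $m$ fed by the quantities computed in Proposition \ref{ypq}. Writing an arc as $\gamma(t)=\sum_{i=1}^m a_i t^i$ with $a_i=(b_i,c_i)$, where $b_i\in\mathbb{R}$ is the coordinate along $x=x_2$ and $c_i\in\mathbb{R}^{p+q}$ the part in the renamed variables carrying the action n\textsuperscript{o}\ref{acty1} or n\textsuperscript{o}\ref{actyp1}, I first observe that since $m<k+1$ the series $\epsilon x^{k+1}\circ\gamma$ has order $\ge k+1>m$, so it contributes to no coefficient of $t^j$, $j\le m$. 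Hence $\gamma\in A^0_m(f_k^{\epsilon,\eta})$ if and only if $Q_{p,q}(\sum_i c_i t^i)=O(t^{m+1})$, a condition bearing on the $c_i$ only. The coordinates $b_1,\dots,b_m$ are then free and carry the trivial action (as $s$ fixes $x_2$), so by the product formula of Remark \ref{equivpoinc},
\[
\beta^G(A^0_m(f_k^{\epsilon,\eta}))=u^m\,\beta^G(\mathcal{C}_m),\qquad \mathcal{C}_m:=\Big\{(c_1,\dots,c_m)\in(\mathbb{R}^{p+q})^m \ \Big|\ Q_{p,q}\big(\textstyle\sum_i c_i t^i\big)=O(t^{m+1})\Big\}.
\]

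I would then stratify $\mathcal{C}_m$ by its leading coefficient $c_1$. With $\langle\,,\rangle$ the symmetric bilinear form of $Q_{p,q}$, the defining equations are $\sum_{i+j=l}\langle c_i,c_j\rangle=0$ for $l=2,\dots,m$, the case $l=2$ forcing $c_1\in Y_{p,q}$. On $\{c_1=0\}$, the reindexing $\tilde c_i:=c_{i+1}$ and the homogeneity $Q_{p,q}(t\,z)=t^2 Q_{p,q}(z)$ turn the equations into $Q_{p,q}(\sum_{i=1}^{m-1}\tilde c_i t^i)=O(t^{m-1})$; as $\tilde c_{m-1}$ then occurs only in orders $>m-2$, this stratum is equivariantly isomorphic to $\mathcal{C}_{m-2}\times\mathbb{R}^{p+q}$ and contributes $u^{p+q}\beta^G(\mathcal{C}_{m-2})$. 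On $\{c_1\neq0\}$, each equation of order $l\ge3$ is affine in $c_{l-1}$ with leading part $2\langle c_1,c_{l-1}\rangle$, and $\langle c_1,\cdot\rangle$ is a nonzero functional since $Q_{p,q}$ is nondegenerate and $c_1\neq0$; eliminating one coordinate of each of $c_2,\dots,c_{m-1}$ presents this stratum as an affine bundle of rank $(m-1)(p+q)-(m-2)$ over $Y_{p,q}\setminus\{0\}$. Granting that $\beta^G$ multiplies along this bundle, additivity of $\beta^G$ yields the recursion
\[
\beta^G(\mathcal{C}_m)=u^{(m-1)(p+q)-(m-2)}\,\beta^G(Y_{p,q}\setminus\{0\})+u^{p+q}\,\beta^G(\mathcal{C}_{m-2}),
\]
with base cases $\beta^G(\mathcal{C}_0)=\tfrac{u}{u-1}$ and $\beta^G(\mathcal{C}_1)=\tfrac{u^{p+q+1}}{u-1}$.

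It remains to unroll this recursion, multiply by $u^m$, and distinguish the three cases by the geometry of $Y_{p,q}$. If $pq=0$ the form is definite, $Y_{p,q}\setminus\{0\}=\emptyset$, the first term disappears and the recursion collapses to the pure powers of the first case. If $pq\neq0$ and $(p,q)\neq(1,1)$, then $p+q\ge3$ and the successive first terms form a geometric progression of ratio $u^{p+q-2}$, whence the factor $\tfrac{u^{r(p+q-2)}-1}{u^{p+q-2}-1}$ (for $m=2r+1$), resp. $\tfrac{u^{(r-1)(p+q-2)}-1}{u^{p+q-2}-1}$ (for $m=2r$), of the third case; the tail is $\tfrac{u^{(r+1)(2+p+q)}}{u-1}$ coming from $\mathcal{C}_1$, resp. $u^{r(2+p+q)}\beta^G(Y_{p,q})$ obtained by merging the $\mathcal{C}_0$-term with the last summand via $\beta^G(\{0\})=\tfrac{u}{u-1}$. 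The borderline $(p,q)=(1,1)$, where $p+q-2=0$, is precisely where the progression degenerates to a sum of equal terms, producing the integer coefficients $r$ and $r-1$ of the second case; the two subcases are separated by whether the recursion bottoms out at $\mathcal{C}_1$ (odd $m$) or $\mathcal{C}_0$ (even $m$).

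The main obstacle will be the stratum $\{c_1\neq0\}$: one must check that the order-by-order elimination defines a genuinely $G$-equivariant trivialization over $Y_{p,q}\setminus\{0\}$, so that $\beta^G$ really multiplies by the fibre power $u^{(m-1)(p+q)-(m-2)}$. As in the proofs of Propositions \ref{ypq} and \ref{yxi}, I would cover $Y_{p,q}\setminus\{0\}$ by the $G$-invariant pieces $\{(c_1)_{i_0}\neq0\}$, on each of which the same coordinate $i_0$ of every $c_{l-1}$ is eliminated equivariantly (the equations being $G$-invariant because $Q_{p,q}$ is), and conclude by additivity of $\beta^G$ together with the product formula of Remark \ref{equivpoinc}.
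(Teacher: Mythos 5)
Your proposal is correct and follows essentially the same route as the paper: both observe that $m<k+1$ makes the $\epsilon x_2^{k+1}$ term irrelevant, stratify by the leading vector $c_1$ (linear elimination over $Y_{p,q}\setminus\{0\}$ on the open stratum, reduction to the arc space of step $m-2$ on $\{c_1=0\}$), and unroll the resulting two-step recursion down to the bases $m=1$ and $m=2$, with the three cases of the statement corresponding to the behaviour of the geometric ratio $u^{p+q-2}$. Isolating the free coordinates $b_i$ into the prefactor $u^m$ and packaging the induction through the sets $\mathcal{C}_m$ is only a cosmetic reorganization of the paper's argument, including the coordinate-by-coordinate equivariant stratification of $\{c_1\neq 0\}$ that the paper also uses.
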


\begin{proof} We follow the computation steps of \cite{GF-BNT}, keeping the track of the action of $G$ in our context. 

An arc $\gamma$ of $\mathcal{L}_{m}$ can be written as 
\begin{eqnarray*} \gamma(t) & = & (a_1 t + \cdots + a_{m} t^{m}, c_1^1 t + \cdots + c_{m}^1 t^{m}, \ldots, c_1^{p+q} t + \cdots + c_{m}^{p+q} t^{m}) \\
& = & \left( \begin{array}{c} a_1 \\ c_1^1 \\ \vdots \\ c_1^{p+q} \end{array} \right) t + \cdots + \left( \begin{array}{c} a_{m} \\ c_{m}^1 \\ \vdots \\ c_{m}^{p+q} \end{array} \right) t^{m} = \left( \begin{array}{c} a_1 \\ c_1 \end{array} \right) t + \cdots + \left( \begin{array}{c} a_{m} \\ c_{m}\end{array} \right) t^{m}
\end{eqnarray*}
if $c_i := (c_i^1, \ldots, c_i^{p+q})$. The group $G$ acts on $\mathcal{L}_{m}$ changing the sign of the variables $c_i^1$, resp. $c_i^{p+1}$, in the case n\textsuperscript{o}\ref{acty1}, resp. n\textsuperscript{o}\ref{actyp1}. 
\\

We begin with the case $pq \neq 0$, $(p,q) \neq (1,1)$ and $m = 2 r + 1$ odd. An arc $\gamma$ of $\mathcal{L}_{m}$ belongs to $A^0_{m}(f_k^{\epsilon, \eta})$ if and only if

$$\begin{cases}
Q_{p,q}(c_1) = 0, \\
\Phi_{p,q}(c_1, c_2) = 0, \\
\cdots \\
Q_{p,q}(c_r) + \sum_{t= 1}^{r-1} \Phi_{p,q}(c_t,c_{2r-t}) = 0, \\
\sum_{t= 1}^{r} \Phi_{p,q}(c_t,c_{2r+1-t}) = 0, \\
\end{cases}$$
where $\Phi_{p,q}$ is the function on $\mathbb{R}^{p+q} \times \mathbb{R}^{p+q}$ defined by $\Phi_{p,q}(u,v) = 2 \sum_{i=1}^p u_i v_i - 2 \sum_{j=1}^q u_{p+j} v_{p+j}$.

The first equality of the system means $c_1 \in Y_{p,q}$ by definition. Now, if $c_1^1 \neq 0$, the variables $c_2^1, \ldots, c_{2r}^1$ are determined by $c_1^1$ and the other (free) variables via an equivariant morphism. Therefore,
\begin{eqnarray*}
\beta^G(A^0_{m}(f_k^{\epsilon, \eta})) & = & \beta^G\left(A^0_{m}(f_k^{\epsilon,\eta}) \cap \{c_1^1 \neq 0\}\right) +  \beta^G\left(A^0_{m}(f_k^{\epsilon, \eta}) \cap \{c_1^1 = 0\}\right) \\
 & = & \beta^G\left(\left(Y_{p,q} \setminus (\{0\} \times Y_{p-1,q})\right) \times \mathbb{R}^{m + (m-1)(p+q-1)+1}\right) + \beta^G\left(A^0_{m}(f_k^{\epsilon, \eta}) \cap \{c_1^1 = 0\}\right) \\
 & = & u^{m + (m-1)(p+q-1)+1}  \beta^G\left(Y_{p,q} \setminus (\{0\} \times Y_{p-1,q})\right) + \beta^G\left(A^0_{m}(f_k^{\epsilon, \eta}) \cap \{c_1^1 = 0\}\right)
 \end{eqnarray*}
Next, we have
$$\beta^G\left(A^0_{m}(f_k^{\epsilon, \eta}) \cap \{c_1^1 = 0\}\right) = u^{m + (m-1)(p+q-1)+1}  \beta^G\left(Y_{p-1,q} \setminus (\{0\} \times Y_{p-2,q})\right) + \beta^G\left(A^0_{m}(f_k^{\epsilon, \eta}) \cap \{c_1^1 = c_1^2 = 0 \}\right)$$
and we obtain by induction
$$\beta^G(A^0_{m}(f_k^{\epsilon, \eta})) = u^{m + (m-1)(p+q-1)+1}  \beta^G(Y_{p,q} \setminus \{0\}) +  \beta^G\left(A^0_{m}(f_k^{\epsilon, \eta}) \cap \{c_1^1 = \ldots =  c_1^p = 0\}\right).$$
If $c_1^1 = \ldots =  c_1^p = 0$ then $c_1^{p+1} = \ldots = c_1^{p+q} = 0$ (since $Q_{p,q}(c_1) = 0$) and the other variables verify the system
$$\begin{cases}
Q_{p,q}(c_2) = 0, \\
\Phi_{p,q}(c_2, c_3) = 0, \\
\cdots \\
Q_{p,q}(c_r) + \sum_{t= 2}^{r-1} \Phi_{p,q}(c_t,c_{2r-t}) = 0, \\
\sum_{t= 2}^{r} \Phi_{p,q}(c_t,c_{2r+1-t}) = 0. \\
\end{cases}$$

Noticing that the vector $c_m$ as well as the variables $a_{m-1}$, $a_m$ are free and renaming the remaining variables, we have
$$\beta^G(A^0_{m}(f_k^{\epsilon, \eta})) = u^{m + (m-1)(p+q-1)+1}  \beta^G(Y_{p,q} \setminus \{0\}) + u^{2+(p+q)} \beta^G(A^0_{m-2}(f_k^{\epsilon, \eta}))$$
and, by an induction,
$$\beta^G(A^0_{m}(f_k^{\epsilon, \eta})) =  \beta^G(Y_{p,q} \setminus \{0\}) \sum_{t=0}^{r-1} u^{t(2+p+q)} u^{m-2t + (m-2t-1)(p+q-1)+1} + u^{(r-1)(2+ p+q)} \beta^G(A^0_{3}(f_k^{\epsilon, \eta}) \cap \{c_1 = 0\})$$
As a conclusion, since the system describing $A^0_{3}(f_k^{\epsilon, \eta}) \cap \{c_1 = 0\}$ is trivial, the variables $a_i$ as well as the vectors $c_2$ and $c_3$ are free and
$$\beta^G(A^0_{m}(f_k^{\epsilon, \eta})) = u^{m} u^{(r+1)(p+q)-1} \frac{u^{r(p+q-2)} - 1}{u^{p+q-2} - 1} \beta^G(Y_{p,q} \setminus \{0\}) + \frac{u^{(r+1)(2+p+q)}}{u-1}.$$
\\

If $m$ is even, $m = 2r$, the system describing $A^0_{m}(f_k^{\epsilon, \eta})$ is
$$\begin{cases}
Q_{p,q}(c_1) = 0, \\
\Phi_{p,q}(c_1, c_2) = 0, \\
\cdots \\
\sum_{t= 1}^{r-1} \Phi_{p,q}(c_t,c_{2r-1-t}) = 0, \\
Q_{p,q}(c_r) + \sum_{t= 1}^{r-1} \Phi_{p,q}(c_t,c_{2r-t}) = 0.
\end{cases}$$
  
Therefore, by similar computations, we obtain
$$\beta^G(A^0_{m}(f_k^{\epsilon, \eta})) =  \beta^G(Y_{p,q} \setminus \{0\}) \sum_{t=0}^{r-2} u^{t(2+p+q)} u^{m-2t + (m-2t-1)(p+q-1)+1} + u^{(r-1)(2+ p+q)} \beta^G(A^0_{2}(f_k^{\epsilon, \eta}))$$
Since $A^0_{2}(f_k^{\epsilon, \eta})$ is described by the equation $Q_{p,q}(c_1) = 0$, the vector $c_2$ as well as the variables $a_1$ and $a_2$ being free,
$$\beta^G(A^0_{m}(f_k^{\epsilon, \eta})) = u^m u^{(r+1)(p+q) - 2}  \frac{u^{(r-1)(p+q-2)} - 1}{u^{p+q-2} - 1} \beta^G(Y_{p,q} \setminus \{0\}) + u^{r(2+ p+q)} \beta^G(Y_{p,q}).$$
\\

Finally, if $(p,q) = (1,1)$, the same process gives 
$$\beta^G(A^0_{m}(f_k^{\epsilon, \eta})) = \begin{cases} \beta^G(Y_{1,1} \setminus \{0\}) \sum_{t=0}^{r-1} u^{2m} + \frac{u^{4(r+1)}}{u-1} \mbox{ if $m = 2r+1$,} \\
\beta^G(Y_{1,1} \setminus \{0\}) \sum_{t=0}^{r-2} u^{2m} + u^{4r} \beta^G(Y_{1,1})  \mbox{ if $m = 2r$.}
\end{cases}$$
If $pq = 0$, since $Y_{p,q} = \{0\}$, the equations $Q_{p,q}(c_1) = \ldots = Q_{p,q}(c_r) = 0$ impose $c_1, \ldots, c_r$ to be zero vectors and, the other variables being free, we have
$$\beta^G(A^0_{m}(f_k^{\epsilon, \eta})) = \begin{cases} \frac{u^{m + (r+1)(p+q) + 1}}{u-1} \mbox{ if $m = 2r+1$,} \\
\frac{u^{m + r(p+q) + 1}}{u-1} \mbox{ if $m = 2r$.}
\end{cases}$$ 
\end{proof}

\begin{rem} We obtain the same quantities for $\beta^G(A^0_{m}(g_l^{\epsilon}))$ with $m < 2l$, providing we equip the set $Y_{p,q}$ with the trivial action of $G$. Indeed, the computation steps above remain equivariant if the group $G$ acts on $\mathcal{L}_m$ changing the sign of the variables $a_i$.
\end{rem}

\begin{prop} \label{oamh} Let $h$ be an invariant Nash germ $(\mathbb{R}^n, 0) \rightarrow (\mathbb{R}, 0)$ and $m \geq 2$. Then 
$$\beta^G({}^0\!A_{m}(h)) = u^{n} \beta^G(A^0_{m-1}(h)).$$
\end{prop}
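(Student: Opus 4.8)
The plan is to exhibit an equivariant algebraic isomorphism ${}^0\!A_m(h) \cong A^0_{m-1}(h) \times \mathbb{R}^n$ and then to invoke the product formula $\beta^G(X \times \mathbb{R}^n) = u^n \beta^G(X)$ of Remark \ref{equivpoinc}. The crux is a simple observation about which coefficients of $h \circ \gamma$ the last coefficient of the arc can influence.

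Writing an arc $\gamma(t) = a_1 t + \cdots + a_m t^m \in \mathcal{L}_m$, the coefficients of $t^1, \ldots, t^{m-1}$ in the expansion of $h \circ \gamma(t)$ depend only on $a_1, \ldots, a_{m-1}$ and not on $a_m$. Indeed, since $h(0) = 0$, each monomial of degree $d$ in the Taylor series of $h$, evaluated on $\gamma$, contributes only to orders $\geq d \geq 1$, and the term $a_m t^m$ enters such a product only in orders $\geq m$; thus the coefficient of $t^j$ in $h \circ \gamma$ involves $a_i$ only for $i \leq j$. Consequently the defining condition of ${}^0\!A_m(h)$ --- that the coefficients of $t^1, \ldots, t^{m-1}$ in $h\circ\gamma$ all vanish --- is a condition on $(a_1, \ldots, a_{m-1})$ alone, and it coincides exactly with the condition defining $A^0_{m-1}(h)$ applied to the truncated arc $a_1 t + \cdots + a_{m-1} t^{m-1}$ (truncating at order $m-1$ rather than $m$ leaves the low-order coefficients unchanged). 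This is precisely where the hypothesis $m \geq 2$ is used, so that $A^0_{m-1}(h)$ is defined.

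The splitting is then given by the map $(a_1, \ldots, a_{m-1}, a_m) \mapsto ((a_1, \ldots, a_{m-1}), a_m)$, which is an algebraic isomorphism ${}^0\!A_m(h) \to A^0_{m-1}(h) \times \mathbb{R}^n$, the free factor $\mathbb{R}^n$ recording the unconstrained coefficient $a_m$. Since $G$ acts on $\mathcal{L}_m$ coefficientwise through $s$, this isomorphism is equivariant once the $\mathbb{R}^n$-factor is equipped with the linear action via $s$; moreover the invariance $h \circ s = h$ gives $h \circ (s \cdot \gamma) = h \circ \gamma$, so that both ${}^0\!A_m(h)$ and $A^0_{m-1}(h)$ are globally $G$-stable and $\beta^G$ applies. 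Applying the product formula of Remark \ref{equivpoinc} to $X = A^0_{m-1}(h)$ and the affine space $\mathbb{R}^n$ with this algebraic action yields $\beta^G({}^0\!A_m(h)) = u^n \beta^G(A^0_{m-1}(h))$.

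All steps are elementary; the only point requiring genuine care is the verification that the low-order coefficients of $h \circ \gamma$ are truly independent of $a_m$, so that the $a_m$-direction contributes a free $\mathbb{R}^n$, together with checking that the resulting identification respects the coefficientwise $G$-action. Neither presents a real difficulty.
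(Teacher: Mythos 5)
Your proof is correct and follows the same route as the paper: both identify ${}^0\!A_m(h)$ equivariantly with $A^0_{m-1}(h) \times \mathbb{R}^n$ (the defining equations only involving $a_1,\ldots,a_{m-1}$, the last coefficient vector $a_m$ being free) and then apply the product formula $\beta^G(X \times \mathbb{R}^n) = u^n\beta^G(X)$. Your write-up merely makes explicit the order-counting argument that the paper leaves implicit when it asserts the two systems of equations coincide.
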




\begin{proof}
Notice that ${}^0\!A_{m}(h) = \{\gamma \in \mathcal{L}_{m}~|~h \circ \gamma(t) = 0 \times t + \cdots + 0 \times t^{m-1} + c t^m + \cdots\}$. Therefore, the system describing ${}^0\!A_{m}(h)$ is the same as the system describing $A^0_{m-1}(h)$, the last $n$ variables being free.

As a consequence, we have an equivariant isomorphism between ${}^0\!A_{m}(h)$ and the product $\mathbb{R}^{n} \times A^0_{m-1}(h)$ (this set is equipped with the diagonal action of $G$, the first term being equipped with the involution $s$) and consequently
$$\beta^G({}^0\!A_{m}(h)) = u^{n} \beta^G(A^0_{m-1}(h)).$$
\end{proof}

We also compute $\beta^G(A^{\xi}_{m}(f_k^{\epsilon, \eta}))$ for $m < k+1$ :

\begin{prop} \label{amfxi} Suppose $k \geq 2$ and $m < k+1$.
		\begin{enumerate}
			\item If $pq = 0$, then
$$\beta^G(A^{\xi}_{m}(f_k^{\epsilon, \eta})) = \begin{cases} 0 \mbox{ if $m = 2r+1$,} \\
u^{m + r(p+q)} \beta^G(Y^{\xi}_{p,q}) \mbox{ if $m = 2r$.}
\end{cases}$$ 
			\item If $(p,q) = (1,1)$, then
$$\beta^G(A^{\xi}_{m}(f_k^{\epsilon, \eta})) = \begin{cases} r u^{2m} \beta^G(Y_{1,1} \setminus \{0\})\mbox{ if $m = 2r+1$,} \\
(r-1) u^{2m} \beta^G(Y_{1,1} \setminus \{0\}) + u^{4r} \beta^G(Y^{\xi}_{1,1})  \mbox{ if $m = 2r$,}
\end{cases}$$
			\item If $pq \neq 0$ and $(p,q) \neq (1,1)$, then
$$\beta^G(A^{\xi}_{m}(f_k^{\epsilon, \eta})) = \begin{cases} u^{m} u^{(r+1)(p+q)-1} \frac{u^{r(p+q-2)} - 1}{u^{p+q-2} - 1} \beta^G(Y_{p,q} \setminus \{0\}) \mbox{ if $m = 2r+1$,} \\
 u^m u^{(r+1)(p+q) - 2}  \frac{u^{(r-1)(p+q-2)} - 1}{u^{p+q-2} - 1} \beta^G(Y_{p,q} \setminus \{0\}) + u^{r(2+ p+q)} \beta^G(Y^{\xi}_{p,q}) \mbox{ if $m = 2r$.}
\end{cases}$$
		\end{enumerate}
\end{prop}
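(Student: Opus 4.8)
The plan is to mirror the computation of Proposition \ref{amf} almost verbatim, the only structural difference being the right-hand side of the last equation of the defining system. First I would observe that, since $m < k+1$, the monomial $\epsilon x_2^{k+1}$ contributes nothing to the coefficients of $t^j$ for $j \leq m$ in $f_k^{\epsilon,\eta}\circ\gamma(t)$; these coefficients therefore coincide with those of $Q_{p,q}(c_1 t + \cdots + c_m t^m)$. As a consequence, $A^{\xi}_m(f_k^{\epsilon,\eta})$ is cut out by exactly the same system as $A^0_m(f_k^{\epsilon,\eta})$, with the single change that the equation at degree $m$ (namely $\sum_{t=1}^r \Phi_{p,q}(c_t, c_{2r+1-t})$ if $m = 2r+1$, or $Q_{p,q}(c_r) + \sum_{t=1}^{r-1}\Phi_{p,q}(c_t,c_{2r-t})$ if $m = 2r$) is set equal to $\xi$ instead of $0$.

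I would then run the same inductive reduction on $c_1$ (splitting according to $c_1^1 \neq 0$ or $c_1^1 = 0$, then $c_1^2$, and so on) as in the proof of Proposition \ref{amf}. The point is that every contribution feeding the $\beta^G(Y_{p,q}\setminus\{0\})$ terms arises from a branch in which some component of $c_1$ is nonzero; there one solves for the components $c_2^1, \ldots, c_{m-1}^1$ from the equations of degree at least $3$, each as an affine function of the remaining free variables with nonzero pivot. Since $\xi$ carries the trivial $G$-action and the pivot transforms consistently with the solved component, these solving maps remain equivariant isomorphisms exactly as in Proposition \ref{amf}, and their dimension counts are insensitive to whether the right-hand side is $0$ or $\xi$. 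Hence the coefficients of $\beta^G(Y_{p,q}\setminus\{0\})$ are literally the same as those computed in Proposition \ref{amf}; one checks in addition that the index shift accompanying each reduction step transports the right-hand side $\xi$ of the degree-$m$ equation unchanged.

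It remains to treat the base case, where all the degeneration is concentrated. When $m = 2r+1$ is odd, the reduction terminates at $A^{\xi}_3(f_k^{\epsilon,\eta})\cap\{c_1 = 0\}$, whose surviving equation $\Phi_{p,q}(c_1,c_2) = \xi$ becomes $0 = \xi$: this is inconsistent, so the set is empty and the term $\frac{u^{(r+1)(2+p+q)}}{u-1}$ of Proposition \ref{amf} is replaced by $0$. When $m = 2r$ is even, the reduction terminates at $A^{\xi}_2(f_k^{\epsilon,\eta})$, defined by $Q_{p,q}(c_1) = \xi$, that is $c_1 \in Y^{\xi}_{p,q}$; the factor $\beta^G(Y_{p,q})$ is thus replaced by $\beta^G(Y^{\xi}_{p,q})$. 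The two special cases follow the same pattern: for $pq = 0$ the definiteness of $Q_{p,q}$ forces $c_1 = \cdots = c_{r-1} = 0$, after which the last equation reads $0 = \xi$ (odd, empty) or $Q_{p,q}(c_r) = \xi$, i.e. $c_r \in Y^{\xi}_{p,q}$ (even); and for $(p,q) = (1,1)$ one repeats the dedicated computation of Proposition \ref{amf} with the same substitution, dropping the base term in the odd case and replacing $\beta^G(Y_{1,1})$ by $\beta^G(Y^{\xi}_{1,1})$ in the even case.

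The hard part will be the bookkeeping of the middle step: making fully rigorous that the intermediate contributions are independent of the right-hand side, which amounts to verifying that each pivot-solving map stays $G$-equivariant once $0$ is replaced by $\xi$ and that the surviving degree-$m$ equation is carried correctly through every index shift. Granting this, the statement follows by comparing the outcome term by term with Proposition \ref{amf}: the $\beta^G(Y_{p,q}\setminus\{0\})$ coefficients are preserved, the odd base term disappears, and in the even case $\beta^G(Y_{p,q})$ becomes $\beta^G(Y^{\xi}_{p,q})$.
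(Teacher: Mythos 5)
Your proposal is correct and follows essentially the same route as the paper's own proof: the system defining $A^{\xi}_{m}(f_k^{\epsilon,\eta})$ is that of $A^{0}_{m}(f_k^{\epsilon,\eta})$ with the degree-$m$ right-hand side replaced by $\xi$, the inductive reduction of Proposition \ref{amf} carries through unchanged, and the base cases become $A^{\xi}_{3}(f_k^{\epsilon,\eta}) \cap \{c_1 = 0\} = \emptyset$ for $m$ odd and $A^{\xi}_{2}(f_k^{\epsilon,\eta}) = \{Q_{p,q}(c_1) = \xi\}$ for $m$ even. Only a cosmetic slip: in the case $pq = 0$ with $m = 2r+1$ odd, definiteness forces $c_1 = \cdots = c_r = 0$ (the degree-$2r$ equation kills $c_r$ as well), not merely $c_1 = \cdots = c_{r-1} = 0$, before the degree-$m$ equation degenerates to $0 = \xi$.
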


\begin{proof} We first deal with the case $pq \neq 0$, $(p,q) \neq (1,1)$ and $m = 2r$ even. Keeping the notations of the proof of \ref{amf}, the system describing $A^{\xi}_{m}(f_k^{\epsilon, \eta})$ is
$$\begin{cases}
Q_{p,q}(c_1) = 0, \\
\Phi_{p,q}(c_1, c_2) = 0, \\
\cdots \\
\sum_{t= 1}^{r-1} \Phi_{p,q}(c_t,c_{2r-1-t}) = 0, \\
Q_{p,q}(c_r) + \sum_{t= 1}^{r-1} \Phi_{p,q}(c_t,c_{2r-t}) = \xi.
\end{cases}$$
The computation steps are the same as in the proof of proposition \ref{amf}, and we have
$$\beta^G(A^{\xi}_{m}(f_k^{\epsilon, \eta})) = u^m u^{(r+1)(p+q) - 2}  \frac{u^{(r-1)(p+q-2)} - 1}{u^{p+q-2} - 1} \beta^G(Y_{p,q} \setminus \{0\}) + u^{(r-1)(2+p+q)}\beta^G(A^{\xi}_{2}(f_k^{\epsilon, \eta})).$$
Since the set $A^{\xi}_{2}(f_k^{\epsilon, \eta})$ is described by the equation $Q_{p,q}(c_1) = \xi$ and the other variables being free, we obtain the result.

If $m$ is odd, $m = 2r + 1$, as in the proof of proposition \ref{amf}, we obtain
$$\beta^G(A^{\xi}_{m}(f_k^{\epsilon, \eta})) = u^{m} u^{(r+1)(p+q)-1} \frac{u^{r(p+q-2)} - 1}{u^{p+q-2} - 1} \beta^G(Y_{p,q} \setminus \{0\}) + u^{(r-1)(2+ p+q)} \beta^G(A^{\xi}_{3}(f_k^{\epsilon, \eta}) \cap \{c_1 = 0\})$$
and the set $A^{\xi}_{3}(f_k^{\epsilon, \eta}) \cap \{c_1 = 0\}$ is empty.

Similar considerations provide the results for the cases $(p,q) = (1,1)$ and $pq = 0$.
\end{proof}

\begin{rem} Again, we have the same quantities for $\beta^G(A^{\xi}_{m}(g_l^{\epsilon}))$ with $m < 2l$, providing we equip the sets $Y_{p,q}$ and $Y^{\xi}_{p,q}$ with the trivial action of $G$.
\end{rem}

Now, we are ready to deduce distinctions, with respect to $G$-blow-Nash equivalence, between $f_k^{\epsilon,+}$ and $f_k^{\epsilon, -}$, respectively between $f_{2k-1}$ and $g_k$, in some cases :

\begin{cor} \label{fpmnot} Let $k \geq 1$. Suppose that the invariant germs
$$f_k^{\epsilon, +}(x) := + x_1^2 + \epsilon x_2^{k+1} + Q \mbox{   and   } f_k^{\epsilon, -}(x) := - x_1^2 + \epsilon x_2^{k+1} + Q'$$
have the same quadratic part up to permutation of the variables $x_1, x_3, \ldots , x_n$. Then they are not $G$-blow-Nash equivalent.
\end{cor}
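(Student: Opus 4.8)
The plan is to apply Theorem \ref{eqzfinv} contrapositively: I will exhibit a low-degree coefficient of one of the equivariant zeta functions that differs between $f_k^{\epsilon,+}$ and $f_k^{\epsilon,-}$. First I would set up the comparison exactly as in Section \ref{firsttermsab}: writing the common quadratic part as $Q_{p,q}$, the germ $f_k^{\epsilon,+}$ (carrying $+x_1^2$) realises the action n\textsuperscript{o}\ref{acty1}, since the sign change falls on a \emph{positive} square, whereas $f_k^{\epsilon,-}$ (carrying $-x_1^2$) realises the action n\textsuperscript{o}\ref{actyp1}, the sign change falling on a \emph{negative} square. Because the two germs share the same quadratic part up to permutation, the pair $(p,q)$ is common to both; moreover the presence of $+x_1^2$ forces $p\geq 1$ and the presence of $-x_1^2$ forces $q\geq 1$, so $pq\neq 0$. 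The whole problem thus reduces to distinguishing the actions n\textsuperscript{o}\ref{acty1} and n\textsuperscript{o}\ref{actyp1} on one and the same $Q_{p,q}$; I may assume $p\leq q$, the other case being symmetric by the remark following Proposition \ref{ypq}.

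The key observation I would extract from Propositions \ref{ypq} and \ref{yxi} is a clean dichotomy. Comparing the two actions, $\beta^G(Y_{p,q})$ differs by $\frac{u^{p-1}-u^{p+1}}{u-1}$ when $p<q$ but \emph{coincides} when $p=q$; dually, since $Y_{p,q+1}$ always sits in the regime $p<q+1$, Proposition \ref{yxi} shows that $\beta^G(Y_{p,q}^{+1})$ \emph{coincides} when $p<q$ but differs by $\frac{u^{p-1}-u^{p+1}}{(u-1)^2}$ when $p=q$. In other words, exactly one of the two invariants $\beta^G(Y_{p,q})$ and $\beta^G(Y_{p,q}^{+1})$ detects the difference between the actions, according to the sign of $q-p$, and in both cases the discrepancy $-u^{p-1}(u+1)$ (up to a power of $u-1$) is a genuinely nonzero polynomial.

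Next I would feed this into the zeta functions at level $m=2$, which is legitimate for $k\geq 2$ since then $2<k+1$. Using Proposition \ref{oamh} together with the fact that the germs have no linear part — so that ${}^0\!A_2$ reduces to $\mathbb{R}^n\times A_1^0$ with $A_1^0\cong\mathbb{R}^n$ contributing identically for both germs — the $T^2$-coefficient of the naive zeta function is governed by $\beta^G(A_2^0)$, which by Proposition \ref{amf} (the case $r=1$, where the $Y_{p,q}\setminus\{0\}$-term drops out) equals $u^{2+p+q}\beta^G(Y_{p,q})$. Similarly, by Proposition \ref{amfxi}, the $T^2$-coefficient of $Z^{G,+}$ is controlled by $u^{2+p+q}\beta^G(Y_{p,q}^{+1})$. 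The dichotomy then yields: for $p<q$ the naive zeta functions already differ at order $2$, while for $p=q$ the zeta functions with sign differ at order $2$. Either way a coefficient differs, and Theorem \ref{eqzfinv} forbids $G$-blow-Nash equivalence.

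The main obstacle is precisely the equidimensional case $p=q$: there the naive zeta function is \emph{blind} to the difference of the two actions — an instance of the phenomenon, flagged in the introduction, that the equivariant virtual Poincar\'e series cannot separate two algebraic actions on a sphere once there is a fixed point — and one genuinely has to pass to the zeta functions with sign and check, via Proposition \ref{yxi}, that $\beta^G(Y_{p,q}^{+1})$ restores the distinction. A secondary point to dispatch is the boundary value $k=1$, where $f_1^{\epsilon,\pm}$ is a nondegenerate quadratic form and the hypothesis $m<k+1$ of the propositions is vacuous; here I would compute the level-two sets by hand, namely $A_2(Q_{p,q})=(\mathbb{R}^n\setminus Y_{p,q})\times\mathbb{R}^n$ and $A_2^{+1}(Q_{p,q})=Y_{p,q}^{+1}\times\mathbb{R}^n$, and run the same dichotomy to conclude.
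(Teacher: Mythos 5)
Your proof is correct and follows essentially the same route as the paper: compare the degree-$2$ coefficients via Propositions \ref{oamh}, \ref{amf} and \ref{amfxi}, separating the actions n\textsuperscript{o}\ref{acty1} and n\textsuperscript{o}\ref{actyp1} through $\beta^G(Y_{p,q})$ (Proposition \ref{ypq}) when $p \neq q$ and through $\beta^G(Y_{p,q}^{+1})$ (Proposition \ref{yxi}) when $p = q$, with a direct hand computation of the level-two arc spaces at $k=1$. The only differences are cosmetic: you reduce to $p \leq q$ by symmetry and package the two cases into an explicit dichotomy (also observing, correctly, that $\beta^G(Y_{p,q}^{+1})$ is blind to the action when $p<q$), whereas the paper treats $p<q$, $q<p$ and $p=q$ in turn.
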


\begin{proof} We begin by assuming $k \geq 2$. We first compare $\beta^G(A_2(f_k^{\epsilon, +}))$ and $\beta^G(A_2(f_k^{\epsilon, -}))$. Since $\beta^G({}^0\!A_2(f_k^{\epsilon, \eta})) = u^{1+p+q} \beta^G(A^0_1(f_k^{\epsilon, \eta}))$ (by proposition \ref{oamh}) and $A^0_1(f_k^{\epsilon, \eta}) = \mathcal{L}_1$, we are reduced to compare $\beta^G(A^0_2(f_k^{\epsilon, +}))$ and $\beta^G(A^0_2(f_k^{\epsilon, -}))$.

Denote $p$ the number of signs $+$ and $q$ the number of signs $-$ in the quadratic part of $f_k^{\epsilon, +}$ and $f_k^{\epsilon, -}$ (notice that $pq \neq 0$). Then, according to proposition \ref{amf},
$$\beta^G(A^0_2(f_k^{\epsilon, \eta})) = u^{2+p+q} \beta^G(Y_{p,q}).$$
  
Therefore, by proposition \ref{ypq}, 
\begin{itemize}
 \item if $p < q$, then
$$\beta^G(A^0_2(f_k^{\epsilon, +})) = u^{2+p+q} \frac{u^{p+q} - u^q + u^{p-1}}{u-1} \mbox{ and } \beta^G(A^0_2(f_k^{\epsilon, -})) = u^{2+p+q} \frac{u^{p+q} - u^q + u^{p+1}}{u-1},$$
\item if $q < p$, then
$$\beta^G(A^0_2(f_k^{\epsilon, +})) = u^{2+p+q} \frac{u^{p+q} - u^p+ u^{q+1}}{u-1} \mbox{ and } \beta^G(A^0_2(f_k^{\epsilon, -})) = u^{2+p+q} \frac{u^{p+q} - u^p + u^{q-1}}{u-1}$$
\end{itemize}
In particular, $\beta^G(A^0_2(f_k^{\epsilon, +})) \neq \beta^G(A^0_2(f_k^{\epsilon, -}))$ if $p \neq q$. Consequently, if $p \neq q$, the naive equivariant zeta functions of $f_k^{\epsilon, +}$ and $f_k^{\epsilon, -}$ are different and, by theorem \ref{eqzfinv}, these germs are not $G$-blow-Nash equivalent.
\\

If $p=q$, $\beta^G(A^0_2(f_k^{\epsilon, +})) = \beta^G(A^0_2(f_k^{\epsilon, -}))$ and we look at the term $\beta^G(A^{+1}_2(f_k^{\epsilon, \eta}))$ of the equivariant zeta functions with sign $+$. According to proposition \ref{amfxi}, 
$$\beta^G(A^{+1}_2(f_k^{\epsilon, \eta})) = u^{2+2p} \beta^G(Y^{+1}_{p,p})$$
and, by \ref{yxi}, $\beta^G(Y_{p,p}^{+1}) = \frac{1}{u-1} \left(\beta^G(Y_{p,p+1}) - \beta^G(Y_{p,p})\right)$. Since the quantity $\beta^G(Y_{p,p})$ is the same in either of the cases n\textsuperscript{o}\ref{acty1} and n\textsuperscript{o}\ref{actyp1}, we are reduced to compare the quantities $\beta^G(Y_{p,p+1})$ in the cases n\textsuperscript{o}\ref{acty1} and n\textsuperscript{o}\ref{actyp1}.

We have
$$\beta^G(Y_{p,p+1}) = \begin{cases} \frac{u^{2p+1} - u^{p+1} + u^{p-1}}{u-1} \mbox{ in the case n\textsuperscript{o}\ref{acty1},} \\
\frac{u^{2p+1} - u^{p+1} + u^{p+1}}{u-1} \mbox{ in the case n\textsuperscript{o}\ref{actyp1},}
\end{cases}$$
and, as a consequence, $\beta^G(A^{+1}_2(f_k^{\epsilon, +})) \neq \beta^G(A^{+1}_2(f_k^{\epsilon, -}))$, so $f_k^{\epsilon, +}$ and $f_k^{\epsilon, -}$ are not $G$-blow-Nash equivalent in the case $p = q$ as well.
\\

If $k = 1$, notice that $f_1^{\epsilon, \eta}(x,y) = \epsilon x^2 + Q_{p,q}(y)$ and we are reduced to compare $\beta^G(A^0_2(f_1^{\epsilon, +}))$ and $\beta^G(A^0_2(f_1^{\epsilon, -}))$ as well. We have $\beta^G(A^0_2(f_1^{\epsilon, \eta})) = u^{1+ p+q} \beta^G(Y_{p+1,q})$ if $\epsilon = +1$, and  $\beta^G(A^0_2(f_1^{\epsilon, \eta})) = u^{1+ p+q} \beta^G(Y_{p,q+1})$ if $\epsilon = -1$. As above, we can show, for instance if $\epsilon = +1$, that $\beta^G(A^0_2(f_1^{\epsilon, +})) \neq \beta^G(A^0_2(f_1^{\epsilon, -}))$ when $p+1 \neq q$, and $\beta^G(A^{+1}_2(f_1^{\epsilon, +})) \neq \beta^G(A^{+1}_2(f_1^{\epsilon, -}))$ if $p+1 = q$.

\end{proof}

\begin{rem} If $k = 0$, $f_0^{\epsilon, \eta}(x,y) = \epsilon x + Q_{p,q}(y)$ and, using the notations of the proof of proposition \ref{amf}, the left members of all the equations describing $A^0_m(f_0^{\epsilon, \eta})$, resp. $A^{\xi}_m(f_0^{\epsilon, \eta})$, for $m \geq 1$, contain a term $\epsilon a_i + \ldots$, so that each of these sets is equivariantly isomorphic to an affine space. As a consequence (see remark \ref{equivpoinc}), the respective equivariant zeta functions of $f_0^{\epsilon, +}$ and $f_0^{\epsilon, -}$ are equal.
\end{rem}

\begin{cor} \label{fgprem} Let $k \geq 2$. Suppose that the invariant germs
$$f_{2k-1} = \epsilon x_2^{2k} + \eta x_1^2 + Q \mbox{   and   } g_k = \epsilon x_1^{2k} + \eta' x_2^2 + Q'$$
have, up to permutation of all variables, the same quadratic part, with $p$ signs $+$ and $q$ signs~$-$. 

If $p \leq q$ and $\eta = + 1$ or $q \leq p$ and $\eta = -1$, then $f_{2k-1}$ and $g_k$ are not $G$-blow-Nash equivalent. 

If $p = q+1$ or $q = p+1$, then $f_{2k-1}$ and $g_k$ are not $G$-blow-Nash equivalent.
\end{cor}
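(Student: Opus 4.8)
The plan is to follow the two-tier strategy of the proof of Corollary \ref{fpmnot}: first attempt to separate the two germs with the naive equivariant zeta function, and, in the configurations where it fails, pass to the equivariant zeta functions with sign. The guiding observation is structural. Both $f_{2k-1} = \epsilon x_2^{2k} + \eta x_1^2 + Q$ and $g_k = \epsilon x_1^{2k} + \eta' x_2^2 + Q'$ have degree $2k$ in their distinguished variable and share the quadratic part $Q_{p,q}$, but the involution $s$ enters them differently: for $f_{2k-1}$ the coordinate whose sign is changed, namely $x_1$, lies inside the quadratic part, so $G$ acts on $Q_{p,q}$ via case n\textsuperscript{o}\ref{acty1} if $\eta = +1$ and via case n\textsuperscript{o}\ref{actyp1} if $\eta = -1$; for $g_k$ the changed coordinate is the distinguished variable $x_1$ itself, so by the remarks following Propositions \ref{amf} and \ref{amfxi} the relevant $Y_{p,q}$ and $Y^{\xi}_{p,q}$ carry the \emph{trivial} $G$-action. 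Since $k \geq 2$ we have $2 < 2k$, so every coefficient of degree $m = 2$ is provided by Propositions \ref{amf}, \ref{oamh} and \ref{amfxi}, and it suffices to compare those.

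For the first statement I would compare the degree-$2$ coefficient of the naive zeta function. Exactly as in Corollary \ref{fpmnot}, Proposition \ref{oamh} and the equality $A^0_1 = \mathcal{L}_1$ (both germs have the same $G$-space $\mathcal{L}_1 \cong \mathbb{R}^n$) make $\beta^G({}^0\!A_2)$ agree for the two germs, so everything reduces to $\beta^G(A^0_2) = u^{2+p+q}\,\beta^G(Y_{p,q})$ (the first summand in Proposition \ref{amf} vanishes at $r = 1$). It then remains to feed this into Proposition \ref{ypq} and check that the nontrivial action attached to $f_{2k-1}$ and the trivial action attached to $g_k$ give different $\beta^G(Y_{p,q})$. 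The hypotheses "$p \leq q,\ \eta = +1$" and "$q \leq p,\ \eta = -1$" are precisely the cases where the changed variable sits in the smaller half of the form, and there the exceptional monomial in the numerator of $\beta^G(Y_{p,q})$ is $u^{p-1}$ (resp.\ $u^{q-1}$) for $f_{2k-1}$ against $u^{p+1}$ (resp.\ $u^{q+1}$) for $g_k$. The numerators differ, so Theorem \ref{eqzfinv} yields the non-equivalence.

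For the second statement I would first note that, since $|p-q| = 1$ forces $p \neq q$, the only configurations not already settled by the first statement are $\{p = q+1,\ \eta = +1\}$ and $\{q = p+1,\ \eta = -1\}$; and one checks that in these the two actions produce the \emph{same} $\beta^G(Y_{p,q})$, so the naive zeta function no longer separates the germs. I would therefore pass to the zeta function with sign $\xi = +1$ when $p = q+1$ and $\xi = -1$ when $q = p+1$. By Proposition \ref{amfxi} the degree-$2$ coefficient is $\beta^G(A^{\xi}_2) = u^{2+p+q}\,\beta^G(Y^{\xi}_{p,q})$, and Proposition \ref{yxi} rewrites $\beta^G(Y^{\xi}_{p,q})$ as a difference of two $\beta^G(Y_{\cdot,\cdot})$, one of whose arguments becomes the balanced form $Y_{q+1,q+1}$ (resp.\ $Y_{p+1,p+1}$). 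Substituting Proposition \ref{ypq} and cancelling the $(u-1)$ factors, one finds $\frac{u^{2q+1} - u^q}{u-1}$ for the nontrivial action of $f_{2k-1}$ and $\frac{u^{2q+1} + u^{q+1}}{u-1}$ for the trivial action of $g_k$ (with $p$ in place of $q$ in the second configuration). These differ, so Theorem \ref{eqzfinv} again gives non-equivalence.

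The hard part will not be any single computation but the bookkeeping of the $G$-action across the exchange of $p$ and $q$: since Propositions \ref{ypq} and \ref{yxi} are normalised for $p \leq q$, one must track how case n\textsuperscript{o}\ref{acty1} and case n\textsuperscript{o}\ref{actyp1} swap when the changed variable lies in the \emph{larger} half of the form, and verify that this swap is exactly what makes the naive invariant succeed in the first statement yet fail in the remaining $|p-q|=1$ cases, forcing the finer sign invariant. A secondary point is to handle the degenerate values $pq = 0$ and $(p,q) = (1,1)$ separately; in each the same formulas persist — for instance $(p,q) = (1,0)$ in the second statement gives $\beta^G(Y^{+1}_{1,0}) = 1$ for $f_{2k-1}$ against $\frac{2u}{u-1}$ for $g_k$ — so no new phenomenon appears.
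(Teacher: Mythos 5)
Your proposal is correct and takes essentially the same route as the paper's proof: the first statement via the degree-$2$ coefficient $\beta^G(A^0_2) = u^{2+p+q}\beta^G(Y_{p,q})$ of the naive equivariant zeta function together with Proposition \ref{ypq}, and the second statement, after reducing to $p = q+1$, $\eta = +1$ (resp.\ the symmetric case), via $\beta^G(A^{\xi}_2) = u^{2+p+q}\beta^G(Y^{\xi}_{p,q})$ combined with Propositions \ref{yxi} and \ref{ypq}. Your explicit closed forms $\frac{u^{2q+1}-u^q}{u-1}$ versus $\frac{u^{2q+1}+u^{q+1}}{u-1}$, the choice of sign $\xi$ matched to which of $p$, $q$ is larger, and the check of the degenerate case $(p,q)=(1,0)$ all agree with what the paper's argument implicitly yields.
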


\begin{proof} We first deal with the case $p \leq q$ and $\eta = +1$ (notice that $ p \neq 0$) ; the case $q \leq p$ and $\eta = -1$ is symmetric. 

As in the proof of previous corollary \ref{fpmnot}, we have  
$$\beta^G(A^0_2(f_{2k-1})) = u^{2+p+q} \beta^G(Y_{p,q}) \mbox{ and } \beta^G(A^0_2(g_k)) = u^{2+p+q} \beta^G(Y_{p,q})$$
where, in the left equality, the set $Y_{p,q}$ is equipped with the action n\textsuperscript{o}\ref{acty1} and, in the right one, with the trivial action of $G$. Since the corresponding equivariant virtual Poincar\'e series are different by proposition \ref{ypq}, $\beta^G(A_2(f_{2k-1})) \neq \beta^G(A_2(g_k))$ and the naive equivariant zeta functions of $f_{2k-1}$ and $g_k$ are different. As a consequence, $f_{2k-1}$ and $g_k$ are not $G$-blow-Nash equivalent.
\\

Now we suppose $p = q+1$ (the case $q = p+1$ is symmetric). In particular $q < p$, so we can assume $\eta = +1$. 

We consider $\beta^G(A^{+1}_2(f_{2k-1})) = u^{2+p+q} \beta^G(Y^{+1}_{p,q})$ and $\beta^G(A^{+1}_2(g_k)) = u^{2+p+q} \beta^G(Y^{+1}_{p,q})$ (proposition \ref{amfxi}). Thanks to proposition \ref{yxi}, we know that $\beta^G(Y^{+1}_{p,q}) = \frac{1}{u-1} \left(\beta^G(Y_{p,q+1}) - \beta^G(Y_{p,q})\right)$. By proposition \ref{ypq}, the respective quantities $\beta^G(Y_{p,q})$ for $f_{2k-1}$ and $g_k$ are equal, whereas the quantities $\beta^G(Y_{p,q+1}) = \beta^G(Y_{p,p})$ are different. Consequently, the equivariant zeta functions with sign $+$ of $f_{2k-1}$ and $g_k$ are different and therefore the latter germs are not $G$-blow-Nash equivalent.
\end{proof}

\begin{rem} In the other cases, the quantities $\beta^G(Y_{p,q})$ and $\beta^G(Y^{\xi}_{p,q})$ are the same for $f_{2k-1}$ and $g_k$.
\end{rem}

\subsection{Computation of $\beta^G(A_{2k}(f_{2k-1}))$ and $\beta^G(A_{2k}(g_k))$} \label{a2kfg}

For the continuation of the section, thanks to corollaries \ref{fpmnot} and \ref{fgprem}, we only need to consider the germs 
$$f_{2k-1} = \epsilon x_2^{2k} + \eta x_1^2 + Q \mbox{   and   } g_k = \epsilon x_1^{2k} + \eta' x_2^2 + Q',$$
assumed to have the same quadratic part $Q_{p,q}$, such that $p > q+1$ and $\eta = + 1$ or $q > p+1$ and $\eta = -1$.

In order to prove that the germs $f_{2k-1}$ and $g_k$ are not $G$-blow-Nash equivalent in some of these cases as well, we will compute the coefficients $\beta^G(A_{2k}(f_{2k-1}))$ and $\beta^G(A_{2k}(g_k))$ of their respective naive equivariant zeta functions :

\begin{prop} \label{a02k} Suppose $k \geq 2$.
\begin{enumerate}
	\item If $pq = 0$, then
$$\beta^G(A_{2k}^0(f_{2k-1})) = u^{2k - 1 + k(p+q)} \beta^G(\{f_{2k-1} = 0 \}) \mbox{ and } \beta^G(A_{2k}^0(g_k)) = u^{2k - 1 + k(p+q)} \beta^G(\{g_k = 0 \}).$$	
	\item If $pq \neq 0$, then
$$\beta^G(A^{0}_{2k}(f_{2k-1})) = u^{2k-2} u^{(p+q)(k+1)} \beta^G(Y_{p,q} \setminus \{0\}) \frac{u^{(p+q-2)(k-1)}-1}{u^{p+q-2}-1} + u^{k(p+q) + 2k-1} \beta^G(\{f_{2k-1} = 0\})$$
(the group $G$ acts on $Y_{p,q}$ via the involution n\textsuperscript{o}\ref{acty1} or n\textsuperscript{o}\ref{actyp1} depending on the sign of $\eta$) and
$$\beta^G(A^{0}_{2k}(g_k)) = u^{2k-2} u^{(p+q)(k+1)} \beta^G(Y_{p,q} \setminus \{0\}) \frac{u^{(p+q-2)(k-1)}-1}{u^{p+q-2}-1} + u^{k(p+q) + 2k-1} \beta^G(\{g_k = 0\})$$
(the group $G$ acts trivially on $Y_{p,q}$). 
\end{enumerate}
\end{prop}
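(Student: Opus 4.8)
The plan is to follow exactly the inductive machinery developed in the proof of Proposition \ref{amf}, now carried out at the critical order $m = 2k$, which is the degree of the germs $f_{2k-1}$ and $g_k$. The key structural observation is that for $m < k+1$ the monomial $\epsilon x_2^{k+1}$ (resp. $\epsilon x_1^{2k}$) never contributes to the relevant truncations, so the computation was governed entirely by the quadratic part $Q_{p,q}$; but at order $m = 2k$ the top-degree monomial does enter the last equation of the defining system, replacing the ``trivial tail'' that previously produced the affine-space contribution $\frac{u^{(r+1)(2+p+q)}}{u-1}$ by a genuine contribution of the full zero locus $\{f_{2k-1}=0\}$ (resp. $\{g_k=0\}$). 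Concretely, I would write an arc $\gamma \in \mathcal{L}_{2k}$ as in Proposition \ref{amf}, set up the system of equations expressing $f_{2k-1}\circ\gamma(t) = 0\times t + \cdots + 0\times t^{2k} + \cdots$, and observe that the first $k-1$ equations are identical to those appearing for smaller $m$ (involving only $Q_{p,q}$ and the bilinear forms $\Phi_{p,q}$), while the final equation now incorporates the leading term of $f_{2k-1}$.

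First I would peel off, by the same successive splitting on $\{c_1^1 \neq 0\}$ versus $\{c_1^1 = 0\}$ and the induction on the vanishing of $c_1, c_2, \ldots$, the leading geometric-series factor, reproducing the term
\[
u^{2k-2} u^{(p+q)(k+1)} \beta^G(Y_{p,q} \setminus \{0\}) \frac{u^{(p+q-2)(k-1)}-1}{u^{p+q-2}-1}.
\]
This is verbatim the same recursion as in Proposition \ref{amf}, and each step remains equivariant for precisely the reason recorded there (when $c_1^1 \neq 0$, the dependent variables are determined via an equivariant morphism from the free ones). The induction terminates not in a trivial system but in the residual set where $c_1 = \cdots = c_{k-1} = 0$, at which point the surviving equation is the full condition $f_{2k-1}\circ\gamma = 0$ at the top order, which cuts out a copy of the zero locus $\{f_{2k-1}=0\}$ crossed with the free coordinates $a_i$ and the free vector $c_k$.

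For the case $pq = 0$ the argument is even shorter: since $Y_{p,q} = \{0\}$, all the intermediate equations $Q_{p,q}(c_i) = 0$ force $c_1 = \cdots = c_{k-1} = 0$ immediately, the geometric-series factor collapses, and one is left at once with a single copy of $\{f_{2k-1}=0\}$ times an affine space of the appropriate dimension, giving $u^{2k-1+k(p+q)}\beta^G(\{f_{2k-1}=0\})$. The identical computation, with $G$ acting trivially on $Y_{p,q}$ and on the leading variable, yields the stated formulas for $g_k$; this is the analogue of the remarks following Propositions \ref{amf} and \ref{amfxi} that the same steps go through for $g_l^\epsilon$ once the action on $\mathcal{L}_m$ is transferred to the $a_i$ coordinates. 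The main obstacle, and the point requiring genuine care rather than routine bookkeeping, is the bottom of the recursion: I must verify that the residual system at $c_1 = \cdots = c_{k-1} = 0$ really reduces the top-order equation to the equation $f_{2k-1} = 0$ in the free vector $c_k$ (so that the fiber is $\{f_{2k-1}=0\}$ rather than some larger or twisted set), and that the identification of this fiber with the zero locus is equivariant — that is, that the $G$-action induced on $c_k$ matches the action on the variables of $\{f_{2k-1}=0\}$, so that $\beta^G$ of the fiber is exactly $\beta^G(\{f_{2k-1}=0\})$. Correctly accounting for the exponents of the free affine factors (the $a_i$ and the remaining $c_j$) at this final stage is the other place where an off-by-one error would be easy to make.
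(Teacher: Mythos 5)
Your proposal is correct and follows essentially the same route as the paper: the paper keeps the notation of Proposition \ref{amf}, peels off the $\beta^G(Y_{p,q}\setminus\{0\})$ contributions by the same stratification and induction, and terminates at the residual set $C_2^0$ defined by $\epsilon a_1^{2k}+Q_{p,q}(c_1)=0$, which is exactly the equivariant copy of $\{f_{2k-1}=0\}$ (resp. $\{g_k=0\}$) times a free affine factor, yielding the exponent $2k-1+k(p+q)$. The only blemish is the clause saying $G$ acts ``trivially \dots{} on the leading variable'' for $g_k$ (the action there changes the sign of the $a_i$, so the residual fiber carries the natural, nontrivial action of $\{g_k=0\}$), but you immediately correct this by transferring the action to the $a_i$ coordinates as in the paper's remark after Proposition \ref{amf}.
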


\begin{proof} We keep the notations of the proof of proposition \ref{amf} and we proceed as in \cite{GF-BNT} Proof of Lemma 3.3. First suppose that $pq \neq 0$. An arc $\gamma$ of $\mathcal{L}_{2k}$ belongs to $A_{2k}^0(f_{2k-1})$ if and only if
$$\begin{cases}
Q_{p,q}(c_1) = 0, \\
\Phi_{p,q}(c_1, c_2) = 0, \\
\cdots \\
\sum_{t= 1}^{k-1} \Phi_{p,q}(c_t,c_{2k-1-t}) = 0, \\
\epsilon a_1^{2k} + Q_{p,q}(c_k) + \sum_{t= 1}^{k-1} \Phi_{p,q}(c_t,c_{2k-t}) = 0.
\end{cases}$$
We have 
$$\beta^G(A_{2k}^0(f_{2k-1})) = u^{2k + (2k-1)(p+q-1) + 1} \beta^G(Y_{p,q} \setminus \{0\}) + \beta^G(A_{2k}^0(f_{2k-1}) \cap \{c_1^1 = \ldots = c_1^p = 0\}),$$
and $\beta^G(A_{2k}^0(f_{2k-1}) \cap \{c_1^1 = \ldots = c_1^p = 0\}) = u^{2+p+q}\beta^G(C^0_{2k-2})$, if $C^0_{2k-2}$ denotes the algebraic set described by the equations
$$\begin{cases}
Q_{p,q}(c_1) = 0, \\
\Phi_{p,q}(c_1, c_2) = 0, \\
\cdots \\
\sum_{t= 1}^{k-2} \Phi_{p,q}(c_t,c_{2k-3-t}) = 0, \\
\epsilon a_1^{2k} + Q_{p,q}(c_{k-1}) + \sum_{t= 1}^{k-2} \Phi_{p,q}(c_t,c_{2k-2-t}) = 0.
\end{cases}$$

By an induction, we obtain
$$\beta^G(A^0_{2k}(f_{2k-1})) =  \beta^G(Y_{p,q} \setminus \{0\}) \sum_{t=0}^{k-2} u^{t(2+p+q)} u^{2k-2t + (2k-2t-1)(p+q-1)+1} + u^{(k-1)(2+ p+q)} \beta^G(C^0_{2}).$$
Since $C^0_{2}$ is defined by the equation $\epsilon a_1^{2k} + Q_{p,q}(c_1) = 0$ and since the vector $c_2$ and the variable $a_2$ are free, we deduce the desired expression for $\beta^G(A^0_{2k}(f_{2k-1}))$. The steps of computation are the same for $\beta^G(A^0_{2k}(g_{k}))$. 
\\

If $pq = 0$, the vectors $c_1, \ldots, c_{k-1}$ are zero vectors and the system is reduced to the equation $\epsilon a_1^{2k} + Q_{p,q}(c_k) = 0$, the other variables being free.
\end{proof}

Since $p > q+1$ and $\eta = + 1$ or $q > p+1$ and $\eta = -1$, the quantity $\beta^G(Y_{p,q})$ is the same for $f_{2k-1}$ and $g_k$. As a consequence, in order to compare $\beta^G(A_{2k}(f_{2k-1}))$ and $\beta^G(A_{2k}(g_k))$, we are reduced to consider the quantities $\beta^G(\{f_{2k-1} = 0 \})$ and $\beta^G(\{g_{k} = 0 \})$ (notice that $\beta^G({}^0\!A_{2k}(f_{2k-1})) = \beta^G({}^0\!A_{2k}(g_k))$ by the results of the previous paragraph \ref{firsttermsab}). We compute these equivariant virtual Poincar\'e series for all $k \geq 2$, $p,q \in \mathbb{N}$ and $\eta \in \{1, -1\}$ :

\begin{lem} \label{fg2k0} We have
$$\beta^G(\{f_{2k-1} = 0\}) = \beta^G(\{\epsilon x_2^2 + \eta x_1^2 + Q = 0\}) - (k-1) \beta^G(\{\eta x_1^2 + Q = 0\}) + (k-1) \beta^G(\{0\}),$$ 
where the second set in the right member is considered as an algebraic subset of $\mathbb{R}^{n-1}$ and $G$ acts on the considered sets via the involution n\textsuperscript{o}\ref{acty1} or n\textsuperscript{o}\ref{actyp1} depending on the sign of $\eta$, and
$$\beta^G(\{g_k = 0\}) = \beta^G(\{\epsilon x_1^2 + \eta' x_2^2 + Q' = 0\}) - \rho \beta^G(\{\eta' x_2^2 + Q' = 0\}) - \tau \beta^G(\{\eta' x_2^2 + Q' = 0\}) + (k-1) \beta^G(\{0\}),$$ 
where the second and third sets in the right member are considered as algebraic subsets of $\mathbb{R}^{n-1}$, the group $G$ acts on the second set via the involution n\textsuperscript{o}\ref{triv} (trivial action), on the third set via the involution n\textsuperscript{o}\ref{acttout} (change of signs of all coordinates) and
\begin{enumerate}
	\item if $k = 2l + 1$ is odd, then $\rho = \tau = l$ and $G$ acts on the first set in the right member via the involution n\textsuperscript{o}\ref{acty1} or n\textsuperscript{o}\ref{actyp1} depending on the sign of $\epsilon$, 
	\item if $k = 2l$ is even, then $\rho = l$, $\tau = l-1$ and $G$ acts on the first set in the right member via the involution n\textsuperscript{o}\ref{acttout}.
\end{enumerate} 
\end{lem}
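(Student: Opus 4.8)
The plan is to compute both equivariant virtual Poincaré series by a descending induction on the exponent, passing from the degree-$2k$ germ to the corresponding degree-$2$ germ through $k-1$ successive equivariant blowings-up of the origin of $\mathbb{R}^n$, and collecting along the way the contributions of the exceptional loci by additivity of $\beta^G$. This is the equivariant counterpart of the computation of \cite{GF-BNT} (Proof of Lemma 3.3), and it fits the desingularization scheme already used in the proof of Proposition \ref{ypq}. The base case in each family is the degree-$2$ germ, whose zero set is a quadric of the form $Y_{p,q}$ or $Y_{p,q}^{\xi}$, so that its equivariant virtual Poincaré series is furnished by Proposition \ref{ypq}.

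For $f_{2k-1} = \epsilon x_2^{2k} + R$, with $R = \eta x_1^2 + Q$ a nondegenerate quadratic form not involving the high-power variable $x_2$, I would first write, by additivity, $\beta^G(\{f_{2k-1}=0\}) = \beta^G(\{0\}) + \beta^G(\{f_{2k-1}=0\}\setminus\{0\})$. Since the blow-up $\sigma$ of $\mathbb{R}^n$ at the origin is an equivariant isomorphism away from its center, the punctured zero set is equivariantly isomorphic to the complement of the exceptional divisor in the strict transform. In the chart adapted to $x_2$, namely $x_2 = w$, $x_i = w\xi_i$, the equation factors as $w^2(\epsilon w^{2k-2} + R(\xi))$, so the strict transform is governed by $\epsilon w^{2k-2} + R(\xi)$, i.e.\ by the degree-$2(k-1)$ germ of the same family, while the exceptional directions contribute, through the rules of Remark \ref{equivpoinc}, copies of $\{R=0\}$ and of the origin. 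The decisive point is that in this chart the involution acts by $(w,\xi_1,\xi_2,\dots) \mapsto (w,-\xi_1,\xi_2,\dots)$: because $G$ does not touch the high-power variable, the action induced on the transformed quadric remains of the single-sign-change type (case \ref{acty1} or \ref{actyp1}). As the action type is preserved at each of the $k-1$ steps, every reduction contributes the same correction $\beta^G(\{0\}) - \beta^G(\{R=0\})$, and iterating down to the anchoring term $\beta^G(\{\epsilon x_2^2 + R = 0\})$ yields the announced uniform formula.

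For $g_k = \epsilon x_1^{2k} + S$, with $S = \eta' x_2^2 + Q'$, I would run the same induction, the difference being that $G$ now acts by the sign change of $x_1$, which is the high-power variable. In the chart $x_1 = w$, $x_j = w\zeta_j$ one gets $\zeta_j = x_j/x_1 \mapsto -\zeta_j$ and $w \mapsto -w$, so the action becomes the central symmetry (case \ref{acttout}); a further blow-up sends $\zeta_j \mapsto \zeta_j$ while $w \mapsto -w$, turning the action back into a single-sign-change. Thus, contrary to the case of $f$, the action type \emph{alternates} between case \ref{acttout} and case \ref{acty1}/\ref{actyp1} along the $k-1$ blowings-up. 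Recording how many exceptional contributions carry each type according to the parity of $k$ is what produces the coefficients $\rho$, $\tau$ and the odd/even dichotomy, the slices $\{\eta' x_2^2 + Q' = 0\}$ being counted with the trivial action (case \ref{triv}) or with the central symmetry (case \ref{acttout}) exactly as the alternation prescribes.

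I expect the main obstacle to be the precise bookkeeping of the $G$-action through the blowings-up. At each stage one must ensure that the action is linearizable on the normal crossings in the chosen affine charts (as in the construction preceding Theorem \ref{eqzfinv}), determine which of the involutions \ref{acty1}, \ref{actyp1}, \ref{acttout}, \ref{triv} is induced both on the exceptional divisor and on the transformed quadric, and feed the correct action into Proposition \ref{ypq}. This is exactly where the two families part ways—preserved versus alternating action type—and it is also where one must invoke the phenomenon recalled in Remark \ref{equivpoinc}, namely that $\beta^G$ cannot separate two algebraic actions on a sphere sharing a fixed point, in order to collapse several a priori different exceptional contributions into the single quantities $\beta^G(\{\eta' x_2^2 + Q' = 0\})$ appearing in the statement. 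Once the action is correctly tracked, assembling the pieces by inclusion--exclusion over the charts and summing the resulting geometric-type series in $u$ is routine and reproduces the displayed expressions.
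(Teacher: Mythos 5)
You reproduce the paper's proof: one equivariant blow-up of the origin per reduction step, computed in the single chart adapted to the variable carrying the high power, a recursion collecting at each step a correction $\beta^G(\{0\})-\beta^G(\{\mbox{quadratic part}=0\})$, and the bookkeeping of the induced involution (type preserved for $f_{2k-1}$ because $G$ fixes $x_2$, alternating between the central symmetry and a one-variable sign change for $g_k$ because $G$ flips the blown-up variable $x_1$), the parity of $k$ yielding $\rho$ and $\tau$. In approach, charts and action types this is exactly the paper's argument.

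There is, however, one step that you assert in passing and that is precisely the delicate point --- in the paper's own proof as well, so you inherit it rather than diverge from it. You correctly state that $\{f_{2k-1}=0\}\setminus\{0\}$ is equivariantly isomorphic to the strict transform minus the exceptional divisor, but you then compute everything in the chart $x_2=w$, $x_i=w\xi_i$. That chart only reaches the locus $x_2\neq 0$: the part of the strict transform lying over $\{f_{2k-1}=0,\,x_2=0\}\setminus\{0\}=\{\eta x_1^2+Q=0,\,x_2=0\}\setminus\{0\}$ sits in the other charts, and this set is nonempty exactly when the quadratic form $\eta x_1^2+Q$ is isotropic (i.e.\ $pq\neq 0$). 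If one adds this closed stratum back, its contribution $\beta^G(\{\eta x_1^2+Q=0\})-\beta^G(\{0\})$ cancels the per-step correction, and the recursion collapses to $\beta^G(\{f_{2k-1}=0\})=\beta^G(\{f_{2k-3}=0\})$ with no correction terms at all. A concrete test: for $k=2$, $f_3=x_2^4+x_1^2-x_3^2$ in $\mathbb{R}^3$ (so $p=q=1$, action n\textsuperscript{o}\ref{acty1}), decomposing along $\{x_2=0\}$ and trivializing the fibration over $x_2\neq 0$ gives, with Propositions \ref{ypq} and \ref{yxi},
$$\beta^G(\{f_3=0\})=\beta^G(Y_{1,1})+(u-1)\,\beta^G(Y_{1,1}^{-1})=\frac{u^2-u+1}{u-1}+(u^2+1)=\frac{u^3}{u-1},$$
whereas the displayed formula of the lemma yields $\frac{u^3}{u-1}-(u-1)$. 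The same phenomenon occurs for $g_k$, where the omitted slice $\{g_k=0\}\cap\{x_1=0\}$ carries the trivial action and again cancels the exceptional correction (cases n\textsuperscript{o}\ref{acttout} and n\textsuperscript{o}\ref{triv} give equal series by Proposition \ref{ypq}). So the single-chart identification is not a harmless shortcut: a complete argument must account for the part of the zero set inside the hyperplane $\{x_2=0\}$ (resp.\ $\{x_1=0\}$), and doing so changes the stated formulas in the isotropic cases. Note that the downstream use in Corollary \ref{cora2k} survives, since there only the difference between the two formulas is used and the correction terms are identical for $f_{2k-1}$ and $g_k$ under the standing hypotheses; but as a proof of the lemma as stated, your argument has the same gap as the paper's.
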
  

\begin{proof}
We begin with $\beta^G(\{f_{2k-1} = 0\})$. Recall that $f_{2k-1}(x_1,x_2,x_3, \ldots, x_n) = \epsilon x_2^{2k} + \eta x_1^2 + Q(x_3,\ldots,x_n)$. We proceed to an equivariant blowing-up of the algebraic set $\{f_{2k-1} = 0\}$ at the origin of $\mathbb{R}^n$. In the chart $x_2 = u$, $x_i = u v_i$, $i = 1,3, \ldots, n$, the blown-up variety is defined by the equation
$$u^2 f_{2k-3}(v_1,u,v_3, \ldots, v_n) = 0,$$ 
the action of $G$ being given by the involution $(v_1,u,v_3, \ldots, v_n) \mapsto (-v_1,u,v_3, \ldots, v_n)$. We have $\beta^G( \{f_{2k-1} = 0\} \setminus \{0\}) = \beta^G(\{f_{2k-3} = 0 \} \setminus \{u = 0\})$, therefore
$$\beta^G( \{f_{2k-1} = 0\}) = \beta^G(\{f_{2k-3} = 0 \}) - \beta^G(\{\eta v_1^2 + Q(v_3,\ldots,v_n) = 0, u=0\}) + \beta^G(\{0\}).$$
We then obtain the desired result by an induction.
\\

For the computation of $\beta^G(\{g_k = 0\})$, recall that $g_k(x_1,x_2,x_3, \ldots, x_n) = \epsilon x_1^{2k} + \eta' x_2^2 + Q'(x_3, \ldots, x_n)$ and proceed to an equivariant blowing-up of the set $\{g_k = 0\}$ at the origin of $\mathbb{R}^n$, looked at in the chart $x_1 = u$, $x_i = u v_i$, $i = 2,3, \ldots, n$. In this chart, the blown-up variety is defined by 
$$u^2 g_{k-1}(u,v_2,v_3, \ldots, v_n) = 0,$$ 
the action of $G$ being given by the involution $(u,v_2,v_3, \ldots, v_n) \mapsto (-u,-v_2,-v_3, \ldots, -v_n)$, and we have
$$\beta^G( \{g_k = 0\}) = \beta^G(\{g_{k-1} = 0 \}) - \beta^G\{\eta' x_2^2 + Q'(x_3,\ldots,x_n) = 0\}) + \beta^G(\{0\}).$$
One further equivariant blowing-up of $\{g_{k-1} = 0\}$ provides the equation 
$$u^2 g_{k-1}(u,v_2,v_3, \ldots, v_n) = 0,$$ 
the group $G$ acting via the involution $(u,v_2,v_3, \ldots, v_n) \mapsto (-u, v_2, v_3, \ldots, v_n)$. The desired expression is then obtained by an induction.
\end{proof}

\begin{rem} According to proposition \ref{ypq}, the quantity $\beta^G(\{\eta' x_2^2 + Q' = 0\})$ is the same if $G$ acts via the involution n\textsuperscript{o}\ref{triv} or via the involution n\textsuperscript{o}\ref{acttout}. Therefore, in the previous lemma \ref{fg2k0}, we can simply write $\rho \beta^G(\{\eta' x_2^2 + Q' = 0\}) + \tau \beta^G(\{\eta' x_2^2 + Q' = 0\})$ as $(k-1) \beta^G(\{\eta' x_2^2 + Q' = 0\})$ with $G$ acting trivially on the latter set.
\end{rem}

Because $p > q+1$ and $\eta = + 1$ or $q > p+1$ and $\eta = -1$, we have $\beta^G(\{\eta x_1^2 + Q = 0\}) =  \beta^G(\{\eta' x_2^2 + Q' = 0\})$ and we are finally reduced to compare $\beta^G(\{\epsilon x_2^2 + \eta x_1^2 + Q = 0\})$ and $\beta^G(\{\epsilon x_1^2 + \eta' x_2^2 + Q' = 0\})$. The cases where these quantities are different are cases where the germs $f_{2k-1}$ and $g_k$ are not $G$-blow-Nash-equivalent :

\begin{cor} \label{cora2k} 

If $k$ is odd and if $p > q +1$, $\eta = +1$ and $\epsilon = -1$ or $q > p+1$, $\eta = -1$ and $\epsilon = +1$, then the germs $f_{2k-1}$ and $g_k$ are not $G$-blow-Nash-equivalent.
\end{cor}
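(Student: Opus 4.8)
The plan is to separate the germs by showing that the coefficients $\beta^G(A_{2k}(f_{2k-1}))$ and $\beta^G(A_{2k}(g_k))$ of their naive equivariant zeta functions differ, and then to conclude with Theorem~\ref{eqzfinv}. Writing $\beta^G(A_{2k}(h)) = \beta^G({}^0\!A_{2k}(h)) - \beta^G(A^0_{2k}(h))$, the summand $\beta^G({}^0\!A_{2k}(h)) = u^n \beta^G(A^0_{2k-1}(h))$ of Proposition~\ref{oamh} is common to both germs: as $2k-1$ lies strictly below the degree, Proposition~\ref{amf} computes $\beta^G(A^0_{2k-1}(\cdot))$ in terms of the $\beta^G(Y_{p,q})$, which coincide for $f_{2k-1}$ and $g_k$ under the hypotheses $p > q+1,\ \eta = +1$ (or $q > p+1,\ \eta = -1$). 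So the whole comparison is carried by $\beta^G(A^0_{2k}(f_{2k-1}))$ against $\beta^G(A^0_{2k}(g_k))$.

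First I would apply Proposition~\ref{a02k}, whose leading combinatorial term is identical for the two germs, thereby reducing the difference to $\beta^G(\{f_{2k-1}=0\}) - \beta^G(\{g_k=0\})$. Then, inserting Lemma~\ref{fg2k0} with $k = 2l+1$ (so $\rho = \tau = l$) and the remark following it, the terms $(k-1)\beta^G(\{\eta x_1^2 + Q = 0\})$ and $(k-1)\beta^G(\{\eta' x_2^2 + Q' = 0\})$ cancel, since those two series agree in the cases considered. The problem thus collapses to comparing the two quadric fibers $\beta^G(\{\epsilon x_2^2 + \eta x_1^2 + Q = 0\})$ and $\beta^G(\{\epsilon x_1^2 + \eta' x_2^2 + Q' = 0\})$.

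The crux, where all three hypotheses ($k$ odd and the prescribed signs of $\eta,\epsilon$) intervene, is that these two fibers are the \emph{same} algebraic set $Y_{a,b}$ (with $\{a,b\} = \{p,q+1\}$ or $\{p+1,q\}$) but carry \emph{different} $G$-actions. For $f_{2k-1}$ the involution flips the sign of a single coordinate of the group selected by $\eta$ (action n\textsuperscript{o}\ref{acty1} or n\textsuperscript{o}\ref{actyp1}); for $g_k$, because $k$ is odd, Lemma~\ref{fg2k0} makes it flip a single coordinate of the group selected by $\epsilon$. I would then feed these into Proposition~\ref{ypq}, using the remark that exchanges $p,q$ together with the labels n\textsuperscript{o}\ref{acty1}$\leftrightarrow$n\textsuperscript{o}\ref{actyp1} when the first index exceeds the second. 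In the case $p > q+1,\ \eta=+1,\ \epsilon=-1$ the $f$-action flips a coordinate of the majority group of $Y_{p,q+1}$ while the $g$-action flips one of the minority group; Proposition~\ref{ypq} distinguishes precisely these two situations, giving $\tfrac{u^{p+q+1}-u^{p}+u^{q+2}}{u-1}$ for $f_{2k-1}$ versus $\tfrac{u^{p+q+1}-u^{p}+u^{q}}{u-1}$ for $g_k$. The symmetric case $q>p+1,\ \eta=-1,\ \epsilon=+1$ yields a $u^{p+2}$ versus $u^{p}$ discrepancy in the same way.

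Since the two quadric-fiber series are unequal, so are $\beta^G(\{f_{2k-1}=0\})$ and $\beta^G(\{g_k=0\})$, hence $\beta^G(A_{2k}(f_{2k-1})) \neq \beta^G(A_{2k}(g_k))$, and Theorem~\ref{eqzfinv} finishes the proof. The main obstacle is purely bookkeeping: one must track the $G$-action faithfully through the successive blowings-up of Lemma~\ref{fg2k0} and apply the role-exchange of the remark after Proposition~\ref{ypq} correctly, so that the single-coordinate action lands on the minority group for exactly one of the two germs --- which is exactly what the parity of $k$ and the sign constraints arrange.
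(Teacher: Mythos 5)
Your proposal is correct and follows essentially the same route as the paper: the reduction via Propositions \ref{oamh}, \ref{amf} and \ref{a02k} and Lemma \ref{fg2k0} (with the cancellation of the $(k-1)\beta^G(\{\eta x_1^2+Q=0\})$ terms) to the comparison of the two quadric fibers, followed by Proposition \ref{ypq} distinguishing the majority-group flip (for $f_{2k-1}$, involution n\textsuperscript{o}\ref{acty1}) from the minority-group flip (for $g_k$, involution n\textsuperscript{o}\ref{actyp1}) on $Y_{p,q+1}$, is exactly the paper's argument. Your explicit values $\frac{u^{p+q+1}-u^{p}+u^{q+2}}{u-1}$ versus $\frac{u^{p+q+1}-u^{p}+u^{q}}{u-1}$ are also correct.
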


\begin{proof}

Assume that $k$ is odd and suppose that $p > q +1$, $\eta = +1$ and $\epsilon = -1$ (the case $q > p+1$, $\eta = -1$ and $\epsilon = +1$ is symmetric). We have $\{\epsilon x_2^2 + \eta x_1^2 + Q = 0\} = Y_{p,q+1}$, where $Y_{p,q+1}$ is equipped with the involution n\textsuperscript{o}\ref{acty1}, and $\{\epsilon x_1^2 + \eta' x_2^2 + Q' = 0\} = Y_{p,q+1}$, where $Y_{p,q+1}$ is equipped with the involution n\textsuperscript{o}\ref{actyp1}. Then, by proposition \ref{ypq}, $\beta^G(\{\epsilon x_2^2 + \eta x_1^2 + Q = 0\}) \neq \beta^G(\{\epsilon x_1^2 + \eta' x_2^2 + Q' = 0\})$ and $\beta^G(A_{2k}(f_{2k-1})) \neq \beta^G(A_{2k}(g_k))$.
\end{proof}

In the remaining cases, the quantities $\beta^G(\{\epsilon x_2^2 + \eta x_1^2 + Q = 0\})$ and $\beta^G(\{\epsilon x_1^2 + \eta' x_2^2 + Q' = 0\})$ are equal so that $\beta^G(A_{2k}(f_{2k-1})) = \beta^G(A_{2k}(g_k))$. As a consequence, for these cases, we are led to look at the remaining coefficients of the equivariant zeta functions of $f_{2k-1}$ and $g_k$. We begin, in the following paragraph, with the computation of the terms $\beta^G(A_{2k}^{\xi}(f_{2k-1}))$ and $\beta^G(A_{2k}^{\xi}(g_k))$.

\subsection{Computation of $\beta^G(A_{2k}^{\xi}(f_{2k-1}))$ and $\beta^G(A_{2k}^{\xi}(g_k))$} \label{subsectiona2kxi}

We assume we are not in one of the previous cases for which we showed that $f_{2k-1}$ and $g_k$ are not $G$-blow-Nash-equivalent. In particular, we have $\beta^G(A_m(f_{2k-1})) = \beta^G(A_m(g_k))$ and $\beta^G(A^{\xi}_m(f_{2k-1})) = \beta^G(A^{\xi}_m(g_k))$ for $m < 2k$, and $\beta^G(A_{2k}(f_{2k-1})) = \beta^G(A_{2k}(g_k))$.

Now, the same steps of computation as in the proof of proposition \ref{a02k} provide the following formulae for $\beta^G(A_{2k}^{\xi}(f_{2k-1}))$ and $\beta^G(A_{2k}^{\xi}(g_k))$ :

\begin{prop} \label{axi2k} Suppose $k \geq 2$.
\begin{enumerate}
	\item If $pq = 0$, then
$$\beta^G(A_{2k}^{\xi}(f_{2k-1})) = u^{2k - 1 + k(p+q)} \beta^G(\{f_{2k-1} = \xi \}) \mbox{ and } \beta^G(A_{2k}^{\xi}(g_k)) = u^{2k - 1 + k(p+q)} \beta^G(\{g_k = \xi \}).$$	
	\item If $pq \neq 0$, then
$$\beta^G(A^{\xi}_{2k}(f_{2k-1})) = u^{2k-2} u^{(p+q)(k+1)} \beta^G(Y_{p,q} \setminus \{0\}) \frac{u^{(p+q-2)(k-1)}-1}{u^{p+q-2}-1} + u^{k(p+q) + 2k-1} \beta^G(\{f_{2k-1} = \xi\})$$
(the group $G$ acts on $Y_{p,q}$ via the involution n\textsuperscript{o}\ref{acty1} or n\textsuperscript{o}\ref{actyp1} depending on the sign of $\eta$) and
$$\beta^G(A^{\xi}_{2k}(g_k)) = u^{2k-2} u^{(p+q)(k+1)} \beta^G(Y_{p,q} \setminus \{0\}) \frac{u^{(p+q-2)(k-1)}-1}{u^{p+q-2}-1} + u^{k(p+q) + 2k-1} \beta^G(\{g_k = \xi\})$$
(the group $G$ acts trivially on $Y_{p,q}$). 
\end{enumerate}
\end{prop}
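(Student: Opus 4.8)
The plan is to reproduce, essentially verbatim, the argument given for Proposition \ref{a02k}, the only change being that the right-hand side of the last defining equation is $\xi$ instead of $0$. Writing an arc $\gamma \in \mathcal{L}_{2k}$ in the coordinates $(a_i,c_i)$ introduced in the proof of Proposition \ref{amf}, I would first record that $\gamma \in A^{\xi}_{2k}(f_{2k-1})$ if and only if
\[
\begin{cases}
Q_{p,q}(c_1) = 0, \\
\Phi_{p,q}(c_1, c_2) = 0, \\
\cdots \\
\sum_{t= 1}^{k-1} \Phi_{p,q}(c_t,c_{2k-1-t}) = 0, \\
\epsilon a_1^{2k} + Q_{p,q}(c_k) + \sum_{t= 1}^{k-1} \Phi_{p,q}(c_t,c_{2k-t}) = \xi.
\end{cases}
\]
Since $f_{2k-1}$ is invariant and the target $\mathbb{R}$ carries the trivial $G$-action, the fixed value $\xi \in \{-1,+1\}$ is unchanged by $G$, so $A^{\xi}_{2k}(f_{2k-1})$ is again globally $G$-stable and all the additivity and equivariant fibration steps of Proposition \ref{a02k} apply without modification.

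The key observation is that the shift by $\xi$ affects only the final equation of the system. The induction carried out in the proof of Proposition \ref{a02k} successively eliminates the conditions $Q_{p,q}(c_1)=0$, $\Phi_{p,q}(c_1,c_2)=0$, and so on, which all come from the first $2k-1$ equations; these are literally identical in the two systems. Consequently the $\beta^G(Y_{p,q}\setminus\{0\})$ contributions and their attached powers of $u$ are produced exactly as before, yielding the same prefactor $u^{2k-2} u^{(p+q)(k+1)} \frac{u^{(p+q-2)(k-1)}-1}{u^{p+q-2}-1}$. The induction terminates at the set $C_2^{\xi}$ described by $\epsilon a_1^{2k} + Q_{p,q}(c_1) = \xi$, with $c_2$ and $a_2$ free; after renaming $a_1$ as $x_2$ and the components of $c_1$ as the remaining variables, $C_2^{\xi}$ is equivariantly isomorphic to $\{f_{2k-1}=\xi\}$ times a trivial affine factor. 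This replaces the terminal contribution $u^{k(p+q)+2k-1}\beta^G(\{f_{2k-1}=0\})$ by $u^{k(p+q)+2k-1}\beta^G(\{f_{2k-1}=\xi\})$, giving the stated formula. The computation for $g_k$ is identical, the only difference being that $G$ acts trivially on $Y_{p,q}$, exactly as in Proposition \ref{a02k}. In the degenerate case $pq=0$, the conditions $Q_{p,q}(c_1)=\cdots=Q_{p,q}(c_{k-1})=0$ force $c_1,\dots,c_{k-1}$ to vanish, the system collapses to $\epsilon a_1^{2k}+Q_{p,q}(c_k)=\xi$, i.e.\ to $\{f_{2k-1}=\xi\}$ with the remaining variables free, which produces the first formula.

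I do not expect a genuine obstacle here, since the proposition is a routine adaptation of Proposition \ref{a02k}. The only two points that deserve explicit verification are, first, that introducing $\xi$ on the right-hand side preserves equivariance — which it does, because $\xi$ is a $G$-fixed point of the (trivially acted-upon) target — and, second, that the terminal set $C_2^{\xi}$ is correctly identified, up to equivariant isomorphism, with $\{f_{2k-1}=\xi\}$ carrying the involution n\textsuperscript{o}\ref{acty1} or n\textsuperscript{o}\ref{actyp1} prescribed by the sign of $\eta$ (respectively the trivial action, for $g_k$). Once these are checked, the bookkeeping of free variables and powers of $u$ transcribes unchanged from the proof of Proposition \ref{a02k}.
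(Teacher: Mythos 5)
Your proposal is correct and follows exactly the paper's own route: the paper states this proposition with the single remark that ``the same steps of computation as in the proof of proposition \ref{a02k}'' apply, which is precisely the adaptation you carry out (identical induction on the first $2k-1$ equations, terminal set $\epsilon a_1^{2k}+Q_{p,q}(c_1)=\xi$ identified equivariantly with $\{f_{2k-1}=\xi\}$, and the collapsed system in the $pq=0$ case). Your two explicit verification points (equivariance of the $\xi$-level set and identification of the terminal set with its $G$-action) are exactly the right ones, so nothing is missing.
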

 
As in the previous paragraph \ref{a2kfg}, we are reduced to consider the quantities $\beta^G(\{f_{2k-1} = \xi \})$ and $\beta^G(\{g_k = \xi \})$. We give below the first steps of computation of these equivariant virtual Poincar\'e series for all $k \geq 2$, $(p,q) \in \mathbb{N}^2 \setminus \{(0,0)\}$ and $\eta \in \{1, -1\}$. We write $f_{2k-1} = \epsilon x_2^{2k} + \eta x_1^2 + Q =  \epsilon x_2^{2k} + \sum_{i = 1}^p y_i^2 - \sum_{j=1}^q y_{p+j}^2$ and $g_{k} = \epsilon x_1^{2k} + \eta' x_2^2 + Q' = \epsilon x_1^{2k} + \sum_{i = 1}^p y_i^2 - \sum_{j=1}^q y_{p+j}^2$. Then :

\begin{lem} \label{fgxi} We have
$$\beta^G(\{f_{2k-1} =\xi \}) =
\begin{cases}
u^{q+2} \frac{u^{p-1}-1}{u-1} + u^{p-1} \beta^G\left(\left\{\epsilon x_2^{2k} + y_1^2 - y_{p+1}^2 - \sum_{j = 2p+1}^{p+q} y_j^2 = \xi \right\}\right) & \mbox{ if } 0 < p < q, \\
u^{p+2} \frac{u^{q-1}-1}{u-1} + u^{q-1} \beta^G\left(\left\{\epsilon x_2^{2k} + y_1^2 - y_{p+1}^2 + \sum_{j = q+1}^{p} y_j^2 = \xi \right\}\right) & \mbox{ if } 0 < q < p, \\
u^{p+2} \frac{u^{p-1}-1}{u-1} + u^{p-1} \beta^G\left(\left\{\epsilon x_2^{2k} + y_1^2 - y_{p+1}^2 = \xi \right\}\right) & \mbox{ if } p = q, \\
\beta^G\left(\left\{\epsilon x_2^{2k} - y_{1}^2 - \sum_{j = 2}^{q} y_j^2 = \xi \right\}\right) & \mbox{ if } p = 0, \\
\beta^G\left(\left\{\epsilon x_2^{2k} + y_{1}^2 + \sum_{i = 2}^{p} y_i^2 = \xi \right\}\right) & \mbox{ if } q = 0,
\end{cases}
$$
the group $G$ acting only changing the sign of $y_1$ or $y_{p+1}$ depending on the sign of $\eta$, and
$$\beta^G(\{g_k = \xi\}) =
\begin{cases} u^{q+1} \frac{u^p - 1}{u-1} + u^p \beta^G\left(\left\{\epsilon x_1^{2k} - \sum_{j = 2p+1}^{p+q} y_j^2 = \xi \right\}\right) & \mbox{ if } 0 < p < q, \\
u^{p+1} \frac{u^q - 1}{u-1} + u^q \beta^G\left(\left\{\epsilon x_1^{2k} + \sum_{j = q+1}^{p} y_j^2 = \xi \right\}\right) & \mbox{ if } 0 < q < p, \\
u^{p+1} \frac{u^p - 1}{u-1} + u^p  \beta^G\left(\left\{\epsilon x_1^{2k} = \xi \right\}\right) & \mbox{ if } p = q, \\
\beta^G\left(\left\{\epsilon x_1^{2k} - \sum_{j = 1}^{q} y_j^2 = \xi \right\}\right) & \mbox{ if } p = 0, \\
\beta^G\left(\left\{\epsilon x_1^{2k} + \sum_{i = 1}^{p} y_i^2 = \xi \right\}\right) & \mbox{ if } q = 0,
\end{cases}
$$
the group $G$ acting only changing the sign of $x_1$. 
\end{lem}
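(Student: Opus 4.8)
The plan is to mimic, almost verbatim, the argument used in the proof of Proposition~\ref{ypq}: an equivariant linear change of variables turning pairs of opposite squares $y_i^2 - y_{p+i}^2$ into hyperbolic terms $u_i v_i$, followed by additivity of $\beta^G$ and an induction on the number of such pairs. The only new ingredients compared with Proposition~\ref{ypq} are the inhomogeneous monomial $\epsilon x_2^{2k}$ (resp. $\epsilon x_1^{2k}$) and the value $\xi$ in place of $0$ on the right-hand side; as we will see, neither affects the combinatorics of the reduction, but the extra coordinate $x_2$ (resp. $x_1$) acts as a free spectator and contributes one additional power of $u$ to every term.

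For $\beta^G(\{f_{2k-1}=\xi\})$ I would first recall that $G$ acts on the $y$-coordinates only by changing the sign of $y_1$ (case n\textsuperscript{o}\ref{acty1}) or of $y_{p+1}$ (case n\textsuperscript{o}\ref{actyp1}). For this reason I reserve the hyperbolic pair $(y_1,y_{p+1})$ and apply the change of variables $u_i=y_i+y_{p+i}$, $v_i=y_i-y_{p+i}$ only to the $\min(p,q)-1$ indices $i\geq 2$ that can be paired, on which the induced action of $G$ is trivial, so the map is equivariant. Writing, by additivity, $\beta^G(\{f_{2k-1}=\xi\})=\beta^G(\{f_{2k-1}=\xi\}\cap\{u_2\neq 0\})+\beta^G(\{f_{2k-1}=\xi\}\cap\{u_2=0\})$ and observing that on $\{u_2\neq 0\}$ the variable $v_2$ is determined equivariantly by the remaining free variables—among which $x_2$—the first piece is equivariantly isomorphic to $\mathbb{R}^*\times\mathbb{R}^{p+q-1}$, evaluated via remark~\ref{equivpoinc}. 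Iterating over the paired indices produces a geometric sum whose top term is $u^{p+q}$, one power of $u$ above the analogous term in Proposition~\ref{ypq} precisely because of the free coordinate $x_2$; this accounts for the exponents $u^{q+2}$ and $u^{p+2}$ in the statement. Once the paired indices are exhausted, setting them all to $0$ frees the corresponding $v_i$ (the factor $u^{p-1}$ or $u^{q-1}$) and leaves the residual set $\{\epsilon x_2^{2k}+y_1^2-y_{p+1}^2-\cdots=\xi\}$ carrying the sign action on $y_1$ or $y_{p+1}$, which is exactly the remaining term.

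For $\beta^G(\{g_k=\xi\})$ the single difference is that $G$ now acts nontrivially only on $x_1$ and trivially on all the $y$-coordinates. Hence no hyperbolic pair must be reserved, and I pair all $\min(p,q)$ available couples $y_i\leftrightarrow y_{p+i}$, the spectator this time being $x_1$. The same additivity-plus-induction argument then yields the coefficients $u^{q+1}\frac{u^p-1}{u-1}$, $u^{p+1}\frac{u^q-1}{u-1}$, etc., together with the stated residual sets, the boundary cases $p=0$ and $q=0$, where no pair is available, being read off directly since the set is then already in the reduced form.

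The main obstacle—and the conceptual reason the two lists of formulae differ—is bookkeeping the equivariance at each reduction step. The key point is that one cannot pair the $G$-active coordinate: if $G$ changes the sign of $y_1$ while we set $u_1=y_1+y_{p+1}$, $v_1=y_1-y_{p+1}$, then $G$ sends $u_1\mapsto -v_1$ and $v_1\mapsto -u_1$, an off-diagonal action under which the subsequent step ``solve $v_1$ from $u_1\neq 0$'' is no longer an equivariant isomorphism of the required form. Reserving the pair $(y_1,y_{p+1})$ in the $f$-case, versus pairing everything in the $g$-case where the $y$-action is trivial, is precisely what forces one fewer pairing (and the surviving term $y_1^2-y_{p+1}^2$) for $f_{2k-1}$, hence the systematic discrepancy between the two families of expressions.
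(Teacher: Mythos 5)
Your proposal is correct and follows essentially the same route as the paper: the paper also reduces $\{f_{2k-1}=\xi\}$ by the hyperbolic change of variables $u_i=y_i+y_{i+p}$, $v_i=y_i-y_{i+p}$ applied only for $i=2,\ldots,p$ (reserving the $G$-active pair $(y_1,y_{p+1})$), stratifies by the $G$-stable sets $\{u_2=\cdots=u_i=0,\,u_{i+1}\neq 0\}$, and pairs all indices $i=1,\ldots,p$ for $g_k$ where the $y$-action is trivial. Even your closing observation about the off-diagonal action $(u_1,v_1)\mapsto(-v_1,-u_1)$ obstructing the full pairing for $f_{2k-1}$ is exactly the content of the paper's remark following the lemma.
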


\begin{proof}

We focus on the case $2 \leq p \leq q$ and proceed as in the proof of proposition \ref{ypq} : in order to compute $\beta^G(\{f_{2k-1} =\xi \})$, we apply the (equivariant) change of variables $u_i = y_i + y_{i+p}$, $v_i = y_i - y_{i+p}$ for $i = 2, \ldots, p$ and the equation $f_{2k-1} = \xi$ becomes 
$$\epsilon x_2^{2k} + y_1^2 - y_{p+1}^2 + \sum_{i=2}^p u_i v_i - \sum_{j = 2p+1}^{p+q} y_j^2 = \xi.$$ 
Then, as in the proof of proposition \ref{ypq}, we use the stratification by the globally $G$-stable subsets $\{f_{2k-1} =\xi \} \cap \{u_2 = \ldots = u_i = 0, u_{i+1} \neq 0\}$, along with the additivity of the equivariant virtual Poincar\'e series, to obtain the desired formula for $\beta^G(\{f_{2k-1} =\xi \})$.  

As for $\beta^G(\{g_k = \xi\})$, we can apply the equivariant change of variables $u_i = y_i + y_{i+p}$, $v_i = y_i - y_{i+p}$ for $i = 1, \ldots, p$ (the strata $\{g_{k} =\xi \} \cap \{u_1 = \ldots = u_i = 0, u_{i+1} \neq 0\}$ are $G$-globally stable).

\end{proof}

\begin{rem} Regarding the equation $f_{2k-1} =\xi$, we could also have applied the change of variables $u_1 = y_1 + y_{p+1}$, $v_1 = y_1 - y_{p+1}$, provided $G$ acts on these new coordinates via the involution $(u_1, v_1) \mapsto (-v_1, -u_1)$ or $(u_1, v_1) \mapsto (v_1, u_1)$ (depending on the sign of $\eta$). However, the stratum $\{f_{2k-1} =\xi \} \cap \{u_1 \neq 0\}$ is not globally stable under this action of $G$.
\end{rem}

From these formulae, among the remaining cases for which we did not establish that the germs $f_{2k-1}$ and $g_k$ are not $G$-blow-Nash equivalent, we first extract the cases for which $\beta^G(A_{2k}^{\xi}(f_{2k-1})) = \beta^G(A_{2k}^{\xi}(g_k))$ :

\begin{prop} \label{eqa2kxi} If $p > q+1$ and $\eta = \epsilon = +1$ or $q  > p+1$ and $\eta = \epsilon = - 1$, we have $\beta^G(A_{2k}^{\xi}(f_{2k-1})) = \beta^G(A_{2k}^{\xi}(g_k))$.
\end{prop}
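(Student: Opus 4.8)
The plan is to reduce the stated identity, via Proposition \ref{axi2k}, to a single equality of equivariant virtual Poincaré series of one sphere carrying two different algebraic $G$-actions, and then to invoke the fact that $\beta^G$ cannot separate such actions. First I would observe that the two expressions furnished by Proposition \ref{axi2k} differ only in their final summand. In the case $pq\neq 0$ the prefactor $u^{2k-2}u^{(p+q)(k+1)}\frac{u^{(p+q-2)(k-1)}-1}{u^{p+q-2}-1}$ is identical for the two germs, and its companion factor $\beta^G(Y_{p,q}\setminus\{0\})$ is identical too: under the hypothesis $p>q+1$ (resp.\ $q>p+1$) one has $q<p$ (resp.\ $p<q$), so the action attached to $f_{2k-1}$---namely n\textsuperscript{o}\ref{acty1} when $\eta=+1$ and n\textsuperscript{o}\ref{actyp1} when $\eta=-1$---lies among the ``three other cases'' of Proposition \ref{ypq} that share the value of $\beta^G(Y_{p,q})$ with the trivial action carried by $g_k$; since $\beta^G(\{0\})=\frac{u}{u-1}$ does not depend on the action, $\beta^G(Y_{p,q}\setminus\{0\})$ agrees as well. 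When $pq=0$ there is no such term at all. In every case I am thus reduced to establishing the single equality $\beta^G(\{f_{2k-1}=\xi\})=\beta^G(\{g_k=\xi\})$.

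Then I would compute both members by the method of Lemma \ref{fgxi} and push the reduction one step further. Lemma \ref{fgxi} removes the hyperbolic pairs and leaves, for each germ, a level set of the form $\{\epsilon t^{2k}+R=\xi\}$; for $g_k$ the form $R$ is already definite and $G$ moves only the power variable $t$, whereas for $f_{2k-1}$ the form $R$ retains one hyperbolic direction and $G$ moves one square coordinate. I would then pair that remaining negative coordinate with one of the surplus positive coordinates $y_{q+1},\dots,y_p$---available since $p>q$---and stratify on the new variable as in the proofs of Propositions \ref{ypq} and \ref{axi2k}; this stays equivariant precisely because the coordinate used for the pairing is not moved by $G$. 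It is essential here to work only with such algebraic operations, never with a decomposition into Euclidean ``sheets'', since a half-sheet like $\{y>0,\,y^2=B\}$ is not an $\mathcal{AS}$ set and is not amenable to additivity. Tracking the prefactors, both computations collapse to the same comparison; writing $d:=|p-q|\geq 2$, it remains to prove
$$\beta^G\big(\{\epsilon t^{2k}+y_1^2+\cdots+y_d^2=\xi\},\,G\text{ on }y_1\big)=\beta^G\big(\{\epsilon t^{2k}+y_1^2+\cdots+y_d^2=\xi\},\,G\text{ on }t\big),$$
the two sides being the series of one and the same variety $W_\xi$ equipped with two different algebraic involutions.

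Finally I would use the hypothesis $\epsilon=\eta$. This sign condition is exactly what keeps the surviving quadratic part definite, so that $W_\xi$ is compact: for one value of $\xi$ it is empty (both series vanish), and for the other it is a topological sphere. On that sphere the two involutions are the reflections fixing $\{y_1=0\}$ and $\{t=0\}$ respectively; both have non-empty fixed locus, a topological sphere of dimension $d-1$ whose virtual Poincaré polynomial $1+u^{d-1}$ is the same in either case. Since the equivariant virtual Poincaré series cannot distinguish two algebraic actions on a single sphere as soon as there is at least one fixed point, the two series coincide, giving $\beta^G(\{f_{2k-1}=\xi\})=\beta^G(\{g_k=\xi\})$ and hence the proposition.

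I expect the genuine difficulty to lie in this middle reduction: carrying the computation down to a \emph{definite} quadric while every step remains equivariant and admissible for $\beta^G$, and verifying that the prefactors generated by the successive stratifications cancel so that the $f$- and $g$-computations truly land on the \emph{same} sphere. Conceptually this is where the hypothesis $\epsilon=\eta$ is indispensable, since it alone forces the relevant quadric to be a sphere rather than the non-compact hyperboloid that appears when $\epsilon=-\eta$---in that opposite regime $\beta^G$ does separate the two actions, which is precisely the source of the non-equivalence in Corollary \ref{cora2k}.
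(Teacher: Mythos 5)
Your proposal is correct and follows essentially the same route as the paper: reduce via Proposition \ref{axi2k} to comparing $\beta^G(\{f_{2k-1}=\xi\})$ and $\beta^G(\{g_k=\xi\})$, use Lemma \ref{fgxi} plus one further equivariant hyperbolic-pair elimination (legitimate since $G$ does not move the paired coordinates) to land both computations on the same definite quadric $\{\epsilon t^{2k}+y_1^2+\cdots+y_d^2=\xi\}$ carrying two different involutions, and conclude because that set is either empty or a compact nonsingular variety equivariantly homeomorphic to a sphere with non-empty fixed locus, where $\beta^G$ cannot distinguish the actions (Remark \ref{equivpoinc}). The paper's own proof is exactly this, carried out for the representative case $p>q+1$, $q\neq 0$, $\eta=\epsilon=+1$.
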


\begin{proof}
Similarly to the previous proofs, we focus on the case $p > q+1$, $q \neq 0$ and $\eta = \epsilon = + 1$. Then $\beta^G(\{f_{2k-1} =\xi \}) = u^{p+2} \frac{u^{q-1}-1}{u-1} + u^{q-1} \beta^G\left(\left\{+ x_2^{2k} + y_1^2 - y_{p+1}^2 + \sum_{j = q+1}^{p} y_j^2 = \xi \right\}\right)$. On the latter set, the action of $G$ only changes the sign of $y_1$, so that we can use the equivariant change of variables $u = y_{q+1} + y_{p+1}$, $v = y_{q+1} - y_{p+1}$ in order to obtain the equality 
$$\beta^G(\{f_{2k-1} =\xi \}) = u^{p+1} \frac{u^q - 1}{u-1} + u^q  \beta^G\left(\left\{+ x_2^{2k} + y_1^2 + \sum_{j = q+2}^{p} y_j^2 = \xi \right\}\right).$$
Therefore $\beta^G(A_{2k}^{\xi}(f_{2k-1})) = \beta^G(A_{2k}^{\xi}(g_k))$ if and only if $\beta^G\left(\left\{+ x_2^{2k} + y_1^2 + \sum_{j = q+2}^{p} y_j^2 = \xi \right\}\right) = \beta^G\left(\left\{+ x_1^{2k} + \sum_{j = q+1}^{p} y_j^2 = \xi \right\}\right)$ (recall that, on the latter set, the action of $G$ only changes the sign of $x_1$). 

Now, if $\xi = -1$, both sets are empty and if $\xi = +1$, they are compact, nonsingular and equivariantly homeomorphic to spheres having a non-empty fixed point set. As a consequence, for $\xi = \pm 1$, $\beta^G\left(\left\{+ x_2^{2k} + y_1^2 + \sum_{j = q+2}^{p} y_j^2 = \xi \right\}\right) = \beta^G\left(\left\{+ x_1^{2k} + \sum_{j = q+1}^{p} y_j^2 = \xi \right\}\right)$ (see remark \ref{equivpoinc}) and $\beta^G(A_{2k}^{\xi}(f_{2k-1})) = \beta^G(A_{2k}^{\xi}(g_k))$.
\end{proof}

Finally, we give the cases for which the equality $\beta^G(A_{2k}^{\xi}(f_{2k-1})) = \beta^G(A_{2k}^{\xi}(g_k))$ depends on the equality of two equivariant virtual Poincar\'e series :

\begin{prop} \label{a2kxidepend} \begin{itemize} 
 \item If $k$ is even and if $p > q + 1$, $\eta = +1$ and $\epsilon = -1$, the equality $\beta^G(A_{2k}^{\xi}(f_{2k-1})) = \beta^G(A_{2k}^{\xi}(g_k))$ is true if and only if the equivariant virtual Poincar\'e series of the algebraic subsets $\left\{ - x_2^{2k} + y^2 + \sum_{i = 1}^{K-1} y_i^2 = \xi \right\} \subset \mathbb{R}^{K+1}$, $K := p-q$, equipped with the action of $G$ only changing the sign of $y$, and $\left\{ - x_1^{2k} + \sum_{i = 1}^{K} z_i^2 = \xi \right\}\subset \mathbb{R}^{K+1}$, equipped with the action of $G$ only changing the sign of $x_1$, are equal.
 	\item If $k$ is even and if $q > p+1$, $\eta = -1$ and $\epsilon = +1$, we have $\beta^G(A_{2k}^{\xi}(f_{2k-1})) = \beta^G(A_{2k}^{\xi}(g_k))$ if and only if $\beta^G\left(\left\{ x_2^{2k} - y^2 - \sum_{i = 1}^{K-1} y_i^2 = \xi \right\}\right) = \beta^G\left(\left\{ x_1^{2k} - \sum_{i = 1}^{K} z_i^2 = \xi \right\} \right)$.
\end{itemize}
\end{prop}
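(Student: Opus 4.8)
The plan is to reduce the equality of the signed coefficients to the comparison of $\beta^G$ of the two explicit sets appearing in the statement, in the same spirit as the proof of Proposition \ref{eqa2kxi}, while observing that the final simplification available there (to spheres) breaks down when $\epsilon=-1$. I focus on the first bullet; the second is obtained by exchanging the roles of $p$ and $q$ together with the actions n\textsuperscript{o}\ref{acty1} and n\textsuperscript{o}\ref{actyp1}.

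First I would record that the two formulae produced by Proposition \ref{axi2k} for $\beta^G(A_{2k}^{\xi}(f_{2k-1}))$ and $\beta^G(A_{2k}^{\xi}(g_k))$ share the same leading summand. Indeed that summand only involves $\beta^G(Y_{p,q}\setminus\{0\})$, carried by the action n\textsuperscript{o}\ref{acty1} on the $f$-side (since $\eta=+1$) and by the trivial action n\textsuperscript{o}\ref{triv} on the $g$-side; since $p>q+1$, in particular $p>q$, the coordinate negated by n\textsuperscript{o}\ref{acty1} lies in the majority (positive) group, so by Proposition \ref{ypq} and its subsequent remark the value $\beta^G(Y_{p,q})$ coincides with the one for the trivial action. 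Hence the identity $\beta^G(A_{2k}^{\xi}(f_{2k-1}))=\beta^G(A_{2k}^{\xi}(g_k))$ is equivalent to the single equality $\beta^G(\{f_{2k-1}=\xi\})=\beta^G(\{g_k=\xi\})$, and the same reduction holds verbatim when $q=0$ through the line $pq=0$ of Proposition \ref{axi2k}.

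Next I would expand both members via Lemma \ref{fgxi} in the regime $\epsilon=-1$, $\eta=+1$, $0<q<p$ (the subcase $q=0$ already being in final form). To the remaining $f$-set $\{-x_2^{2k}+y_1^2-y_{p+1}^2+\sum_{j=q+1}^{p}y_j^2=\xi\}$, on which $G$ only negates $y_1$, I would apply the equivariant change of variables $u=y_p+y_{p+1}$, $v=y_p-y_{p+1}$ (the action being trivial on $u,v$), turning the lone negative square into the product $uv$, and then stratify by $\{u\neq0\}\sqcup\{u=0\}$ as in the proof of Proposition \ref{eqa2kxi}. On $\{u\neq0\}$ the coordinate $v$ is determined equivariantly, leaving an affine contribution (a single power of $u$, namely $u^{p-q+2}$); on $\{u=0\}$ the coordinate $v$ is free and produces $u\,\beta^G(\{-x_2^{2k}+y_1^2+\sum_{j=q+1}^{p-1}y_j^2=\xi\})$, which, after renaming, is exactly $u\,\beta^G$ of the first set of the statement, with $K:=p-q$. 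A short bookkeeping check then shows that the polynomial prefactors coming from Lemma \ref{fgxi} cancel against each other, so that both $\beta^G(\{f_{2k-1}=\xi\})$ and $\beta^G(\{g_k=\xi\})$ equal the same polynomial plus, respectively, $u^{q}\beta^G(\{-x_2^{2k}+y^2+\sum_{i=1}^{K-1}y_i^2=\xi\})$ and $u^{q}\beta^G(\{-x_1^{2k}+\sum_{i=1}^{K}z_i^2=\xi\})$; here the remaining $g$-set is already the second set of the statement. This yields the announced equivalence, and the second bullet follows symmetrically.

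The delicate point, and the reason the statement can only be conditional, is precisely what distinguishes this case from Proposition \ref{eqa2kxi}: there, with $\epsilon=+1$, the reduced sets were compact nonsingular and equivariantly homeomorphic to spheres carrying a fixed point, so remark \ref{equivpoinc} forced their $\beta^G$ to agree irrespective of the action. Here $\epsilon=-1$ makes the two reduced sets noncompact, and the actions genuinely differ, $G$ negating a quadratic coordinate $y$ on the first set and the coordinate $x_1$ entering with the $2k$-th power on the second, so no elementary equivariant isomorphism between them is at hand. I therefore expect the main obstacle to be not the reduction itself, which is routine bookkeeping parallel to Lemma \ref{fgxi} and Proposition \ref{eqa2kxi}, but the impossibility of deciding the residual comparison, which is exactly the question of invariance of $\beta^G$ under equivariant bijection with $\mathcal{AS}$ graph raised in the introduction.
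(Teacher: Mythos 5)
Your proposal is correct and follows essentially the same route as the paper: the paper's proof simply invokes ``the same computation as in the proof of Proposition \ref{eqa2kxi}'', which is exactly the reduction you carry out (Proposition \ref{axi2k} to pass to $\beta^G(\{f_{2k-1}=\xi\})$ versus $\beta^G(\{g_k=\xi\})$, then Lemma \ref{fgxi} plus the equivariant hyperbolic change of variables and stratification to strip off the common polynomial part), stopping where the sphere argument of the $\epsilon=+1$ case is no longer available. Your choice of pairing $y_p$ with $y_{p+1}$ rather than $y_{q+1}$ with $y_{p+1}$ is an immaterial variation, and your explicit treatment of the $q=0$ case and of the equality of the leading $\beta^G(Y_{p,q}\setminus\{0\})$ summands only makes explicit what the paper leaves implicit.
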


\begin{proof} If we focus on the case $p > q + 1$, $q \neq 0$, $\eta = +1$ and $\epsilon = -1$, the same computation as in the proof of the previous proposition \ref{eqa2kxi} provides the equivalence $\beta^G(A_{2k}^{\xi}(f_{2k-1})) = \beta^G(A_{2k}^{\xi}(g_k))$ if and only if $\beta^G\left(\left\{- x_2^{2k} + y_1^2 + \sum_{j = q+2}^{p} y_j^2 = \xi \right\}\right) = \beta^G\left(\left\{- x_1^{2k} + \sum_{j = q+1}^{p} y_j^2 = \xi \right\}\right)$.  
\end{proof}

\begin{rem} \label{remevpsas} \begin{enumerate}
	\item Recall that we showed in corollary \ref{cora2k} that the germs $f_{2k-1}$ and $g_k$ are not $G$-blow-Nash equivalent in the case $k$ odd and $p > q +1$, $\eta = +1$, $\epsilon = -1$ or $q > p+1$, $\eta = -1$, $\epsilon = +1$ (notice that in the previous proof of proposition \ref{a2kxidepend}, we did not use the fact that $k$ was even). 
	\item Forgetting the action of $G$, the virtual Poincar\'e polynomials of the algebraic subsets $\left\{ x^{2k} - \sum_{i = 1}^{K} y_i^2 = \xi \right\}$, $\xi = \pm 1$, of $\mathbb{R}^{K+1}$ can be computed using the invariance of the virtual Poincar\'e polynomial under bijection with $\mathcal{AS}$ graph (see \cite{MCP}). However, we do not know if the equivariant virtual Poincar\'e series is invariant under equivariant bijection with $\mathcal{AS}$ graph. 
\end{enumerate}
\end{rem}

As a consequence of the results of this subsection \ref{subsectiona2kxi}, we will then consider the other coefficients $\beta^G(A_M(f_{2k-1}))$ and $\beta^G(A_M(g_k))$, respectively $\beta^G(A^{\xi}_M(f_{2k-1}))$ and $\beta^G(A^{\xi}_M(g_k))$, $M> 2k$, of the equivariant zeta functions of $f_{2k-1}$ and $g_k$, in the cases of propositions \ref{eqa2kxi} and \ref{a2kxidepend}. In the next paragraph, we will show that the comparison of these quantities reduces to the comparison of the equivariant virtual Poincar\'e series of $\{f_{2k-1} =\xi \}$ and $\{g_{k} =\xi \}$ as well.

\subsection{The last terms of the equivariant zeta functions} \label{lasttermsfg}

Suppose $p > q +1$, $\eta = \epsilon = +1$ or $k$ even, $p > q+1$, $\eta = +1$, $\epsilon = -1$. The following results will also be true for the respective symmetric cases.

We first establish the equality between the last coefficients of the naive equivariant zeta functions of $f_{2k-1}$ and $g_k$ (and therefore the equality of $Z_{f_{2k-1}}^G(u,T)$ and $Z_{g_k}^G(u,T)$) : 

\begin{prop} \label{aM02k} For all $M > 2k$, we have $\beta^G(A_M(f_{2k-1})) = \beta^G(A_M(g_k))$.
\end{prop}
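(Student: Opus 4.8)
The goal is to show $\beta^G(A_M(f_{2k-1})) = \beta^G(A_M(g_k))$ for every $M > 2k$, where $f_{2k-1}$ and $g_k$ are assumed to share the same quadratic part $Q_{p,q}$ in the remaining cases. The approach mirrors the structure already used for the coefficients of degree $< 2k$ and $= 2k$: I will decompose $\beta^G(A_M) = \beta^G({}^0\!A_M) - \beta^G(A^0_M)$ as in paragraph \ref{firsttermsab}, so it suffices to compare $\beta^G(A^0_M(f_{2k-1}))$ with $\beta^G(A^0_M(g_k))$ (the quantities ${}^0\!A_M$ reduce to $A^0_{M-1}$ by proposition \ref{oamh}, so an induction on $M$ handles them automatically once the $A^0$-terms agree). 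I would therefore set up the defining system for $A^0_M$ exactly as in the proof of proposition \ref{a02k}, writing an arc $\gamma \in \mathcal{L}_M$ in coordinates $(a_i, c_i)$ and listing the equations $Q_{p,q}(c_1)=0$, $\Phi_{p,q}(c_1,c_2)=0$, \ldots, with the degree-$2k$ monomial $\epsilon a_1^{2k}$ (for $f_{2k-1}$) or the degree-$2$ monomial in the appropriate slot (for $g_k$) entering only in the equation of order $2k$.

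\textbf{Key steps.}
First I would run the same inductive stripping of the variables $c_1^1 = \cdots = c_1^p = 0$ that was carried out in proposition \ref{a02k}, peeling off one factor of $u^{2+p+q}$ and a factor $\beta^G(Y_{p,q}\setminus\{0\})$ at each stage. Since $Y_{p,q}$ carries the action n\textsuperscript{o}\ref{acty1}/\ref{actyp1} for $f_{2k-1}$ and the trivial action for $g_k$, I must verify that these $Y_{p,q}$-contributions are \emph{identical} in the two computations: this is exactly the hypothesis $p>q+1$, $\eta=+1$ (or the symmetric case), under which proposition \ref{ypq} gives the same $\beta^G(Y_{p,q})$ regardless of the action. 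After the induction reduces the system down to an order-$2k$ core equation, the only remaining discrepancy between $f_{2k-1}$ and $g_k$ is concentrated in a ``tail'' set described essentially by $\{f_{2k-1} = 0\}$ versus $\{g_k = 0\}$ together with some free affine variables. Thus the comparison of $\beta^G(A^0_M)$ collapses, just as in paragraph \ref{a2kfg}, to the comparison of $\beta^G(\{f_{2k-1}=0\})$ and $\beta^G(\{g_k=0\})$.

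\textbf{The main obstacle, and how to close it.}
The crux is that in the range $M > 2k$ the degree-$2k$ monomial no longer sits in the \emph{last} equation of the system; it appears in an intermediate equation, so one must check that the extra equations of orders $2k+1, \ldots, M$ contribute identically for the two germs. Here the key observation is that these higher-order equations are $\mathbb{R}$-linear in fresh variables whose signs are governed only by the common quadratic part $Q_{p,q}$, so the equivariant structure of the additional strata coincides for $f_{2k-1}$ and $g_k$; the induction of proposition \ref{oamh} then propagates the agreement upward in $M$. The final equality therefore rests on the fact, already granted in the cases of propositions \ref{eqa2kxi} and \ref{a2kxidepend}, that $\beta^G(\{f_{2k-1}=0\}) = \beta^G(\{g_k=0\})$: by lemma \ref{fg2k0} together with the hypothesis $p>q+1$, $\eta=+1$ (so that $\beta^G(\{\eta x_1^2 + Q = 0\}) = \beta^G(\{\eta' x_2^2 + Q' = 0\})$ and, in the present cases, $\beta^G(\{\epsilon x_2^2 + \eta x_1^2 + Q = 0\}) = \beta^G(\{\epsilon x_1^2 + \eta' x_2^2 + Q' = 0\})$), these two series are equal. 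I expect the bookkeeping of the free variables in the higher-order equations to be the most delicate part, but it is routine additivity once the equivariant isomorphism types of the strata are identified.
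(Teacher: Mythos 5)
Your proposal follows the same skeleton as the paper's proof: reduce to comparing $\beta^G(A^0_M(f_{2k-1}))$ and $\beta^G(A^0_M(g_k))$ via proposition \ref{oamh} and additivity, strip the $c$-variables as in propositions \ref{amf} and \ref{a02k} to extract contributions of $\beta^G(Y_{p,q}\setminus\{0\})$ (equal for the two germs under the standing hypothesis, by proposition \ref{ypq}), and collapse the remaining comparison to that of $\beta^G(\{f_{2k-1}=0\})$ and $\beta^G(\{g_k=0\})$, which are equal by lemma \ref{fg2k0} in the cases at hand. That identification of the decisive input is exactly right.

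The one step that fails as written is your ``key observation'' that the equations of orders $2k+1,\ldots,M$ are linear in fresh variables, so that the equivariant structure of the additional strata coincides. This is true only on the strata where the core equation $\epsilon a_1^{2k}+Q_{p,q}(c_1)=0$ is solved away from the origin; on the origin stratum ($a_1=0$, $c_1=0$) the subsequent equations are not linear at all: the quadratic equations $Q_{p,q}(\cdot)=0$ restart, and as soon as $M\geq 4k$ a second core equation $\epsilon a_j^{2k}+Q_{p,q}(\cdot)=0$ appears. The paper's proof handles this by iterating the entire process: each pass through the system produces further contributions of $\beta^G(Y_{p,q}\setminus\{0\})$ (from the equations $Q_{p,q}(c_1)=0$) and of $\beta^G(\{f_{2k-1}=0\}\setminus\{0\})$ (from the degree-$2k$ core equations), with coefficients in $\mathbb{Z}[u][[u^{-1}]]$ that are the same for both germs; the conclusion survives because each of these basic quantities is equal for $f_{2k-1}$ and $g_k$ under the hypothesis. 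Note also that the equivariant structures genuinely do not coincide: for $g_k$ the involution acts on the arc coefficients of $x_1$ (the variable carrying the degree-$2k$ monomial), whereas for $f_{2k-1}$ it acts on one coordinate of the $c_i$; what coincides are the equivariant virtual Poincar\'e series of the corresponding strata, using that affine factors contribute $u^d$ times the base for any action (remark \ref{equivpoinc}). So the missing idea is the iteration, not a new lemma; once inserted, your reduction to lemma \ref{fg2k0} closes the proof exactly as in the paper.
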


\begin{proof} Let $M$ be greater than $2k$. We prove that $\beta^G(A^0_M(f_{2k-1})) = \beta^G(A^0_M(g_k))$ (this will give the desired result because of proposition \ref{oamh} and the additivity of the equivariant virtual Poincar\'e series). 

As in the proofs of propositions \ref{amf} and \ref{a02k}, consider the system of equations defining $A^0_M(f_{2k-1})$. The same computations will bring, in the expression of $\beta^G(A^0_M(f_{2k-1}))$, a contribution of (a multiple in $\mathbb{Z}[u][[u^{-1}]]$ of) $\beta^G(Y_{p,q} \setminus \{0\})$ and a contribution of the equivariant virtual Poincar\'e series of a set defined by a system whose first equation is $\epsilon a_1^{2k} + Q_{p,q}(c_1) = 0$. Stratifying this last algebraic set with the subsets $\{c_1^1 = \ldots = c_1^{i-1} = 0, c_1^i \neq 0\}$, $i = 1, \ldots, p+q$, and $\{c_1 = 0\}$ provides a contribution of $\beta^G(\{f_{2k-1} = 0\} \setminus \{0\})$ and a new system where $c_1 = 0$, $a_1 = 0$ and whose first (non trivial) equations are the ones defining $A^0_m(f_{2k-1})$ for $m = min(M-2k, 2k)$.  

As a consequence, we can repeat the same steps of computations on this system and this will give further contributions of $\beta^G(Y_{p,q} \setminus \{0\})$ (provided by the equations $Q_{p,q}(c_1) = 0$) and $\beta^G(\{f_{2k-1} = 0\} \setminus \{0\})$ (provided by the equations $\epsilon a_j^{2k} + Q_{p,q}(c_1) = 0$).

Since these systems and these operations are also valid for the computation of $\beta^G(A^0_M(g_k))$ and because, in the considered cases, the quantities $\beta^G(Y_{p,q})$ are equal for $f_{2k-1}$ and $g_k$ and $\beta^G(\{f_{2k-1} = 0\}) = \beta^G(\{g_{k} = 0\})$, the expressions of $\beta^G(A^0_M(f_{2k-1}))$ and $\beta^G(A^0_M(g_k))$ are identical.
\end{proof}

Similar considerations bring the following results for the last coefficients of the equivariant zeta functions with signs :

\begin{prop} \label{lasttermszetafsignsfg} \begin{enumerate}
	\item If $p > q+1$, $\eta = \epsilon = + 1$, then, for all $M > 2k$, we have $\beta^G(A_{M}^{\xi}(f_{2k-1})) = \beta^G(A_{M}^{\xi}(g_k))$, and consequently $Z_{f_{2k-1}}^{G,\pm}(u,T) = Z_{g_{k}}^{G,\pm}(u,T)$. 
	\item If $k$ is even and if $p > q + 1$, $\eta = +1$ and $\epsilon = -1$, we have the equality $Z_{f_{2k-1}}^{G,\xi}(u,T) = Z_{g_{k}}^{G,\xi}(u,T)$ if and only if $\beta^G\left(\left\{ - x_2^{2k} + y^2 + \sum_{i = 1}^{K-1} y_i^2 = \xi \right\}\right) = \beta^G\left(\left\{ - x_1^{2k} + \sum_{i = 1}^{K} z_i^2 = \xi \right\}\right)$ (the former set is a subset of $\mathbb{R}^{K+1}$ equipped with the action of $G$ only changing the sign of $y$ and the latter set is a subset of $\mathbb{R}^{K+1}$ equipped with the action of $G$ only changing the sign of $x_1$).
	
\end{enumerate}
\end{prop}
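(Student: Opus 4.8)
The plan is to rerun, essentially verbatim, the recursive stratification of the proof of Proposition \ref{aM02k}, keeping track of the sign $\xi$ carried by the highest-order equation of the defining system. Fix $M > 2k$. As in the proofs of Propositions \ref{amf}, \ref{a02k} and \ref{aM02k}, I would write down the system of equations cutting out $A_M^{\xi}(f_{2k-1})$ inside $\mathcal{L}_M$: it is obtained from the system defining $A_M^0(f_{2k-1})$ by replacing, in the order-$M$ equation alone, the right-hand side $0$ by $\xi$, all lower-order equations remaining homogeneous. I would then peel off the lowest-order vector $c_1$ exactly as before, producing the multiples of $\beta^G(Y_{p,q} \setminus \{0\})$ from the quadratic equations $Q_{p,q}(c_1) = 0$ and, upon reaching the order-$2k$ equations, the multiples of $\beta^G(\{f_{2k-1} = 0\} \setminus \{0\})$ from the intermediate equations $\epsilon a_j^{2k} + Q_{p,q}(c_1) = 0$.

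The crucial observation is that none of these intermediate contributions sees $\xi$: they all arise from equations with right-hand side $0$, hence are literally the same polynomials in $\beta^G(Y_{p,q} \setminus \{0\})$ and $\beta^G(\{f_{2k-1} = 0\} \setminus \{0\})$ that appear in the naive computation of $\beta^G(A_M^0(f_{2k-1}))$. Only the deepest residual system, into which the order-$M$ equation descends, carries the value $\xi$, and it contributes $\beta^G(\{f_{2k-1} = \xi\})$ in place of $\beta^G(\{f_{2k-1} = 0\})$ — exactly as is already witnessed for $M = 2k$ by comparing the formulas of Propositions \ref{a02k} and \ref{axi2k}. The identical computation applies to $g_k$. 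Since, in the cases under consideration, the quantities $\beta^G(Y_{p,q})$ agree for $f_{2k-1}$ and $g_k$ and $\beta^G(\{f_{2k-1} = 0\}) = \beta^G(\{g_k = 0\})$ by the argument of Proposition \ref{aM02k}, I conclude that for every $M > 2k$,
$$\beta^G(A_M^{\xi}(f_{2k-1})) = \beta^G(A_M^{\xi}(g_k)) \iff \beta^G(\{f_{2k-1} = \xi\}) = \beta^G(\{g_k = \xi\}).$$

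It then remains to feed in the two regimes. In the case $\eta = \epsilon = +1$, the equality $\beta^G(\{f_{2k-1} = \xi\}) = \beta^G(\{g_k = \xi\})$ has already been established inside Proposition \ref{eqa2kxi} (it is precisely what forces $\beta^G(A_{2k}^{\xi}(f_{2k-1})) = \beta^G(A_{2k}^{\xi}(g_k))$ there, both fibers being compact nonsingular spheres with fixed points), so all coefficients of degree $> 2k$ coincide; combined with the equalities in degrees $< 2k$ and the equality of the degree-$2k$ coefficients from Proposition \ref{eqa2kxi}, this yields $Z_{f_{2k-1}}^{G,\pm}(u,T) = Z_{g_k}^{G,\pm}(u,T)$, proving the first assertion. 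In the case $k$ even, $\eta = +1$, $\epsilon = -1$, the proof of Proposition \ref{a2kxidepend} reduces, via the equivariant change of variables used there, the equality $\beta^G(\{f_{2k-1} = \xi\}) = \beta^G(\{g_k = \xi\})$ to $\beta^G(\{-x_2^{2k} + y^2 + \sum_{i=1}^{K-1} y_i^2 = \xi\}) = \beta^G(\{-x_1^{2k} + \sum_{i=1}^{K} z_i^2 = \xi\})$ with $K = p-q$; since the same condition governs the degree-$2k$ coefficient (Proposition \ref{a2kxidepend}) and the lower degrees always agree, $Z_{f_{2k-1}}^{G,\xi}(u,T) = Z_{g_k}^{G,\xi}(u,T)$ holds if and only if this last equality does.

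The step I expect to be the main obstacle is the bookkeeping that isolates the role of $\xi$: one must verify that, throughout the descent, the value $\xi$ is carried by one and only one equation, and that this equation lands on a relation of the form $\epsilon a^{2k} + Q_{p,q} = \xi$ — so that its contribution is genuinely the fiber $\beta^G(\{f_{2k-1} = \xi\})$ rather than a pure-quadratic fiber $\beta^G(Y_{p,q}^{\xi})$ — while every other contribution remains the homogeneous one of Proposition \ref{aM02k}. The period-$2k$ structure of the system together with the hypothesis $M > 2k$ make this plausible, but confirming it requires following the recursion through the orders with some care, exactly as in the transition from Proposition \ref{a02k} to Proposition \ref{axi2k}.
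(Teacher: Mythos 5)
Your overall strategy is the same as the paper's: rerun the recursion of Proposition \ref{aM02k} on the system defining $A^{\xi}_M$, observe that $\xi$ is carried only by the last equation, and reduce the comparison to fiber series that are then handled by Lemma \ref{fgxi} and the arguments of Propositions \ref{eqa2kxi} and \ref{a2kxidepend}. However, the step you yourself flagged as the main obstacle is exactly where your claim is wrong: it is \emph{not} true that the $\xi$-carrying equation always lands on a relation of the form $\epsilon a^{2k} + Q_{p,q} = \xi$, and consequently your displayed per-$M$ equivalence (``for every $M>2k$, $\beta^G(A_M^{\xi}(f_{2k-1})) = \beta^G(A_M^{\xi}(g_k))$ if and only if $\beta^G(\{f_{2k-1}=\xi\}) = \beta^G(\{g_k=\xi\})$'') is false. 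What actually happens --- this is the content of the paper's proof, parallel to Proposition \ref{amfxi} --- is a trichotomy on $M$: if $M$ is odd, the residual equation has no solutions, so both contributions vanish; if $M$ is even but not a multiple of $2k$, the residual equation is the pure quadratic $Q_{p,q}(c_1) = \xi$, contributing $\beta^G(Y^{\xi}_{p,q})$, which is the same for $f_{2k-1}$ and $g_k$ under the standing hypotheses; only when $2k$ divides $M$ does the residual equation take the form $\epsilon a_j^{2k} + Q_{p,q}(c_1) = \xi$ and contribute the fibers $\beta^G(\{f_{2k-1}=\xi\})$, resp.\ $\beta^G(\{g_k=\xi\})$. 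Hence for $M$ odd, or $M$ even and not a multiple of $2k$, the coefficient equality holds unconditionally, and your ``iff'' fails in one direction.

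The damage is contained, because the implications you actually use are the valid ones: fiber equality implies equality of all coefficients of degree $>2k$ (which, with Proposition \ref{eqa2kxi}, gives assertion 1), and for the forward direction of assertion 2 it suffices to invoke the single coefficient at $M=2k$ from Proposition \ref{a2kxidepend} (or any multiple of $2k$). But executed literally, your verification plan would break down: following the recursion you would find that for most $M$ the final equation is not the fiber equation, and you would need to replace your per-$M$ equivalence by the case analysis above. That case analysis is the missing piece of the proof.
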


\begin{proof} Let $M$ be greater than $2k$. The system defining $A^{\xi}_M(f_{2k-1})$ is obtained by replacing $0$ by $\xi$ in the right member of the last equation of the system defining $A^0_M(f_{2k-1})$. Consequently, the same arguments works as in the proof of previous proposition \ref{aM02k} and $\beta^G(A_{M}^{\xi}(f_{2k-1})) = \beta^G(A_{M}^{\xi}(g_k))$ if and only if the contribution given by the very last equation provided by the computation is the same for $f_{2k-1}$ and $g_k$. 

As in the proof of proposition \ref{amfxi}, if $M$ is odd, this contribution is the equivariant virtual Poincar\'e series of an empty set, and if $M$ is even and not a multiple of $2k$, it is $\beta^G(Y_{p,q}^{\xi})$ : in both cases, $\beta^G(A_{M}^{\xi}(f_{2k-1})) = \beta^G(A_{M}^{\xi}(g_k))$. Finally, if $M$ is a multiple of $2k$, the respective contributions are $\beta^G(\{f_{2k-1} = \xi\})$ and $\beta^G(\{g_k = \xi\})$, hence the result by lemma \ref{fgxi} (see also the proofs of propositions \ref{eqa2kxi} and \ref{a2kxidepend}).
\end{proof}

\subsection{Conclusion}

As a conclusion, we summarize and gather the results of the previous paragraphs in the following theorem :

\begin{theo} Let $k \geq 2$. Suppose that the invariant germs
$$f_{2k-1} = \epsilon x_2^{2k} + \eta x_1^2 + Q \mbox{   and   } g_k = \epsilon x_1^{2k} + \eta' x_2^2 + Q'$$
have, up to permutation of all variables, the same quadratic part, with $p$ signs $+$ and $q$ signs~$-$.

\begin{enumerate}
	\item If  \begin{itemize}
		\item $p \leq q$, $\eta = + 1$ or $q \leq p$, $\eta = -1$,
	        \item $p = q+1$ or $q = p+1$,
	        \item $k$ is odd and if $p > q +1$, $\eta = +1$, $\epsilon = -1$ or $q > p+1$, $\eta = -1$, $\epsilon = +1$,
		      \end{itemize}
		  then $f_{2k-1}$ and $g_k$ are not $G$-blow-Nash equivalent.
	\item If $p > q+1$, $\eta = \epsilon = +1$ or $q  > p+1$, $\eta = \epsilon = - 1$, then $Z_{f_{2k-1}}^{G}(u,T) = Z_{g_{k}}^{G}(u,T)$ and $Z_{f_{2k-1}}^{G,\xi}(u,T) = Z_{g_{k}}^{G,\xi}(u,T)$.
	\item  \begin{itemize} 
		\item If $k$ is even and if $p > q + 1$, $\eta = +1$, $\epsilon = -1$, then $Z_{f_{2k-1}}^{G}(u,T) = Z_{g_{k}}^{G}(u,T)$. Furthermore, $Z_{f_{2k-1}}^{G,\xi}(u,T) = Z_{g_{k}}^{G,\xi}(u,T)$ if and only if $\beta^G\left(\left\{ - x_2^{2k} + y^2 + \sum_{i = 1}^{K-1} y_i^2 = \xi \right\}\right) = \beta^G\left(\left\{ - x_1^{2k} + \sum_{i = 1}^{K} z_i^2 = \xi \right\}\right)$.
	 \item If $k$ is even and if $q > p+1$, $\eta = -1$,  $\epsilon = +1$, then $Z_{f_{2k-1}}^{G}(u,T) = Z_{g_{k}}^{G}(u,T)$. Furthermore, $Z_{f_{2k-1}}^{G,\xi}(u,T) = Z_{g_{k}}^{G,\xi}(u,T)$ if and only if $\beta^G\left(\left\{ x_2^{2k} - y^2 - \sum_{i = 1}^{K-1} y_i^2 = \xi \right\}\right) = \beta^G\left(\left\{ x_1^{2k} - \sum_{i = 1}^{K} z_i^2 = \xi \right\} \right)$.
		\end{itemize}
\end{enumerate}
\end{theo}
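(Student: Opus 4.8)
The plan is to treat this statement as a synthesis: every assertion is already contained in one of the propositions, corollaries and lemmas of Section~\ref{compAB}, so the proof reduces to matching each case hypothesis to the precise earlier result and, for the claims about the equivariant zeta functions, to checking the coefficients $\beta^G(A_m)$ and $\beta^G(A_m^{\xi})$ across the three ranges of indices that were analysed separately, namely $m<2k$, $m=2k$, and $M>2k$. Throughout, the conversion of a coefficient inequality into genuine non-equivalence is provided by the invariance Theorem~\ref{eqzfinv}.

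For item~(1) I would simply invoke the non-equivalence corollaries already proved. The first two bullet points (the cases $p\leq q$, $\eta=+1$ or $q\leq p$, $\eta=-1$, and the cases $p=q+1$ or $q=p+1$) are exactly the two conclusions of Corollary~\ref{fgprem}, obtained there by comparing $\beta^G(A_2^0)$, resp.\ $\beta^G(A_2^{+1})$, through Propositions~\ref{amf}, \ref{amfxi} and \ref{ypq}, \ref{yxi}. The third bullet ($k$ odd with $p>q+1$, $\eta=+1$, $\epsilon=-1$, or the symmetric configuration) is precisely Corollary~\ref{cora2k}, where the distinction comes from $\beta^G(A_{2k}^0)$ via Proposition~\ref{a02k} and Lemma~\ref{fg2k0}, the relevant difference being that between the two actions on $Y_{p,q+1}$ recorded in Proposition~\ref{ypq}.

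For items~(2) and (3), establishing $Z_{f_{2k-1}}^{G}=Z_{g_k}^{G}$ and its signed analogue amounts to proving $\beta^G(A_m(f_{2k-1}))=\beta^G(A_m(g_k))$, resp.\ $\beta^G(A_m^{\xi}(f_{2k-1}))=\beta^G(A_m^{\xi}(g_k))$, for every $m$, which I would split by the three ranges. For $m<2k$ the equalities follow from Propositions~\ref{amf} and \ref{amfxi} together with their remarks and Proposition~\ref{oamh}, once one observes (as noted after Corollary~\ref{fgprem}, using Propositions~\ref{ypq} and \ref{yxi}) that in all configurations of items~(2) and (3) the quantities $\beta^G(Y_{p,q})$ and $\beta^G(Y_{p,q}^{\xi})$ coincide for $f_{2k-1}$ and $g_k$. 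For $m=2k$, Propositions~\ref{a02k} and \ref{axi2k} reduce the comparison to $\beta^G(\{f_{2k-1}=0\})$ versus $\beta^G(\{g_k=0\})$ and $\beta^G(\{f_{2k-1}=\xi\})$ versus $\beta^G(\{g_k=\xi\})$, which are settled by Lemmas~\ref{fg2k0}, \ref{fgxi} and then by Propositions~\ref{eqa2kxi} and \ref{a2kxidepend}. For $M>2k$, the equalities are exactly Propositions~\ref{aM02k} (valid, by its governing hypotheses, both for item~(2) and for the $\epsilon=-1$, $k$ even case of item~(3)) and \ref{lasttermszetafsignsfg}. Assembling the three ranges yields the full zeta-function equalities of item~(2) and the naive equality $Z^G_{f_{2k-1}}=Z^G_{g_k}$ of each bullet of item~(3); the ``if and only if'' statements of item~(3) are inherited verbatim from Proposition~\ref{a2kxidepend} and Proposition~\ref{lasttermszetafsignsfg}(2), whose residual conditions are literally the displayed equalities of equivariant virtual Poincar\'e series.

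The only genuine subtlety, and hence the step I would treat most carefully, is the passage to \emph{all} coefficients $M>2k$: this is where the real content of Propositions~\ref{aM02k} and \ref{lasttermszetafsignsfg} lies, since one must verify that the iterated blowing-up and stratification producing each higher coefficient only ever feeds back the building blocks $\beta^G(Y_{p,q}\setminus\{0\})$, $\beta^G(\{f_{2k-1}=0\})$ (resp.\ $=\xi$) and $\beta^G(\{g_k=0\})$ (resp.\ $=\xi$), so that equality of these building blocks propagates to every degree; everything else is a direct citation. I would also stress that the ``if and only if'' form in item~(3) is not a weakness of the argument but reflects the genuinely open point flagged in Remark~\ref{remevpsas}: we do not know whether the equivariant virtual Poincar\'e series is invariant under equivariant bijection with $\mathcal{AS}$ graph, which is exactly what would be needed to decide the two displayed equalities and thereby settle whether $f_{2k-1}$ and $g_k$ can be distinguished in these final cases.
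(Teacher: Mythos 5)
Your proposal is correct and follows essentially the same route as the paper: the theorem is stated there precisely as a summary that ``gathers the results of the previous paragraphs,'' and your case-by-case matching (Corollaries \ref{fgprem} and \ref{cora2k} for item (1); Propositions \ref{amf}, \ref{amfxi}, \ref{oamh}, \ref{a02k}, \ref{axi2k}, \ref{eqa2kxi}, \ref{a2kxidepend}, \ref{aM02k} and \ref{lasttermszetafsignsfg} across the ranges $m<2k$, $m=2k$, $M>2k$ for items (2) and (3), with Theorem \ref{eqzfinv} converting coefficient inequalities into non-equivalence) is exactly the intended assembly, including the observation that the hypotheses of Subsection \ref{lasttermsfg} cover both item (2) and the $k$ even, $\epsilon=-1$ case of item (3).
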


\begin{rem}
\begin{enumerate}
	\item As one can notice from the computations, the fact that the equivariant Poincar\'e series of a given sphere is the same for any action of $G$ on it with a non-empty fixed point set (see remark \ref{equivpoinc}) induces equalities between coefficients of the respective equivariant zeta functions of $f_{2k-1}$ and $g_k$.
	\item If the equivariant virtual Poincar\'e series was proved to be an invariant under equivariant bijection with $\mathcal{AS}$ graph, this could allow to compute (and compare) the quantities $\beta^G\left(\left\{ - x_2^{2k} + y^2 + \sum_{i = 1}^{K-1} y_i^2 = \xi \right\}\right)$ and $\beta^G\left(\left\{ - x_1^{2k} + \sum_{i = 1}^{K} z_i^2 = \xi \right\}\right)$.
\end{enumerate}
\end{rem}

\section{The germs $C_k$ and $D_k$} \label{compCD}

In a second time, we plan to make progress towards the classification with respect to $G$-blow-Nash equivalence of the invariant germs of the families
$$h_k^{\epsilon_k}(x) := \pm x_1^2 + x_2^2 x_3 + \epsilon_k x_3^{k-1} + Q \mbox{ ~~and~~} r_k ^{\epsilon_k}(x) := x_1^2 x_2 + \epsilon_k x_2^k + \pm x_3^2+ Q',$$
where $\epsilon_k \in \{-1 ; +1\}$.

By the same arguments as in the introduction of section \ref{compAB}, we know that if two germs $h_k^{\epsilon_k}$ and $h_l^{\epsilon_l}$ are $G$-blow-Nash equivalent, they have the same quadratic part up to permutation of the variables $x_1, x_4, \ldots, x_n$ and, by \cite{GF-BNT} Proposition 3.11, that $k = l$ and $\epsilon_k = \epsilon_l$. Therefore, inside the family $D_k$, it remains to show that the germs
$$h_k^{\epsilon_k, +}(x) := + x_1^2 +  x_2^2 x_3 + \epsilon_k x_3^{k-1} + Q  \mbox{   and   } h_k^{\epsilon_k, -}(x) := - x_1^2 +  x_2^2 x_3 + \epsilon_k x_3^{k-1} + Q',$$
where $\epsilon_k \in \{-1 \, ; + 1\}$ and $+ x_1^2 + Q$ and $- x_1^2 + Q'$ are the same quadratic part up to permutation of the variables $x_1, x_4, \ldots , x_n$, are not $G$-blow-Nash equivalent.

As for the family $C_k$, if two germs $r_k^{\epsilon_k}$ and $r_l^{\epsilon_l}$ are $G$-blow-Nash equivalent, they have the same quadratic part up to permutation of the variables $x_3, \ldots , x_n$, $k = l$ and $\epsilon_k = \epsilon_l$.

On the other hand, if two germs $h_k^{\epsilon_k}$ and $r_{k'}^{\epsilon_{k'}}$ are $G$-blow-Nash equivalent then $k = k'+1$, $\epsilon_{k} = \epsilon_{k'}$ and $\pm x_1^2 + Q$ and $\pm x_2^2 + Q'$ are the same quadratic part up to permutation of all variables. As a consequence, we focus on the comparison of the germs
$$h_{k+1} = x_2^2 x_3 + \epsilon x_3^k + \eta x_1^2 + Q \mbox{   and   } r_k = x_1^2 x_2 + \epsilon x_2^k + \eta' x_3^2 + Q'$$
where $\epsilon, \eta, \eta' \in \{1,-1\}$ and $\eta x_1^2 + Q = \eta' x_3^2 + Q'$ up to permutation of all variables. 
\\

In the following, as we did for the families $A_k$ and $B_k$, we study and compare the respective equivariant zeta functions of $h_k$ and $r_k$ : using theorem \ref{eqzfinv}, this allows to extract further cases of non-G-blow-Nash equivalence.

\subsection{Computation of the first terms of the equivariant zeta functions} \label{firsttermscd}

Fix $k \geq 4$ and consider the invariant germ $h_k^{\epsilon, \eta}(x_1, \ldots, x_n) = \eta x_1^2 + x_2^2 x_3 + \epsilon x_3^{k-1} + Q$. Denote $x_2 = x$, $x_3 = z$ and $\eta x_1^2 + Q = Q_{p,q} =  \sum_{i=1}^p y_i^2 - \sum_{j=1}^q y_{p+j}^2$ ($G$ acts on the renamed coordinates via the involution n\textsuperscript{o}\ref{acty1} or n\textsuperscript{o}\ref{actyp1} depending on the sign of $\eta$), so that $h_k^{\epsilon, \eta}(x,z,y) = x^2z + \epsilon z^{k-1} + Q_{p,q}(y)$.

The following proposition gives the computed expressions for $\beta^G(A_m^0(h_k^{\epsilon, \eta}))$ (see the beginning of paragraph \ref{firsttermsab} for the definition of $A_m^0(h)$ for $h$ an invariant Nash germ) if $m < k-1$. The same expressions can be obtained for $\beta^G(A_m^0(r_{k-1}^{\epsilon}))$, providing $Y_{p,q}$ is equipped with the trivial action in this case.

\begin{prop} \label{a0mhr} Suppose $m < k-1$.

\begin{enumerate}
	\item If $p+q = 1$, then
$$\beta^G(A^0_{m}(h_k^{\epsilon, \eta})) = 
\begin{cases} r u^{2m+1} + \frac{u^{4r+4}}{u-1} \mbox{ if $m = 2r+1$,} \\
(r-1) u^{2m+1} + \frac{u^{4r+2}}{u-1}  \mbox{ if $m = 2r$.}
\end{cases}
$$
	\item If $p + q \neq 1$, then
$$\beta^G(A^0_{m}(h_k^{\epsilon, \eta})) = 
\begin{cases} 
u^{3r+2 + (r+1)(p+q)} \frac{u^{r(p+q-1)} - 1}{u^{p+q-1} - 1} \left(\beta^G(Y_{p,q} \setminus \{0\}) + 1\right) + \frac{u^{3r+3 +(r+1)(p+q)}}{u-1} \mbox{ if $m = 2r+1$,} \\
 u^{3r + (r+1)(p+q)} \frac{u^{(r-1)(p+q-1)} - 1}{u^{p+q-1} - 1} \left(\beta^G(Y_{p,q} \setminus \{0\}) + 1\right)  + u^{3r+ 1 + r(p+q)} \beta^G(Y_{p,q}) \mbox{ if $m = 2r$.}
\end{cases}$$
\end{enumerate}

\end{prop}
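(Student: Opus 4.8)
The plan is to adapt the arc-space computation of \cite{GF-BNT} — already carried out in the proofs of Propositions \ref{amf} and \ref{a02k} — to the present germ, keeping careful track of the linear $G$-action. First I would reduce the problem: since $m < k-1$, the monomial $\epsilon z^{k-1}$ contributes to $h_k^{\epsilon, \eta}\circ \gamma(t)$ only in degrees $\geq k-1 > m$, so $A_m^0(h_k^{\epsilon, \eta})$ coincides with the analogous truncation set of the cubic-plus-quadratic germ $x^2 z + Q_{p,q}(y)$. Writing an arc $\gamma \in \mathcal{L}_m$ in coordinates $a_i$ (the $x$-coefficients), $b_i$ (the $z$-coefficients) and $c_i = (c_i^1, \ldots, c_i^{p+q})$ (the $y$-coefficients), the group $G$ changes only the sign of the variables $c_i^1$ (case n\textsuperscript{o}\ref{acty1}) or $c_i^{p+1}$ (case n\textsuperscript{o}\ref{actyp1}) and fixes every $a_i$ and $b_i$. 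Expanding $x(t)^2 z(t) + Q_{p,q}(c(t))$, the conditions defining $A_m^0(h_k^{\epsilon, \eta})$ form the system whose equation in degree $t^2$ is $Q_{p,q}(c_1) = 0$, whose equation in degree $t^3$ is $a_1^2 b_1 + \Phi_{p,q}(c_1, c_2) = 0$, and whose equation in degree $t^\ell$ (for $3 \leq \ell \leq m$) reads $a_1^2 b_{\ell-2} + \Phi_{p,q}(c_1, c_{\ell-1}) + (\text{terms in lower-index coefficients}) = 0$.

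Then I would run the same stratification-and-induction scheme as in the proof of Proposition \ref{amf}. The first equation places $c_1$ on the quadric $Y_{p,q}$, so I split $\beta^G$ additively along $\{c_1 \neq 0\}$ and $\{c_1 = 0\}$, refining the former coordinate by coordinate. On the locus where some coordinate of $c_1$ is nonzero, each successive equation determines the high-index coefficient $c_{\ell-1}$ from $c_1$ and the remaining (free) variables through a $G$-equivariant morphism, yielding a multiple of $\beta^G(Y_{p,q} \setminus \{0\})$. On $\{c_1 = 0\}$ the bilinear terms $\Phi_{p,q}(c_1, \cdot)$ drop out and the degree-$t^3$ equation degenerates to the purely cubic condition $a_1^2 b_1 = 0$; this is exactly the new feature relative to \ref{amf}. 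Splitting this reducible locus according to whether $a_1 \neq 0$ — where the cubic now lets me solve for the $z$-coefficients $b_{\ell-2}$, the resulting free factor $\beta^G(\mathbb{R}^*) = u-1$ combining with the geometric denominator $u^{p+q-1}-1$ to produce the extra summand $+1$ next to $\beta^G(Y_{p,q} \setminus \{0\})$ — or $a_1 = 0$ — which shifts the system down and feeds the induction reducing $m$ by $2$ — I would assemble the formula. The extra $z$-coefficient available at every order is what raises the ratio of the resulting geometric sum from $u^{p+q-2}$ (as in \ref{amf}) to $u^{p+q-1}$. The degenerate case $p+q = 1$, where $Y_{p,q} = \{0\}$ and $\Phi_{p,q}$ vanishes identically, I would treat directly by the same splitting: there the quadric imposes $c_1 = 0$ outright and only the cubic strata survive, giving the separate closed forms. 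Finally I would reassemble everything using the values of $\beta^G(Y_{p,q})$ from Proposition \ref{ypq} and the additivity of $\beta^G$, exactly as in the proof of \ref{a02k}; the identical computation with $G$ acting trivially on $Y_{p,q}$ yields the stated expressions for $\beta^G(A_m^0(r_{k-1}^\epsilon))$.

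I expect the main obstacle to be the bookkeeping forced by the cubic term $x^2 z$: unlike the purely quadratic situation of Proposition \ref{amf}, the degree-$t^\ell$ equations are no longer linear in a single new coefficient, so I must choose, stratum by stratum, which variable ($c_{\ell-1}$ or $b_{\ell-2}$) to solve, and verify that the resulting trivialization is genuinely $G$-equivariant — which is immediate here, since $G$ fixes every $a_i$ and $b_i$ and acts linearly on the $c_i$. Correctly isolating the contribution of the strata on which the quadratic part degenerates and the cubic dominates, and thereby extracting both the $+1$ and the exponent shift, is the delicate point; everything else is the routine geometric-series bookkeeping already present in \cite{GF-BNT} and in the proof of Proposition \ref{amf}.
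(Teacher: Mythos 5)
Your proposal follows essentially the same route as the paper's proof: the same arc coordinates and system of equations, the same stratification first by the coordinates of $c_1$ and then by $a_1 \neq 0$ versus $a_1 = 0$, the same induction reducing $m$ by $2$, and the extra summand $+1$ correctly traced to the $a_1 \neq 0$ stratum where the cubic term determines the $b$-coefficients. Two details are stated loosely but harmlessly: the factor $u-1$ from $a_1 \in \mathbb{R}^*$ actually cancels against the denominator of $\beta^G(\mathbb{R}^N) = \frac{u^{N+1}}{u-1}$, producing a pure power of $u$ that matches the coefficient of $\beta^G(Y_{p,q}\setminus\{0\})$ (the denominator $u^{p+q-1}-1$ only arises afterwards, from summing the geometric series over the induction steps); and for $p+q=1$ the form $\Phi_{p,q}(u,v)=\pm 2u_1v_1$ does not vanish identically --- it merely vanishes on the relevant strata because the quadric has already forced $c_1=0$ there.
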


\begin{proof} As in subsection \ref{firsttermsab}, we follow the computations of \cite{GF-BNT}, paying attention to their equivariance with respect to the considered actions of $G$.

Here, we write an arc $\gamma$ of $\mathcal{L}_m$ as
\begin{eqnarray*} \gamma(t) & = & (a_1 t + \cdots + a_{m} t^{m}, b_1 t + \cdots + b_{m} t^{m}, c_1^1 t + \cdots + c_{m}^1 t^{m}, \ldots, c_1^{p+q} t + \cdots + c_{m}^{p+q} t^{m}) \\
& = & \left( \begin{array}{c} a_1 \\ b_1 \\ c_1^1 \\ \vdots \\ c_1^{p+q} \end{array} \right) t + \cdots + \left( \begin{array}{c} a_{m} \\ b_m \\c_{m}^1 \\ \vdots \\ c_{m}^{p+q} \end{array} \right) t^{m} = \left( \begin{array}{c} a_1 \\ b_1\\ c_1 \end{array} \right) t + \cdots + \left( \begin{array}{c} a_{m} \\ b_1 \\ c_{m}\end{array} \right) t^{m}
\end{eqnarray*}
(the group $G$ acts on $\mathcal{L}_{m}$ changing the sign of the variables $c_i^1$, resp. $c_i^{p+1}$, in the case n\textsuperscript{o}\ref{acty1}, resp. n\textsuperscript{o}\ref{actyp1}).

We focus on the generic case $pq \neq 0$, $p+q \neq 1$. First suppose that $m$ is odd, $m = 2 r + 1$. Then an arc $\gamma$ of $\mathcal{L}_{m}$ belongs to $A^0_{m}(h_k^{\epsilon, \eta})$ if and only if
$$\begin{cases}
Q_{p,q}(c_1) = 0, \\
a_1^2 b_1 + \Phi_{p,q}(c_1, c_2) = 0, \\
a_1^2 b_2 + 2 a_1 a_2 b_1 + Q_{p,q} (c_2) + \Phi_{p,q} (c_1, c_3) = 0,\\
\cdots \\
\sum_{t=1}^{r-1} a_t^2 b_{2r-2t} + 2 \sum_{t=1}^{r-1} a_t \sum_{\delta=t+1}^{2r-(t+1)} a_{\delta} b_{2r-\delta-t} + Q_{p,q}(c_r) + \sum_{t= 1}^{r-1} \Phi_{p,q}(c_t,c_{2r-t}) = 0, \\
\sum_{t = 1}^{r} a_t^2 b_{2r+1-2t} + 2 \sum_{t=1}^{r-1} a_s \sum_{\delta=t+1}^{2r+1-(t+1)} a_{\delta} b_{2r+1-\delta-t} +\sum_{t= 1}^{r} \Phi_{p,q}(c_t,c_{2r+1-t}) = 0.
\end{cases}$$

Stratifying $A^0_{m}(h_k^{\epsilon, \eta})$ with the $G$-globally invariant subsets $\{c_1^1 = \ldots = c_1^{i-1} = 0, c_1^i \neq 0\}$, $i = 1, \ldots, p$, and $\{c_1^1 = \ldots = c_1^p = 0\} = \{c_1 = 0\}$, as we did in the proof of proposition \ref{amf}, we obtain, by additivity of the equivariant virtual Poincar\'e series, 
$$\beta^G(A^0_{m}(h_k^{\epsilon, \eta})) = u^{2 \times (2r+1) + 2r (p-1) + 2r q + 1} \beta^G(Y_{p,q} \setminus \{0\}) + \beta^G(A^0_{m}(h_k^{\epsilon, \eta}) \cap \{c_1 = 0\}),$$ 
the algebraic set $A^0_{m}(h_k^{\epsilon, \eta}) \cap \{c_1 = 0\}$ being described by the system
$$\begin{cases}
a_1^2 b_1 = 0, \\
a_1^2 b_2 + 2 a_1 a_2 b_1 + Q_{p,q} (c_2) = 0,\\
\cdots \\
\sum_{t=1}^{r-1} a_t^2 b_{2r-2t} + 2 \sum_{t=1}^{r-1} a_t \sum_{\delta=t+1}^{2r-(t+1)} a_{\delta} b_{2r-\delta-t} + Q_{p,q}(c_r) + \sum_{t= 2}^{r-1} \Phi_{p,q}(c_t,c_{2r-t}) = 0, \\
\sum_{t = 1}^{r} a_t^2 b_{2r+1-2t} + 2 \sum_{t=1}^{r-1} a_s \sum_{\delta=t+1}^{2r+1-(t+1)} a_{\delta} b_{2r+1-\delta-t} +\sum_{t= 2}^{r} \Phi_{p,q}(c_t,c_{2r+1-t}) = 0.
\end{cases}$$

Now, if $a_1 \neq 0$, then $b_1 = 0$ and the coordinates $b_2, \ldots, b_{2r-1}$ are determined by $a_1$ and the other variables (via an equivariant morphism), and thus
$$\beta^G(A^0_{m}(h_k^{\epsilon, \eta}) \cap \{c_1 = 0\}) = (u-1) \frac{u^{[2r + 2 + 2r (p+q)]+ 1}}{u-1} + \beta^G(A^0_{m}(h_k^{\epsilon, \eta}) \cap \{c_1 = 0, a_1 = 0\}).$$
If $c_1 = 0$ and $a_1 = 0$, the remaining coordinates verify the system
$$\begin{cases}
Q_{p,q} (c_2) = 0,\\
a_2^2 b_1 + \Phi_{p,q}(c_2, c_3) = 0, \\
a_2^2 b_2 + 2 a_2 a_3 b_1 + Q'_{p,q} (c_3) + \Phi_{p,q} (c_2, c_4) = 0,\\
\cdots \\
\sum_{t=2}^{r-1} a_t^2 b_{2r-2t} + 2 \sum_{t=2}^{r-1} a_t \sum_{\delta=t+1}^{2r-(t+1)} a_{\delta} b_{2r-\delta-t} + Q_{p,q}(c_r) + \sum_{t= 2}^{r-1} \Phi_{p,q}(c_t,c_{2r-t}) = 0, \\
\sum_{t = 2}^{r} a_t^2 b_{2r+1-2t} + 2 \sum_{t=2}^{r-1} a_s \sum_{\delta=t+1}^{2r+1-(t+1)} a_{\delta} b_{2r+1-\delta-t} +\sum_{t= 2}^{r} \Phi_{p,q}(c_t,c_{2r+1-t}) = 0.
\end{cases}$$
Notice that the vector $c_m$ as well as the variables $a_m$, $b_{m-1}$ and $b_m$ are free and that, if we rename the variables, these equations define the set $A^0_{m-2}(h_k^{\epsilon, \eta})$, so that 
$$\beta^G(A^0_{m}(h_k^{\epsilon, \eta}) \cap \{c_1 = 0, a_1 = 0\}) = u^{3 + p + q} \beta^G(A^0_{m-2}(h_k^{\epsilon, \eta})).$$

By an induction process, we then obtain  
\begin{eqnarray*}
\beta^G(A^0_{m}(h_k^{\epsilon, \eta})) & = & \beta^G(Y_{p,q} \setminus \{0\}) \left[ \sum_{t = 0}^{r-1} u^{t(3+p+q)} u^{2 \times (m - 2t) + (m-2t - 1)(p+q-1)+1} \right] \\
& & + \left[ \sum_{t = 0}^{r-1} u^{t(3+p+q)} u^{(m-1-2t) (p+q + 1)+ 3} \right] + u^{(r-1)(3+p+q)} \beta^G(A^0_{3}(h_k^{\epsilon, \eta}) \cap \{c_1 = 0, a_1 = 0\}),
\end{eqnarray*}
the equations for $A^0_{3}(h_k^{\epsilon, \eta}) \cap \{c_1 = 0, a_1 = 0\}$ becoming trivial. As a consequence (notice that for all $t = 0, \ldots, r-1$, $2 \times (m - 2t) + (m-2t - 1)(p+q-1)+1 = (m-1-2t) (p+q + 1)+ 3$),
$$\beta^G(A^0_{m}(h_k^{\epsilon, \eta})) = u^{3r+2 + (r+1)(p+q)} \frac{u^{r(p+q-1)} - 1}{u^{p+q-1} - 1} \left(\beta^G(Y_{p,q} \setminus \{0\}) + 1\right) + \frac{u^{3r+3 +(r+1)(p+q)}}{u-1}.$$ 
\\


If $m$ is even, $m = 2r$, the system defining $A^0_{m}(h_k^{\epsilon, \eta})$ is 
$$\begin{cases}
Q_{p,q}(c_1) = 0, \\
a_1^2 b_1 + \Phi_{p,q}(c_1, c_2) = 0, \\
\cdots \\
\sum_{t = 1}^{r-1} a_t^2 b_{2r-1-2t} + 2 \sum_{t=1}^{r-2} a_s \sum_{\delta=t+1}^{2r-1-(t+1)} a_{\delta} b_{2r-1-\delta-t} +\sum_{t= 1}^{r-1} \Phi_{p,q}(c_t,c_{2r-1-t}) = 0, \\
\sum_{t=1}^{r-1} a_t^2 b_{2r-2t} + 2 \sum_{t=1}^{r-1} a_t \sum_{\delta=t+1}^{2r-(t+1)} a_{\delta} b_{2r-\delta-t} + Q_{p,q}(c_r) + \sum_{t= 1}^{r-1} \Phi_{p,q}(c_t,c_{2r-t}) = 0,
\end{cases}$$
and we have
\begin{eqnarray*}
\beta^G(A^0_{m}(h_k^{\epsilon, \eta})) & = & \beta^G(Y_{p,q} \setminus \{0\}) \left[ \sum_{t = 0}^{r-2} u^{t(3+p+q)} u^{2 \times (m - 2t) + (m-2t - 1)(p+q-1)+1} \right] \\
& & + \left[ \sum_{t = 0}^{r-2} u^{t(3+p+q)} u^{(m-1-2t) (p+q + 1)+ 3} \right] + u^{(r-1)(3+p+q)} \beta^G(A^0_{2}(h_k^{\epsilon, \eta}))
\end{eqnarray*}
Because $A^0_{2}(h_k^{\epsilon, \eta})$ is described by the equation $Q_{p,q}(c_1) = 0$ and since the vector $c_2$ as well as the variables $a_1,a_2, b_1$ and $b_2$ are free, we obtain
$$\beta^G(A^0_{m}(h_k^{\epsilon, \eta}))  =  u^{3r + (r+1)(p+q)} \frac{u^{(r-1)(p+q-1)} - 1}{u^{p+q-1} - 1} \left(\beta^G(Y_{p,q} \setminus \{0\}) + 1\right)  + u^{3r+ 1 + r(p+q)} \beta^G(Y_{p,q}).$$

 
\end{proof}

As for $\beta^G(A^{\xi}_{m}(h_k^{\epsilon, \eta}))$, we have the following expressions if $m< k+1$ :

\begin{prop} \label{amxihr} Suppose $m < k-1$.

\begin{enumerate}
	\item If $(p,q) = (0,1)$, then
$$\beta^G(A^{\xi}_{m}(h_k^{\epsilon, \eta})) = 
\begin{cases} r u^{2m+1} \mbox{ if $m = 2r+1$,} \\
(r-1) u^{2m+1} + u^{4r+1} \beta^G(Y^{\xi}_{0,1}) \mbox{ if $m = 2r$.}
\end{cases}
$$
	\item If $(p,q)= (1,0)$, then
$$\beta^G(A^{\xi}_{m}(h_k^{\epsilon, \eta})) = 
\begin{cases} r u^{2m+1} \mbox{ if $m = 2r+1$,} \\
(r-1) u^{2m+1} + u^{4r+1} \beta^G(Y^{\xi}_{1,0})  \mbox{ if $m = 2r$.}
\end{cases}
$$
	\item If $p + q \neq 1$, then
$$\beta^G(A^{\xi}_{m}(h_k^{\epsilon, \eta})) = 
\begin{cases} 
u^{3r+2 + (r+1)(p+q)} \frac{u^{r(p+q-1)} - 1}{u^{p+q-1} - 1} \left(\beta^G(Y_{p,q} \setminus \{0\}) + 1\right) \mbox{ if $m = 2r+1$,} \\
 u^{3r + (r+1)(p+q)} \frac{u^{(r-1)(p+q-1)} - 1}{u^{p+q-1} - 1} \left(\beta^G(Y_{p,q} \setminus \{0\}) + 1\right)  + u^{3r+ 1 + r(p+q)} \beta^G(Y_{p,q}^{\xi}) \mbox{ if $m = 2r$.}
\end{cases}$$
\end{enumerate}

\end{prop}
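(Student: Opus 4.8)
The plan is to follow, almost verbatim, the computation carried out for $\beta^G(A^0_m(h_k^{\epsilon,\eta}))$ in Proposition \ref{a0mhr}, exploiting the fact that $A^\xi_m(h_k^{\epsilon,\eta})$ and $A^0_m(h_k^{\epsilon,\eta})$ are cut out by systems that agree in their first $m-1$ equations and differ only in that the right-hand side of the last equation is $\xi$ rather than $0$. This is exactly the relationship that was used to deduce Proposition \ref{amfxi} from Proposition \ref{amf} in the $A_k$--$B_k$ setting, so the same book-keeping of the $G$-action applies here: writing an arc of $\mathcal{L}_m$ with coordinates $a_i$ (for $x$), $b_i$ (for $z$) and $c_i = (c_i^1, \ldots, c_i^{p+q})$ (for $y$), the group $G$ acts trivially on the $a_i$ and $b_i$ and changes the sign of $c_i^1$, resp. $c_i^{p+1}$, in the case n\textsuperscript{o}\ref{acty1}, resp. n\textsuperscript{o}\ref{actyp1}. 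Since $m < k-1$, the monomial $\epsilon z^{k-1}$ has order $> m$ and contributes to none of the first $m$ equations, exactly as in Proposition \ref{a0mhr}.

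First I would run the same inductive reduction as in the proof of Proposition \ref{a0mhr}: stratify by the globally $G$-invariant loci $\{c_1^1 = \cdots = c_1^{i-1} = 0,\ c_1^i \neq 0\}$ and $\{c_1 = 0\}$ to extract the contribution of $\beta^G(Y_{p,q} \setminus \{0\})$ coming from the equation $Q_{p,q}(c_1) = 0$, then split $\{c_1 = 0\}$ into $\{a_1 \neq 0\}$ and $\{a_1 = 0\}$ to produce the ``$+1$'' summand and to descend from $m$ to $m-2$. Because this entire reduction only manipulates the first $m-1$ equations of the system --- all of which carry the right-hand side $0$ and are therefore identical for $A^\xi_m$ and $A^0_m$ --- the accumulated factors, namely the coefficients of $\beta^G(Y_{p,q}\setminus\{0\}) + 1$ and the powers of $u$, coincide exactly with those obtained in Proposition \ref{a0mhr}.

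The only place where the value of $\xi$ enters is the base case of the induction, and this is where the two statements diverge. When $m = 2r$ is even, the recursion terminates at the set $A^\xi_2(h_k^{\epsilon,\eta}) = \{Q_{p,q}(c_1) = \xi\} = Y^\xi_{p,q}$ (with $c_2$ and $a_1, a_2, b_1, b_2$ free), contributing $\beta^G(Y^\xi_{p,q})$ in place of the $\beta^G(Y_{p,q})$ that appeared for $A^0$; this yields the even-$m$ formula. When $m = 2r+1$ is odd, the recursion terminates at $A^\xi_3(h_k^{\epsilon,\eta}) \cap \{c_1 = 0, a_1 = 0\}$, whose defining last equation reads $a_1^2 b_1 + \Phi_{p,q}(c_1, c_2) = \xi$; after imposing $c_1 = 0$ and $a_1 = 0$ this becomes $0 = \xi$, so the set is empty and contributes $0$. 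Consequently the constant summand $\frac{u^{3r+3 +(r+1)(p+q)}}{u-1}$ present in Proposition \ref{a0mhr} simply drops out, giving the odd-$m$ formula. For the low-dimensional cases $p+q = 1$ I would run the corresponding degenerate version of the same induction, keeping the two sub-cases $(p,q) = (0,1)$ and $(p,q) = (1,0)$ separate precisely because $\beta^G(Y^\xi_{0,1})$ and $\beta^G(Y^\xi_{1,0})$ differ (one sphere is a pair of points, the other is empty, depending on the sign of $\xi$), as recorded in Propositions \ref{ypq} and \ref{yxi} together with Remark \ref{equivpoinc}.

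The main obstacle --- really the only genuinely new point compared with Proposition \ref{a0mhr} --- is verifying the emptiness claim in the odd case, i.e. confirming that once all lower-order coefficients are forced to vanish and $c_1, a_1$ are set to $0$, the final coefficient is structurally forced to be $0$ and hence cannot equal $\xi \neq 0$. This is the exact analogue of the emptiness of $A^\xi_3(f_k^{\epsilon,\eta}) \cap \{c_1 = 0\}$ used in the proof of Proposition \ref{amfxi}, but here one must additionally check that the $b_i$-variables and the mixed term $a_1^2 b_1$ occurring in the $h_k$-system do not supply a free parameter that could realize the value $\xi$; they do not, since every term in that final equation is divisible by $a_1$ or by a coordinate of $c_1$, both of which have been set to zero, leaving the equation $0 = \xi$.
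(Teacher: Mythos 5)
Your proposal is correct and takes essentially the same route as the paper: the paper's proof likewise observes that the system defining $A^{\xi}_{m}(h_k^{\epsilon, \eta})$ is obtained from that of $A^{0}_{m}(h_k^{\epsilon, \eta})$ by replacing $0$ with $\xi$ in the right member of the last equation, reruns the computation of Proposition \ref{a0mhr}, and records exactly your two base-case changes --- $A^{\xi}_{3}(h_k^{\epsilon, \eta}) \cap \{c_1 = 0, a_1 = 0\}$ has no solution (so the constant summand $\frac{u^{3r+3+(r+1)(p+q)}}{u-1}$ drops in the odd case), while $A^{\xi}_{2}(h_k^{\epsilon, \eta})$ is described by $Q_{p,q}(c_1) = \xi$ (so $\beta^G(Y_{p,q})$ becomes $\beta^G(Y^{\xi}_{p,q})$ in the even case). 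The only minor imprecision is your claim that the reduction "only manipulates the first $m-1$ equations": on the strata $\{c_1^i \neq 0\}$ and $\{c_1 = 0, a_1 \neq 0\}$ the last equation is in fact used to solve for one determined coordinate, but since it is solvable there regardless of whether its right-hand side is $0$ or $\xi$, those contributions coincide with the $A^0$ case and your conclusion is unaffected.
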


\begin{proof} If we keep the notations of the proof of proposition \ref{a0mhr}, the system defining $A^{\xi}_{m}(h_k^{\epsilon, \eta})$ is obtained by replacing $0$ by $\xi$ in the right member of the last of the equations describing $A^{0}_{m}(h_k^{\epsilon, \eta})$. Furthermore, the system for $A^{\xi}_{3}(h_k^{\epsilon, \eta}) \cap \{c_1 = 0, a_1 = 0\}$ has no solution, whereas $A^{\xi}_{2}(h_k^{\epsilon, \eta})$ is described by the equation $Q_{p,q}(c_1) = \xi$.
\end{proof}

We are now able to show that the germs $h_k^{\epsilon, +}$ and $h_k^{\epsilon, -}$ are not $G$-blow-Nash equivalent :

\begin{cor} \label{hk+hk-dif}
Let $k \geq 4$. Suppose that the invariant Nash germs
$$h_k^{\epsilon, +}(x) := + x_1^2 + x_2^2 x_3 + \epsilon x_3^{k-1} + Q \mbox{   and   } h_k^{\epsilon, -}(x) := - x_1^2 + x_2^{2}x_3 + \epsilon x_3^{k-1}  + Q'$$
have the same quadratic part up to permutation of the variables $x_1, x_4, \ldots , x_n$. Then they are not $G$-blow-Nash equivalent.
\end{cor}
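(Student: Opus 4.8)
The plan is to follow the exact pattern of the proof of Corollary \ref{fpmnot}, exploiting Theorem \ref{eqzfinv}: it suffices to exhibit one coefficient of the equivariant zeta functions that differs between $h_k^{\epsilon,+}$ and $h_k^{\epsilon,-}$. Writing $\eta x_1^2 + Q = Q_{p,q}$ as in the paragraph before Proposition \ref{a0mhr}, the two germs share the same underlying quadratic form $Q_{p,q}$; since $h_k^{\epsilon,+}$ realizes $x_1$ as a positive coordinate while $h_k^{\epsilon,-}$ realizes it as a negative one, we have $p,q \geq 1$ (in particular $pq \neq 0$ and $p+q \geq 2$, so we are always in the generic case of the relevant propositions), and the only difference between the two situations is that $G$ acts on $Y_{p,q}$ via the involution n\textsuperscript{o}\ref{acty1} for $h_k^{\epsilon,+}$ and via the involution n\textsuperscript{o}\ref{actyp1} for $h_k^{\epsilon,-}$.

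First I would compare the coefficient of degree $2$ of the naive equivariant zeta functions, that is $\beta^G(A_2(h_k^{\epsilon,\eta}))$ (note that $2 < k-1$ since $k \geq 4$, so the first-term formulae apply). By the additivity relation of paragraph \ref{firsttermsab} together with Proposition \ref{oamh}, this reduces to comparing $\beta^G(A_2^0(h_k^{\epsilon,\eta}))$: indeed $\beta^G({}^0\!A_2(h_k^{\epsilon,\eta})) = u^n \beta^G(A_1^0(h_k^{\epsilon,\eta}))$ with $A_1^0(h_k^{\epsilon,\eta}) = \mathcal{L}_1 \cong \mathbb{R}^n$, and $\beta^G(\mathbb{R}^n) = \frac{u^{n+1}}{u-1}$ is independent of the action of $G$ by Remark \ref{equivpoinc}, so the contribution of ${}^0\!A_2$ is the same for both germs. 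Specializing Proposition \ref{a0mhr} to $m=2$ (that is $r=1$), the factor $\frac{u^{(r-1)(p+q-1)}-1}{u^{p+q-1}-1}$ vanishes and one is left with $\beta^G(A_2^0(h_k^{\epsilon,\eta})) = u^{4+p+q}\,\beta^G(Y_{p,q})$. By Proposition \ref{ypq}, the quantity $\beta^G(Y_{p,q})$ differs between the involutions n\textsuperscript{o}\ref{acty1} and n\textsuperscript{o}\ref{actyp1} precisely when $p \neq q$; hence for $p \neq q$ the naive equivariant zeta functions of $h_k^{\epsilon,+}$ and $h_k^{\epsilon,-}$ are already different, and Theorem \ref{eqzfinv} yields non-$G$-blow-Nash equivalence.

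It then remains to treat the case $p=q$, where $\beta^G(Y_{p,p})$ is the same for both involutions and the naive zeta functions do not separate the germs. Here I would pass to the equivariant zeta functions with sign and look at $\beta^G(A_2^{+1}(h_k^{\epsilon,\eta}))$. Specializing Proposition \ref{amxihr} to $m=2$ gives, after the same vanishing of the leading fraction, $\beta^G(A_2^{+1}(h_k^{\epsilon,\eta})) = u^{4+p+q}\,\beta^G(Y_{p,p}^{+1})$. Proposition \ref{yxi} rewrites $\beta^G(Y_{p,p}^{+1}) = \frac{1}{u-1}\bigl(\beta^G(Y_{p,p+1}) - \beta^G(Y_{p,p})\bigr)$, and since $\beta^G(Y_{p,p})$ coincides for the two actions, the comparison reduces to $\beta^G(Y_{p,p+1})$; as $0 < p < p+1$, Proposition \ref{ypq} shows this differs between the involutions n\textsuperscript{o}\ref{acty1} and n\textsuperscript{o}\ref{actyp1}. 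Thus $\beta^G(A_2^{+1}(h_k^{\epsilon,+})) \neq \beta^G(A_2^{+1}(h_k^{\epsilon,-}))$ and the germs are not $G$-blow-Nash equivalent in the case $p=q$ either.

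I do not anticipate a serious obstacle, since everything parallels Corollary \ref{fpmnot}; the only point requiring care is the bookkeeping that isolates the distinguishing factor $\beta^G(Y_{p,q})$, namely verifying that the ${}^0\!A_2$ contribution and the $\bigl(\beta^G(Y_{p,q}\setminus\{0\})+1\bigr)$-term genuinely drop out at $r=1$, so that the sign $\eta$ enters only through the type of action induced on $Y_{p,q}$.
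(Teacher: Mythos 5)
Your proposal is correct and follows essentially the same route as the paper's proof: reduce to comparing $\beta^G(A_2^0(h_k^{\epsilon,\pm}))=u^{4+p+q}\beta^G(Y_{p,q})$ via Propositions \ref{oamh} and \ref{a0mhr}, which settles the case $p\neq q$, and then handle $p=q$ through $\beta^G(A_2^{+1}(h_k^{\epsilon,\pm}))=u^{4+2p}\beta^G(Y_{p,p}^{+1})$ together with Propositions \ref{amxihr}, \ref{yxi} and \ref{ypq}. The only difference is presentational: the paper cites the arguments of Corollary \ref{fpmnot} at the final step, whereas you spell them out (and you also justify $p,q\geq 1$ explicitly, which the paper leaves implicit in the remark that $p+q\neq 1$).
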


\begin{proof} We compare $\beta^G(A_2(h_k^{\epsilon, +}))$ and $\beta^G(A_2(h_k^{\epsilon, -}))$. Because $\beta^G(A_2(h_k^{\epsilon, \eta})) = \beta^G({}^0\!A_2(h_k^{\epsilon, \eta})) - \beta^G(A^0_2(h_k^{\epsilon, \eta}))$, $\beta^G({}^0\!A_2(h_k^{\epsilon, \eta})) = u^{n} \beta^G(A^0_1(h_k^{\epsilon, \eta}))$ (by proposition \ref{oamh}) and $A^0_1(h_k^{\epsilon, \eta}) = \mathcal{L}_1$, we are reduced to compare the quantities $\beta^G(A^0_2(h_k^{\epsilon, +}))$ and $\beta^G(A^0_2(h_k^{\epsilon, -}))$.

Now, if $p$ denotes the number of signs $+$ and $q$ the number of signs $-$ in the quadratic part of $h_k^{\epsilon, +}$ and $h_k^{\epsilon, -}$ (notice that $p+q \neq 1$), we have, by proposition \ref{a0mhr},
$$\beta^G(A^0_2(h_k^{\epsilon, \eta}))) = u^{4+p+q} \beta^G(Y_{p,q})$$
Consequently, if $p \neq q$, we can use the same arguments as in the proof of corollary \ref{fpmnot} to conclude that the naive equivariant zeta functions of $h_k^{\epsilon, +}$ and $h_k^{\epsilon, -}$ are different and therefore that the germs $h_k^{\epsilon, +}$ and $h_k^{\epsilon, -}$ are not $G$-blow-Nash equivalent. 

If $p = q$, we compare $\beta^G(A_2^{+1}(h_k^{\epsilon, +}))$ and $\beta^G(A_2^{+1}(h_k^{\epsilon, -}))$. Since, by proposition \ref{amxihr}, 
$$\beta^G(A_2^{+1}(h_k^{\epsilon, +})) = u^{4+2p} \beta^G(Y_{p,q}^{+1}),$$
we can in this case as well use the arguments of the proof of corollary \ref{fpmnot}, in order to conclude that the equivariant zeta functions with signs $+$ of $h_k^{\epsilon, +}$ and $h_k^{\epsilon, -}$ are different.

\end{proof}

Using again the formulae of propositions \ref{a0mhr} and \ref{amxihr}, we then extract cases for which the germs $h_{k+1}$ and $r_k$ are not $G$-blow-Nash equivalent :

\begin{cor} Let $k \geq 3$. Suppose that the invariant germs
$$h_{k+1} = x_2^2 x_3 + \epsilon x_3^k + \eta x_1^2 + Q \mbox{   and   } r_k = x_1^2 x_2 + \epsilon x_2^k + \eta' x_3^2 + Q'$$
have, up to permutation of all variables, the same quadratic part, with $p$ signs $+$ and $q$ signs~$-$.

If $p \leq q$ and $\eta = +1$ or $q \leq p$ and $\eta = -1$, then $h_{k+1}$ and $r_k$ are not $G$-blow-Nash equivalent.

If $p = q+1$ or $q = p+1$, then $h_{k+1}$ and $r_k$ are not $G$-blow-Nash equivalent. 
\end{cor}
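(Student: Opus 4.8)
The plan is to mimic exactly the proof of Corollary \ref{fgprem}, whose structure carries over to the $C$-$D$ setting because the first-order coefficients of the equivariant zeta functions of $h_{k+1}$ and $r_k$ are governed by the same quantities $\beta^G(Y_{p,q})$ and $\beta^G(Y_{p,q}^{\xi})$ that control the $A$-$B$ case. First I would observe that, by Proposition \ref{oamh} together with the fact that $A_1^0$ is the full arc space $\mathcal{L}_1$, comparing the naive equivariant zeta functions at the lowest informative level reduces to comparing $\beta^G(A_2^0(h_{k+1}))$ and $\beta^G(A_2^0(r_k))$. By Proposition \ref{a0mhr} (applied with $m=2$, so $r=1$, and using that the same formulae hold for $r_k$ with $Y_{p,q}$ carrying the trivial action), both equal $u^{4+p+q}\,\beta^G(Y_{p,q})$, but with $Y_{p,q}$ equipped with action n\textsuperscript{o}\ref{acty1} (or n\textsuperscript{o}\ref{actyp1}, depending on $\eta$) for $h_{k+1}$ and the trivial action for $r_k$.

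For the first assertion, suppose $p\le q$ and $\eta=+1$ (the case $q\le p$, $\eta=-1$ being symmetric by exchanging the roles of $p$ and $q$); note $p\neq 0$. Then $G$ acts on $Y_{p,q}$ for $h_{k+1}$ via involution n\textsuperscript{o}\ref{acty1}, and by Proposition \ref{ypq} the two values of $\beta^G(Y_{p,q})$---one for the action n\textsuperscript{o}\ref{acty1}, one for the trivial action n\textsuperscript{o}\ref{triv}---are different (the numerators differ in the $u^{p-1}$ versus $u^{p+1}$ term). Hence $\beta^G(A_2^0(h_{k+1}))\neq\beta^G(A_2^0(r_k))$, so $\beta^G(A_2(h_{k+1}))\neq\beta^G(A_2(r_k))$, the naive equivariant zeta functions differ, and by Theorem \ref{eqzfinv} the germs are not $G$-blow-Nash equivalent.

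For the second assertion, suppose $p=q+1$ (the case $q=p+1$ being symmetric); then $q<p$, so I may take $\eta=+1$, and now $\beta^G(Y_{p,q})$ coincides for both germs, so the naive series agree at this order and I pass to the equivariant zeta functions with sign. By Proposition \ref{amxihr} (with $m=2$), both $\beta^G(A_2^{+1}(h_{k+1}))$ and $\beta^G(A_2^{+1}(r_k))$ equal $u^{4+p+q}\,\beta^G(Y_{p,q}^{+1})$, and Proposition \ref{yxi} rewrites $\beta^G(Y_{p,q}^{+1})=\frac{1}{u-1}\bigl(\beta^G(Y_{p,q+1})-\beta^G(Y_{p,q})\bigr)$. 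Since $q<p$ the term $\beta^G(Y_{p,q})$ is common to both germs, so the comparison reduces to $\beta^G(Y_{p,q+1})=\beta^G(Y_{p,p})$; and here, because $G$ acts by n\textsuperscript{o}\ref{acty1} for $h_{k+1}$ but trivially for $r_k$, Proposition \ref{ypq} gives two different values (the $p=q$ case of that proposition separates action n\textsuperscript{o}\ref{acty1} from the trivial one). Thus the equivariant zeta functions with sign $+$ differ and the germs are not $G$-blow-Nash equivalent.

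The argument is essentially a transcription, so the only point requiring genuine care---and the step I would watch most closely---is the exact value of the shift exponent and of the arguments of $Y_{p,q}$ produced by Proposition \ref{a0mhr} at $m=2$: here $p+q\neq 1$ is automatic (the quadratic part has at least two squares once we are comparing $C$ and $D$ germs), the correct exponent is $u^{4+p+q}$ rather than the $u^{2+p+q}$ of the $A$-$B$ case (reflecting the extra variable $x=x_2$ carried along in the $C$-$D$ germs), and one must verify that the action attached to $Y_{p,q}$ for $r_k$ is indeed trivial, as recorded in the remarks following Propositions \ref{a0mhr} and \ref{amxihr}. Once these bookkeeping points are confirmed, the distinctions follow verbatim from Propositions \ref{ypq} and \ref{yxi} exactly as in Corollary \ref{fgprem}.
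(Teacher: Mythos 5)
Your proof is correct and follows essentially the same route as the paper's: the paper likewise reduces the first assertion to comparing $\beta^G(A_2^0(h_{k+1})) = u^{4+p+q}\beta^G(Y_{p,q})$ (action n\textsuperscript{o}\ref{acty1} or n\textsuperscript{o}\ref{actyp1}) with $\beta^G(A_2^0(r_k)) = u^{4+p+q}\beta^G(Y_{p,q})$ (trivial action) via Proposition \ref{ypq}, and handles the second assertion by passing to $\beta^G(A_2^{+1}) = u^{4+p+q}\beta^G(Y_{p,q}^{+1})$ and Propositions \ref{yxi} and \ref{ypq}, exactly as in Corollary \ref{fgprem}.

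One parenthetical claim in your last paragraph is false, although it happens to cost you nothing. You assert that $p+q \neq 1$ is automatic; it is not. The common quadratic part consists of $n-2$ squares and $n = 3$ is allowed (for instance $D_4 : \pm x_1^2 + x_2^2 x_3 \pm x_3^3$ and $C_3 : x_1^2 x_2 \pm x_2^3 \pm x_3^2$ as germs on $\mathbb{R}^3$), so in your second assertion the case $(p,q) = (1,0)$, $\eta = +1$ genuinely occurs. (In the first assertion it is automatic: $p \le q$ and $\eta = +1$ force $p \ge 1$, hence $p+q \ge 2$, and symmetrically.) For $(p,q) = (1,0)$ you cannot quote the $p+q \neq 1$ branch of Propositions \ref{a0mhr} and \ref{amxihr}; you must instead use their first cases, which at $m = 2$ (so $r = 1$) give $u^{4r+1}\beta^G(Y_{p,q}^{\xi}) = u^{5}\beta^G(Y_{p,q}^{\xi}) = u^{4+p+q}\beta^G(Y_{p,q}^{\xi})$, i.e.\ the very same expression. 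The comparison then runs on $\beta^G(Y_{1,1})$, which by Proposition \ref{ypq} still distinguishes the action n\textsuperscript{o}\ref{acty1} from the trivial one ($\frac{u^2-u+1}{u-1}$ versus $\frac{2u^2-u}{u-1}$), so the conclusion is unaffected; only this edge case needs the one-line repair rather than the dismissal you gave it.
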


\begin{proof}  For the first point, focus for instance on the case $p \leq q$ and $\eta = +1$. As in the proof of corollary \ref{hk+hk-dif}, we consider $\beta^G(A^0_2(h_{k+1})) = u^{4+p+q} \beta^G(Y_{p,q})$ and $\beta^G(A^0_2(r_{k})) = u^{4+p+q} \beta^G(Y_{p,q})$. Since the action of $G$ on the former set $Y_{p,q}$ is the action n\textsuperscript{o}\ref{acty1} and the action on the latter set $Y_{p,q}$ is the trivial action, we obtain $\beta^G(A_2(h_{k+1})) \neq \beta^G(A_2(r_{k}))$.

For the second point, assume for instance $p = q+1$. Suppose furthermore that $\eta = +1$ and consider the quantities $\beta^G(A^{+1}_2(h_{k+1})) = u^{4+p+q} \beta^G(Y^{+1}_{p,q})$ and $\beta^G(A^{+1}_2(r_{k})) = u^{4+p+q} \beta^G(Y^{+1}_{p,q})$. By proposition \ref{yxi}, $\beta^G(Y_{p,q}^{+1}) = \frac{1}{u-1} \left(\beta^G(Y_{p,p}) - \beta^G(Y_{p,q})\right)$. Since $q < p$ and $\eta = +1$, the quantity $\beta^G(Y_{p,q})$ is the same for $h_{k+1}$ and $r_k$, while the quantities $\beta^G(Y_{p,p})$ are different (see proposition \ref{ypq}). As a consequence, $\beta^G(A^{+1}_2(h_{k+1})) \neq \beta^G(A^{+1}_2(r_{k}))$.
\end{proof}

From now, we are going to study the other coefficients of the equivariant zeta functions of $h_{k+1}$ and $r_k$ in the remaining cases, that is if $p > q+1$ and $\eta = +1$ or $q > p+1$ and $\eta = -1$. Notice that, in these cases, the quantities $\beta^G(Y_{p,q})$ and $\beta^G(Y^{\xi}_{p,q})$ are identical for $h_{k+1}$ and $r_k$.

\subsection{Computation of $\beta^G(A_k(h_{k+1}))$ and $\beta^G(A_k(r_{k}))$} \label{akhr}
 
Assuming that the Nash germs $h_{k+1}$ and $r_k$ have the same quadratic part $Q_{p,q}$, with $p > q+1$ and $\eta = +1$ or $q > p+1$ and $\eta = -1$, we first compute the coefficients $\beta^G(A_k(h_{k+1}))$ and $\beta^G(A_k(r_{k}))$ of their respective naive equivariant zeta functions. Having in mind proposition \ref{oamh}, we actually give formulae for $\beta^G(A^0_k(h_{k+1}))$ and $\beta^G(A^0_k(r_{k}))$ :

\begin{prop} \label{a0khr} Suppose $k \geq 3$. Then
$$\beta^G(A^0_{k}(h_{k+1})) = \begin{cases} u^{3l+2 + (l+1)(p+q)} \frac{u^{l(p+q-1)} - 1}{u^{p+q-1} - 1} \left(\beta^G(Y_{p,q} \setminus \{0\})\right) \\ + u^{3l+1 + (l+2)(p+q)} \frac{u^{(l-1)(p+q-1)} - 1}{u^{p+q-1} - 1} + u^{3l+1 + (l+1)(p+q)} \beta^G(\{h_{k+1}(x_2,x_3,0) = 0\}) \mbox{ if $k = 2l+1$,}\\
u^{3l + (l+1)(p+q)} \frac{u^{(l-1)(p+q-1)} - 1}{u^{p+q-1} - 1} \left(\beta^G(Y_{p,q} \setminus \{0\}) + 1\right) \\ ~~~~~ + u^{3l + l(p+q)} \beta^G(\{h_{k+1}(0,x_3,y) = 0\}) \mbox{ if $k = 2l$,}
\end{cases}$$
and
$$\beta^G(A^0_{k}(r_{k})) = \begin{cases} u^{3l+2 + (l+1)(p+q)} \frac{u^{l(p+q-1)} - 1}{u^{p+q-1} - 1} \left(\beta^G(Y_{p,q} \setminus \{0\})\right) \\ ~~~~ + u^{3l+1 + (l+2)(p+q)} \frac{u^{(l-1)(p+q-1)} - 1}{u^{p+q-1} - 1} + u^{3l+1 + (l+1)(p+q)} \beta^G(\{r_{k}(x_1,x_2,0) = 0\}) \mbox{ if $k = 2l+1$,}\\
u^{3l + (l+1)(p+q)} \frac{u^{(l-1)(p+q-1)} - 1}{u^{p+q-1} - 1} \left(\beta^G(Y_{p,q} \setminus \{0\}) + 1\right) + u^{3l + l(p+q)} \beta^G(\{r_{k}(0,x_2,y) = 0\}) \mbox{ if $k = 2l$.}
\end{cases}$$
\end{prop}

\begin{proof} We do the computations for $\beta^G(A^0_{k}(h_{k+1}))$.

First suppose $k$ to be odd, $k = 2l+1$. Keeping the notations of the proof of proposition \ref{a0mhr}, the set $A^0_k(h_{k+1})$ is defined by the system
$$\begin{cases}
Q_{p,q}(c_1) = 0, \\
a_1^2 b_1 + \Phi_{p,q}(c_1, c_2) = 0, \\
a_1^2 b_2 + 2 a_1 a_2 b_1 + Q_{p,q} (c_2) + \Phi_{p,q} (c_1, c_3) = 0,\\
\cdots \\
\sum_{t=1}^{l-1} a_t^2 b_{2l-2t} + 2 \sum_{t=1}^{l-1} a_t \sum_{\delta=t+1}^{2l-(t+1)} a_{\delta} b_{2l-\delta-t} + Q_{p,q}(c_l) + \sum_{t= 1}^{l-1} \Phi_{p,q}(c_t,c_{2l-t}) = 0, \\
\epsilon b_1^{2l+1}+ \sum_{t = 1}^{l} a_t^2 b_{2l+1-2t} + 2 \sum_{t=1}^{l-1} a_s \sum_{\delta=t+1}^{2l+1-(t+1)} a_{\delta} b_{2l+1-\delta-t} +\sum_{t= 1}^{l} \Phi_{p,q}(c_t,c_{2l+1-t}) = 0.
\end{cases}$$

Proceeding as in the proof of proposition \ref{a0mhr} (see also the proof of proposition \ref{a02k}), we obtain
\begin{eqnarray*}
\beta^G(A^0_{k}(h_{k+1})) & = & u^{3l+2 + (l+1)(p+q)} \frac{u^{l(p+q-1)} - 1}{u^{p+q-1} - 1} \left(\beta^G(Y_{p,q} \setminus \{0\})\right) \\
 &&+ \sum_{t = 0}^{l-2} u^{t(3+p+q)} u^{(k-1-2t) (p+q + 1)+ 3} + u^{(l-1)(3+p+q)} \beta^G(S^0_3),
\end{eqnarray*}
if $S^0_3$ denotes the algebraic set defined by the equation $\epsilon b_1^{2l+1} + a_1^2 b_1 = 0$, the variables $a_2, a_3$, $b_2,b_3$ as well as the vectors $c_2,c_3$ being free. Hence the desired result.
\\

If we suppose $k$ even, $k = 2l$, the set $A^0_k(h_{k+1})$ is described by the system
$$\begin{cases}
Q_{p,q}(c_1) = 0, \\
a_1^2 b_1 + \Phi_{p,q}(c_1, c_2) = 0, \\
\cdots \\
\sum_{t = 1}^{l-1} a_t^2 b_{2l-1-2t} + 2 \sum_{t=1}^{l-2} a_s \sum_{\delta=t+1}^{2l-1-(t+1)} a_{\delta} b_{2l-1-\delta-t} +\sum_{t= 1}^{l-1} \Phi_{p,q}(c_t,c_{2l-1-t}) = 0, \\
\epsilon b_1^{2l} + \sum_{t=1}^{l-1} a_t^2 b_{2l-2t} + 2 \sum_{t=1}^{l-1} a_t \sum_{\delta=t+1}^{2l-(t+1)} a_{\delta} b_{2l-\delta-t} + Q_{p,q}(c_l) + \sum_{t= 1}^{l-1} \Phi_{p,q}(c_t,c_{2l-t}) = 0,
\end{cases}$$
and we have
$$\beta^G(A^0_{k}(h_{k+1}))  =  u^{3l + (l+1)(p+q)} \frac{u^{(l-1)(p+q-1)} - 1}{u^{p+q-1} - 1} \left(\beta^G(Y_{p,q} \setminus \{0\}) + 1\right)  + u^{(l-1)(3+p+q)} \beta^G(S_2^0),$$
where $S_2^0$ is the algebraic set given by the equation $\epsilon b_1^{2l} + Q_{p,q}(c_1) = 0$, with free variables $a_1, a_2,b_2$ and free vector $c_2$.
\end{proof}
 
Since, in our present framework, the quantity $\beta^G(Y_{p,q})$ is the same for $h_{k+1}$ and $g_k$, we are reduced to study the equivariant virtual Poincar\'e series of the $G$-algebraic sets $\{h_{k+1}(x_2,x_3,0) = 0\}$ and $\{r_{k}(x_1,x_2,0) = 0\}$ if $k$ is odd, resp. $\{h_{k+1}(0,x_3,y) = 0\}$ and $\{r_{k}(0,x_2,y) = 0\}$ if $k$ is even.

Notice that, if $k$ is even, $\beta^G(\{h_{k+1}(0,x_3,y) = 0\})$ and $\beta^G(\{r_{k}(0,x_2,y) = 0\})$ have been already computed in lemma \ref{fg2k0} : if $k$ is even and if $p > q+1$ and $\eta = +1$ or $q > p+1$ and $\eta = -1$, the equivariant virtual Poincar\'e series $\beta^G(\{h_{k+1}(0,x_3,y) = 0\})$ and $\beta^G(\{r_{k}(0,x_2,y) = 0\})$ are equal and therefore $\beta^G(A_k(h_{k+1}))= \beta^G(A_k(r_{k}))$.

Now, in the next lemma, we compute $\{h_{k+1}(x_2,x_3,0) = 0\}$ and $\{r_{k}(x_1,x_2,0) = 0\}$ if $k$ is odd, $k = 2l+1$ : 
 
\begin{lem}
We have
$$\beta^G(\{h_{2l+2}(x_2,x_3,0) = 0\}) =  \beta^G(\{x_2^2 + \epsilon x_3^{2} = 0\}) - \frac{u}{u-1} +\frac{u^2}{u-1},$$
where the latter set is considered as an algebraic subset of $\mathbb{R}^2$ on which the group $G$ acts trivially, and
$$\beta^G(\{r_{2l+1}(x_1,x_2,0) = 0\}) = \beta^G(\{x_1^2 + \epsilon x_2^{2} = 0\}) - \frac{u}{u-1} + \frac{u^2}{u-1},$$ 
where the latter set is considered as an algebraic subset of $\mathbb{R}^2$ on which the group $G$ acts only changing the sign of the coordinate $x_1$.
\end{lem}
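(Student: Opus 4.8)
The plan is to reduce both equalities to a single computation by exploiting the factorizations
$$h_{2l+2}(x_2,x_3,0) = x_3\left(x_2^2 + \epsilon x_3^{2l}\right) \quad \text{and} \quad r_{2l+1}(x_1,x_2,0) = x_2\left(x_1^2 + \epsilon x_2^{2l}\right).$$
First I would treat the germ $h$. The zero set $\{x_3(x_2^2+\epsilon x_3^{2l}) = 0\}$ is the union of the line $\{x_3 = 0\}$ and the curve $C_l := \{x_2^2 + \epsilon x_3^{2l} = 0\}$, which meet only at the origin; both pieces are globally $G$-invariant. By additivity of $\beta^G$, together with $\beta^G(\{x_3 = 0\}) = \beta^G(\mathbb{R}) = \frac{u^2}{u-1}$ and $\beta^G(\{0\}) = \frac{u}{u-1}$ (remark \ref{equivpoinc}), this gives
$$\beta^G(\{h_{2l+2}(x_2,x_3,0) = 0\}) = \frac{u^2}{u-1} - \frac{u}{u-1} + \beta^G(C_l).$$
The same decomposition applied to $r_{2l+1}(x_1,x_2,0)$ yields the analogous identity, with $\{x_2 = 0\}$ (still a line, hence $\beta^G = \frac{u^2}{u-1}$ whatever the action, by remark \ref{equivpoinc}) and the curve $C_l' := \{x_1^2 + \epsilon x_2^{2l} = 0\}$. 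Thus in both cases it remains to prove that the equivariant virtual Poincar\'e series of the inner curve is independent of $l$, i.e. equals its value for $l=1$, which is exactly the curve appearing in the right-hand sides.

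For $\epsilon = +1$ this is immediate: both $C_l$ and $C_l'$ reduce to the single fixed point $\{0\}$, so $\beta^G = \frac{u}{u-1}$ independently of $l$. For $\epsilon = -1$ the curves are genuinely one-dimensional and split as two branches $\{x_2 = \pm x_3^{l}\}$, resp. $\{x_1 = \pm x_2^l\}$, meeting at the origin. In the $h$ case $G$ acts trivially, so each branch is $G$-invariant and isomorphic to $\mathbb{R}$ via projection onto $x_3$; additivity then gives $\beta^G(C_l) = 2\frac{u^2}{u-1} - \frac{u}{u-1}$, again independent of $l$.

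The main obstacle is the $r$ case with $\epsilon = -1$: there $G$ changes the sign of $x_1$ and hence exchanges the two branches $\{x_1 = x_2^l\}$ and $\{x_1 = -x_2^l\}$, so they are not individually $G$-invariant and one cannot split $\beta^G$ branch by branch. Instead I would remove the origin equivariantly and observe that $C_l' \setminus \{0\}$ is equivariantly isomorphic to $\mathbb{R}^* \times \{\pm 1\}$ via $(x_1,x_2) \mapsto (x_2,\, x_1/x_2^l)$, which is a well-defined algebraic isomorphism since $x_2 \neq 0$ on $C_l' \setminus \{0\}$, with $G$ acting trivially on the $\mathbb{R}^*$-factor and exchanging the two points $\{\pm 1\}$. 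Since $\beta^G$ of two exchanged points is $1$ and $\beta^G(X \times (\mathbb{R}^*)^d) = (u-1)^d \beta^G(X)$ (remark \ref{equivpoinc}), this gives $\beta^G(C_l' \setminus \{0\}) = u - 1$, whence $\beta^G(C_l') = (u-1) + \frac{u}{u-1}$, once more independent of $l$. Substituting the computed values of $\beta^G(C_l)$ and $\beta^G(C_l')$ into the two reduced identities yields the stated formulae.
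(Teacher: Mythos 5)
Your proof is correct. The first reduction coincides exactly with the paper's: the paper, too, splits off the line $\{x_2=0\}$ (resp. $\{x_3=0\}$), notes that on its complement the equation becomes $x_1^2+\epsilon x_2^{2l}=0$, and thereby reduces the lemma to the equality $\beta^G\left(\left\{x_1^2+\epsilon x_2^{2l}=0\right\}\right) = \beta^G\left(\left\{x_1^2+\epsilon x_2^{2}=0\right\}\right)$. Where you diverge is in how this last equality is obtained. The paper simply invokes its earlier Lemma \ref{fg2k0} (itself proved by a recursion of equivariant blowings-up), applied in ambient dimension $2$, which gives $\beta^G\left(\left\{x_1^2+\epsilon x_2^{2l}=0\right\}\right) = \beta^G\left(\left\{x_1^2+\epsilon x_2^{2}=0\right\}\right) - (l-1)\beta^G(\{x_1^2=0\}) + (l-1)\beta^G(\{0\})$, the correction terms cancelling since $\{x_1^2=0\}\subset\mathbb{R}$ is a point. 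You instead compute the series of the curves directly and explicitly: for $\epsilon=+1$ they are single points; for $\epsilon=-1$ with trivial action you split into the two invariant branches; and in the only genuinely equivariant case ($\epsilon=-1$ with $G$ flipping $x_1$, where the branches are exchanged) you use the equivariant isomorphism $C_l'\setminus\{0\}\cong \mathbb{R}^*\times\{\pm 1\}$ together with $\beta^G(X\times\mathbb{R}^*)=(u-1)\beta^G(X)$ and $\beta^G(\{\pm 1\})=1$ from Remark \ref{equivpoinc}. Your values $\frac{2u^2-u}{u-1}$ and $(u-1)+\frac{u}{u-1}=\frac{u^2-u+1}{u-1}$ agree with Proposition \ref{ypq}, which is precisely what the paper uses \emph{after} the lemma to distinguish the two germs when $\epsilon=-1$. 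What your route buys: it is self-contained and elementary, avoiding both blow-ups and the appeal to Lemma \ref{fg2k0}, and it produces the explicit values of the series, effectively absorbing the paper's post-lemma computation. What the paper's route buys: brevity, and uniformity with the recursive blow-up technique it uses throughout the $AB$, $CD$ and $EF$ comparisons.
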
 
 
\begin{proof} We make the computation for $\beta^G(\{r_{2l+1}(x_1,x_2,0) = 0\})$.

Consider the equation $x_1^2 x_2 + \epsilon x_2^{2l+1} = 0$. If $x_2 \neq 0$, it is equivalent to $x_1^2 + \epsilon x_2^{2l} = 0$, and if $x_2 = 0$, it becomes trivial. Consequently,
$$\beta^G(\{r_{2l+1}(x_1,x_2,0) = 0\}) = \beta^G(\{x_1^2 + \epsilon x_2^{2l} = 0 \} \setminus \{(0,0)\}) + \frac{u^2}{u-1},$$
and we use lemma \ref{fg2k0} to write $\beta^G(\{x_1^2 + \epsilon x_2^{2l} = 0 \}) = \beta^G(\{x_1^2 + \epsilon x_2^{2} = 0 \}) - (l-1) \beta^G(\{x_1^2 = 0\}) + (l-1) \beta^G(\{(0,0)\}) = \beta^G(\{x_1^2 + \epsilon x_2^{2} = 0 \})$ (recall also that the equivariant virtual Poincar\'e series of a point is $\frac{u}{u-1}$).
\end{proof} 

If $\epsilon = +1$, the sets $\{x_2^2 + x_3^{2} = 0\})$ and $\{x_1^2 + x_2^{2} = 0\}$ are both reduced to a single point. On the other hand, if $\epsilon = -1$,  we have $\beta^G(\{x_2^2 - x_3^{2} = 0\}) = \frac{2u^2-u}{u-1}$ whereas $\beta^G(\{x_1^2 - x_2^{2} = 0\}) = \frac{u^2-u+1}{u-1}$ (see proposition \ref{ypq}). As a consequence :
 
\begin{cor} If $k$ is odd and if $\epsilon = -1$, the germs $h_{k+1}$ and $r_k$ are not $G$-blow-Nash equivalent.
\end{cor}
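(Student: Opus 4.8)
The plan is to distinguish $h_{k+1}$ and $r_k$ by exhibiting a single coefficient of their naive equivariant zeta functions that differs, and then to invoke Theorem \ref{eqzfinv}: since $G$-blow-Nash equivalence forces $Z^G_{h_{k+1}}(u,T) = Z^G_{r_k}(u,T)$, any discrepancy in a coefficient rules out equivalence. The natural coefficient to target is the one of degree $T^{k}$, namely $\beta^G(A_k(h_{k+1}))$ versus $\beta^G(A_k(r_k))$, since this is exactly where the computations of Proposition \ref{a0khr} and the preceding lemma have just exposed the sensitivity of the invariant to the $G$-action when $k$ is odd.

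First I would reduce the comparison of $\beta^G(A_k)$ to that of $\beta^G(A^0_k)$ via the additivity relation $\beta^G(A_k(\cdot)) = \beta^G({}^0\!A_k(\cdot)) - \beta^G(A^0_k(\cdot))$ together with Proposition \ref{oamh}, which gives $\beta^G({}^0\!A_k(\cdot)) = u^{n}\beta^G(A^0_{k-1}(\cdot))$. Here the point is that $\beta^G(A^0_{k-1}(h_{k+1})) = \beta^G(A^0_{k-1}(r_k))$: the index $k-1$ falls in the range covered by Propositions \ref{a0mhr} and \ref{amxihr}, and in the standing cases ($p > q+1$, $\eta = +1$, or $q > p+1$, $\eta = -1$) the quantities $\beta^G(Y_{p,q})$ and $\beta^G(Y^{\xi}_{p,q})$ are the same whether $G$ acts via involution n\textsuperscript{o}\ref{acty1}/n\textsuperscript{o}\ref{actyp1} or trivially on $Y_{p,q}$. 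Hence the ${}^0\!A_k$ contributions coincide, and I am left to compare $\beta^G(A^0_k(h_{k+1}))$ and $\beta^G(A^0_k(r_k))$. Inspecting the odd case $k = 2l+1$ of Proposition \ref{a0khr}, every term there is identical for the two germs \emph{except} the last one, so the comparison collapses to comparing $\beta^G(\{h_{k+1}(x_2,x_3,0)=0\})$ with $\beta^G(\{r_k(x_1,x_2,0)=0\})$.

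At this stage I would simply feed in the preceding lemma, which writes each of these as $\beta^G(\{x_2^2 + \epsilon x_3^2 = 0\})$, resp. $\beta^G(\{x_1^2 + \epsilon x_2^2 = 0\})$, plus the \emph{same} correction $-\tfrac{u}{u-1} + \tfrac{u^2}{u-1}$; the only difference is that $G$ acts trivially on the first cone but changes the sign of $x_1$ on the second. Specialising to $\epsilon = -1$ and applying the $p=q=1$ formula of Proposition \ref{ypq}, the trivial action (case n\textsuperscript{o}\ref{triv}) yields $\beta^G(\{x_2^2 - x_3^2 = 0\}) = \tfrac{2u^2 - u}{u-1}$, whereas the sign change on $x_1$ (case n\textsuperscript{o}\ref{acty1}) yields $\beta^G(\{x_1^2 - x_2^2 = 0\}) = \tfrac{u^2 - u + 1}{u-1}$, so their difference is $\tfrac{u^2-1}{u-1} = u+1 \neq 0$. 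Therefore $\beta^G(A^0_k(h_{k+1})) \neq \beta^G(A^0_k(r_k))$, whence $\beta^G(A_k(h_{k+1})) \neq \beta^G(A_k(r_k))$ and $Z^G_{h_{k+1}}(u,T) \neq Z^G_{r_k}(u,T)$; by Theorem \ref{eqzfinv} the germs are not $G$-blow-Nash equivalent. The only genuinely delicate point, already handled by the bookkeeping of Proposition \ref{a0khr}, is confirming that the action-dependence survives in precisely this one cone term and cancels everywhere else; conceptually this reflects that the equivariant virtual Poincaré series detects the difference between the two $G$-actions on the node $\{x^2 = y^2\}$, the trivial action producing two fixed branches while the sign involution exchanges them — a distinction that becomes transparent after the equivariant blow-up used in the lemma.
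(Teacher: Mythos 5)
Your proposal is correct and follows essentially the same route as the paper: reduce the comparison of the $T^k$-coefficients of the naive equivariant zeta functions to the cone terms $\beta^G(\{h_{k+1}(x_2,x_3,0)=0\})$ and $\beta^G(\{r_k(x_1,x_2,0)=0\})$ via Propositions \ref{oamh}, \ref{a0mhr} and \ref{a0khr}, then use the preceding lemma and the $p=q=1$ case of Proposition \ref{ypq} to see that for $\epsilon=-1$ these differ (trivial action versus sign involution on the node), and conclude by Theorem \ref{eqzfinv}. The paper leaves this chain implicit ("As a consequence"), whereas you spell it out, but the argument is the same.
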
 
 
If $p > q+1$ and $\eta = +1$ or $q > p+1$ and $\eta = -1$, and if $k$ is even or $k$ is odd and $\epsilon = +1$, the coefficients $\beta^G(A_k(h_{k+1}))$ and $\beta^G(A_k(r_{k}))$ of the respective naive equivariant zeta functions of $h_{k+1}$ and $r_k$ are equal. We are then led to look at the coefficients $\beta^G(A^{\xi}_k(h_{k+1}))$ and $\beta^G(A^{\xi}_k(r_{k}))$ of their respective equivariant zeta functions with signs.

\subsection{Computation of $\beta^G(A^{\xi}_k(h_{k+1}))$ and $\beta^G(A^{\xi}_k(r_{k}))$}
 
For the cases listed above, we consider the quantities $\beta^G(A^{\xi}_k(h_{k+1}))$ and $\beta^G(A^{\xi}_k(r_{k}))$, expressed by the following formulae (just follow the steps of computation of the proof of proposition \ref{a0khr})~:

\begin{prop} Suppose $k \geq 3$. Then
$$\beta^G(A^{\xi}_{k}(h_{k+1})) = \begin{cases} u^{3l+2 + (l+1)(p+q)} \frac{u^{l(p+q-1)} - 1}{u^{p+q-1} - 1} \left(\beta^G(Y_{p,q} \setminus \{0\})\right) \\ + u^{3l+1 + (l+2)(p+q)} \frac{u^{(l-1)(p+q-1)} - 1}{u^{p+q-1} - 1} + u^{3l+1 + (l+1)(p+q)} \beta^G(\{h_{k+1}(x_2,x_3,0) = \xi\}) \mbox{ if $k = 2l+1$,}\\
u^{3l + (l+1)(p+q)} \frac{u^{(l-1)(p+q-1)} - 1}{u^{p+q-1} - 1} \left(\beta^G(Y_{p,q} \setminus \{0\}) + 1\right) \\ ~~~~~ + u^{3l + l(p+q)} \beta^G(\{h_{k+1}(0,x_3,y) = \xi\}) \mbox{ if $k = 2l$,}
\end{cases}$$
and
$$\beta^G(A^{\xi}_{k}(r_{k})) = \begin{cases} u^{3l+2 + (l+1)(p+q)} \frac{u^{l(p+q-1)} - 1}{u^{p+q-1} - 1} \left(\beta^G(Y_{p,q} \setminus \{0\})\right) \\ ~~~~ + u^{3l+1 + (l+2)(p+q)} \frac{u^{(l-1)(p+q-1)} - 1}{u^{p+q-1} - 1} + u^{3l+1 + (l+1)(p+q)} \beta^G(\{r_{k}(x_1,x_2,0) = \xi\}) \mbox{ if $k = 2l+1$,}\\
u^{3l + (l+1)(p+q)} \frac{u^{(l-1)(p+q-1)} - 1}{u^{p+q-1} - 1} \left(\beta^G(Y_{p,q} \setminus \{0\}) + 1\right) + u^{3l + l(p+q)} \beta^G(\{r_{k}(0,x_2,y) = \xi\}) \mbox{ if $k = 2l$.}
\end{cases}$$
\end{prop}

If $k$ is even, we can use the formulae of lemma \ref{fgxi} and the same arguments as in the proofs of propositions \ref{eqa2kxi} and \ref{a2kxidepend} in order to establish the following facts :

\begin{prop} \label{akxihreven} Suppose $k$ is even, $k = 2l \geq 4$.
\begin{enumerate}
\item If $p > q+1$ and $\eta = \epsilon = + 1$ or $q > p+1$ and $\eta = \epsilon = - 1$, then $\beta^G(A^{\xi}_{k}(h_{k+1})) = \beta^G(A^{\xi}_{k}(r_{k}))$.
\item \begin{itemize}
	\item If $p > q + 1$, $\eta = +1$ and $\epsilon = -1$, the equality $\beta^G(A_{k}^{\xi}(h_{k+1})) = \beta^G(A_{k}^{\xi}(r_k))$ is true if and only if the equivariant virtual Poincar\'e series of the algebraic subsets $\left\{ - x_3^{2l} + y^2 + \sum_{i = 1}^{K-1} y_i^2 = \xi \right\} \subset \mathbb{R}^{K+1}$, $K := p-q$, equipped with the action of $G$ changing only the sign of $y$, and $\left\{ - x_2^{2l} + \sum_{i = 1}^{K} z_i^2 = \xi \right\}\subset \mathbb{R}^{K+1}$, equipped with the trivial action of $G$, are equal.
 	\item If $q > p+1$, $\eta = -1$ and $\epsilon = +1$, we have $\beta^G(A_{k}^{\xi}(h_{k+1})) = \beta^G(A_{k}^{\xi}(r_k))$ if and only if $\beta^G\left(\left\{ x_3^{2l} - y^2 - \sum_{i = 1}^{K-1} y_i^2 = \xi \right\}\right) = \beta^G\left(\left\{ x_2^{2l} - \sum_{i = 1}^{K} z_i^2 = \xi \right\} \right)$.
	\end{itemize}
\end{enumerate}
\end{prop}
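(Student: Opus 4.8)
The plan is to reduce the statement, in both parts, to the comparison of the equivariant virtual Poincar\'e series of the fibres $\{h_{k+1}(0,x_3,y) = \xi\}$ and $\{r_k(0,x_2,y) = \xi\}$, and then to recognise these fibres as exactly the sets already treated for the families $A_k$ and $B_k$. First I would record that, in the formulae just established for $\beta^G(A^{\xi}_k(h_{k+1}))$ and $\beta^G(A^{\xi}_k(r_k))$ with $k=2l$, the leading summand (the one carrying the factor $\beta^G(Y_{p,q}\setminus\{0\})+1$) is literally identical for $h_{k+1}$ and $r_k$: by proposition \ref{ypq} (as already observed for $A_k$ and $B_k$ in the discussion following corollary \ref{fgprem}), when $p>q+1$ and $\eta=+1$, resp. $q>p+1$ and $\eta=-1$, the quantity $\beta^G(Y_{p,q})$, hence also $\beta^G(Y_{p,q}\setminus\{0\})$, takes the same value whether $G$ acts on $Y_{p,q}$ via the involution n\textsuperscript{o}\ref{acty1} or n\textsuperscript{o}\ref{actyp1} (the action attached to $h_{k+1}$) or trivially (the action attached to $r_k$). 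Consequently
\[
\beta^G(A^{\xi}_k(h_{k+1})) - \beta^G(A^{\xi}_k(r_k)) = u^{3l+l(p+q)}\Bigl(\beta^G(\{h_{k+1}(0,x_3,y)=\xi\}) - \beta^G(\{r_k(0,x_2,y)=\xi\})\Bigr),
\]
so everything comes down to the bracketed difference.

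Next I would make the identification explicit. Setting $x_2=0$ in $h_{k+1}$ and $x_1=0$ in $r_k$ gives
\[
h_{k+1}(0,x_3,y) = \epsilon x_3^{2l} + Q_{p,q}(y), \qquad r_k(0,x_2,y) = \epsilon x_2^{2l} + Q_{p,q}(y),
\]
and here the crucial point is the $G$-action: since the sign-changed coordinate $x_1$ sits inside $Q_{p,q}$ for $h_{k+1}$ but inside the (vanishing) cubic term for $r_k$, the group acts on the $y$-variables of the first fibre via n\textsuperscript{o}\ref{acty1} or n\textsuperscript{o}\ref{actyp1} (according to the sign of $\eta$) and trivially on the $y$-variables of the second, while acting trivially on the even-power variable in both cases. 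These are precisely the sets $\{f_{2k-1}=\xi\}$ and $\{g_k=\xi\}$ of the sections on $A_k$ and $B_k$, with the even exponent $2k$ replaced by $2l$; since the computations of lemma \ref{fgxi} treat the even-power term as a single monomial and the hyperbolic change of variables $u_i=y_i+y_{p+i}$, $v_i=y_i-y_{p+i}$ involves only $y$-coordinates, they remain valid and equivariant verbatim here.

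It then remains to invoke the two reductions already carried out. For part (1), that is $\eta=\epsilon=+1$ with $p>q+1$ (and the symmetric case $\eta=\epsilon=-1$, $q>p+1$), the argument of proposition \ref{eqa2kxi} applies: after the change of variables both fibres become the empty set (for $\xi=-1$) or a compact nonsingular set equivariantly homeomorphic to a sphere with non-empty fixed-point set (for $\xi=+1$), and by remark \ref{equivpoinc} the equivariant virtual Poincar\'e series of such a sphere is independent of the action, so the bracketed difference vanishes and $\beta^G(A^{\xi}_k(h_{k+1}))=\beta^G(A^{\xi}_k(r_k))$. For part (2), that is $\eta=+1$, $\epsilon=-1$ with $p>q+1$ (and its symmetric counterpart), the same change of variables reduces, exactly as in proposition \ref{a2kxidepend}, the bracketed difference to the difference of the $\beta^G$ of the two subsets of $\mathbb{R}^{K+1}$, $K=p-q$, displayed in the statement, whence the asserted equivalence.

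The step I expect to require the most care is the bookkeeping in the first two paragraphs: one must check that the leading terms genuinely cancel — which hinges on the agreement of $\beta^G(Y_{p,q})$ between the two actions precisely under the hypothesis $p>q+1$, an agreement that fails in the excluded cases — and that the residual $G$-action on the reduced fibres is correctly tracked through the hyperbolic change of variables, so that the citations of propositions \ref{eqa2kxi} and \ref{a2kxidepend} are legitimate. There is no genuinely new difficulty here, only the need to confirm that the $A_k$/$B_k$ analysis transfers under the substitution of the monomial $\epsilon x^{2l}$ for $\epsilon x^{2k}$, the variable carrying that monomial being acted on trivially by $G$ in both germs.
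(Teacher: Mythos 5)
Your proof is correct and takes essentially the same approach as the paper, whose entire justification is the remark that one can ``use the formulae of lemma \ref{fgxi} and the same arguments as in the proofs of propositions \ref{eqa2kxi} and \ref{a2kxidepend}'' --- precisely the reduction you carry out (cancellation of the leading terms via the agreement of $\beta^G(Y_{p,q})$ under the hypothesis $p>q+1$, $\eta=+1$, resp.\ $q>p+1$, $\eta=-1$, then comparison of the fibres $\{h_{k+1}(0,x_3,y)=\xi\}$ and $\{r_k(0,x_2,y)=\xi\}$). Your explicit tracking of the $G$-action on the reduced fibres, in particular that the even-power variable is acted on trivially in \emph{both} germs here, unlike the $A_k$/$B_k$ situation where $g_k$'s variable $x_1$ is sign-changed, is exactly the bookkeeping the paper leaves implicit and is what makes the citation of propositions \ref{eqa2kxi} and \ref{a2kxidepend} legitimate.
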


If $k$ is odd, $k = 2l+1$ and $\epsilon = +1$, and if $p > q+1$ and $\eta = +1$ or $q > p+1$ and $\eta = -1$, we are reduced to compare $\beta^G(\{h_{k+1}(x_2,x_3,0) = \xi\}) = \beta^G(\{x_2^2 x_3 + x_3^{2l+1} = \xi\})$ and $\beta^G(\{r_{k}(x_1,x_2,0) = \xi\}) = \beta^G(\{x_1^2 x_2 + x_2^{2l+1} = \xi \})$. We are going to show that these two quantities are equal and therefore :

\begin{prop} \label{axihkkoddepsplus} If $p > q+1$ and $\eta = +1$ or $q > p+1$ and $\eta = -1$, and if $k$ is odd and $\epsilon = +1$, then $\beta^G(A^{\xi}_{k}(h_{k+1})) = \beta^G(A^{\xi}_{k}(r_{k}))$. 
\end{prop}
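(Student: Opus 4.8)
The plan is to prove the equality directly at the level of the two plane curves to which the statement has just been reduced, namely
$$X_h := \{x_2^2 x_3 + x_3^{2l+1} = \xi\} \subset \mathbb{R}^2 \quad\text{and}\quad X_r := \{x_1^2 x_2 + x_2^{2l+1} = \xi\} \subset \mathbb{R}^2,$$
the first carrying the trivial action of $G$ and the second the action changing the sign of its first coordinate $x_1$. Both are defined by the same polynomial $a^2 b + b^{2l+1} - \xi$, where in each case $a$ denotes the variable on which $G$ may act ($a = x_2$, resp. $a = x_1$) and $b$ the variable left fixed by $G$. I would emphasize from the outset that, since $\beta^G$ is only invariant under equivariant $\mathcal{AS}$-isomorphisms, the computation must proceed through genuine Nash (algebraic) isomorphisms and cannot be carried out by slicing along the semialgebraic, non-$\mathcal{AS}$ locus of admissible values of $b$.

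The key step is to project each curve onto its $a$-axis. For a fixed value of $a$, the one-variable polynomial $P_a(b) := b^{2l+1} + a^2 b - \xi$ has derivative $(2l+1)b^{2l} + a^2$, which is nonnegative and makes $P_a$ strictly increasing (strictly so for $a \neq 0$, and still strictly increasing for $a = 0$ since then $P_0(b) = b^{2l+1} - \xi$); hence $P_a$ has exactly one real root $b = \varphi(a)$ for every $a \in \mathbb{R}$. Equivalently, the partial derivative $\partial_b(a^2 b + b^{2l+1}) = a^2 + (2l+1)b^{2l}$ vanishes only at the origin, which does not lie on the curve, so the projection $(a,b) \mapsto a$ is a Nash isomorphism of each curve onto $\mathbb{R}$, with inverse $a \mapsto (a, \varphi(a))$ and $\varphi$ a Nash function. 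Because $P_a$ depends on $a$ only through $a^2$, the function $\varphi$ is even, $\varphi(-a) = \varphi(a)$.

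I would then check that these isomorphisms are equivariant: in the case of $X_h$ the target line carries the trivial action and there is nothing to verify, whereas for $X_r$ the evenness of $\varphi$ shows that $a \mapsto (a,\varphi(a))$ intertwines the involution $a \mapsto -a$ on $\mathbb{R}$ with the involution $(a,b) \mapsto (-a,b)$ on $X_r$. Invoking the action-independence of the equivariant virtual Poincar\'e series of the affine line, $\beta^G(\mathbb{R}) = \frac{u^2}{u-1}$ for every algebraic action of $G$ (remark \ref{equivpoinc}), I conclude that $\beta^G(X_h) = \beta^G(X_r) = \frac{u^2}{u-1}$, whence $\beta^G(A_k^\xi(h_{k+1})) = \beta^G(A_k^\xi(r_k))$; the symmetric case $q > p+1$, $\eta = -1$ is identical.

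The only real difficulty, and the point I would be most careful about, is precisely the choice of projection: projecting onto the fixed variable $b$ would split the fibre into one or two points and force a decomposition along a non-$\mathcal{AS}$ subset of the base, after which the trivial and the sign actions would contribute differently; projecting instead onto the squared variable $a$ turns each curve into an honest Nash graph over $\mathbb{R}$, and the action-independence of $\beta^G(\mathbb{R})$ then makes the two computations coincide regardless of whether $G$ acts trivially or by a sign change.
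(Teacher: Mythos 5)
The value you arrive at is the correct one, and everything you prove about the \emph{real points} of the two curves is right: each of $X_h$, $X_r$ is a nonsingular curve which is the graph of an even Nash function $\varphi$ over its $a$-axis, and the graph map is equivariant. The gap is in the single step that actually produces the number: transporting $\beta^G$ along the projection $(a,b)\mapsto a$. That map is not an isomorphism in the category where $\beta^G$ is known to be invariant. As recalled in remark \ref{equivpoinc}, an isomorphism of $G$-$\mathcal{AS}$ sets means a birational map biregular on the sets, and the invariance theorem for the equivariant virtual Poincar\'e series gives nothing beyond that. Your projection is regular, but its inverse $a\mapsto(a,\varphi(a))$ is only Nash: $\varphi$ is a root of $b^{2l+1}+a^2b-\xi$, which has no root in $\mathbb{R}(a)$ (being monic in $b$, a rational root would be a polynomial $g\in\mathbb{R}[a]$ with $g^{2l+1}+a^2g=\xi$, which a degree count rules out), so the projection has degree $2l+1>1$ on function fields and is not birational. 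Your phrase ``genuine Nash (algebraic) isomorphisms'' conflates two different notions, and the difference is exactly where the argument breaks: what you are invoking is invariance of $\beta^G$ under equivariant bijections with $\mathcal{AS}$ graph (equivariant Nash isomorphisms are a special case), and that is precisely the property which the paper states, in the introduction and in remark \ref{remevpsas}, is \emph{not} known for the equivariant virtual Poincar\'e series. Moreover the gap cannot be repaired by a cleverer choice of map: for $k=3$ (i.e.\ $l=1$) the projective closure $\{X^2Y+Y^3=\xi Z^3\}$ is a nonsingular cubic, of genus one, so the affine curve is not even birationally equivalent to a line, let alone biregularly isomorphic to one.

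This is why the paper's own proof takes a different route: it equivariantly compactifies the curve in $\mathbb{P}^2(\mathbb{R})$, resolves the point at infinity (singular when $l\geq 2$) by one equivariant blow-up, and then uses only additivity, the biregular isomorphism off the center of the blow-up, and the value of $\beta^G$ on a compact nonsingular curve --- a circle with a fixed point --- which is computed by equivariant homology and is therefore the same for the trivial action and for any action with a fixed point. All of these operations stay inside the algebraic category where the invariance of $\beta^G$ is actually available, and they give $\beta^G=\frac{u^2}{u-1}$ for both actions, confirming your answer but by legitimate means. To salvage your argument as written, you would first have to prove the equivariant analogue of McCrory--Parusi\'nski's invariance under bijections with $\mathcal{AS}$ graph, which is an open problem.
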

 
\begin{proof} We compute the equivariant virtual Poincar\'e series of the nonsingular curve $C := \{x_0^2 y_0 + y_0^{2l+1} = \xi \}$ of $\mathbb{R}^2$, on which the group $G$ acts only changing the sign of the first coordinate $x_0$, resp. trivially. 
\\

First suppose the action of $G$ is the former one. Suppose also $l \geq 2$. We equivariantly compactify $C$ in the projective space $\mathbb{P}^2(\mathbb{R})$ with homogeneous coordinates $[X : Y : Z]$, on which $G$ acts via the involution $[X : Y : Z] \mapsto [-X : Y : Z] = [X : -Y : -Z]$. We denote $\Gamma := \{X^2 Y Z^{2l-2} + Y^{2l+1} = \xi Z^{2l+1} \}$ this compactification and $p := [1 : 0 : 0]$ the point at infinity.

The equivariant compactification $\Gamma$ is singular at the fixed point $p$ as one can see in the globally invariant chart $X \neq 0$. If $(y_0,z_0)$ are the coordinates in this chart, the group $G$ acting via the involution $(y_0,z_0) \mapsto (-y_0,-z_0)$, we denote by $C'$ the curve $\Gamma \cap \{X \neq 0\} = \{y_0 z_0^{2l-2} + y_0^{2l+1} = \xi z_0^{2l+1}\}$ (the point at infinity is the fixed point $q = [0 : \xi : 1]$ of $C$).

Equivariantly blowing-up $\Gamma$ at $p$ resolves the singularity : in the chart $y_0 = u_0 v_0$, $z_0 = v_0$, where the action of $G$ is given by $(u_0,v_0) \mapsto (u_0, - v_0)$, the equation of the strict transform is $u_0 + u_0^{2l+1} v_0^2 - \xi v_0^2 = 0$ and it intersects the exceptional divisor at the single point $p_0$ with coordinates $(u_0, v_0) = (0,0)$. The resolved compact $G$-variety, denoted by $\widetilde{\Gamma}$, is equivariantly homeomorphic to a circle equipped with an action of $G$ fixing the two points $p_0$ and $q$.

As a conclusion, we have
$$\beta^G(C) = \beta(\Gamma \setminus \{p\}) = \beta^G(\widetilde{\Gamma} \setminus \{p_0\}) = \beta^G(\widetilde{\Gamma}) - \beta^G(\{p_0\}) = u + 2 \frac{u}{u-1} - \frac{u}{u-1} = \frac{u^2}{u-1}$$
(see remark \ref{equivpoinc}). 

If $l =1$, the point $p$ of $\Gamma$ is not singular and $\Gamma$ is a compact nonsingular $G$-variety equivariantly homeomorphic to a circle with two fixed points $p$ and $q$.
\\
 
If now we suppose that the affine space $\mathbb{R}^2$ with coordinates $(x_0,y_0)$ is equipped with the trivial action of $G$, we will obtain the same expression for $\beta^G(C)$ since the equivariant homology of a circle is the same as soon as there is at least one fixed point.
\\

Consequently, the equivariant virtual Poincar\'e series $\beta^G(\{h_{k+1}(x_2,x_3,0) = \xi\}) = \beta^G(\{x_2^2 x_3 + x_3^{2l+1} = \xi\})$ and $\beta^G(\{r_{k}(x_1,x_2,0) = \xi\}) = \beta^G(\{x_1^2 x_2 + x_2^{2l+1} = \xi \})$ are equal and then $\beta^G(A^{\xi}_{k}(h_{k+1})) = \beta^G(A^{\xi}_{k}(r_{k}))$.

\end{proof} 
 
In the next paragraph, we will look at the last part of the respective equivariant zeta functions of $h_{k+1}$ and $r_k$. Still supposing that $p > q+1$ and $\eta = +1$ or $q > p+1$ and $\eta = -1$, we will show that, if $k$ is even or $k$ is odd and $\epsilon = +1$, their comparison reduces as in proposition \ref{akxihreven}.
 
\subsection{The last terms of the equivariant zeta functions} 
 
Suppose $p > q+1$ and $\eta = +1$ or $q > p+1$ and $\eta = -1$. Suppose that $k$ is even or that $k$ is odd and $\epsilon = +1$. The naive equivariant zeta functions of $h_{k+1}$ and $r_k$ are equal :

\begin{prop} \label{aMMhk} For all $M > k$, we have $\beta^G(A_M(h_{k+1})) = \beta^G(A_M(r_k))$.
\end{prop}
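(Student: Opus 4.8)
The plan is to reduce the statement, exactly as in the proof of Proposition \ref{aM02k}, to the equality $\beta^G(A^0_M(h_{k+1})) = \beta^G(A^0_M(r_k))$ for every index $M \geq k$. Indeed, by Proposition \ref{oamh} we have $\beta^G({}^0\!A_M(h_{k+1})) = u^n \beta^G(A^0_{M-1}(h_{k+1}))$ and likewise for $r_k$, while additivity of the equivariant virtual Poincar\'e series gives $\beta^G(A_M(\cdot)) = \beta^G({}^0\!A_M(\cdot)) - \beta^G(A^0_M(\cdot))$. For $M > k$ both indices $M$ and $M-1$ are $\geq k$, the case $m = k$ being the content of subsection \ref{akhr} (Proposition \ref{a0khr} together with the ensuing discussion) and the cases $m < k$ being already settled in subsection \ref{firsttermscd}; so proving $A^0$-equality for the remaining indices $m > k$ will yield $\beta^G(A_M(h_{k+1})) = \beta^G(A_M(r_k))$.

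To treat these indices I would write down the system of equations defining $A^0_M(h_{k+1})$, as in the proofs of Propositions \ref{a0mhr} and \ref{a0khr}, and run the very same stratification argument, separating the strata $\{c_1^1 = \cdots = c_1^{i-1} = 0,\, c_1^i \neq 0\}$, then $\{c_1 = 0\}$ and $\{c_1 = 0,\, a_1 = 0\}$. Each equation of the form $Q_{p,q}(c_1) = 0$ contributes a multiple in $\mathbb{Z}[u][[u^{-1}]]$ of $\beta^G(Y_{p,q} \setminus \{0\})$ (or of $\beta^G(Y_{p,q})$), exactly as before, while each equation carrying the special monomial of the germ — that is $\epsilon b_1^{k} + a_1^2 b_1 + \cdots = 0$ when $k$ is odd, and $\epsilon b_1^{k} + Q_{p,q}(c_1) + \cdots = 0$ when $k$ is even — contributes, after the reduction already performed in the proof of Proposition \ref{a0khr}, the equivariant virtual Poincar\'e series of $\{h_{k+1}(x_2,x_3,0) = 0\}$ (for $k$ odd) or of $\{h_{k+1}(0,x_3,y) = 0\}$ (for $k$ even), up to a power of $u$ and free variables. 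Once such a block of length $k$ has been consumed, the surviving variables satisfy a system of precisely the same shape with the first nontrivial equations being those defining $A^0_m(h_{k+1})$ for a smaller $m$ (dropping by the degree $k$ of the monomial), so an induction on $M$ expresses $\beta^G(A^0_M(h_{k+1}))$ as a fixed $\mathbb{Z}[u][[u^{-1}]]$-combination of $\beta^G(Y_{p,q} \setminus \{0\})$, $\beta^G(Y_{p,q})$ and the restricted zero-set contribution.

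Since this combination depends only on the combinatorics of the defining system, which is identical for $h_{k+1}$ and $r_k$, the two series $\beta^G(A^0_M(h_{k+1}))$ and $\beta^G(A^0_M(r_k))$ can differ only through the ingredients $\beta^G(Y_{p,q})$, $\beta^G(Y_{p,q} \setminus \{0\})$ and the restricted germ zero sets. In the considered cases ($p > q+1$, $\eta = +1$ or $q > p+1$, $\eta = -1$, and either $k$ even or $k$ odd with $\epsilon = +1$), the quantities $\beta^G(Y_{p,q})$ coincide for the two germs, and the equalities $\beta^G(\{h_{k+1}(0,x_3,y) = 0\}) = \beta^G(\{r_k(0,x_2,y) = 0\})$ (for $k$ even, via Lemma \ref{fg2k0}) and $\beta^G(\{h_{k+1}(x_2,x_3,0) = 0\}) = \beta^G(\{r_k(x_1,x_2,0) = 0\})$ (for $k$ odd with $\epsilon = +1$) were already established in subsection \ref{akhr}. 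Hence $\beta^G(A^0_M(h_{k+1})) = \beta^G(A^0_M(r_k))$ for all $M \geq k$, and the conclusion follows.

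The main difficulty, as in the $A_k$/$B_k$ comparison, is not any individual computation but the bookkeeping: one must verify that every contribution produced in the expansion of $\beta^G(A^0_M(\cdot))$ for large $M$ is one of the building blocks whose equality between $h_{k+1}$ and $r_k$ has already been proven, and that no new type of restricted zero-set arises. This is exactly what the periodicity of the system (with period governed by the degree $k$) guarantees, so that the only germ-dependent quantities are those treated in subsection \ref{akhr}.
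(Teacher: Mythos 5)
Your overall strategy is the same as the paper's: reduce to the equality of $\beta^G(A^0_M(\cdot))$ via Proposition \ref{oamh} and additivity, then expand $\beta^G(A^0_M(h_{k+1}))$ by the stratification $\{c_1^1 = \cdots = c_1^{i-1} = 0,\, c_1^i \neq 0\}$, $\{c_1 = 0\}$, $\{c_1 = 0, a_1 = 0\}$, and argue that the germ-dependent building blocks occurring in the expansion are equal for $h_{k+1}$ and $r_k$. The case $k$ even is handled correctly: there the only blocks are $\beta^G(Y_{p,q})$, $\beta^G(Y_{p,q}\setminus\{0\})$ and the sets coming from equations $\epsilon b_j^{2l} + Q_{p,q}(c_1) = 0$, i.e.\ $\{h_{k+1}(0,x_3,y)=0\}$ versus $\{r_k(0,x_2,y)=0\}$, whose equality follows from Lemma \ref{fg2k0}.

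However, for $k$ odd your key claim --- that ``no new type of restricted zero-set arises'' and that the only germ-dependent quantities are those treated in subsection \ref{akhr} --- is false, and this is a genuine gap. The system is not literally periodic: after the first block is consumed (the equation $b_1^{2l+1} + a_1^2 b_1 = 0$ forces $b_1 = 0$, and one then stratifies on $a_1$), the leading coefficients of the three monomials $x_2^2x_3$, $x_3^k$, $Q_{p,q}(y)$ shift by different amounts, so new resonances appear. Concretely, as soon as $M \geq 2k$ the iteration produces a system whose terminal equation is $b_j^{2l+1} + Q_{p,q}(c_1) = 0$, i.e.\ the hypersurface $\{\epsilon x^{k} + Q_{p,q}(y) = 0\}$ with $k$ odd --- a block that never occurs at level $M = k$ (where the special equation is $\epsilon b_1^{2l+1} + a_1^2 b_1 = 0$, giving the two-variable set $\{h_{k+1}(x_2,x_3,0)=0\}$) and that was not compared anywhere in subsection \ref{akhr}; Lemma \ref{fg2k0} covers only even exponents. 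The paper must prove a separate lemma (Lemma \ref{boddq0}) for exactly this contribution, showing by successive equivariant blowings-up that $\beta^G(\{\epsilon x_3^{k} + Q_{p,q}(y) = 0\})$ (action changing the sign of one $y$-coordinate) equals $\beta^G(\{\epsilon x_2^{k} + Q_{p,q}(y) = 0\})$ (trivial action), both reducing to $\beta^G(\mathbb{R}^{p+q}) - k\,\beta^G(Y_{p,q}) + k\,\beta^G(\{0\})$ because odd-degree powers blow down to the graph $\{\epsilon x + Q_{p,q}(y) = 0\}$. This equality is not something one may take for granted: for the analogous fibers over $\xi = \pm 1$ the corresponding equality is precisely the open question that forces the conditional statements elsewhere in the paper. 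Without this lemma your induction does not close in the odd case.
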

 
\begin{proof} Let $M$ be greater than $k$. We prove that $\beta^G(A^0_M(h_{k+1})) = \beta^G(A^0_M(r_k))$. 
\\

Suppose $k$ to be even, $k = 2l$. Consider the system of equations describing $A^0_M(h_{k+1})$ and $A^0_M(r_{k})$. The same computations as in the proofs of propositions \ref{a0mhr} and \ref{a0khr} bring, in both expressions of $\beta^G(A^0_M(h_{k+1}))$ and $\beta^G(A^0_M(r_{k}))$, an equal contribution of $\beta^G(Y_{p,q} \setminus \{0\})$ and a contribution of the equivariant virtual Poincar\'e series of a set defined by a system whose first equation is $\epsilon b_1^{k} + Q_{p,q}(c_1) = 0$. Stratifying this last algebraic set with the subsets $\{c_1^1 = \ldots = c_1^{i-1} = 0, c_1^i \neq 0\}$, $i = 1, \ldots, p+q$, and $\{c_1 = 0\}$ provides a contribution of $\beta^G(\{h_{k+1}(0,x_3,y) = 0\} \setminus \{0\})$, resp. $\beta^G(\{r_k(0,x_2,y) = 0\}\setminus \{0\})$ (it is the same quantity in our hypothesis) and we are led to the further condition $c_1 = 0$, and then $b_1 = 0$, in the previous system. 

Now, stratify with the subsets $\{a_1 \neq 0\}$ (this will provide an equal contribution for $h_{k+1}$ and $r_k$) and $\{a_1 = 0\}$. If $a_1 = 0$, shifting by $-1$ the indices of the remaining variables $a_i$ and $c_i$, we obtain a new system whose first equations are, if $M \geq 2k$ :
$$\begin{cases}
Q_{p,q}(c_1) = 0, \\
\Phi_{p,q}(c_1,c_2) = 0, \\
a_1^2 b_2 +  Q_{p,q}(c_2) + \Phi_{p,q}(c_1,c_3) = 0, \\
a_1^2 b_3 + 2 a_1 a_2 b_2 + \Phi_{p,q}(c_1, c_4) + \Phi_{p,q}(c_2,c_3) = 0, \\ 
\cdots \\
\sum_{t = 1}^{l-2} a_t^2 b_{2l-1-2t} + 2 \sum_{t=1}^{l-2} a_s \sum_{\delta=t+1}^{2l-2-(t+1)} a_{\delta} b_{2l-1-\delta-t} +\sum_{t= 1}^{l-1} \Phi_{p,q}(c_t,c_{2l-1-t}) = 0, \\
\epsilon b_2^{2l} + \sum_{t=1}^{l-1} a_t^2 b_{2l-2t} + 2 \sum_{t=1}^{l-2} a_t \sum_{\delta=t+1}^{2l-1-(t+1)} a_{\delta} b_{2l-\delta-t} + Q_{p,q}(c_l) + \sum_{t= 1}^{l-1} \Phi_{p,q}(c_t,c_{2l-t}) = 0.
\end{cases}$$
These equations can be obtained from the system defining $A_{k}^0(h_{k+1})$, by replacing the term $\epsilon b_1^{2l}$ with $\epsilon b_2^{2l}$ in the last equation and imposing $b_1$ to be $0$ in the other ones. 


Therefore, a similar process as above can be applied and provides further equal contributions for $\beta^G(A^0_M(h_{k+1}))$ and $\beta^G(A^0_M(r_{k}))$. In any case, the final equation will be either $Q_{p,q} (c_1) = 0$, $\epsilon b_{j}^{2l} + Q_{p,q}(c_1) = 0$ or trivial, so that the induced respective contributions are equal as well.

As a consequence, $\beta^G(A^0_M(h_{k+1})) = \beta^G(A^0_M(r_k))$ and $\beta^G(A_M(h_{k+1})) = \beta^G(A_M(r_k))$.
\\

If $k$ is odd, $k =2l+1$, and $\epsilon = +1$, from the initial system of equations defining $A^0_M(h_{k+1})$ and $A^0_M(r_{k})$, we are reduced to consider a system whose first equation is $b_1^{2l+1} + a_1^2 b_1 = 0$ (see the proof of proposition \ref{a0khr}). Therefore $b_1 = 0$. Stratifying with the subsets $\{a_1 \neq 0\}$ and $\{a_1 = 0 \}$, we then get, after a renaming of the variables, a system whose first non trivial equations are, if $M \geq 2k$ :
$$\begin{cases}
Q_{p,q}(c_1) = 0, \\
\Phi_{p,q}(c_1,c_2) = 0, \\
a_1^2 b_2 +  Q_{p,q}(c_2) + \Phi_{p,q}(c_1,c_3) = 0, \\
a_1^2 b_3 + 2 a_1 a_2 b_2 + \Phi_{p,q}(c_1, c_4) + \Phi_{p,q}(c_2,c_3) = 0, \\ 
\cdots \\
 \sum_{t = 1}^{l-1} a_t^2 b_{2l+1-2t} + 2 \sum_{t=1}^{l-1} a_s \sum_{\delta=t+1}^{2l-(t+1)} a_{\delta} b_{2l+1-\delta-t} +\sum_{t= 1}^{l} \Phi_{p,q}(c_t,c_{2l+1-t}) = 0,\\
b_2^{2l+1} + \sum_{t=1}^{l} a_t^2 b_{2l+2-2t} + 2 \sum_{t=1}^{l-1} a_t \sum_{\delta=t+1}^{2l+1-(t+1)} a_{\delta} b_{2l+2-\delta-t} + Q_{p,q}(c_{l+1}) + \sum_{t= 1}^{l} \Phi_{p,q}(c_t,c_{2l+2-t}) = 0.
\end{cases}$$  

Repeating the process provides further equal contributions for $\beta^G(A^0_M(h_{k+1}))$ and $\beta^G(A^0_M(r_{k}))$ and, if $M \geq 2k$, we are led to a new system whose first equation is $ b_2^{2l+1} + Q_{p,q}(c_1) = 0$. We will show in lemma \ref{boddq0} below that the respective induced contributions are equal.

In any case, these repeated steps of computations will eventually allow us to consider a single equation, which will be either $Q_{p,q} (c_1) = 0$, $b_j^{2l+1} + a_1^2 b_j = 0$, $b_{j}^{2l+1} + Q_{p,q}(c_1) = 0$ or trivial.

Consequently, if $k$ is odd, $\beta^G(A^0_M(h_{k+1})) = \beta^G(A^0_M(r_k))$ and $\beta^G(A_M(h_{k+1})) = \beta^G(A_M(r_k))$ as well.
 
\end{proof}

\begin{lem} \label{boddq0} Suppose that $k$ is odd, $k = 2l+1$, $p > q+1$ and $\eta = +1$ (the property will also be true if $q > p+1$ and $\eta = -1$). Then
$$\beta^G(\{h_{k+1}(0, x_3, y) = 0 \}) = \beta^G(\{r_{k}(0, x_2, y) = 0 \}).$$
\end{lem}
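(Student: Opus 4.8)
The plan is to observe first that both sets are, up to renaming the distinguished variable, the \emph{same} odd‑degree hypersurface equipped with two different actions of $G$. Writing $z$ for $x_3$ (resp. $x_2$) and recalling $k=2l+1$, I would note that
$$\{h_{k+1}(0,x_3,y)=0\}=\{\epsilon z^{2l+1}+Q_{p,q}(y)=0\}\quad\text{and}\quad\{r_k(0,x_2,y)=0\}=\{\epsilon z^{2l+1}+Q_{p,q}(y)=0\},$$
the only difference being that on the first set $G$ acts on the $y$‑variables via the involution n\textsuperscript{o}\ref{acty1} (since $\eta=+1$), while on the second $G$ acts trivially on the $y$‑variables (the sign‑changing variable $x_1$ has been set to $0$). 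In both cases $z$ is $G$‑fixed, so the whole action lives on the quadratic part $Q_{p,q}$. The symmetric situation $q>p+1$, $\eta=-1$ is handled identically.

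Next I would run exactly the successive equivariant blowings‑up at the origin used in Lemma~\ref{fg2k0}, the centre $\{0\}$ being $G$‑fixed. In the chart $z=u$, $y_i=uv_i$ the strict transform of $\{\epsilon z^{m}+Q_{p,q}(y)=0\}$ is $\{\epsilon u^{m-2}+Q_{p,q}(v)=0\}$, so each blow‑up lowers the exponent by $2$ and, as in Lemma~\ref{fg2k0}, contributes correction terms built from the exceptional slice $\{u=0,\ Q_{p,q}(v)=0\}=Y_{p,q}$ and from $\{0\}$. Since the exponent is \emph{odd}, this reduction terminates not at a quadric but at exponent $1$, i.e.\ at $\{\epsilon z+Q_{p,q}(y)=0\}$; the key gain is that this base case is the graph of the $G$‑invariant function $z=-\epsilon Q_{p,q}(y)$, hence equivariantly isomorphic to $\mathbb{R}^{p+q}$. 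Consequently, by Remark~\ref{equivpoinc}, its series equals $\frac{u^{p+q+1}}{u-1}$ for \emph{any} action on the $y$‑variables, and likewise $\beta^G(\{0\})=\frac{u}{u-1}$ is action‑independent.

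Putting these together, the blow‑up computation expresses $\beta^G(\{\epsilon z^{2l+1}+Q_{p,q}(y)=0\})$ as a combination in $\mathbb{Z}[u][[u^{-1}]]$ of the three building blocks $\beta^G(\mathbb{R}^{p+q})$, $\beta^G(\{0\})$ and $\beta^G(Y_{p,q})$, with coefficients that depend only on $l$, $p$, $q$ and not on the action. The first two of these are insensitive to the $G$‑action by the previous paragraph, so the \emph{only} term through which the action on the $y$‑variables can enter is $\beta^G(Y_{p,q})$. The last step is then to invoke Proposition~\ref{ypq}: because $p>q+1$ forces $q<p$, the remark following Proposition~\ref{ypq} lets us read off $\beta^G(Y_{p,q})$ from the case $0<q<p$, where both the involution n\textsuperscript{o}\ref{acty1} and the trivial action fall among "the three other cases" and yield the same value $\frac{u^{p+q}-u^{p}+u^{q+1}}{u-1}$ (and $\frac{u}{u-1}$ if $q=0$). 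Hence every building block agrees for the two actions, and the two series coincide, proving $\beta^G(\{h_{k+1}(0,x_3,y)=0\})=\beta^G(\{r_k(0,x_2,y)=0\})$.

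The point I expect to require the most care is precisely this localization of the action‑dependence: one must check that the odd exponent and the vanishing right‑hand side $0$ make the entire computation driven by the quadratic form $Q_{p,q}$ — so that no level set $Y_{p,q}^{\xi}$ (which \emph{does} distinguish the two actions when $p=q+1$, cf.\ Corollary~\ref{fgprem}) ever appears — and then that the residual term $\beta^G(Y_{p,q})$ is action‑blind exactly under the standing hypothesis $q<p$. Everything else is the routine bookkeeping of the equivariant blow‑up reduction already carried out in Lemma~\ref{fg2k0}.
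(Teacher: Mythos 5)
Your proposal is correct and takes essentially the same route as the paper: identify both sets as the same odd-degree hypersurface $\{\epsilon z^{2l+1}+Q_{p,q}(y)=0\}$ carrying two different $G$-actions, run the successive equivariant blowings-up of Lemma~\ref{fg2k0} to reduce to the graph $\{\epsilon z+Q_{p,q}(y)=0\}\cong\mathbb{R}^{p+q}$ together with action-independent multiples of $\beta^G(Y_{p,q})$ and $\beta^G(\{0\})$, and then use Proposition~\ref{ypq} (with its following remark) to see that $\beta^G(Y_{p,q})$ agrees for the involution n\textsuperscript{o}\ref{acty1} and the trivial action because $q<p$. The paper's proof is exactly this argument in terser form, so your write-up differs only in the amount of bookkeeping made explicit.
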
 
 
\begin{proof} Applying successive blowings-up as in the proof of lemma \ref{fg2k0}, we obtain
\begin{eqnarray*}
\beta^G(\{h_{k+1}(0, x_3, y) = 0 \}) & = & \beta^G(\{\epsilon x_3 + Q_{p,q}(y) = 0\}) - k \beta^G( \{Q_{p,q}(y) = 0\}) + k \beta^G(\{0\}) \\
& = & \beta^G(\mathbb{R}^{p+q}) - k \beta^G( \{Q_{p,q}(y) = 0\}) + k \beta^G(\{0\}).
\end{eqnarray*}
We have the same expression for $\beta^G(\{r_{k}(0, x_2, y) = 0 \})$ and therefore, since $p > q+1$ and $\eta = +1$, the two quantities are equal.
\end{proof} 
 
As for the last part of the equivariant zeta functions with signs of $h_{k+1}$ and $r_k$, adapting the computations of the proof of proposition \ref{aMMhk}, we obtain the following (still under our current hypothesis) :

\begin{prop} \label{lasttermszetafsignshr}

\begin{enumerate} 
	\item Suppose $k$ is even. \begin{itemize} \item If $\eta = \epsilon$, then, for all $M > k$,  
$\beta^G(A^{\xi}_M(h_{k+1})) = \beta^G(A^{\xi}_M(r_k))$ and consequently the respective equivariant zeta functions with signs of $h_{k+1}$ and $r_k$ are equal.
	\item If $\eta = +1$, $\epsilon = -1$, we have the equality $Z_{h_{k+1}}^{G,\xi}(u,T) = Z_{r_{k}}^{G,\xi}(u,T)$ if and only if $\beta^G\left(\left\{ - x_3^{k} + y^2 + \sum_{i = 1}^{K-1} y_i^2 = \xi \right\}\right) = \beta^G\left(\left\{ - x_2^{k} + \sum_{i = 1}^{K} z_i^2 = \xi \right\}\right)$ (the former set is a subset of $\mathbb{R}^{K+1}$ equipped with the action of $G$ only changing the sign of $y$ and the latter set is a subset of $\mathbb{R}^{K+1}$ equipped with the trivial action of $G$).
	\item If $\eta = -1$, $\epsilon = +1$, we have the equality $Z_{h_{k+1}}^{G,\xi}(u,T) = Z_{r_{k}}^{G,\xi}(u,T)$ if and only if $\beta^G\left(\left\{ x_3^{k} -  y^2 - \sum_{i = 1}^{K-1} y_i^2 = \xi \right\}\right) = \beta^G\left(\left\{ x_2^{k} - \sum_{i = 1}^{K} z_i^2 = \xi \right\}\right)$.
\end{itemize}
	\item Suppose $k$ is odd and $\epsilon = +1$. \begin{itemize} \item If $\eta = +1$, the equality $Z_{h_{k+1}}^{G,\xi}(u,T) = Z_{r_{k}}^{G,\xi}(u,T)$ is true if and only if the quantities $\beta^G\left(\left\{ x_3^{k} + y^2 + \sum_{i = 1}^{K-1} y_i^2 = \xi \right\}\right)$ and $\beta^G\left(\left\{ x_2^{k} + \sum_{i = 1}^{K} z_i^2 = \xi \right\}\right)$ are equal.
	\item If $\eta = -1$, the equality $Z_{h_{k+1}}^{G,\xi}(u,T) = Z_{r_{k}}^{G,\xi}(u,T)$ is true if and only if the quantities $\beta^G\left(\left\{ x_3^{k} - y^2 - \sum_{i = 1}^{K-1} y_i^2 = \xi \right\}\right)$ and $\beta^G\left(\left\{ x_2^{k} - \sum_{i = 1}^{K} z_i^2 = \xi \right\}\right)$ are equal.
\end{itemize}
\end{enumerate}

\end{prop}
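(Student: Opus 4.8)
The plan is to adapt the reduction from the proof of Proposition \ref{aMMhk} to the signed setting, noting that for every $M > k$ the system defining $A^{\xi}_M(h_{k+1})$, resp. $A^{\xi}_M(r_k)$, is obtained from the one defining $A^0_M(h_{k+1})$, resp. $A^0_M(r_k)$, merely by replacing $0$ with $\xi$ in the right-hand side of the very last equation. First I would run the same stratifications and inductions as in Proposition \ref{aMMhk}: under the standing hypothesis ($p > q+1$, $\eta = +1$, or $q > p+1$, $\eta = -1$) every intermediate contribution — those coming from the equations $Q_{p,q}(c_1) = 0$ via $\beta^G(Y_{p,q}\setminus\{0\})$, and those coming from the strata $\{a_1 \neq 0\}$ — is literally the same for $h_{k+1}$ and $r_k$. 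Hence $\beta^G(A^{\xi}_M(h_{k+1})) = \beta^G(A^{\xi}_M(r_k))$ if and only if the contribution of the single final equation, now carrying $\xi$, coincides for the two germs.

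Next I would classify that final equation exactly as at the end of the proof of Proposition \ref{aMMhk}. When it is $Q_{p,q}(c_1) = \xi$ its contribution is $\beta^G(Y^{\xi}_{p,q})$, equal for both germs by Proposition \ref{ypq} under our hypothesis; when it is of the type $b_j^{k} + a_1^2 b_j = \xi$ its contribution is the equivariant virtual Poincar\'e series of the curve already treated in Proposition \ref{axihkkoddepsplus}, again equal; and the odd-index situations give empty sets. The decisive case is when $M$ is a multiple of $k$, where the final equation becomes $\{h_{k+1}(0,x_3,y) = \xi\}$ against $\{r_k(0,x_2,y) = \xi\}$ for $k$ even, resp.\ a fiber comparison built from $x_3^k$ against $x_2^k$ for $k$ odd.

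I would then dispatch the decisive case with the machinery already assembled. For $k$ even I would apply the equivariant change of variables $u_i = y_i + y_{p+i}$, $v_i = y_i - y_{p+i}$ of Lemma \ref{fgxi}, which collapses the comparison onto the $(K+1)$-dimensional sets of Propositions \ref{eqa2kxi}, \ref{a2kxidepend} and \ref{akxihreven}, with $K := p-q$; this yields unconditional equality when $\eta = \epsilon$ — the residual spheres possess a nonempty fixed-point set, so their equivariant series is insensitive to the action by Remark \ref{equivpoinc} — and the stated conditional equality when $\eta \neq \epsilon$. For $k$ odd with $\epsilon = +1$ the same change of variables reduces the multiple-of-$k$ fibers to the sets $\{x_3^k \pm (\cdots) = \xi\}$ versus $\{x_2^k \pm (\cdots) = \xi\}$ named in the statement, producing the stated biconditional criteria.

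The hardest part will be confirming that, in the conditional cases, the listed $(K+1)$-dimensional sets are genuinely the sole residual obstruction: after the change of variables one must check that all the hyperbolic pairs $u_iv_i$ and every spherical direction carrying a fixed point contribute identically for $h_{k+1}$ and $r_k$, so that the equality of the signed equivariant zeta functions really is equivalent to the single stated equality of equivariant virtual Poincar\'e series. This requires careful bookkeeping of the $G$-action on the new coordinates — trivial on the pairs $u_i, v_i$ but sign-changing on the distinguished variable — and of the global $G$-stability of each stratum invoked in the additivity argument.
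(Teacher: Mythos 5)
Your proposal is correct and follows essentially the same route as the paper's own proof: reduce to the single final equation exactly as in Proposition \ref{aMMhk}, classify that equation ($Q_{p,q}(c_1)=\xi$, handled by Proposition \ref{ypq}; the curve equation, handled by Proposition \ref{axihkkoddepsplus}; the fibers $\{h_{k+1}(0,x_3,y)=\xi\}$ versus $\{r_k(0,x_2,y)=\xi\}$, handled via Lemma \ref{fgxi}), and conclude unconditionally when $\eta=\epsilon$ and conditionally otherwise, as in Propositions \ref{eqa2kxi} and \ref{a2kxidepend}. The only slight imprecision is that you list the intermediate equal contributions somewhat incompletely (omitting the zero-fiber terms already equalized in Proposition \ref{aMMhk}), but since you invoke that proposition's computation wholesale this does not create a gap.
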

 
\begin{proof} Let $M$ be greater than $k$. Since the system describing $A^{\xi}_M(h_{k+1})$ and $A^{\xi}_M(r_k)$ is obtained from the one defining $A^{0}_M(h_{k+1})$ and $A^{0}_M(r_k)$ by replacing $0$ by $\xi$ in the right member of the last equation, we are reduced, as in the proof of proposition \ref{aMMhk}, to consider a single equation.

If $k$ is even, $k = 2l$, this equation is either $Q_{p,q} (c_1) = \xi$, $\epsilon b_{j}^{2l} + Q_{p,q}(c_1) = \xi$ or an equation with no solution. Under our current hypothesis, the quantity $\beta^G(Y_{p,q}^{\xi})$ is the same for $h_{k+1}$ and $r_k$. If $\epsilon = \eta$, we can show, as in the proof of proposition \ref{eqa2kxi}, using the formulae of lemma \ref{fgxi}, that $\beta^G(\{h_{k+1}(0,x_3,y) = \xi\}) = \beta^G(\{r_k(0,x_2,y) = \xi\})$. If $\epsilon = - \eta$, we also use lemma \ref{fgxi} to obtain the desired equivalences. 

If $k$ is odd and $\epsilon = +1$, the final equation is either $Q_{p,q} (c_1) = \xi$, $b_j^{2l+1} + a_1^2 b_j = \xi$, $b_{j}^{2l+1} + Q_{p,q}(c_1) = \xi$ or an equation with no solution. The quantity $\beta^G(Y_{p,q}^{\xi})$ is the same for $h_{k+1}$ and $r_k$ and we showed in proposition \ref{axihkkoddepsplus} that $\beta^G(\{h_{k+1}(x_2,x_3,0) = \xi\}) = \beta^G(\{r_k(x_1,x_2,0) = \xi\})$. Finally, we can obtain formulae similar to the ones in lemma \ref{fgxi} for $\beta^G(\{h_{k+1}(0,x_3,y) = \xi\})$ and $\beta^G(\{r_k(0,x_2,y) = \xi\})$ if $k$ is odd and this provides the desired equivalences.
\end{proof} 
 
\subsection{Conclusion}

We gather the obtained results in the following statement :

\begin{theo} Let $k \geq 3$. Suppose that the invariant germs
$$h_{k+1} = x_2^2 x_3 + \epsilon x_3^k + \eta x_1^2 + Q \mbox{   and   } r_k = x_1^2 x_2 + \epsilon x_2^k + \eta' x_3^2 + Q'$$
have, up to permutation of all variables, the same quadratic part, with $p$ signs $+$ and $q$ signs~$-$.

\begin{enumerate}
	\item If \begin{itemize}
			\item $p \leq q$, $\eta = +1$ or $q \leq p$, $\eta = -1$, 
			\item $p = q+1$ or $q = p+1$,
			\item $k$ is odd, $\epsilon = -1$, 
		\end{itemize}
then $h_{k+1}$ and $r_k$ are not $G$-blow-Nash equivalent.
	\item If $k$ is even and if $p > q+1$, $\eta = +1$, $\epsilon = +1$ or $q > p+1$, $\eta = -1$, $\epsilon = -1$, then $Z_{h_{k+1}}^{G}(u,T) = Z_{r_{k}}^{G}(u,T)$ and $Z_{h_{k+1}}^{G,\xi}(u,T) = Z_{r_{k}}^{G,\xi}(u,T)$.
	\item \begin{itemize}
			\item If $k$ is even and if $p > q+1$, $\eta = +1$, $\epsilon = -1$, then $Z_{h_{k+1}}^{G}(u,T) = Z_{r_{k}}^{G}(u,T)$. Furthermore, $Z_{h_{k+1}}^{G,\xi}(u,T) = Z_{r_{k}}^{G,\xi}(u,T)$ if and only if $\beta^G\left(\left\{ - x_3^{k} + y^2 + \sum_{i = 1}^{K-1} y_i^2 = \xi \right\}\right) = \beta^G\left(\left\{ - x_2^{k} + \sum_{i = 1}^{K} z_i^2 = \xi \right\}\right)$.
			\item If $k$ is even and if $q > p+1$, $\eta = -1$, $\epsilon = +1$, then $Z_{h_{k+1}}^{G}(u,T) = Z_{r_{k}}^{G}(u,T)$. Furthermore, $Z_{h_{k+1}}^{G,\xi}(u,T) = Z_{r_{k}}^{G,\xi}(u,T)$ if and only if $\beta^G\left(\left\{ x_3^{k} -  y^2 - \sum_{i = 1}^{K-1} y_i^2 = \xi \right\}\right) = \beta^G\left(\left\{ x_2^{k} - \sum_{i = 1}^{K} z_i^2 = \xi \right\}\right)$.
			\item If $k$ is odd and if $p > q+1$, $\eta = +1$, $\epsilon = +1$, then $Z_{h_{k+1}}^{G}(u,T) = Z_{r_{k}}^{G}(u,T)$. Furthermore, $Z_{h_{k+1}}^{G,\xi}(u,T) = Z_{r_{k}}^{G,\xi}(u,T)$ if and only if $\beta^G\left(\left\{ x_3^{k} + y^2 + \sum_{i = 1}^{K-1} y_i^2 = \xi \right\}\right) = \beta^G\left(\left\{ x_2^{k} + \sum_{i = 1}^{K} z_i^2 = \xi \right\}\right)$.
			\item If $k$ is odd and if $q > p+1$, $\eta = -1$, $\epsilon = +1$, then $Z_{h_{k+1}}^{G}(u,T) = Z_{r_{k}}^{G}(u,T)$. Furthermore, $Z_{h_{k+1}}^{G,\xi}(u,T) = Z_{r_{k}}^{G,\xi}(u,T)$ if and only if $\beta^G\left(\left\{ x_3^{k} - y^2 - \sum_{i = 1}^{K-1} y_i^2 = \xi \right\}\right) = \beta^G\left(\left\{ x_2^{k} - \sum_{i = 1}^{K} z_i^2 = \xi \right\}\right)$. 
		\end{itemize}
\end{enumerate}
\end{theo}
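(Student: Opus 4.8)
The plan is to treat this statement as the bookkeeping that collects the results established in the preceding paragraphs of this section, organized by the values of $p$, $q$, the signs $\eta,\epsilon$, and the parity of $k$. First I would recall the reduction opening the section: by the arguments imported from the introduction of section \ref{compAB} (corank and index invariance, \cite{GF-CI} Theorem 2.5) together with \cite{GF-BNT} Proposition 3.11, comparing the families $C_k$ and $D_k$ with respect to $G$-blow-Nash equivalence reduces exactly to comparing $h_{k+1}$ and $r_k$ under the stated hypothesis on their common quadratic part $Q_{p,q}$. Item (1) is then immediate: its first two bullets ($p\leq q$, $\eta=+1$ and the symmetric case, resp. $p=q+1$ or $q=p+1$) are precisely the content of the corollary concluding paragraph \ref{firsttermscd}, where $\beta^G(A_2(h_{k+1}))\neq\beta^G(A_2(r_k))$ (resp. $\beta^G(A_2^{+1}(h_{k+1}))\neq\beta^G(A_2^{+1}(r_k))$) forces non-equivalence through Theorem \ref{eqzfinv}; and its third bullet, $k$ odd with $\epsilon=-1$, is the corollary of paragraph \ref{akhr}, obtained by comparing the degree-$k$ coefficients, which differ because $\beta^G(\{x_1^2-x_2^2=0\})\neq\beta^G(\{x_2^2-x_3^2=0\})$.

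It then remains to treat the cases $p>q+1$, $\eta=+1$ (resp. $q>p+1$, $\eta=-1$), in which, by Proposition \ref{ypq}, the quantities $\beta^G(Y_{p,q})$ and $\beta^G(Y_{p,q}^{\xi})$ coincide for $h_{k+1}$ and $r_k$. For the naive equivariant zeta functions I would argue that the coefficients of degree $M<k$ agree by the first-terms computations of paragraph \ref{firsttermscd} (Proposition \ref{a0mhr}), that the degree-$k$ coefficient agrees by Proposition \ref{a0khr} together with the computations of paragraph \ref{akhr} — using Lemma \ref{fg2k0} when $k$ is even, and the fact that for $\epsilon=+1$ the sets $\{x_2^2+x_3^2=0\}$ and $\{x_1^2+x_2^2=0\}$ are each a single point when $k$ is odd — and that the coefficients of degree $M>k$ agree by Proposition \ref{aMMhk}. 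Combining these gives $Z_{h_{k+1}}^{G}(u,T)=Z_{r_k}^{G}(u,T)$ throughout items (2) and (3).

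For the equivariant zeta functions with signs I would proceed analogously: the coefficients of degree $M<k$ agree by paragraph \ref{firsttermscd} (Proposition \ref{amxihr}), and the degree-$k$ coefficient is controlled by Proposition \ref{akxihreven} for $k$ even and by Proposition \ref{axihkkoddepsplus} for $k$ odd with $\epsilon=+1$. When $\eta=\epsilon$ and $k$ is even (item (2)), Proposition \ref{lasttermszetafsignshr}(1) additionally yields $\beta^G(A_M^{\xi}(h_{k+1}))=\beta^G(A_M^{\xi}(r_k))$ for every $M>k$, so $Z_{h_{k+1}}^{G,\xi}(u,T)=Z_{r_k}^{G,\xi}(u,T)$. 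In the cases of item (3) — $k$ even with $\eta=-\epsilon$, or $k$ odd with $\epsilon=+1$ — Propositions \ref{akxihreven}, \ref{axihkkoddepsplus} and \ref{lasttermszetafsignshr} together show that the equality $Z_{h_{k+1}}^{G,\xi}(u,T)=Z_{r_k}^{G,\xi}(u,T)$ is equivalent to a single identity between the equivariant virtual Poincar\'e series of the sets $\{\pm x_3^{k}+y^2+\sum_{i}y_i^2=\xi\}$ and $\{\pm x_2^{k}+\sum_{i}z_i^2=\xi\}$ (with the $G$-actions specified in each bullet), which is exactly the ``if and only if'' criterion recorded there.

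The only genuinely delicate aspect is the bookkeeping rather than any single computation: one must verify that the case division over $(p,q)$, the signs $\eta,\epsilon$, and the parity of $k$ is exhaustive, that each remaining case is matched to the correct previous statement, and that one tracks which of the four $G$-actions n\textsuperscript{o}\ref{acty1}--n\textsuperscript{o}\ref{triv} equips each set involved. Beyond this matching I expect no hidden difficulty, since every arithmetic identity has already been produced in the preceding propositions; the one substantive obstruction — deciding whether the conditional identities of item (3) actually hold — lies outside the reach of the equivariant virtual Poincar\'e series as presently understood, and is precisely the open point recorded in Remark \ref{remevpsas}.
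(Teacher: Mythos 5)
Your proposal is correct and follows essentially the same route as the paper: there the theorem carries no separate proof, being precisely the summary of the corollaries of paragraphs \ref{firsttermscd} and \ref{akhr} (non-equivalence via the degree-$2$ and degree-$k$ coefficients and Theorem \ref{eqzfinv}) together with Propositions \ref{a0mhr}, \ref{a0khr}, \ref{aMMhk}, \ref{akxihreven}, \ref{axihkkoddepsplus} and \ref{lasttermszetafsignshr}, assembled exactly as you do. Your bookkeeping, including the reduction to the cases $p>q+1$, $\eta=+1$ (resp. $q>p+1$, $\eta=-1$) where $\beta^G(Y_{p,q})$ and $\beta^G(Y_{p,q}^{\xi})$ agree for both germs, and the observation that the conditional identities of item (3) remain open pending an equivariant $\mathcal{AS}$-bijection invariance, matches the paper's intent.
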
 
 
\begin{rem} \label{remforgacthr} If we forget the $G$-actions, the virtual Poincar\'e polynomials of the algebraic subsets $\left\{ x^{2l+1} + \sum_{i = 1}^{K} y_i^2 = \xi \right\}$ and $\left\{ x^{2l+1} - \sum_{i = 1}^{K} y_i^2 = \xi \right\}$ of $\mathbb{R}^{K+1}$, $\xi = \pm 1$, can also be computed using the invariance of the virtual Poincar\'e polynomial under bijection with $\mathcal{AS}$ graph (see remark \ref{remevpsas}).
\end{rem} 
 
\section{The germs $E_6$ and $F_4$} \label{compEF}

Finally, we study the classification with respect to $G$-blow-Nash equivalence of the families
$$\varphi^{\epsilon}(x) := \pm x_1^2 + x_2^3 + \epsilon x_3^4 + Q$$
and
$$\omega^{\epsilon}(x) := \epsilon x_1^4 + x_2^3 + \pm x_3^3 + Q'$$
where $\epsilon \in \{-1 ; +1\}$. 

If two germs $\varphi^{\epsilon}$ and $\varphi^{\epsilon'}$ are $G$-blow-Nash equivalent, they have the same quadratic part up to permutation of the variables $x_1, x_4, \ldots, x_n$ and, by \cite{GF-BNT} Proposition 3.14, $\epsilon = \epsilon'$. Furthermore, we will show in corollary \ref{firstdistphiom} below that the germs
$$\varphi^{\epsilon,+}(x) := + x_1^2 + x_2^3 + \epsilon x_3^4 + Q \mbox{ and } \varphi^{\epsilon,-}(x) := - x_1^2 + x_2^3 + \epsilon x_3^4 + Q',$$
where $\epsilon\in \{-1 \, ; + 1\}$ and $+ x_1^2 + Q$ and $- x_1^2 + Q'$ are the same quadratic part up to permutation of the variables $x_1, x_4, \ldots , x_n$, are not $G$-blow-Nash equivalent.

If two germs $\omega^{\epsilon}$ and $\omega^{\epsilon'}$ are $G$-blow-Nash equivalent, they also have the same quadratic part, up to permutation of the variables $x_3, \ldots, x_n$, and $\epsilon = \epsilon'$ as well.

If now two germs $\varphi^{\epsilon}$ and $\omega^{\epsilon'}$ are $G$-blow-Nash equivalent, then $\epsilon = \epsilon'$ and $\pm x_1^2 + Q$ and $\pm x_3^2 + Q'$ are the same quadratic part up to permutation of all variables, so that we will intend to compare the germs
$$\varphi(x) =  x_2^3 + \epsilon x_3^4 + \eta x_1^2 + Q \mbox{ and } \omega(x) =  x_2^3 + \epsilon x_1^4 + \eta' x_3^3 + Q'$$
where $\epsilon, \eta, \eta' \in \{1,-1\}$ and $\eta x_1^2 + Q = \eta' x_3^3 + Q'$ up to permutation of all variables.
\\

As in the previous two parts, we will consider the respective equivariant zeta functions of $\varphi$ and $\omega$, along with theorem \ref{eqzfinv}, to try to distinguish these invariant germs with respect to $G$-blow-Nash equivalence.

We begin with the computation of the first coefficients $\beta^G(A_2(\varphi))$, $\beta^G(A_3(\varphi))$, $\beta^G(A_4(\varphi))$ of the naive equivariant zeta function of $\varphi$ (notice that the set $A_1(\varphi)$ is empty so that $\beta^G(A_1(\varphi)) = 0$). Thanks to proposition \ref{oamh}, we can focus on the quantities $\beta^G(A^0_m(\varphi))$, $m \leq 4$. The corresponding expressions for $\omega$ are similar, in this case equipping the set $Y_{p,q}$ with the trivial action of $G$.

\begin{prop} \label{firsttermsphiom} Write $\varphi = \varphi(x,z,y) = x^3 + \epsilon z^4 + Q_{p,q}(y)$. We have $\beta^G(A_2^0(\varphi)) = u^{4+p+q} \beta^G(Y_{p,q})$, $\beta^G(A_3^0(\varphi)) = u^{2(p+q) + 5} \beta^G(Y_{p,q} \setminus \{0\}) + \frac{u^{2(p+q) + 6}}{u-1}$ and $\beta^G(A_4^0(\varphi)) = u^{3(p+q) + 6} \beta^G(Y_{p,q} \setminus \{0\})  + u^{2(p+q) + 6} \beta^G(\{\varphi(0, z,y) = 0\})$.
\end{prop}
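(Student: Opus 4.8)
The plan is to follow the computational scheme already used in the proofs of Propositions \ref{amf}, \ref{a02k} and \ref{a0mhr}, keeping track of the action of $G$ at every step. Writing an arc $\gamma \in \mathcal{L}_m$ as
$$\gamma(t) = \left(\textstyle\sum_i a_i t^i,\ \sum_i b_i t^i,\ \sum_i c_i t^i\right),$$
with $c_i = (c_i^1, \ldots, c_i^{p+q})$ the coefficients in the $y$-directions, the group $G$ acts on $\mathcal{L}_m$ by changing the sign of the variables $c_i^1$ (case n\textsuperscript{o}\ref{acty1}) or $c_i^{p+1}$ (case n\textsuperscript{o}\ref{actyp1}), and fixes every $a_i$ and $b_i$. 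I would first expand $\varphi \circ \gamma(t) = (\sum a_i t^i)^3 + \epsilon(\sum b_i t^i)^4 + Q_{p,q}(\sum c_i t^i)$ and read off the coefficients of $t^2$, $t^3$, $t^4$, namely $Q_{p,q}(c_1)$, then $a_1^3 + \Phi_{p,q}(c_1,c_2)$, then $3 a_1^2 a_2 + \epsilon b_1^4 + Q_{p,q}(c_2) + \Phi_{p,q}(c_1, c_3)$ (the coefficient of $t$ vanishing automatically). Each equation defining $A_m^0(\varphi)$ sets the corresponding coefficient to zero.

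For $m = 2$ the single equation is $Q_{p,q}(c_1) = 0$, so $A_2^0(\varphi)$ is equivariantly isomorphic to $Y_{p,q} \times \mathbb{R}^{4+p+q}$, the free coordinates being $a_1, a_2, b_1, b_2$ and $c_2$, and Remark \ref{equivpoinc} gives $\beta^G(A_2^0(\varphi)) = u^{4+p+q} \beta^G(Y_{p,q})$. For $m = 3,4$ I would stratify by the $G$-invariant loci $\{c_1^1 = \cdots = c_1^{i-1} = 0,\, c_1^i \neq 0\}$ together with $\{c_1 = 0\}$, exactly as in the proof of Proposition \ref{amf}. On each stratum where $c_1^i \neq 0$, the equation $a_1^3 + \Phi_{p,q}(c_1, c_2) = 0$ determines $c_2^i$, and (for $m = 4$) the equation $\Phi_{p,q}(c_1, c_3) = \cdots$ determines $c_3^i$, both via $G$-equivariant morphisms; equivariance holds because the monomials $a_1^3$, $a_1^2 a_2$, $\epsilon b_1^4$ are $G$-invariant and $\Phi_{p,q}$ is $G$-invariant by construction. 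Since each elimination solves for exactly one coordinate and hence leaves a constant number of free variables over every stratum, these contributions telescope to $u^{2(p+q)+5} \beta^G(Y_{p,q} \setminus \{0\})$ when $m = 3$ and to $u^{3(p+q)+6} \beta^G(Y_{p,q} \setminus \{0\})$ when $m = 4$.

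It then remains to evaluate the stratum $\{c_1 = 0\}$. There the equation $a_1^3 + \Phi_{p,q}(c_1, c_2) = 0$ degenerates to $a_1^3 = 0$, forcing $a_1 = 0$. For $m = 3$ all remaining coordinates are then free, so this stratum is an affine space of dimension $2(p+q)+5$ and contributes $\frac{u^{2(p+q)+6}}{u-1}$ by Remark \ref{equivpoinc}, yielding the stated formula for $\beta^G(A_3^0(\varphi))$. For $m = 4$, after imposing $c_1 = 0$ and $a_1 = 0$ the last equation collapses to $\epsilon b_1^4 + Q_{p,q}(c_2) = 0$, which is precisely the equation of $\{\varphi(0,z,y) = 0\}$ in the variables $z = b_1$, $y = c_2$, with $G$ acting on $c_2$ by n\textsuperscript{o}\ref{acty1}/n\textsuperscript{o}\ref{actyp1} and trivially on $b_1$; with the remaining $2(p+q)+6$ coordinates free this gives $u^{2(p+q)+6} \beta^G(\{\varphi(0,z,y)=0\})$, hence the claimed expression for $\beta^G(A_4^0(\varphi))$. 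The part requiring most care is not any individual calculation but the bookkeeping: verifying that every stratum is globally $G$-stable and that each elimination step is realized by an equivariant isomorphism, so that the additivity of $\beta^G$ and the product formula $\beta^G(X \times \mathbb{R}^d) = u^d \beta^G(X)$ of Remark \ref{equivpoinc} may legitimately be applied at each stage.
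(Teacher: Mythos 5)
Your proposal is correct and follows essentially the same route as the paper's own proof: the same expansion of $\varphi\circ\gamma$ giving the equations $Q_{p,q}(c_1)=0$, $a_1^3+\Phi_{p,q}(c_1,c_2)=0$, $\epsilon b_1^4+3a_1^2a_2+Q_{p,q}(c_2)+\Phi_{p,q}(c_1,c_3)=0$, the same stratification by $\{c_1^1=\cdots=c_1^{i-1}=0,\,c_1^i\neq 0\}$ and $\{c_1=0\}$, and the same elimination and dimension counts, yielding identical contributions. The extra care you take in checking that each stratum is $G$-stable and each elimination is equivariant is exactly the bookkeeping the paper relies on (implicitly, by reference to Proposition \ref{amf}).
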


\begin{proof} If $m \geq 1$, we write an arc $\gamma$ of $\mathcal{L}_m$ as
\begin{eqnarray*} \gamma(t) & = & (a_1 t + \cdots + a_{m} t^{m}, b_1 t + \cdots + b_{m} t^{m}, c_1^1 t + \cdots + c_{m}^1 t^{m}, \ldots, c_1^{p+q} t + \cdots + c_{m}^{p+q} t^{m}) \\
& = & \left( \begin{array}{c} a_1 \\ b_1 \\ c_1^1 \\ \vdots \\ c_1^{p+q} \end{array} \right) t + \cdots + \left( \begin{array}{c} a_{m} \\ b_m \\c_{m}^1 \\ \vdots \\ c_{m}^{p+q} \end{array} \right) t^{m} = \left( \begin{array}{c} a_1 \\ b_1\\ c_1 \end{array} \right) t + \cdots + \left( \begin{array}{c} a_{m} \\ b_1 \\ c_{m}\end{array} \right) t^{m}
\end{eqnarray*}
(the group $G$ acts only changing the sign of the coordinates $c_i^1$, resp. $c_i^{p+1}$, in the case n\textsuperscript{o}\ref{acty1}, resp. n\textsuperscript{o}\ref{actyp1}).

The set $A_2^0(\varphi)$ is described by the single equation $Q_{p,q}(c_1) = 0$, the other variables remaining free. The set $A_3^0(\varphi)$ is defined by the system
$$\begin{cases}
Q_{p,q}(c_1) = 0, \\
a_1^3 + \Phi_{p,q}(c_1,c_2) = 0,
\end{cases}$$
and, stratifying with the $G$-globally invariant subsets $\{c_1^1 = \ldots = c_1^{i-1} = 0, c_1^i \neq 0\}$, $i = 1, \ldots, p$, and $\{c_1^1 = \ldots = c_1^p = 0\} = \{c_1 = 0\}$, we obtain
$$\beta^G(A_3^0(\varphi)) = u^{6 + 2(p-1) + 2q + 1} \beta^G(Y_{p,q} \setminus \{0\}) + \beta^G(A_3^0(\varphi) \cap \{c_1 = 0\}).$$ 
If $c_1 = 0$, then $a_1 = 0$ and the other variables are free, hence the desired expression.

Finally, $A_4^0(\varphi)$ is described by the system of equations
$$\begin{cases}
Q_{p,q}(c_1) = 0, \\
a_1^3 + \Phi_{p,q}(c_1,c_2) = 0,\\
\epsilon b_1^4 + 3 a_1^2 a_2 + Q_{p,q}(c_2) + \Phi(c_1,c_3) = 0.
\end{cases}$$
Equivariantly stratifying $A_4^0(\varphi)$ as we did for $A_3^0(\varphi)$, we get the equality
$$\beta^G(A_4^0(\varphi)) = u^{8 + 3(p-1) + 3q + 1} \beta^G(Y_{p,q} \setminus \{0\}) + \beta^G(A_4^0(\varphi) \cap \{c_1 = 0, a_1 = 0\}),$$ 
the set $A_4^0(\varphi) \cap \{c_1 = 0, a_1 = 0\}$ being given by the equation $\epsilon b_1^4 + Q_{p,q}(c_2) = 0$.
\end{proof}


With the same way of computation, we obtain the following expressions for the first terms of the equivariant zeta functions with signs of $\varphi$ :

\begin{prop} We have $\beta^G(A_2^{\xi}(\varphi)) = u^{4+p+q} \beta^G(Y^{\xi}_{p,q})$, $\beta^G(A_3^{\xi}(\varphi)) = u^{2(p+q) + 5} \beta^G(Y_{p,q} \setminus \{0\}) + \frac{u^{2(p+q) + 6}}{u-1}$ and $\beta^G(A_4^{\xi}(\varphi)) = u^{3(p+q) + 6} \beta^G(Y_{p,q} \setminus \{0\})  + u^{2(p+q) + 6} \beta^G(\{\varphi(0, x_3,y) = \xi\})$. 
\end{prop}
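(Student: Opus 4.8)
The plan is to run the computation of the preceding Proposition \ref{firsttermsphiom} essentially verbatim, tracking only the single modification caused by the sign: for $m \le 4$, the system of equations defining $A_m^{\xi}(\varphi)$ is obtained from the one defining $A_m^0(\varphi)$ by replacing the $0$ in the right-hand side of its \emph{last} equation by $\xi$, all earlier equations being unchanged. I would therefore keep the same coordinates for an arc $\gamma \in \mathcal{L}_m$, namely $\gamma = (a_1,\dots,a_m; b_1,\dots,b_m; c_1,\dots,c_m)$ with $c_i \in \mathbb{R}^{p+q}$ and $G$ acting only on the relevant coordinate of each $c_i$, and use the additivity of $\beta^G$ together with the same $G$-globally invariant stratifications.

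For $m = 2$ the defining equation becomes $Q_{p,q}(c_1) = \xi$, so $A_2^{\xi}(\varphi)$ is equivariantly $Y_{p,q}^{\xi} \times \mathbb{R}^{4+p+q}$ (the variables $a_1, a_2, b_1, b_2, c_2$ being free), whence $\beta^G(A_2^{\xi}(\varphi)) = u^{4+p+q}\beta^G(Y_{p,q}^{\xi})$ by the product formula recalled in Remark \ref{equivpoinc}. For $m = 3$ and $m = 4$ I would stratify by the $G$-invariant subsets $\{c_1^1 = \cdots = c_1^{i-1} = 0,\ c_1^i \neq 0\}$ and $\{c_1 = 0\}$ exactly as in the proof of Proposition \ref{firsttermsphiom}. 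On the open stratum $c_1 \neq 0$ the last equation, whose right-hand side is now $\xi$, still determines one coordinate of $c_2$ through the same equivariant morphism (the coefficient $2c_1^i$ of that coordinate in $\Phi_{p,q}(c_1,c_2)$ being invertible), so this stratum contributes precisely the same term $u^{2(p+q)+5}\beta^G(Y_{p,q}\setminus\{0\})$, resp. $u^{3(p+q)+6}\beta^G(Y_{p,q}\setminus\{0\})$, as in the $0$-case.

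The one point I would check carefully — and the only place the argument is not purely formal — is the contribution of the closed stratum $\{c_1 = 0\}$, which is what carries the sign. For $m = 3$ the surviving equation forces $a_1^3 = \xi$, hence $a_1 = \xi$ (a single real value, since $\xi = \pm 1$); because $G$ acts trivially on the $x_2$-coordinate, $\{a_1 = \xi\}$ is one $G$-fixed point, contributing exactly as the point $\{a_1 = 0\}$ did in the $0$-case, so that $\beta^G(A_3^{\xi}(\varphi)) = \beta^G(A_3^0(\varphi))$ and the stated formula follows. For $m = 4$ the stratum is cut out, after $a_1 = 0$, by $\epsilon b_1^4 + Q_{p,q}(c_2) = \xi$, i.e.\ it is equivariantly $\{\varphi(0,x_3,y) = \xi\} \times \mathbb{R}^{6+2(p+q)}$, yielding the announced term $u^{2(p+q)+6}\beta^G(\{\varphi(0,x_3,y) = \xi\})$.

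I expect no genuine obstacle beyond this bookkeeping: the two nontrivial equivariant virtual Poincar\'e series $\beta^G(Y_{p,q}^{\xi})$ and $\beta^G(\{\varphi(0,x_3,y)=\xi\})$ are left in closed form, to be evaluated afterwards via Propositions \ref{ypq} and \ref{yxi} and a blow-up computation in the style of the later lemmas. The mild subtlety worth stressing is precisely that passing from $0$ to $\xi$ changes a forced value of a $G$-fixed coordinate from $0$ to $\pm 1$ without changing its $\beta^G$, which is why the degree-$2$ and degree-$3$ coefficients with sign coincide with (or stay structurally identical to) their naive counterparts, while the genuine sign dependence is concentrated in the fiber sets $Y_{p,q}^{\xi}$ and $\{\varphi(0,x_3,y)=\xi\}$.
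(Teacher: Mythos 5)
Your proposal is correct and coincides with the paper's own (implicit) proof: the paper merely asserts these formulae follow ``with the same way of computation'' as Proposition \ref{firsttermsphiom}, and your substitution-of-$\xi$ bookkeeping --- unchanged open-stratum contributions, $a_1^3=\xi$ forcing the single $G$-fixed value $a_1=\xi$ for $m=3$, and the closed stratum for $m=4$ becoming $\{\varphi(0,x_3,y)=\xi\}$ --- is exactly that computation. One trivial slip: for $m=4$ the modified last equation determines a coordinate of $c_3$ (via $\Phi_{p,q}(c_1,c_3)$), not of $c_2$, on the open stratum, but the dimension count and hence the contribution are unaffected.
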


As we did in sections \ref{firsttermsab}, \ref{a2kfg} and \ref{firsttermscd}, \ref{akhr}, we deduce the following distinctions :

\begin{cor} \label{firstdistphiom} \begin{enumerate}
	\item The germs $\varphi^{\epsilon,+}$ and $\varphi^{\epsilon,-}$ are not $G$-blow-Nash equivalent.
	\item If $p \leq q$ and $\eta = +1$ or $q \leq p$ and $\eta = -1$, then the germs $\varphi$ and $\omega$ are not $G$-blow-Nash equivalent.
	\item If $p = q+1$ or $q = p+1$, then $\varphi$ and $\omega$ are not $G$-blow-Nash equivalent.
\end{enumerate}
\end{cor}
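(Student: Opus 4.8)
The plan is to transpose, almost verbatim, the strategy of Corollaries \ref{fpmnot} and \ref{fgprem} to the pair $\varphi$, $\omega$: the first terms of the equivariant zeta functions recorded in Proposition \ref{firsttermsphiom} and in the proposition following it have exactly the same shape as in the $A_k$/$B_k$ situation, the only action-dependent factor being $\beta^G(Y_{p,q})$, resp. $\beta^G(Y^{\xi}_{p,q})$. Throughout I keep track of the fact that $G$ changes the sign of $x_1$, so that it acts on $Y_{p,q}$ via the involution n\textsuperscript{o}\ref{acty1} when the $x_1$-variable carries a $+$ sign, via n\textsuperscript{o}\ref{actyp1} when it carries a $-$ sign, and trivially (n\textsuperscript{o}\ref{triv}) for $\omega$, where $x_1$ occurs only in the quartic term $x_1^4$ and not in the quadratic part.

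For the first point I would compare $\beta^G(A_2^0(\varphi^{\epsilon,+}))$ and $\beta^G(A_2^0(\varphi^{\epsilon,-}))$. By Proposition \ref{oamh} and $A^0_1=\mathcal{L}_1$, the ${}^0\!A_2$-terms coincide, so it suffices to treat $A_2^0$. Proposition \ref{firsttermsphiom} gives $\beta^G(A_2^0(\varphi^{\epsilon,\eta})) = u^{4+p+q}\beta^G(Y_{p,q})$, with action n\textsuperscript{o}\ref{acty1} for $\varphi^{\epsilon,+}$ and n\textsuperscript{o}\ref{actyp1} for $\varphi^{\epsilon,-}$ (note $pq\neq 0$, since the common quadratic part carries a $+x_1^2$ in one germ and a $-x_1^2$ in the other). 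When $p\neq q$ these two series differ by Proposition \ref{ypq}, separating the naive equivariant zeta functions. When $p=q$ they agree, and I pass to $\beta^G(A_2^{+1}(\varphi^{\epsilon,\eta}))=u^{4+p+q}\beta^G(Y^{+1}_{p,q})$, evaluating it through Propositions \ref{yxi} and \ref{ypq} exactly as in the proof of Corollary \ref{fpmnot}; Theorem \ref{eqzfinv} then yields the non-equivalence.

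The second point is the most direct: assuming $p\leq q$ and $\eta=+1$ (so $p\geq 1$; the case $q\leq p$, $\eta=-1$ is symmetric), Proposition \ref{firsttermsphiom} gives $\beta^G(A_2^0(\varphi))=u^{4+p+q}\beta^G(Y_{p,q})$ with action n\textsuperscript{o}\ref{acty1} and $\beta^G(A_2^0(\omega))=u^{4+p+q}\beta^G(Y_{p,q})$ with the trivial action, and Proposition \ref{ypq} shows these differ (the numerators differ by $u^{p-1}$ versus $u^{p+1}$), so the naive zeta functions are distinct. For the third point, say $p=q+1$ (the case $q=p+1$ being symmetric); if $\eta=-1$ this is already covered by the second point, so I may assume $\eta=+1$. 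Since $q<p$, the remark following Proposition \ref{ypq} shows that $\beta^G(Y_{p,q})$ is the \emph{same} for action n\textsuperscript{o}\ref{acty1} and for the trivial action, whence $\beta^G(A_2^0(\varphi))=\beta^G(A_2^0(\omega))$ and I must use the signed zeta functions. By the sign analogue of Proposition \ref{firsttermsphiom} and by Proposition \ref{yxi}, $\beta^G(A_2^{+1})=u^{4+p+q}\beta^G(Y^{+1}_{p,q})$ with $\beta^G(Y^{+1}_{p,q})=\frac{1}{u-1}\bigl(\beta^G(Y_{p,q+1})-\beta^G(Y_{p,q})\bigr)$; here $q+1=p$, so $Y_{p,q+1}=Y_{p,p}$, the subtracted $\beta^G(Y_{p,q})$ terms cancel, and Proposition \ref{ypq} distinguishes $\beta^G(Y_{p,p})$ in action n\textsuperscript{o}\ref{acty1} (for $\varphi$) from the trivial action (for $\omega$). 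Theorem \ref{eqzfinv} again gives the non-equivalence.

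The individual manipulations are routine adaptations of Section \ref{compAB}; the only genuine care required is the bookkeeping of which of the involutions n\textsuperscript{o}\ref{acty1}, n\textsuperscript{o}\ref{actyp1}, n\textsuperscript{o}\ref{triv} governs each occurrence of $Y_{p,q}$ and $Y^{\xi}_{p,q}$, together with the correct use of the exchange remark of Proposition \ref{ypq} when $q<p$ — it is precisely this coincidence of the naive invariants in the third case that forces the passage to the zeta functions with sign.
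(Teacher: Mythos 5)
Your proposal is correct and follows essentially the same route as the paper: the paper gives no separate argument for Corollary \ref{firstdistphiom} but deduces it ``as in'' Sections \ref{compAB} and \ref{compCD}, i.e.\ precisely the transposition of Corollaries \ref{fpmnot} and \ref{fgprem} that you carry out. Your case split is the intended one --- distinguish via $\beta^G(A_2^0)=u^{4+p+q}\beta^G(Y_{p,q})$ and Proposition \ref{ypq} when the actions give different series, and pass to $\beta^G(A_2^{+1})$ with Proposition \ref{yxi} exactly in the cases ($p=q$ for point 1, $p=q+1$, $\eta=+1$ for point 3) where the naive coefficients coincide --- with the correct bookkeeping of the involutions n\textsuperscript{o}\ref{acty1}, n\textsuperscript{o}\ref{actyp1} and the trivial action.
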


If $p > q+1$ and $\eta = +1$ or $q > p+1$ and $\eta = -1$, the respective quantities $\beta^G(Y_{p,q})$ and $\beta^G(Y_{p,q}^{\xi})$ are identical for $\varphi$ and $\omega$. Furthermore, notice that, equivariantly, $\{\varphi(0, x_3,y) = 0\} = \{f_3(x_2,y) = 0\}$, resp. $\{\varphi(0, x_3,y) = \xi\} = \{f_3(x_2,y) = \xi\}$, and $\{\omega(0,x_1,y) = 0 \} = \{g_2(x_1,y) = 0\}$, resp. $\{\omega(0,x_1,y) = \xi \} = \{g_2(x_1,y) = \xi\}$. Therefore, thanks to the computations of paragraph \ref{a2kfg}, we can state the following :

\begin{prop} Suppose that $p > q+1$ and $\eta = +1$ or $q > p+1$ and $\eta = -1$.
\begin{enumerate}
	\item For $m \leq 4$, $\beta^G(A_m(\varphi)) = \beta^G(A_m(\omega))$.
	\item For $m \leq 3$, $\beta^G(A^{\xi}_m(\varphi)) = \beta^G(A^{\xi}_m(\omega))$.
\item \begin{itemize}	\item If $\eta = \epsilon$, then $\beta^G(A^{\xi}_4(\varphi)) = \beta^G(A^{\xi}_4(\omega))$.
	\item If $\eta = +1$ and $\epsilon = -1$, then $\beta^G(A_{4}^{\xi}(\varphi)) = \beta^G(A_{4}^{\xi}(\omega))$ if and only if the equivariant virtual Poincar\'e series of the algebraic subsets $\left\{ - x_3^{4} + y^2 + \sum_{i = 1}^{K-1} y_i^2 = \xi \right\} \subset \mathbb{R}^{K+1}$, $K := p-q$, equipped with the action of $G$ only changing the sign of $y$, and $\left\{ - x_1^{4} + \sum_{i = 1}^{K} z_i^2 = \xi \right\}\subset \mathbb{R}^{K+1}$, equipped with the action of $G$ only changing the sign of $x_1$, are equal.
 	\item If $\eta = -1$ and $\epsilon = +1$, then we have $\beta^G(A_{4}^{\xi}(\varphi)) = \beta^G(A_{4}^{\xi}(\omega))$ if and only if $\beta^G\left(\left\{ x_3^{4} - y^2 - \sum_{i = 1}^{K-1} y_i^2 = \xi \right\}\right) = \beta^G\left(\left\{ x_1^{4} - \sum_{i = 1}^{K} z_i^2 = \xi \right\} \right)$.
\end{itemize}
\end{enumerate}
\end{prop}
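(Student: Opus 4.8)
The plan is to assemble the statement from the coefficient formulae already obtained, reducing everything to the comparison of a handful of equivariant virtual Poincar\'e series whose behaviour is controlled by the earlier sections. First I would use Proposition \ref{oamh} together with the additivity of $\beta^G$ to write, for $2 \le m \le 4$,
\[
\beta^G(A_m(\varphi)) = u^n\,\beta^G(A^0_{m-1}(\varphi)) - \beta^G(A^0_m(\varphi)),
\]
and likewise for $\omega$; since $A_1(\varphi)$ and $A_1(\omega)$ are empty, part (1) reduces to checking $\beta^G(A^0_j(\varphi)) = \beta^G(A^0_j(\omega))$ for $j \le 4$, and part (2) to the analogous equalities with sign. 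Substituting the expressions of Proposition \ref{firsttermsphiom} and of its sign-analogue, one sees that the only terms which could differ between $\varphi$ and $\omega$ are (i) $\beta^G(Y_{p,q})$ and $\beta^G(Y^{\xi}_{p,q})$, carrying for $\varphi$ the action n\textsuperscript{o}\ref{acty1} or n\textsuperscript{o}\ref{actyp1} (according to the sign of $\eta$) and for $\omega$ the trivial action, and (ii) the fibre terms $\beta^G(\{\varphi(0,x_3,y)=\bullet\})$ versus $\beta^G(\{\omega(0,x_1,y)=\bullet\})$ entering the $A_4$ coefficients, $\bullet \in \{0,\xi\}$. Every other factor is a pure power of $u$ or involves $\beta^G(Y_{p,q}\setminus\{0\}) = \beta^G(Y_{p,q}) - \tfrac{u}{u-1}$, hence reduces to (i).

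For (i), I would invoke Proposition \ref{ypq}: precisely because we are in the range $p>q+1,\ \eta=+1$ or $q>p+1,\ \eta=-1$ (the first index strictly dominating the second by at least two on the side carrying the singled-out sign), its formulae give the same value for the action n\textsuperscript{o}\ref{acty1}/n\textsuperscript{o}\ref{actyp1} and for the trivial action; the same then passes to $\beta^G(Y^{\xi}_{p,q})$ through Proposition \ref{yxi}, since $p>q+1$ ensures that the pairs $(p,q+1)$ and $(p+1,q)$ still have their first index strictly larger than their second. This is the equality already recorded just before the statement. For (ii), the key observation is that the fibre germs are equivariantly the germs $f_3$ and $g_2$ of Section \ref{compAB}: one has $\{\varphi(0,x_3,y)=\bullet\} = \{\epsilon x_3^4 + Q_{p,q}(y)=\bullet\}$ with $G$ fixing $x_3$ and acting on $Q_{p,q}$ via $\eta$, i.e. the set $\{f_3=\bullet\}$, while $\{\omega(0,x_1,y)=\bullet\} = \{\epsilon x_1^4 + Q_{p,q}(y)=\bullet\}$ with $G$ flipping $x_1$ and fixing $Q_{p,q}$, i.e. the set $\{g_2=\bullet\}$. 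Thus the fibre comparison is literally the $k=2$ instance of paragraph \ref{a2kfg}.

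This lets me finish. For $\bullet=0$ (part (1)), Lemma \ref{fg2k0} with $k=2$, together with the equalities $\beta^G(\{\eta x_1^2+Q=0\})=\beta^G(\{\eta' x_2^2+Q'=0\})$ and $\beta^G(\{\epsilon x_2^2+\eta x_1^2+Q=0\})=\beta^G(\{\epsilon x_1^2+\eta' x_2^2+Q'=0\})$ valid in our range (see the discussion around Corollary \ref{cora2k}, noting that $k=2$ is even so we are never in the exceptional odd case), yields $\beta^G(\{f_3=0\})=\beta^G(\{g_2=0\})$; this closes part (1), and since $A^0_2$ and $A^0_3$ contain no fibre term, part (2) follows at once. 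For $\bullet=\xi$ (part (3)), isolating the fibre term in the $A_4^\xi$ formula shows that $\beta^G(A^\xi_4(\varphi))=\beta^G(A^\xi_4(\omega))$ if and only if $\beta^G(\{f_3=\xi\})=\beta^G(\{g_2=\xi\})$; the three bullets are then exactly Propositions \ref{eqa2kxi} and \ref{a2kxidepend} read with $k=2$ (even), the case $\eta=\epsilon$ giving an unconditional equality and the cases $\eta=+1,\epsilon=-1$ and $\eta=-1,\epsilon=+1$ producing the stated ``if and only if'' conditions on $\beta^G(\{\mp x_3^4 \pm y^2 \pm \sum y_i^2=\xi\})$ and $\beta^G(\{\mp x_1^4 \pm \sum z_i^2=\xi\})$ after renaming the quartic variable. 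The only point demanding care is the bookkeeping of the $G$-actions in the identifications of (ii) --- in particular that for $\omega$ it is the quartic variable $x_1$ whose sign is flipped while $Q_{p,q}$ is fixed, the mirror image of the situation for $\varphi$ --- together with the observation that $k=2$ is even, which is exactly what makes Proposition \ref{a2kxidepend} (rather than the distinguishing Corollary \ref{cora2k}) the relevant statement; once this is in place, no further computation is needed.
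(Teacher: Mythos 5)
Your proof is correct and follows essentially the same route as the paper: the paper's (implicit) argument is precisely the reduction via Proposition \ref{oamh} and Proposition \ref{firsttermsphiom} to the equality of the $\beta^G(Y_{p,q})$, $\beta^G(Y^{\xi}_{p,q})$ terms under the stated hypothesis, together with the equivariant identifications $\{\varphi(0,x_3,y)=\bullet\}=\{f_3=\bullet\}$ and $\{\omega(0,x_1,y)=\bullet\}=\{g_2=\bullet\}$, so that the conclusions are read off from paragraph \ref{a2kfg} (Lemma \ref{fg2k0}, Propositions \ref{eqa2kxi} and \ref{a2kxidepend}) with $k=2$. Your bookkeeping of the $G$-actions and of the parity of $k$ matches the paper's.
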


For these cases, we have then to look at the other coefficients of the equivariant zeta functions of $\varphi$ and $\omega$. We begin by showing that, under this hypothesis $p > q+1$, $\eta = +1$ or $q > p+1$, $\eta = -1$, the respective naive equivariant zeta functions of $\varphi$ and $\omega$ are equal :

\begin{prop} \label{lasttermsphiom} If $p > q+1$ and $\eta = +1$ or $q > p+1$ and $\eta = -1$, then, for all $M > 4$, we have $\beta^G(A_M(\varphi)) = \beta^G(A_M(\omega))$.
\end{prop}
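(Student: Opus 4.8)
The plan is to adapt, almost verbatim, the strategy of Propositions \ref{aM02k} and \ref{aMMhk}. By Proposition \ref{oamh} together with the additivity of $\beta^G$, one has $\beta^G(A_M(\varphi)) = u^n\beta^G(A^0_{M-1}(\varphi)) - \beta^G(A^0_M(\varphi))$, and likewise for $\omega$; hence it suffices to establish $\beta^G(A^0_M(\varphi)) = \beta^G(A^0_M(\omega))$ for all $M \geq 4$. The value $M=4$ was already obtained in the preceding computations (Proposition \ref{firsttermsphiom}), so I would concentrate on $M > 4$. I would set up the defining system of $A^0_M(\varphi)$ exactly as in the proof of Proposition \ref{firsttermsphiom}, writing an arc as $\gamma = (a, b, c)$, where $a$ records the cubic variable $x_2$, $b$ the quartic variable ($x_3$ for $\varphi$, $x_1$ for $\omega$), and $c = (c^1, \dots, c^{p+q})$ the quadratic variables. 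The only difference between the two germs is the location of the $G$-action: for $\varphi$ it changes the sign of one coordinate $c^i$, whereas for $\omega$ it changes the sign of $b$ and acts trivially on $c$.

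Next I would run the iterated stratification of Propositions \ref{aM02k} and \ref{aMMhk}, peeling off the leading equations one order of $t$ at a time. Three kinds of contributions arise, and the point is that each coincides for $\varphi$ and $\omega$. First, the quadratic equations $Q_{p,q}(c_1) = 0$, together with their strata $\{c_1^1 = \cdots = c_1^{i-1} = 0, c_1^i \neq 0\}$, contribute multiples of $\beta^G(Y_{p,q} \setminus \{0\})$; under the hypothesis $p > q+1$, $\eta = +1$ (or its symmetric counterpart) these agree for $\varphi$ and $\omega$ by Proposition \ref{ypq}, even though $G$ acts on a quadratic variable for $\varphi$ and trivially on $Y_{p,q}$ for $\omega$. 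Second, the leading cubic equation, which after setting $c_1 = 0$ reads $a_1^3 = 0$ and forces $a_1 = 0$, is governed by the monomial $x_2^3$ occurring identically in $\varphi$ and $\omega$ with trivial $G$-action on $x_2$; its effect on the stratification is therefore the same on both sides. Third, the leading quartic equation $\epsilon b_1^4 + Q_{p,q}(c_1) = 0$ contributes $\beta^G(\{\varphi(0, x_3, y) = 0\} \setminus \{0\})$, respectively $\beta^G(\{\omega(0, x_1, y) = 0\} \setminus \{0\})$; but equivariantly $\{\varphi(0, x_3, y) = 0\} = \{f_3 = 0\}$ and $\{\omega(0, x_1, y) = 0\} = \{g_2 = 0\}$, and these were shown to coincide under our hypothesis in section \ref{compAB} (Lemma \ref{fg2k0}, with $k=2$, $k$ even).

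As in the proof of Proposition \ref{aMMhk}, the iteration eventually reduces the surviving system to a single equation, which is necessarily $Q_{p,q}(c_1) = 0$, a pure cubic $a_1^3 = 0$, a quartic $\epsilon b_1^4 + Q_{p,q}(c_1) = 0$, or trivial; in every case the induced contribution is one of the three equal quantities above, possibly tensored with a free affine factor. Assembling the contributions then gives $\beta^G(A^0_M(\varphi)) = \beta^G(A^0_M(\omega))$, whence $\beta^G(A_M(\varphi)) = \beta^G(A_M(\omega))$ and the proposition.

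The main obstacle is combinatorial rather than conceptual: one must control the interaction of the two special monomials $x_2^3$ and $x_3^4$ (of coprime orders $3$ and $4$) as the stratification descends through the powers of $t$, and verify that every branch of the descent terminates in the short list of final equations above. This bookkeeping is heavier than in Proposition \ref{aM02k}, where a single special power appeared. What makes it tractable is that the cubic variable $x_2$ is common to $\varphi$ and $\omega$ and carries the trivial $G$-action, so it contributes identically on both sides and never entangles the two different $G$-actions; the entire $G$-dependent part of the comparison is thus confined to the quartic-plus-quadratic data carried by $f_3$ and $g_2$, which was already resolved in section \ref{compAB}.
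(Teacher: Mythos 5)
Your overall strategy (reduce to $\beta^G(A^0_M(\varphi)) = \beta^G(A^0_M(\omega))$ via Proposition \ref{oamh}, then peel off the defining system by iterated equivariant stratification) is the same as the paper's, but your list of terminal equations is incomplete, and the omission is exactly where the real difficulty of this proposition lies. You claim that every branch of the descent ends in $Q_{p,q}(c_1)=0$, a pure cubic $a_1^3=0$, a quartic $\epsilon b_1^4 + Q_{p,q}(c_1)=0$, or a trivial equation, and you justify this by saying the cubic variable, having trivial $G$-action, ``never entangles the two different $G$-actions.'' This is false. The cubic term does not only occur in the pure form $a_j^3=0$: after the stratification forces $c_1=0$, $a_1=0$, $b_1=0$ and the indices shift, the leading equation of the new system is $a_2^3 + Q_{p,q}(c_1)=0$ (cubic \emph{plus} quadratic form), and at depths where the orders align ($4j=3j'$, e.g.\ $M\geq 12$) one even reaches equations of the form $\epsilon b_j^4 + a_{j'}^3 + Q_{p,q}(c_1)=0$, in which all three blocks of variables appear at the same order of $t$. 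In both cases the contribution is the equivariant virtual Poincar\'e series of a set involving the quadratic (and possibly quartic) variables, on which the $G$-actions for $\varphi$ and $\omega$ genuinely differ, so the cubic does entangle with the $G$-dependent data.

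Concretely, the mixed cubic-quadratic equations contribute $\beta^G(\{\varphi(x_2,0,y)=0\})$ versus $\beta^G(\{\omega(x_2,0,y)=0\})$, whose equality is not a section \ref{compAB} fact but is Lemma \ref{boddq0} from the $C_k/D_k$ section (these sets are $\{h_4(0,x_3,y)=0\}$ and $\{r_3(0,x_2,y)=0\}$). Much more seriously, the full equations with $4j=3j'$ contribute $\beta^G(\{\varphi=0\})$ versus $\beta^G(\{\omega=0\})$ --- the zero loci of the germs themselves, carrying different $G$-actions (on a quadratic coordinate for $\varphi$, on the quartic coordinate for $\omega$). The equality $\beta^G(\{\varphi=0\}) = \beta^G(\{\omega=0\})$ is the heart of the matter: the paper must prove it as a separate statement (Lemma \ref{betaphi0eqom0}) by a rather long chain of equivariant blow-ups and case analysis of charts, and nothing in section \ref{compAB} substitutes for it. Your proposal, by asserting that the entire $G$-dependent part of the comparison ``was already resolved in section \ref{compAB},'' skips precisely this ingredient, so as written the proof does not go through.
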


\begin{proof} Let $M$ be greater than $4$, we prove that $\beta^G(A^0_M(\varphi)) = \beta^G(A^0_M(\omega))$. If we consider the system defining the two latter sets, the same computations as in proposition \ref{firsttermsphiom} provide an equal (under our current hypothesis) contribution of $\beta^G(Y_{p,q} \setminus \{0\})$ and we are reduced to consider a system whose first condition is $a_1 = 0$ and next equation is (after a shift of indices) $\epsilon b_1^4 + Q_{p,q}(c_1) = 0$. This equation induces equal contributions for $\beta^G(A^0_M(\varphi))$ and $\beta^G(A^0_M(\omega))$ as well (recall that $\{\varphi(0, x_3,y) = 0\} = \{f_3(x_2,y) = 0\}$ and $\{\omega(0,x_1,y) = 0 \} = \{g_2(x_1,y) = 0\}$).

We then stratify with respect to the coordinates of $c_1$ as we did in the proofs of propositions \ref{aM02k} and \ref{aMMhk}, and we obtain the further condition $b_1 = 0$. The first subsequent equations become, if $M \geq 8$,
$$\begin{cases}
a_2^3 + Q_{p,q}(c_1) = 0, \\
3 a_2^2 a_3 + \Phi_{p,q}(c_1,c_2) = 0,\\
\epsilon b_2^4 + 3 a_2 a_3^2 +  3 a_2^2 a_4 + Q_{p,q}(c_2) + \Phi(c_1,c_3) = 0.
\end{cases}$$
  
Another stratification with respect to the vector $c_1$ provides an equal (by lemma \ref{boddq0}) contribution of $\beta^G(\{\varphi(x_2,0,y) = 0 \}) = \beta^G(\{h_4(0,x_3,y) =0\})$, respectively $\beta^G(\{\omega(x_2, 0,y) = 0\}) = \beta^G(\{r_3(0,x_2,y) = 0\})$, and the condition $a_2 = 0$. 

Carrying on with the computation, we obtain the equivalence $\beta^G(A^0_M(\varphi)) =\beta^G(A^0_M(\omega))$ if and only if $\beta^G(\{\varphi = 0 \}) = \beta^G(\{\omega = 0\})$, from the equations of the form $\epsilon b_j^4 + a_{j'}^3 + Q_{p,q}(c_1) = 0$ with $4 j = 3 j'$. We prove in lemma \ref{betaphi0eqom0} below that $\beta^G(\{\varphi = 0 \}) = \beta^G(\{\omega = 0\})$.
\end{proof}

\begin{lem} \label{betaphi0eqom0} Suppose that $p > q+1$ and $\eta = +1$ or $q > p+1$ and $\eta = -1$. Then
$$\beta^G(\{\varphi = 0 \}) = \beta^G(\{\omega = 0\}).$$
\end{lem}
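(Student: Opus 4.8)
The plan is to compute both equivariant virtual Poincar\'e series through the decomposition of a $G$-$\mathcal{AS}$ set into its fixed locus and its free part. For any $G$-$\mathcal{AS}$ set $X$, additivity of $\beta^G$ gives
\[
\beta^G(X) = \beta^G(X^G) + \beta^G(X \setminus X^G),
\]
where on the fixed locus $X^G$ the action is trivial, so that $\beta^G(X^G) = \frac{u}{u-1}\,\beta(X^G)$ (with $\beta$ the ordinary virtual Poincar\'e polynomial), while on $X \setminus X^G$ the involution acts freely, so that $\beta^G(X \setminus X^G) = \beta\big((X \setminus X^G)/G\big)$ (these are standard properties of the invariant of \cite{GF}, consistent with remark \ref{equivpoinc}). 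I would apply this to $X = \{\varphi = 0\}$ and $X = \{\omega = 0\}$ and then match the two resulting contributions separately.

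For the fixed loci, recall that (taking $\eta = +1$, the symmetric case $q > p+1$, $\eta = -1$ being analogous) the anti-invariant variable of $\varphi$ is the $+$-variable $y_1$ sitting inside $Q_{p,q} = y_1^2 + Q_{p-1,q}$, whereas the anti-invariant variable of $\omega$ is $x_1$, occurring as $x_1^4$. Hence $\{\varphi = 0\}^G = \{x_2^3 + \epsilon x_3^4 + Q_{p-1,q} = 0\}$ and $\{\omega = 0\}^G = \{x_2^3 + Q_{p,q} = 0\}$, both living in $\mathbb{R}^{p+q+1}$. In each case I would solve for the degree-three variable $x_2$ by its (arc-analytic) cube root: this exhibits the fixed locus as the graph of a function over an affine space $\mathbb{R}^{p+q}$, so that, by invariance of the virtual Poincar\'e polynomial under bijection with $\mathcal{AS}$ graph (remark \ref{remevpsas}, \cite{MCP}), both equal $u^{p+q}$. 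The fixed-point contributions $\frac{u}{u-1}\,u^{p+q}$ therefore agree.

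For the free parts I would realise each quotient as an honest algebraic set by passing to the invariants $s = y_1^2$, respectively $t = x_1^2$. This identifies the quotient $\big(\{\varphi = 0\} \setminus \{\varphi = 0\}^G\big)/G$ with $\{x_2^3 + \epsilon x_3^4 + s + Q_{p-1,q} = 0,\ s > 0\}$, and $\big(\{\omega = 0\} \setminus \{\omega = 0\}^G\big)/G$ with $\{\epsilon t^2 + x_2^3 + Q_{p,q} = 0,\ t > 0\}$. Eliminating $s$ (linearly) and $t$ (by its positive square root, a bijection with $\mathcal{AS}$ graph) realises both quotients as the open regions $\{x_2^3 + \epsilon x_3^4 + Q_{p-1,q} < 0\}$ and $\{\epsilon\,(x_2^3 + Q_{p,q}) < 0\}$ of $\mathbb{R}^{p+q+1}$. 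Applying the cube root in $x_2$ to linearise the cubic term, and then a polynomial shear to absorb the remaining quartic and quadratic terms into the $x_2$-coordinate, I would carry both regions, through bijections with $\mathcal{AS}$ graph and polynomial automorphisms, onto one and the same coordinate half-space of $\mathbb{R}^{p+q+1}$ (using moreover the reflection $x_2 \mapsto -x_2$ to reconcile the two sides when $\epsilon = -1$). Consequently the two quotients are $\mathcal{AS}$-isomorphic and share the same virtual Poincar\'e polynomial.

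Combining the two matchings yields $\beta^G(\{\varphi = 0\}) = \beta^G(\{\omega = 0\})$. The delicate point is the treatment of the free part: the naive fibration of $\{\varphi = 0\}$ and $\{\omega = 0\}$ over the plane of non-quadratic variables would split the base along strict inequalities, producing pieces that are not arc-symmetric and to which $\beta^G$ cannot be applied directly (as the failure of rays to carry a well-defined virtual Poincar\'e polynomial shows). Routing the computation through the geometric quotient by the free involution, which is a genuine $G$-$\mathcal{AS}$ set, and only afterwards simplifying by bijections with $\mathcal{AS}$ graph, is what keeps every intermediate object inside the category on which $\beta$ is defined; establishing the final $\mathcal{AS}$-isomorphism between the two quotients is the step that will require the most care.
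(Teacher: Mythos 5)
Your decomposition $\beta^G(X)=\beta^G(X^G)+\beta^G(X\setminus X^G)$ and the treatment of the fixed loci are essentially fine: both $\{\varphi=0\}^G$ and $\{\omega=0\}^G$ are graphs over $\mathbb{R}^{p+q}$ via a cube root, a bijection with $\mathcal{AS}$ graph, so each has virtual Poincar\'e polynomial $u^{p+q}$, and the formula $\beta^G(X^G)=\frac{u}{u-1}\,\beta(X^G)$ for a trivial action is consistent with \cite{GF}. The genuine gap is in the free part, and it is fatal as written. The identity $\beta^G(X\setminus X^G)=\beta\bigl((X\setminus X^G)/G\bigr)$ on which everything rests is not among the established properties of the equivariant virtual Poincar\'e series, and in your situation it cannot even be stated, because the orbit spaces are not $\mathcal{AS}$ sets and hence have no virtual Poincar\'e polynomial. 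Indeed, your own identification exhibits the quotients as $\{x_2^3+\epsilon x_3^4+s+Q_{p-1,q}=0,\ s>0\}$ and $\{\epsilon t^2+x_2^3+Q_{p,q}=0,\ t>0\}$: these are cut out by a strict inequality and are not arc-symmetric (an analytic arc in the ambient hypersurface crossing $\{s=0\}$ lies half inside, half outside the set). Your closing assertion that ``the geometric quotient by the free involution is a genuine $G$-$\mathcal{AS}$ set'' is exactly what fails: the basic example is $\mathbb{R}^*$ with $x\mapsto -x$, a free $G$-$\mathcal{AS}$ set with $\beta^G(\mathbb{R}^*)=u$, whose orbit space is an open half-line --- precisely the ray pathology you yourself invoke against the naive fibration, and it applies verbatim to your quotients (free quotients stay inside the category only under extra hypotheses, e.g.\ compactness as for the antipodal sphere, which fail here since the free parts are noncompact). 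Consequently the remaining steps --- cube roots, shears, the reflection $x_2\mapsto -x_2$ --- compare quantities that are not defined: the McCrory--Parusi\'nski invariance concerns bijections between $\mathcal{AS}$ sets with $\mathcal{AS}$ graph and gives no meaning to $\beta$ of an open half-space. Repairing the argument would require either a quotient formula for $\beta^G$ on suitable equivariant compactifications, or an invariance of $\beta^G$ under equivariant bijections with $\mathcal{AS}$ graph, which is precisely the open question recorded in remark \ref{remevpsas}.

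For contrast, the paper's proof never leaves the $G$-$\mathcal{AS}$ category and never takes quotients: it blows up $\{\omega=0\}$ (and correspondingly $\{\varphi=0\}$) equivariantly at the origin, uses additivity only along $G$-stable Zariski-open/closed decompositions and equivariant isomorphisms, matches the contributions of the two germs chart by chart, and reduces successively to the equations $zx^3+\epsilon z^2+Q_{p,q}(y)=0$, then $zx^2+\epsilon z^2+Q_{p,q}(y)=0$, then $uv+\epsilon v^2+Q_{p,q}(w)=0$, where the last contributions are matched by a stratification in $v$. If you wish to keep the fixed/free strategy, you must first prove the quotient formula you are using --- a substantial result not available in the literature cited by the paper --- rather than treat it as standard.
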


\begin{proof} Suppose that $p> q+ 1$ and $\eta = +1$. Considering an equivariant resolution of singularities of the $G$-algebraic set $\{\omega = 0\}$, we will compare the quantities $\beta^G(\{\omega = 0 \})$ and $\beta^G(\{\varphi = 0 \})$. 

Write $\omega(x) =  x^3 + \epsilon z^4 + Q_{p,q}(y)$ (the group $G$ acts via the involution $(x,z,y) \mapsto (x,-z,y)$). Using an equivariant change of coordinates as in the proof of proposition \ref{ypq}, we can assume $q = 0$. We then equivariantly blow-up the $G$-algebraic set $\{\omega = 0 \}$ at the origin of $\mathbb{R}^n$ : 
\begin{itemize}
	\item in the chart $x = u$, $z = uv$, $y_i = u w_i$, with $G$-action $(u,v,w_i) \mapsto (u,-v,w_i)$, the equation of the blown-up variety is
$$u^2[u + \epsilon u^2 v^4 + Q_{p,q}(w)] = 0,$$
	\item in the chart $x = v u$, $z = v$, $y_i = v w_i$, with $G$-action $(u,v,w_i) \mapsto (-u,-v,-w_i)$, it is
$$v^2[v u^3 + \epsilon v^2 + Q_{p,q}(w)]= 0,$$
	\item in the respective charts $x = w_j u$, $z = w_j v$, $y_j = w_j$ , $y_i = w_j w_i$ for $i \neq j$, with $G$-action $(u,v,w_i) \mapsto (u,v,w_i)$, it is
$$w_j^2 [w_j u^3 + \epsilon w_j^2 v^4 + 1 + Q(\widehat{w})] = 0.$$
\end{itemize}

The set of points of the strict transform of $\{\omega = 0 \}$ which are in the first chart but not in the second one is given by $v = 0,  u + Q_{p,q}(w) = 0$, therefore it is equivariantly isomorphic to an affine space : the respective induced contributions for $\beta^G(\{\omega = 0 \})$ and $\beta^G(\{\varphi = 0 \})$ are equal. Now, the set of points of the strict transform which are in one of the last charts but not in the second and the first ones is given by $v = 0, u = 0, 1+  Q(\widehat{w}) = 0$ : it is the empty set ($q = 0$).

Furthermore, notice that the intersection of the strict transform of $\{\omega = 0 \}$ with the exceptional divisor is a circle with a nonempty fixed point set.
\\

Consequently, we are reduced to consider the equivariant virtual Poincar\'e series of the algebraic set of $\mathbb{R}^n$ defined by the equation $z x^3 + \epsilon z^2 + Q_{p,q}(y) = 0$, $G$ acting via $(x,z,y) \mapsto (-x,-z,-y)$ (for $\varphi$, the involution would have been $(x,z,y_1,y_i) \mapsto (x,z,-y_1,y_i)$). We equivariantly blow-up this $G$-algebraic set at the origin of $\mathbb{R}^n$ as well :
\begin{itemize}
	\item in the chart $x = u$, $z = uv$,$y_i = u w_i$, with $G$-action $(u,v,w_i) \mapsto (-u,v,w_i)$, the equation of the blown-up variety is
$$u^2[v u^2 + \epsilon v^2 + Q_{p,q}(w)] = 0,$$
	\item in the chart $x = v u$, $z = v$, $y_i = v w_i$, with $G$-action $(u,v,w_i) \mapsto (u,-v,w_i)$, it is
$$v^2[v^2 u^3 + \epsilon + Q_{p,q}(w)] = 0,$$
	\item in the respective charts $x = w_j u$, $z = w_j v$, $y_j = w_j$ , $y_i = w_j w_i$ for $i \neq j$, with $G$-action $(u,v,w_j, w_i) \mapsto (u,v,-w_j, w_i)$, it is
$$w_j^2 [v w_j^2 u^3 + \epsilon v^2 + 1 + Q(\widehat{w})] =0.$$
\end{itemize} 

The set of points of the strict transform which are in the second chart but not in the first one is given by $u = 0, \epsilon + Q_{p,q}(w) = 0$ : it is the cartesian product of an affine line and the set $Y_{p,q}^{-\epsilon}$, and therefore it induces an equal contribution for $\beta^G(\{\omega = 0 \})$ and $\beta^G(\{\varphi = 0 \})$ under our current hypothesis. As for the set of points of the strict transform which are in one of the last charts but not in the first and the second ones, it is given by $u = 0, v = 0, 1 + Q(\widehat{w}) = 0$, thus it is empty.


On the other hand, the intersection of the strict transform with the exceptional divisor provides equal contributions for $\beta^G(\{\omega = 0 \})$ and $\beta^G(\{\varphi = 0 \})$ as well.
\\

As a consequence, we can focus on the equation $zx^2 + \epsilon z^2 + Q_{p,q}(y) = 0$ in $\mathbb{R}^n$, the group $G$ acting via $(x,z,y) \mapsto (-x,z,y)$ for $\omega$ (respectively via $(x,z,y_1,y_i) \mapsto (x,z,-y_1,y_i)$ for $\varphi$). We equivariantly blow-up once again :
\begin{itemize}
	\item in the chart $x = u$, $z = uv$,$y_i = u w_i$, with $G$-action $(u,v,w_i) \mapsto (-u,-v,-w_i)$, the equation of the blown-up variety is
$$u^2[uv + \epsilon v^2 + Q_{p,q}(w) ]=0,$$
	\item in the chart $x = v u$, $z = v$, $y_i = v w_i$, with $G$-action $(u,v,w_i) \mapsto (-u,v,w_i)$, it is
$$v^2[v u^2+ \epsilon + Q_{p,q}(w) ]= 0,$$
	\item in the respective charts $x = w_j u$, $z = w_j v$, $y_j = w_j$ , $y_i = w_j w_i$ for $i \neq j$, with $G$-action $(u,v,w_i) \mapsto (-u,v,w_i)$, it is
$$w_j^2 [v w_j u^2 + \epsilon v^2 + 1 + Q(\widehat{w})] =0.$$
\end{itemize}
 
By similar arguments as above, we are reduced to consider the equation $uv + \epsilon v^2 + Q_{p,q}(w) = 0$. We can then stratify with respect to $v$ and show that the induced respective contributions for $\beta^G(\{\varphi = 0 \})$ and $\beta^G(\{\omega = 0\})$ are also the same. This finally proves the equality $\beta^G(\{\varphi = 0 \}) = \beta^G(\{\omega = 0\})$.


\end{proof}

Similarly to what we did in the proofs of propositions \ref{lasttermszetafsignsfg} and \ref{lasttermszetafsignshr}, we can adapt the proof of proposition \ref{lasttermsphiom} in order to state a sufficient and necessary condition for the equality of the respective equivariant zeta functions with signs of $\varphi$ and $\omega$ to be true :

\begin{prop} 
	\begin{enumerate}
		\item Suppose $p > q+1$ and $\eta = +1$. Then $Z_{\varphi}^{G,\xi}(u,T) = Z_{\omega}^{G,\xi}(u,T)$ if and only if we have the equalities 
		\begin{itemize}
			\item $\beta^G(\{x_2^3 + y^2 + \sum_{i=1}^{K-1} y_i^2 = \xi\}) = \beta^G(\{x_2^3 + \sum_{i=1}^{K} z_i^2 = \xi\})$, 
			\item $\beta^G(\{\epsilon x_3^4 + y^2 + \sum_{i=1}^{K-1} y_i^2 = \xi\}) = \beta^G(\{\epsilon x_1^4 + \sum_{i=1}^{K} z_i^2 = \xi\})$,
			\item and $\beta^G(\{x_2^3 + \epsilon x_3^4 + y^2 + \sum_{i=1}^{K-1} y_i^2 = \xi\}) = \beta^G(\{x_2^3 + \epsilon x_1^4 + \sum_{i=1}^{K} z_i^2 = \xi\})$,
		\end{itemize}
where, in the left members of the equalities, the considered sets are algebraic subsets of $\mathbb{R}^{K+2}$ equipped with the action of $G$ only changing the sign of $y$, and, in the right members, the sets are subsets of $\mathbb{R}^{K+2}$ equipped with the action of $G$ only changing the sign of $x_1$.
		\item Suppose $q > p+1$ and $\eta = -1$. Then $Z_{\varphi}^{G,\xi}(u,T) = Z_{\omega}^{G,\xi}(u,T)$ if and only if we have the equalities 
		\begin{itemize}
			\item $\beta^G(\{x_2^3 - y^2 - \sum_{i=1}^{K-1} y_i^2 = \xi\}) = \beta^G(\{x_2^3 - \sum_{i=1}^{K} z_i^2 = \xi\})$, 
			\item $\beta^G(\{\epsilon x_3^4 - y^2 - \sum_{i=1}^{K-1} y_i^2 = \xi\}) = \beta^G(\{\epsilon x_1^4 - \sum_{i=1}^{K} z_i^2 = \xi\})$,
			\item and $\beta^G(\{x_2^3 + \epsilon x_3^4 - y^2 - \sum_{i=1}^{K-1} y_i^2 = \xi\}) = \beta^G(\{x_2^3 + \epsilon x_1^4 - \sum_{i=1}^{K} z_i^2 = \xi\})$.
		\end{itemize}
	\end{enumerate}
\end{prop}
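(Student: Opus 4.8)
The plan is to run the argument of Proposition~\ref{lasttermsphiom} once more, this time keeping track of the signed truncations, exactly as Propositions~\ref{lasttermszetafsignsfg} and~\ref{lasttermszetafsignshr} adapted the naive computations of their respective sections. For $M > 4$, the system defining $A^{\xi}_M(\varphi)$ (resp. $A^{\xi}_M(\omega)$) is obtained from the one defining $A^0_M(\varphi)$ (resp. $A^0_M(\omega)$) by replacing $0$ with $\xi$ in the right-hand side of its very last equation only. Hence every stratification and every induction carried out in the proof of Proposition~\ref{lasttermsphiom} applies verbatim, and the successive contributions of $\beta^G(Y_{p,q} \setminus \{0\})$, produced by the unchanged intermediate equations, are equal for $\varphi$ and $\omega$ under the standing hypothesis $p > q+1$, $\eta = +1$ (resp. $q > p+1$, $\eta = -1$) by Proposition~\ref{ypq}. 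First I would record that the only place where the two computations can differ is the contribution of the single final equation carrying the right-hand side $\xi$.

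Next I would classify that final equation. After the $c$-variables and the $a$-, $b$-variables forced to vanish have been eliminated, the surviving equation has one of the shapes $a_{j}^3 + Q_{p,q}(c_1) = \xi$, $\epsilon b_{j'}^4 + Q_{p,q}(c_1) = \xi$, or $a_j^3 + \epsilon b_{j'}^4 + Q_{p,q}(c_1) = \xi$ (the last one occurring exactly when $3j = 4j'$), or else an equation with no solution. The induced contributions are, respectively, $\beta^G(\{\varphi(x_2,0,y) = \xi\})$ versus $\beta^G(\{\omega(x_2,0,y) = \xi\})$, then $\beta^G(\{\varphi(0,x_3,y) = \xi\})$ versus $\beta^G(\{\omega(0,x_1,y) = \xi\})$, and finally $\beta^G(\{\varphi = \xi\})$ versus $\beta^G(\{\omega = \xi\})$. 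Applying the equivariant change of variables of Proposition~\ref{ypq} to reduce $Q_{p,q}$ to $K = |p-q|$ squares of a common sign, and noting that $\varphi$ carries the action changing the sign of $y$ while $\omega$ carries the action changing the sign of $x_1$, these three pairs become precisely the three pairs of equivariant virtual Poincar\'e series appearing in the statement.

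It then remains to assemble the equivalence. Since $Z^{G,\xi}_{\varphi} = Z^{G,\xi}_{\omega}$ amounts to the equality of all coefficients $\beta^G(A^{\xi}_M(\varphi)) = \beta^G(A^{\xi}_M(\omega))$, and since by the above every such coefficient differs only through one of the three fiber comparisons (each of which does occur as a genuine coefficient for suitable $M$), the signed zeta functions coincide if and only if the three displayed equalities hold. The main obstacle I anticipate is bookkeeping rather than conceptual: one must verify that for every residue of $M$ the reduction terminates at exactly one of these three equations and that no other unmatched contribution survives, in particular that the signed fibers $\beta^G(Y^{\xi}_{p,q})$ produced along the way are genuinely equal for $\varphi$ and $\omega$, just as in Propositions~\ref{lasttermszetafsignsfg} and~\ref{lasttermszetafsignshr}. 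Unlike Lemma~\ref{betaphi0eqom0}, which made the naive case collapse completely, here the three fiber comparisons cannot be settled without deciding the equivariant virtual Poincar\'e series of the sets $\{x^{3} + \cdots = \xi\}$ and $\{x^{4} + \cdots = \xi\}$, which is exactly why the statement must remain a conditional equivalence.
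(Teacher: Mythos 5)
Your proposal is correct and takes essentially the same approach as the paper, whose own proof is exactly the sketch you carry out — adapt the proof of Proposition~\ref{lasttermsphiom} in the manner of Propositions~\ref{lasttermszetafsignsfg} and~\ref{lasttermszetafsignshr}: only the last equation acquires the right-hand side $\xi$, so every intermediate contribution (which, beyond the $\beta^G(Y_{p,q} \setminus \{0\})$ terms you cite, also includes the naive fibers equalized by Lemma~\ref{boddq0}, Lemma~\ref{betaphi0eqom0} and the results of Section~\ref{compAB}) is equal under the standing hypothesis, leaving only the terminal fiber comparison. Your classification of the terminal equations (cube type, quartic type, and the combined type under the divisibility condition $3j = 4j'$, besides the trivially equal cases $Q_{p,q}(c_1) = \xi$ and the empty set), together with the observation that each type genuinely occurs for suitable $M$, is precisely what makes the statement an ``if and only if'' rather than a one-way implication.
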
 

\begin{rem} As we showed in the proof of proposition \ref{eqa2kxi}, we have $\beta^G(\{ + x_3^4 + y^2 + \sum_{i=1}^{K-1} y_i^2 = \xi\}) = \beta^G(\{ + x_1^4 + \sum_{i=1}^{K} z_i^2 = \xi\})$ and $\beta^G(\{ - x_3^4 - y^2 - \sum_{i=1}^{K-1} y_i^2 = \xi\}) = \beta^G(\{ - x_1^4 - \sum_{i=1}^{K} z_i^2 = \xi\})$ for $\xi = \pm 1$.
\end{rem}

We finally gather the results of this section in one theorem :

\begin{theo} Suppose that the invariant germs
$$\varphi(x) =  x_2^3 + \epsilon x_3^4 + \eta x_1^2 + Q \mbox{ and } \omega(x) =  x_2^3 + \epsilon x_1^4 + \eta' x_3^3 + Q'$$
have, up to permutation of all variables, the same quadratic part, with $p$ signs $+$ and $q$ signs~$-$.

\begin{enumerate}
	\item If \begin{itemize}
			\item $p \leq q$, $\eta = +1$ or $q \leq p$, $\eta = -1$, 
			\item $p = q+1$ or $q = p+1$,
		\end{itemize}
then $\varphi$ and $\omega$ are not $G$-blow-Nash equivalent.
	\item \begin{itemize}
			\item If $p > q+1$, $\eta = +1$, then $Z_{\varphi}^{G}(u,T) = Z_{\omega}^{G}(u,T)$. Furthermore, $Z_{\varphi}^{G,\xi}(u,T) = Z_{\omega}^{G,\xi}(u,T)$ if and only if $\beta^G(\{x_2^3 + y^2 + \sum_{i=1}^{K-1} y_i^2 = \xi\}) = \beta^G(\{x_2^3 + \sum_{i=1}^{K} z_i^2 = \xi\})$, $\beta^G(\{\epsilon x_3^4 + y^2 + \sum_{i=1}^{K-1} y_i^2 = \xi\}) = \beta^G(\{\epsilon x_1^4 + \sum_{i=1}^{K} z_i^2 = \xi\})$ and $\beta^G(\{x_2^3 + \epsilon x_3^4 + y^2 + \sum_{i=1}^{K-1} y_i^2 = \xi\}) = \beta^G(\{x_2^3 + \epsilon x_1^4 + \sum_{i=1}^{K} z_i^2 = \xi\})$.
			\item If $q > p+1$, $\eta = -1$, then $Z_{\varphi}^{G}(u,T) = Z_{\omega}^{G}(u,T)$. Furthermore, $Z_{\varphi}^{G,\xi}(u,T) = Z_{\omega}^{G,\xi}(u,T)$ if and only if $\beta^G(\{x_2^3 - y^2 - \sum_{i=1}^{K-1} y_i^2 = \xi\}) = \beta^G(\{x_2^3 - \sum_{i=1}^{K} z_i^2 = \xi\})$, $\beta^G(\{\epsilon x_3^4 - y^2 - \sum_{i=1}^{K-1} y_i^2 = \xi\}) = \beta^G(\{\epsilon x_1^4 - \sum_{i=1}^{K} z_i^2 = \xi\})$ and $\beta^G(\{x_2^3 + \epsilon x_3^4 - y^2 - \sum_{i=1}^{K-1} y_i^2 = \xi\}) = \beta^G(\{x_2^3 + \epsilon x_1^4 - \sum_{i=1}^{K} z_i^2 = \xi\})$.
		\end{itemize}
\end{enumerate}
\end{theo}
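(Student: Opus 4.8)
The plan is to assemble the partial results established throughout the section, since the statement is a synthesis of the preceding corollaries and propositions, organised according to the relative sizes of $p$ and $q$ and the signs $\eta$, $\epsilon$.

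First I would dispose of point 1, whose two listed sub-cases are precisely points 2 and 3 of Corollary \ref{firstdistphiom}. When $p \le q$ and $\eta = +1$ (or symmetrically $q \le p$ and $\eta = -1$), the naive equivariant zeta functions already differ at the coefficient $\beta^G(A_2(\cdot)) = u^{4+p+q}\beta^G(Y_{p,q})$, because $Y_{p,q}$ carries a nontrivial action for $\varphi$ and the trivial action for $\omega$; when $p = q \pm 1$, the signed coefficient $\beta^G(A_2^{+1}(\cdot))$ separates them through Propositions \ref{ypq} and \ref{yxi}. By Theorem \ref{eqzfinv} this forces non-$G$-blow-Nash equivalence, so nothing beyond citing Corollary \ref{firstdistphiom} is needed here.

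For point 2, I would first establish the equality of the naive equivariant zeta functions. By the very definition $Z_f^G(u,T) = \sum_{m\ge 1}\beta^G(A_m(f))\,u^{-mn}T^m$, it suffices to check $\beta^G(A_m(\varphi)) = \beta^G(A_m(\omega))$ for every $m \ge 1$. Under the hypothesis $p > q+1$, $\eta = +1$ (or its symmetric counterpart), the low-order coefficients $m \le 4$ agree by point 1 of the proposition stated just before Proposition \ref{lasttermsphiom}, itself resting on the identity $\beta^G(\{\varphi(0,x_3,y)=0\}) = \beta^G(\{\omega(0,x_1,y)=0\})$ inherited from the $A_k$/$B_k$ computations; the higher coefficients $m > 4$ are handled by Proposition \ref{lasttermsphiom}. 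Together these yield $Z_\varphi^G(u,T) = Z_\omega^G(u,T)$ in both sub-cases. The iff characterisations for the signed zeta functions $Z_\varphi^{G,\xi}(u,T) = Z_\omega^{G,\xi}(u,T)$ are then exactly the two items of the last proposition preceding the theorem, which I would simply invoke.

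The genuinely hard input feeding this collation, and the reason point 2 stops at an equivalence rather than an unconditional equality, lies in the reduction carried out in Proposition \ref{lasttermsphiom} and Lemma \ref{betaphi0eqom0}: comparing the top-degree contributions ultimately depends on whether the equivariant virtual Poincar\'e series of the sign fibers $\{x_2^3 + \cdots = \xi\}$ and $\{x_1^4 + \cdots = \xi\}$ coincide. These series resist evaluation with the available tools, because, unlike the non-equivariant virtual Poincar\'e polynomial, which is invariant under bijection with $\mathcal{AS}$ graph (see Remark \ref{remevpsas}), we do not know that $\beta^G$ is invariant under equivariant bijection with $\mathcal{AS}$ graph. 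I would therefore leave these equalities as the conditional clauses of the statement, exactly as phrased, rather than attempting to settle them.
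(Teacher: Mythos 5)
Your proposal is correct and takes essentially the same route as the paper: the theorem is stated there purely as a synthesis (``We gather the obtained results\ldots''), and your assembly cites exactly the right ingredients --- Corollary \ref{firstdistphiom} for point 1, the proposition preceding Proposition \ref{lasttermsphiom} for the coefficients $m \leq 4$, Proposition \ref{lasttermsphiom} (with Lemma \ref{betaphi0eqom0}) for $M > 4$, and the final proposition for the signed-zeta-function equivalences. The only nitpick is notational: the distinguishing coefficient in point 1 is $\beta^G(A_2^0(\cdot)) = u^{4+p+q}\beta^G(Y_{p,q})$ rather than $\beta^G(A_2(\cdot))$ itself, the conclusion for $\beta^G(A_2(\cdot))$ following because the terms $\beta^G({}^0\!A_2(\cdot))$ agree by Proposition \ref{oamh}.
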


\begin{rem} Forgetting the $G$-action, the respective virtual Poincar\'e polynomials of the algebraic subsets $\{x^3 + \epsilon z^4 + \sum_{i=1}^{K} y_i^2 = \xi\}$ and $\{x^3 + \epsilon z^4 - \sum_{i=1}^{K} y_i^2 = \xi\}$, $\epsilon = \pm 1$, $\xi = \pm 1$, of $\mathbb{R}^{K+1}$ can be computed using the invariance of the virtual Poincar\'e polynomial under bijection with $\mathcal{AS}$ graph (see also remarks \ref{remevpsas} and \ref{remforgacthr}). If the equivariant virtual Poincar\'e series was shown to be an invariant under equivariant $\mathcal{AS}$ bijection, it should be possible to compute the above considered quantities.  
\end{rem}





 \vspace{0.5cm}
Fabien PRIZIAC
\\
Institut de Math\'ematiques de Marseille (UMR 7373 du CNRS)
\\
Aix-Marseille Universit\'e
\\
39, rue Fr\'ed\'eric Joliot-Curie
\\
13453 Marseille Cedex 13
\\
FRANCE
\\
fabien.priziac@univ-amu.fr

\end{document}